\documentclass[reqno,11pt]{amsart}
\usepackage{amsthm,amsfonts,amssymb,euscript,   
mathrsfs,graphics,color,amsmath,latexsym,marginnote} 
\usepackage{cite}      
\usepackage{dsfont}       
\usepackage[english]{babel}      
\usepackage{mathtools}
\mathtoolsset{showonlyrefs} 
\usepackage{lmodern} % math, rm, ss, tt 
\usepackage{fix-cm}

\numberwithin{equation}{section}

\usepackage{datetime}
\usepackage{lipsum}
\usepackage{enumerate}
\usepackage{tikz}
\usetikzlibrary{matrix}
\usepackage[pdftex]{hyperref}
\RequirePackage{amscd}
\RequirePackage{epic}
\RequirePackage[all]{xy}
\RequirePackage{url}
\RequirePackage[shortlabels]{enumitem}
\usepackage{empheq}
\usepackage{epsfig}
\usepackage{lineno} % per numerare le righe 
 \usepackage{perpage}
 \usepackage[english]{babel}

\usepackage[utf8]{inputenc}
\usepackage{cite}
\RequirePackage{amscd}
\RequirePackage{epic}
\RequirePackage{eepic}
\RequirePackage[all]{xy}
\RequirePackage{url}
\RequirePackage[shortlabels]{enumitem}
\usepackage{empheq}
\usepackage{nomencl}
\usepackage{epsfig}
\usepackage{graphicx}
\theoremstyle{plain}

\newtheorem{theorem}{Theorem}[section]
\newtheorem{proposition}[theorem]{Proposition}
\newtheorem{lemma}[theorem]{Lemma}

\newtheorem{remark}[theorem]{Remark}

\newtheorem{definition}[theorem]{Definition}

\renewcommand{\Re}{\mathrm{Re}\,}
\renewcommand{\Im}{\mathrm{Im}\,}

\providecommand{\vect}[2]{{\bigl[\begin{smallmatrix}#1\\#2\end{smallmatrix}\bigr]}}   
\providecommand{\sm}[4]{{\bigl[\begin{smallmatrix}#1&#2\\#3&#4\end{smallmatrix}\bigr]}}
    \newcommand{\set}[1]{{\left\{#1\right\}}}
\newcommand{\pa}[1]{{\left(#1\right)}}
\newcommand{\norm}[1]{{\left |#1\right |}}
\RequirePackage{epic}
\RequirePackage{eepic}
\RequirePackage[all]{xy}
\RequirePackage{url}
\RequirePackage[shortlabels]{enumitem}
\usepackage{empheq}
\usepackage{nomencl}
\usepackage{epsfig}
\usepackage{graphicx}

\newcommand{\T}{\mathbb{T}}
\newcommand{\Z}{\mathbb{Z}}
\newcommand{\R}{\mathbb{R}}
\newcommand{\C}{\mathbb{C}}

\newcommand{\eps}{\varepsilon}

\renewcommand{\Re}{\operatorname{Re}}
\renewcommand{\Im}{\operatorname{Im}}

\newcommand{\na}{\widehat{n}}

\newcommand{\co}[1]{\textit{#1}}%corsivo
\newcommand{\gr}[1]{\textbf{#1}}%grassetto

\newcommand{\id}{\operatorname{id}}

\usepackage{amsthm}

\newcommand{\g}{\gamma}

\newcommand{\s}{{\sigma}}

\newcommand{\ii}{{\rm i}}
\def\wc{ {}}

\def\norma#1{\left \|#1\right \|}

\newcommand{\x}{\xi}

\newcommand{\N}{{\mathbb N}}

\newcommand{\cC}{{\mathcal C}}
\newcommand{\cD}{{\mathcal D}}

\newcommand{\cF}{{\mathcal F}}

\newcommand{\cH}{{\mathcal H}}

\newcommand{\cK}{{\mathcal K}}

\newcommand{\cM}{{\mathcal M}}

\newcommand{\cR}{{\mathcal R}}

\newcommand{\tb}{{\mathtt{b}}}

\newcommand{\td}{{\mathtt{d}}}

\newcommand{\tf}{{\mathtt{f}}}

\newcommand{\tm}{{\mathtt{m}}}

\newcommand{\tr}{{\mathtt{r}}}

\newcommand{\tC}{{\mathtt{C}}}

\newcommand{\tH}{{\mathtt{H}}}
\newcommand{\tK}{{\mathtt{K}}}

\newcommand{\tN}{{\mathtt{N}}}

\usepackage{bm}

\newcommand{\al}{{\alpha}}
\newcommand{\bt}{{\beta}}

\newcommand{\abs}[1]{\left| #1 \right|}

\newcommand{\jap}[1]{\langle #1 \rangle}
\newcommand{\und}[1]{\underline{#1}}

\newcommand{\e}{{\varepsilon}}

\newcommand{\jml}[1]{\lfloor #1 \rfloor}

\newcommand{\uno}{{\mathbb I}}

\newcommand{\meas}{{\mathtt{meas}}}

\newcommand{\tw}{{\mathtt{w}}}
\renewcommand{\th}{{\mathtt{h}}}
\newcommand{\nnorm}[1]{{\left\vert\kern-0.25ex\left\vert\kern-0.25ex\left\vert #1 
    \right\vert\kern-0.25ex\right\vert\kern-0.25ex\right\vert}}
\newcommand{\es}{e^{L_{S} }}

\usepackage{framed,enumitem} 

\newcommand{\suca}{\mathtt N}
\newcommand{\ri}{r}

\newcommand{\rf}{{r'}}

\newcommand{\twi}{\tw}

\newcommand{\imm}{{\rm{i}}}

\newcommand{\CalgM}{{C_{\mathtt{alg},\mathtt M}(p)}}

\newcommand{\jjap}[1]{\lfloor #1 \rfloor}

\definecolor{aqua}{RGB}{20,100,200}

\newcommand{\ov}{\overline}

\makeatletter
\renewcommand{\@setsubjclass}{%
  \ifx\@empty\@subjclass\else
    \begingroup
      \textit{2020 Mathematics Subject Classification. }
      \@subjclass
    \endgroup
  \fi}
\makeatother

\makeatletter
\def\l@subsection{\@tocline{2}{0pt}{2.5pc}{5pc}{}}
\def\l@subsubsection{\@tocline{3}{0pt}{4.5pc}{5pc}{}}
\begin{document}

\title[On the control of high Sobolev norms for the Wave equation on $\T^d$]{On the control of high Sobolev norms for the Wave equation on $\T^d$ over exponentially long times}

%Growth of Sobolev norms and exponential-type stability for the wave equation}}

\date{}

\author{Roberto Feola}
\address{\scriptsize{Dipartimento di Matematica e Fisica, Universit\`a degli Studi Roma Tre, Largo San Leonardo Murialdo 1, 00146}}
\email{roberto.feola@uniroma3.it}

\author{Jessica Elisa Massetti}
\address{\scriptsize{Dipartimento di Matematica, Universit\`a degli Studi di Roma ``Tor Vergata'', Via della Ricerca Scientifica 1, 00133}}
\email{massetti@mat.uniroma2.it}

%\thanks{thanks}  

\begin{abstract} 
We consider a one-parameter family of nonlinear wave equations on the 
$d$-dimensional torus, with polynomial nonlinearities of arbitrary degree $q+1$, 
where $q\geq 1$. 
We investigate the long-time behavior of high Sobolev $H^s$-norms 
of solutions in different settings.     
In the one-dimensional case, and for almost any value of the mass parameter $\mathtt{m}>0$, 
we prove exponentially long stability times   for small initial data. 
The proof relies on normal form techniques together 
with suitable \emph{weak} Diophantine conditions.
In higher space dimensions, for initial data $u_0\in H^{s}$, $s \geq s_1 + 1$, satisfying 
suitable smallness conditions on the \emph{low} Sobolev norm $H^{s_1}$ and 
on the $L^2$-norm, we prove a polynomial upper bound on the possible growth 
of the high Sobolev $H^{s}$-norm, 
over finite but exponentially long time scales in the regularity parameter $s_1$.
 The key ingredient consists 
 in establishing suitable \emph{a priori} tame estimates for the solution. 
The result applies in \emph{any} space dimension $d\geq 1$ 
and for \emph{all} values of the mass parameter $\mathtt{m}\geq 0$.
\end{abstract}

 \keywords{Wave equations, exponential-type stability, lifespan for semi-linear PDEs, small divisors}  
\subjclass{35L05, 35B45, 37K45, 37J40}
  
\maketitle
\tableofcontents

\section{Introduction}

Let $\T^d :=(\mathbb{R}/2\pi \mathbb{Z})^{d}$, $d\ge 1$, $q\geq1$. 
In this paper we consider  the wave type equations
\begin{equation}\label{eq:wave}
 \partial_{tt}\psi-\Delta \psi+\mathtt{m}\psi+\psi^{q+1}=0\,,
 \qquad 
 x\in \mathbb{T}^{d}:=(\R/2\pi\Z)^{d}
\end{equation}
where 
 $t\in\mathbb{R}$, $\mathtt{m}\geq0$,
 and $\Delta$ denotes the Laplacian operator acting 
 on the Fourier basis of periodic functions by linearity as
 \[
 \Delta e^{{\rm i} j \cdot x}=-|j|^{2}e^{{\rm i} j\cdot x}\,, 
 \qquad 
 |j| := \|j\|_{\ell_2}\,,\quad \forall\, j\in \mathbb{Z}^{d}\,.
 \]

Because of the periodicity in the space variable, we shall identify functions to their Fourier expansion and work on the Sobolev phase space with the usual norm on the sequences of Fourier's coefficients of involved functions. More specifically, for any $s\in\R$,
 we consider  the standard Sobolev space $H^{s}(\mathbb{T}^{d};\C)$
 defined as
 \begin{equation}\label{spaziodiSobolev}
 H^{s}:=H^{s}(\T^{d};\C):=
 \Big\{
 u(x)=\sum_{j\in\Z^{d}}u_{j}e^{\ii jx}\;:\;
 \|u\|_{H^{s}}^2:=\sum_{j\in\Z^{d}}\langle j\rangle^{2s}|u_{j}|^2<+\infty
 \Big\}\,,
 \end{equation}
 where $\langle j\rangle:=\max\{1,|j|\}$ for any $j\in\Z^{d}$.
 The general purpose of the present paper is to investigate the \emph{long time} evolution of 
 \emph{small amplitude} initial data, with particular emphasis on the 
 lifespan and the possible growth of their Sobolev norms. More precisely, 
 given $1\ll s_1\leq s$ and  a solution $u(t)$ with initial datum 
 $u_0$ with 
 $\|u_0\|_{H^{s_1}}\sim \delta$,
we prove that its \emph{low} Sobolev norm $\|u(t)\|_{H^{s_1}}$ remains of size $\delta$
 over a very long time scale $[0,T_{\delta}]$. Moreover, we establish upper bounds on the possible growth
of the
\emph{high} Sobolev norms $\|u(t)\|_{H^{s}}$ over the same time scale.
A central point is to give a lower 
 bound on $T_{\delta}$ strictly greater than the natural time $O(\delta^{-q})$ 
 of existence and stability 
 given by classical local existence theory. In this paper we actually prove that 
 $T_{\delta}$ is exponentially long with respect to $1/\delta$.
We address this problem for equation \eqref{eq:wave} in several different settings.
 
 (i) We first consider the dispersive case $\mathtt{m}>0$, where the spatial variable $x$ belongs to the one-dimensional torus $\T$. In this case, we consider  \eqref{eq:wave} 
 as a \emph{one parameter}  
family of PDEs, with the mass $\mathtt{m}$ regarded  as a free parameter 
to be chosen in a 
suitable way 
in order to obtain the exponentially long time stability of the solutions. 

(ii) We next consider the case in which $\mathtt{m}>0$ 
is treated as a \emph{fixed} parameter, 
and the spatial variable $x$ belongs to the $d$-dimensional torus $\T^d$, 
for arbitrary dimension $d>1$.

(iii) We finally analyze the \emph{completely resonant case}, 
corresponding to the wave equation, where the parameter 
$\mathtt{m}$ vanishes, namely $\mathtt{m}=0$.

 \medskip
 Of course the choice of the manifold where  the space variable lives, its dimension and 
  the nonlinearity at stake, play a central role in describing the dynamics for long time.
  
In fact the Cauchy problem associated to \eqref{eq:wave}
has been widely studied on the \emph{Euclidean space}, where dispersive properties of the linear flow
may lead even to global existence results. 
We mention for instance \cite{shatah1982, Sideris1983, Georgiev2010} 
%\cite{} \red{Shatah, gieorgiev, sideris , forse ikeda}
 and reference therein.
 We stress the fact that the choice of the nonlinearity plays a key role also in the dispersive case
 leading to possible ill-posedness of the problem, 
 as it is  shown in the counterexample by Klainerman-Majda in \cite{KlainermanMadja}.

\smallskip
On compact manifolds instead, there is no dispersive effect that could help in the control of Sobolev norms on large time scales (for instance the decay in time of solutions).   On the other hand, the dynamics is governed by the interaction among linear and nonlinear frequencies of oscillations (i.e. the spectrum of the linearized operator) and possible resonant effects.

One of the main feature of the present article is that 
we intend to propose a method that enables one to tackle the problem 
of stability/possible growth of norms of solutions of equation \eqref{eq:wave} 
in a uniform way with respect to the dimension and to the mass parameter. 
This calls for a comment. 
Of course, the one dimensional Klein Gordon (KG) equation (i.e. $\mathtt{m}\neq 0$) 
is very special because it enjoys the possibility of being attacked 
through dynamical systems techniques coming from perturbations theory, 
by means of a Birkhoff Normal Form  procedure (BNF in the following). 
Inspired by that, in this peculiar case, we then considered to be suitable to combine 
a BNF together with energy-like methods, 
in order to get both a longtime stability result \emph{plus} 
the information about the possible growth of
a whole scale of norms beyond the "stable one". 
In order to give a systematic and clear presentation 
of the results and related comments, 
we shall in this introduction distinguish between 
the one dimensional case with mass, and 
the higher dimensional case no matter what the mass is.

\subsection{Main results}
We now present the main results of the paper, 
starting with the one-dimensional case and then turning to the higher-dimensional case.

\vspace{0.5em}
\noindent
{\bf One dimensional case with $\mathtt{m}\neq0$: Birkhoff normal form approach.}
A fruitful approach to this issue comes from normal forms \`a la Birkhoff, 
where the general idea is to take advantage of the Hamiltonian 
nature of the equation and iteratively perform suitable 
change of coordinates that aims at removing the non-resonant terms from the 
nonlinearity up to the highest possible order.  
A crucial point in the implementation of this procedure 
is the derivation of sharp quantitative lower bounds for the 
so-called \emph{small divisors} that naturally arise in the construction
and that come in this form
\begin{equation}\label{ondina}
\omega_{j_1} \pm \omega_{j_2} \pm \cdots \pm \omega_{j_N}\,,
\qquad j_1,\ldots, j_N\in\Z^d\,, N\in\N\,,
\end{equation}
where the $\omega_{j_i}$ are the linear frequencies of oscillations mentioned above. 
The number $N$ of interactions grows with the steps of iteration.
To provide quantitative bounds from below for the above quantities for any $N$ 
is a hard issue and requires the presence of parameters 
in order to modulate the frequencies and 
avoid exact zeros of equation \eqref{ondina}. 
For the Klein-Gordon equation the frequencies are given 
by $\omega_j = \sqrt{|j|^2 + \tm}$ and the ''mass'' parameter allows to 
escape from exact zeros of the small divisors. Nevertheless, 
such quantities could still accumulate to zero very fast and 
create dangerous loss of regularity along the Birkhoff procedure.
Up to eventually reordering \eqref{ondina}, in order 
to overcome small divisors one typically imposes \emph{Diophantine conditions}
of the following  form:
\begin{equation}\label{seconda}
\left|\sum_{i=1}^{\ell}\omega_{j_i}-\sum_{k=\ell+1}^{N}\omega_{j_k}\right|
\geq (\max_{3}\left\{|j_1|,...,|j_N|\right\})^{-\tau} \gamma\,,
\end{equation}
\emph{except} in the case
\begin{equation}\label{nonresForte}
N\;\;{\rm even},\;\; \ell=\frac{N}{2}\;\;\;{\rm and\,(up\,to\,permutations)}\;\;\;
j_i= j_{i+\tfrac{N}{2}}\,,
\end{equation}
for some fixed $\gamma,\tau>0$ and 
where $ \max_{3}\left\{|j_1|,...,|j_N|\right\} $ denotes the third
largest number among $|j_1|,...,|j_N|$.

The strategy described above has been firstly successfully exploited in  one space dimension.
We refer for instance to  the seminal works \cite{Bambusi:2003} for the Klein-Gordon equation, 
and \cite{Bambusi-Grebert:2006}, where
polynomial bounds for a wide class of {\it tame-modulus} PDEs are proved. 
%More precisely, it is shown that for any $\suca\gg 1$ there exists $p(\suca) $ (tending to infinity as $\suca\to \infty$) such that for all $p\ge p(\suca)$ and all $\delta-$small initial data in $H^p$ one has 
%$T\ge C(\suca,p)\delta^{-\suca}$, provided $\delta<\delta_0(\suca,p)$.
We also refer to
 \cite{DS0},\cite{DS},\cite{BDGS}, and to the more recent results
(where nonlinearities may contain derivatives)
  \cite{Yuan-Zhang},\cite{Delort-2009},\cite{Delort-2015},\cite{Berti-Delort},\cite{FI}. 
%\red{aggiornare la lista o ridurla}.
All the papers mentioned above have in common that the spectrum of
the linearized problem at zero has “good separation” properties and estimates like 
\eqref{seconda} are achievable.
In higher dimension this is no more the case and one is able to prove only 
weaker lower bounds that, a priori, cause losses of derivatives that obstruct the 
Birkhoff normal form procedure.
However, some ``partial result'' (few steps of normal forms and polynomial time of stability 
$O(\delta^{-p})$ with  some fixed $p$) 
on higher dimensional manifolds are still available.
In this line of thoughts, we mention for instance
 \cite{Delort:2009vn}, \cite{FGI20} on the Klein-Gordon equation, \cite{Imekraz2016}, \cite{BFGI2021}
 on the Beam equation in high space dimension,
 \cite{FM2022} on the Schr\"odinger on generic tori, 
 \cite{HIT2016}, \cite{IT2017},
 \cite{ID2019}, \cite{BFP2022}, \cite{BFF2021} on the water waves equation
 (see also \cite{FIM2022} on a different fluid model). 
As a matter of fact, recently in \cite{BambuFeoMont, BambuFeoLanMonz} the authors prove ``almost global" type of results on higher dimensional compact manifolds, i.e. time of stability of order $\delta^{-N}$ for any $N$, under suitable assumptions, 
 for various models of PDEs involving free parameters. 
Note however that the latter results strongly rely on the super-linear asymptotic of the frequencies of oscillations, that is $\omega_{j}\sim |j|^{\alpha}$ with $\alpha>1$, situation that does not occur in the Klein-Gordon/wave equation, where $\omega_{j}\sim|j|$.
For results regarding the stability of \emph{low} norms for the 
Klein-Gordon equation in high dimensions we quote
\cite{BFGrelongwave2020, Xiang2025}.

 Note that all the results mentioned above regard polynomial stability times in Sobolev spaces. 
Passing from polynomial estimates to exponential-type ones 
is not trivial and it is related to the regularity of initial data. Concerning 
exponential type stability times for analytic or Gevrey initial data see
 \cite{Faou-Grebert:2013, Cong, BMP:CMP, FeoMass:Beam,CongLiuWang}  and reference therein.
 While the first result on exponential long time stability in Sobolev regularity has been achieved in 
 \cite{BMP:CMP}, see also \cite{FeoMass:Beam, CongMiShi22}.
Inspired by the optimization result of Biasco-Massetti-Procesi and Feola-Massetti \cite{BMP:CMP,FeoMass:Beam} for Sobolev initial data, in the present article we prove
exponential long time stability for equation \eqref{eq:wave} 
with $\mathtt{m}\neq 0$ on $\mathbb{T}$. In addition to that, we are able to provide a priori 
bounds on the possible growth of higher Sobolev norms.
We shall clarify and discuss this latter point in the next paragraphs. Before going there, we state the following result.

\begin{theorem}{\bf (Exponential stability for Klein-Gordon and growth of high norms on $\T$).}\label{teonuovo}
Let $d=1$ and fix any $0<\gamma<1$. 
There is a positive measure set $\mathfrak{M}_{\gamma}\subset[1,2]$
with $\meas([1,2]\setminus\mathfrak{M}_{\gamma})=O(\gamma)$, 
 an absolute constant $\mathtt{c}>0$ 
such that for any $\mathtt{m}\in \mathfrak{M}_{\gamma}$
the following holds.
Fix 
\[
0< \delta\leq \gamma^{\tb}\,,\qquad \mathtt{b}:=24\mathtt{c}^{2}\big[2^6 (36)^{2}\big]^{5/3}\,,
\]
and let
\begin{equation}\label{caramellina1}
p=p(\delta):=1+\left(\frac{1}{24\mathtt{c}^2\ln(1/\gamma)} \ln\big(\frac{1}{\delta}\big)\right)^{5/9}\,.
\end{equation}
Then, 
for any $s\geq p+1$ and any initial conditions
$(\psi_0,\psi_1)\in H^{s+\frac{1}{2}}(\T^{d};\R)\times H^{s-\frac{1}{2}}(\T^{d};\R)$ such that
\begin{equation}\label{main:smallcondSob}
\|\psi_0\|_{H^{p+\frac{1}{2}}}+\|\psi_1\|_{H^{p-\frac{1}{2}}}
+2^{p}(\|\psi_0\|_{H^{\frac{1}{2}}}+\|\psi_1\|_{H^{-\frac{1}{2}}})\leq \frac{\delta}{4}\,,
\end{equation}
there exists  a time $T=T(\delta)>0$ and a unique solution 
$\psi(t)$ of \eqref{eq:wave} with initial condition 
$\psi(0)=\psi_0$, $\psi_{t}(0)=\psi_1$ such that 
\begin{equation}\label{patata1}
\begin{aligned}
&\psi\in C^{0}\big([0,T]; H^{s+\frac{1}{2}}(\T;\R)\big)\cap 
C^{1}\big([0,T]; H^{s-\frac{1}{2}}(\T;\R)\big)\,,
\\& T\geq T_{good}:=
\frac{1}{\delta}
\exp\Big\{
\frac{\mathtt{c}(\ln(1/\gamma))^{-1/9})}{(24\mathtt{c}^2)^{10/9}}(\ln(1/\delta) )^{1+\frac{1}{9}}
\Big\}\,.
\end{aligned}
\end{equation} 
Moreover one has
\begin{equation}\label{stimaIncrediblelowNORM}
\|\psi(t)\|_{H^{p+\frac{1}{2}}}+\|\partial_t\psi(t)\|_{H^{p-\frac{1}{2}}}+2^{p}(\|\psi(t)\|_{H^{\frac{1}{2}}}
+\|\partial_t\psi(t)\|_{H^{-\frac{1}{2}}})\lesssim \delta\,,
\end{equation}
and
\begin{equation}\label{stimaIncredibleBis}
 \|\psi(t)\|_{H^{s+\frac{1}{2}}} + 2^{s}\|\psi(t)\|_{L^{\frac{1}{2}}}
\lesssim_{s}\delta\,
 \Big[
 1+\big(
C_s t
 \big)^{s-p}
 \Big]\,,
\end{equation}
for any $t\in[0,T]$ and for some $C_s>0$.
\end{theorem}
The key point in getting the result above is to understand precisely the mutual dependence between
the regularity $p$ of the initial data and their size $\delta$, 
which is encoded in \eqref{caramellina1}. The crucial midstep 
to get \eqref{caramellina1}-\eqref{patata1} is the following theorem.

\begin{theorem}[\bf Sobolev stability of low norms]\label{teoBNFwave}
Let
$p>2^{6}(36)^{2}+1$ and fix any $0<\gamma<1$. 
There is a positive measure set $\mathfrak{M}_{\gamma}\subset[1,2]$
with $\meas([1,2]\setminus\mathfrak{M}_{\gamma})=O(\gamma)$, 
 an absolute constant $\mathtt{c}>0$ 
such that for any $\mathtt{m}\in \mathfrak{M}_\gamma$
the following holds.
For any 
\begin{equation}\label{smalldelta1Sob}
0<\delta\leq\gamma^{\mathtt{c} p}\,,
\end{equation}
and any initial datum {$(\psi_0,\psi_1)\in H^{p+1/2}\times H^{p-1/2}$}
satisfying the smallness condition \eqref{main:smallcondSob}
the solution $(\psi(t), \partial_t\psi(t))$ of \eqref{eq:wave} with 
$(\psi(0), \partial_t\psi(0))=(\psi_0,\psi_1)$ exists and satisfies
\begin{equation}\label{boundsol1Sob}
\|\psi(t)\|_{H^{p+\frac{1}{2}}}+\|\partial_t\psi(t)\|_{H^{p-\frac{1}{2}}}
+2^{p}(\|\psi(t)\|_{H^{\frac{1}{2}}}
+\|\partial_t\psi(t)\|_{H^{-\frac{1}{2}}})\leq 8\delta\,, 
\qquad \forall\;  |t|\leq T_0\,,
\end{equation}
with 
\begin{equation}\label{longtime1Sob}
T_0\geq
\frac{\gamma^{cp^2}}{\delta}\left(\frac{1}{\delta}\right)^{\frac{1}{c}(p-1)^{1/5}}\,.
\end{equation}
\end{theorem}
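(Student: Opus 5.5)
We will prove Theorem \ref{teoBNFwave} by a rational Birkhoff normal form carried to a number of steps $r$ that is optimized against the regularity $p$, in the spirit of \cite{BMP:CMP,FeoMass:Beam}.

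\emph{Hamiltonian setting and norm.} Set $\omega_j=\sqrt{j^2+\tm}$ and pass to complex coordinates $u=\frac{1}{\sqrt2}(\omega^{1/2}\psi+\ii\omega^{-1/2}\psi_t)$. Equation \eqref{eq:wave} then becomes Hamiltonian with $H=\sum_j\omega_j|u_j|^2+P$, where $P$ is homogeneous of degree $q+2$ and has zero-momentum monomials. The quantity in \eqref{main:smallcondSob} is equivalent to the weighted norm
\[
\|u\|_{p}^2:=\sum_j\big(\langle j\rangle^{2p}+2^{2p}\big)|u_j|^2 .
\]
The extra factor $2^{p}$ on the $L^2$ part is what later controls the combinatorial factorials.

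\emph{Weak Diophantine conditions.} We use conditions of the form \eqref{seconda}, with $\tau$ and $\gamma$ depending on $N$ (for instance $\gamma_N=\gamma N^{-cN}$ and $\tau_N\sim N^{c}$). We must show that for $\tm$ outside a set of measure $O(\gamma)$ every small divisor that is not of the integrable form \eqref{nonresForte} satisfies such a lower bound, uniformly in $N$. The tool is the standard Bambusi measure argument: nondegeneracy in $\tm$ via Vandermonde determinants of derivatives $\partial_\tm^k\omega_j$, combined with the observation that in one dimension the zero-momentum condition forces the two largest indices to be nearly determined by the others. The precise $N$-dependence of the constants must then be tracked explicitly.

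\emph{Iterative normal form.} Work on the class of homogeneous Hamiltonians with tame modulus estimates in $\|\cdot\|_p$. At step $k\le r$ we remove all non-resonant monomials of degree $k+2$ by a Lie transform generated by the solution of the homological equation. Each step loses at most $\tau_N$ derivatives, but only on the third-largest index. Because the factor $\langle j\rangle^{2p}$ concentrates on the two largest indices, this loss is absorbed by the Sobolev weight. The price is a constant of size roughly $\gamma^{-1}(\text{const}\,k)^{ck}$ per step. This holds provided $p\gtrsim\tau_r$, which is the source of the threshold $p>2^6(36)^2+1$ and of the relation $r\sim (p-1)^{1/5}$. Combining the steps gives a symplectic change of variables $\Phi$ that is close to the identity in $\|\cdot\|_p$ on a ball of radius $\sim\delta$. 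In the new variables
\[
H\circ\Phi=H_0+Z+R,
\]
where $Z$ Poisson-commutes with every $|u_j|^2$ (by \eqref{nonresForte}), and the remainder obeys $\|X_R\|_p\lesssim \gamma^{-cr}C^{r}r^{cr}\delta^{r+1}$.

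\emph{Bootstrap and choice of $r$.} Since $Z$ preserves $\|u\|_p$, we have $\frac{d}{dt}\|u\|_p^2\lesssim\|u\|_p\|X_R\|_p$. The norm therefore stays below $8\delta$ as long as $|t|\lesssim \delta^{-r}\gamma^{cr}(Cr)^{-cr}$. We take $r\sim (p-1)^{1/5}$, using \eqref{smalldelta1Sob} to absorb the factorial growth into $\delta$: $r^{cr}\le\delta^{-r/2}$ when $\delta\le\gamma^{cp}$. This yields \eqref{longtime1Sob} with $\gamma^{cp^2}$ collecting the $\gamma$-losses. Finally, pulling back through $\Phi$ gives \eqref{boundsol1Sob}.

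The main obstacle is the measure estimate for the small divisors with explicit, only mildly growing dependence of $\gamma_N,\tau_N$ on $N$, together with the corresponding sharp count of combinatorial constants in the tame estimates. Together these dictate the exponent $1/5$ and the threshold on $p$. For the measure estimate I would first try Bambusi's lemma on $\tm$-derivatives of $\omega_j$, combined with the one-dimensional observation that a divisor with a large index is large unless the two top indices cancel.
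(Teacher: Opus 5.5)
Your overall architecture is the paper's. You run a Birkhoff normal form in the norm with weights $\max\{2,|j|\}^{2p}$ for $\tK\sim(p-1)^{1/5}$ steps and pay for the small-divisor losses by shifting the regularity index. You then take $r_0=2\delta$ and close with a bootstrap in which the resonant part does not move $\|u\|_p$.

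\textbf{The gap: the non-resonance condition is false as stated.} On $\T$ one has $\omega_j=\omega_{-j}$. So there are momentum-preserving divisors that vanish identically in $\tm$ without being of the form \eqref{nonresForte}. For instance, $u_1u_2u_{-3}\,\bar u_{-1}\bar u_{-2}\bar u_{3}$ has zero momentum and divisor $\omega_1+\omega_2+\omega_3-\omega_1-\omega_2-\omega_3\equiv0$; this monomial already appears in the nonlinearity when $q=4$.
\begin{itemize}
\item Hence \eqref{seconda} outside \eqref{nonresForte} holds for no value of $\tm$. Your Vandermonde argument only gives nondegeneracy when $\ell_j+\ell_{-j}\neq0$ for some $j$.
\item All monomials with $\ell_j+\ell_{-j}=0$ for every $j$ must therefore stay in the normal form. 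This is the paper's resonant set $\mathtt{R}$; see Remark \ref{rmk:resonant set}.
\item Such a $Z$ does \emph{not} Poisson-commute with each $|u_j|^2$, only with $|u_j|^2+|u_{-j}|^2$. Your bootstrap survives only because your weights are even in $j$, so $Z$ still preserves $\|u\|_p$. The paper checks the hypothesis of Lemma \ref{tempotempo} exactly this way, in Lemma \ref{lem:kernel} and Remark \ref{rmk:kernel}.
\end{itemize}
You need this change both in the measure estimate and in the conservation step.

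\textbf{Your parameter bookkeeping is too optimistic, though this does not break the argument.}
\begin{itemize}
\item The one-parameter sublevel-set argument gives bounds of the form $(\epsilon/d)^{1/N}$, so it cannot give $\gamma_N=\gamma N^{-cN}$. The exponent of $\gamma$ must grow polynomially in $N$.
\item The paper does not prove its small-divisor bound here; it imports it from \cite{FeoMass:misurewave} as Theorem \ref{thm:mainVERO}. That bound carries a factor $\gamma^{\tau^2(\ell)}$ with $\tau(\ell)=\td(\ell)(\td(\ell)+4)$. Its loss falls on every index except the two largest, not just the third, and is absorbed in Lemma \ref{stimaSob:lem} via momentum conservation.
\item The cost of one step is therefore $J_0=\gamma^{-\mathtt{C}\tN^4}e^{\mathtt{C}\zeta}$ with shift $\zeta\sim\tN^4$ (Proposition \ref{shulalemma}), not $\gamma^{-1}(Ck)^{ck}$.
\item The exponent $1/5$ comes from the \emph{accumulated} shift $\sum_{k\le\tK}\zeta_k\sim\tK^5\le p-1$, not from $p\gtrsim\tau_r$ alone.
\end{itemize}

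\textbf{Where the hypotheses enter.} The hypothesis $\delta\le\gamma^{\mathtt{c}p}$ is not used to absorb factorials into $\delta$. It is used to make $r_0=2\delta$ satisfy the smallness condition of the normal form theorem, with $J_{\tK}\sim\gamma^{-\mathtt{C}\tK^4}e^{\mathtt{C}\tK^5}$. The compounded factor $J_{\tK}^{\tK}$ is of size $\gamma^{-C\tK^6}$ with $\tK^6\le p^2$, which is exactly the prefactor $\gamma^{cp^2}$ in \eqref{longtime1Sob}.

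With these two corrections your outline coincides with the paper's proof.
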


\vspace{0.5em}
\noindent
{\bf Higher dimensional case.} 
As briefly mentioned above, a major point of the present paper is to propose a flexible approach
that allow to deal with  \eqref{eq:wave} 
posed on $\T^{d}$ for \emph{any} dimension $d>1$
and \emph{any} fixed mass parameter 
$\mathtt{m}\geq0$, and that, as a consequence, does  not rely at all on 
normal forms techniques and small divisors bounds.
%
%Therefore, we study the long time behavior for \eqref{eq:wave} 
%for any $d>1$ and any fixed $\mathtt{m}\geq0$ by exploiting 
%the following paradigm.
 In this resonant context (i.e. when no free parameters allow to prove bounds like 
 \eqref{seconda}), we shall introduce a different paradigm.
Inspired by the observation in \cite[Remark $1.4$]{BMP:CMP} 
 (see also \cite{CongMiShi22}),  we shall deeply exploit 
the \emph{tameness} property of the Sobolev 
norm $\|\cdot\|_{H^{s}}$ in \eqref{spaziodiSobolev}.  
%\red{Però la tameness ce l'ha quella col japjap...forse parlare di equivalent norm?}
In fact, this tameness property will be encoded through 
an equivalent Sobolev norm introduced in Section \ref{sec:equivnorm}. 
This choice of norm makes it possible 
to exploit more effectively the rescaling from higher to lower Sobolev indices,
which in turn provides sharper control of the size of the nonlinearity.
As a consequence, for initial data carrying only a 
small amount of energy at low frequencies (for instance, with sufficiently small $L^2$-norm),
 we prove the stability of the $\|\cdot\|_{H^{s_1}}$-norm, $s_1>d/2$, over 
 a time scale which is exponentially long in the regularity parameter $s_1$.
 Furthermore, by exploiting the \emph{smoothing} properties of the nonlinearity in \eqref{eq:wave}
 (in appropriate complex variables)
 and combining sharp tame estimates with an improved Gr\"onwall 
 lemma, we obtain a control from above of the \emph{high} Sobolev norm $\|\cdot\|_{H^{s}}$, $s>s_1$,
proving that it could grow at most polynomially in $t$.
This is the content of the following result.
\begin{theorem}[\bf{Long time estimates for Klein-Gordon on $\T^d$}]\label{thm:mainNOBNF}
Consider equation \eqref{eq:wave} for $d\geq1$, $q\geq1$ and $\mathtt{m}>0$. 
Fix $0<\delta\ll1$, $s_0>d/2$,  consider $s_1\geq s_0$.
There exist   constants $ \mathtt{M}>0$ and $\mathtt{c}=\mathtt{c}_{\tm}$ such that
for any $s\geq s_1+1 $ and any 
$(\psi_0,\phi_0)\in H^{s+\frac{1}{2}}(\T^{d})\times H^{s-\frac{1}{2}}(\T^{d})$ such that
\begin{equation}\label{piccolezza dati trisNOBNF}
\|\psi_0\|_{H^{s_1+1/2}}+\|\phi_0\|_{H^{s_1-1/2}}
+
(2\mathtt{M})^{s_1}\big(\|\psi_0\|_{H^{1/2}}+\|\phi_0\|_{H^{-1/2}}\big)\leq \delta\,,
\end{equation}
%\begin{equation}\label{piccolezza dati trisNOBNF}
%\|\psi_0\|_{H^{s_1+1/2}}+\|\phi_0\|_{H^{s_1-1/2}}\leq \delta\,,
%\qquad 
%(2\mathtt{M})^{s_1}\big(\|\psi_0\|_{H^{1/2}}+\|\phi_0\|_{H^{-1/2}}\big)\leq \delta\,,
%\end{equation}
the following holds.
There exist a time $T=T(\delta,\mathtt{M},s_1)>0$
and a unique solution $\psi(t)$ of \eqref{eq:wave} 
with initial condition $\psi(0)=\psi_0$, $\psi_{t}(0)=\phi_0$ 
such that
\begin{equation}\label{patata1dimd}
\begin{aligned}
&\psi\in C^{0}\big([0,T]; H^{s+1/2}(\T^{d};\C)\big)\cap 
C^{1}\big([0,T]; H^{s-1/2}(\T^{d};\C)\big)\,,
\\& T\geq T_{good}:=\frac{1}{\delta^{q}}\frac{2^{q(s_1-s_0)}}{\mathtt{c}_{\tm}\mathtt{M}^{qs_0}}\,.
\end{aligned}
\end{equation} 
Moreover one has
\begin{equation}\label{stimaIncredibleBisNOBNF}
\|\psi(t)\|_{H^{s+\frac{1}{2}}}+\|\partial_{t}\psi(t)\|_{H^{s-\frac{1}{2}}}
\lesssim_{s}
\big(\|\psi_0\|_{H^{s+\frac{1}{2}}}+\|\phi_0\|_{H^{s-\frac{1}{2}}}\big)
 \Big[
 1+\Big(
 \mathtt{M}^{q(s-s_1)}
 \frac{t}{T_{good}}
% \frac{\mathtt{c}_{\tm}\mathtt{M}^{q s_0}\delta^{q}}{2^{q(s_1-s_0)}}\, t
 \Big)^{s-s_1}
 \Big]\,,
\end{equation}
for any $t\in[0,T]$ and,
as a consequence, the bound
\begin{equation}\label{stimaIncredible}
\sup_{t\in[0,\widehat{T}]}
\big(\|\psi(t)\|_{H^{s+\frac{1}{2}}}+\|\partial_{t}\psi(t)\|_{H^{s-\frac{1}{2}}}\big)
\lesssim_{s}
\big(\|\psi_0\|_{H^{s+\frac{1}{2}}}+\|\phi_0\|_{H^{s-\frac{1}{2}}}\big)
(1+\mathtt{M}^{q(s-s_1)^2}) \,,
\end{equation}
for any $ \widehat{T}\leq T_{good}$.
\end{theorem}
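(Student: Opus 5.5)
We pass to complex variables $u=\Lambda^{1/2}\psi+\ii\Lambda^{-1/2}\partial_t\psi$, with $\Lambda:=(\mathtt{m}-\Delta)^{1/2}$. In these variables \eqref{eq:wave} becomes
\[
\partial_t u=-\ii\Lambda u-\ii\Lambda^{-1/2}\Big(\big(\Lambda^{-1/2}\tfrac{u+\bar u}{2}\big)^{q+1}\Big),
\]
and $\|u\|_{H^{s}}$ is equivalent to $\|\psi\|_{H^{s+1/2}}+\|\partial_t\psi\|_{H^{s-1/2}}$. The nonlinearity $F(u)$ is then smoothing of order one: it gains $1/2$ derivative on each $\Lambda^{-1/2}$.

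We introduce the equivalent weighted norm of Section \ref{sec:equivnorm},
\[
\nnorm{u}_{s_1}:=\|u\|_{H^{s_1}}+(2\mathtt{M})^{s_1}\|u\|_{L^2}.
\]
Here $\mathtt{M}$ is a frequency cutoff. Splitting each Fourier coefficient into modes $|j|\le \mathtt{M}$ and $|j|>\mathtt{M}$, the high modes are controlled by $\mathtt{M}^{-(s_1-s_0)}\|u\|_{H^{s_1}}$ and the low modes by $\mathtt{M}^{s_0}\|u\|_{L^2}$. This gives the rescaled tame estimate
\[
\|u\|_{H^{s_0}}\lesssim \mathtt{M}^{s_0}2^{-(s_1-s_0)}\nnorm{u}_{s_1}.
\]
Combining it with the standard tame product estimate $\|fg\|_{H^{s}}\lesssim \|f\|_{H^s}\|g\|_{H^{s_0}}+\|f\|_{H^{s_0}}\|g\|_{H^s}$ yields
\[
\nnorm{F(u)}_{s_1}\le \mathtt{c}_{\tm}\big(\mathtt{M}^{s_0}2^{-(s_1-s_0)}\big)^q\nnorm{u}_{s_1}^{q+1},
\]
where the $L^2$ part of $F(u)$ is treated the same way. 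Since $\Lambda$ is skew in every $H^s$, an energy estimate gives
\[
\tfrac{d}{dt}\nnorm{u}_{s_1}\le \mathtt{c}_{\tm}\mathtt{M}^{qs_0}2^{-q(s_1-s_0)}\nnorm{u}_{s_1}^{q+1}.
\]
A bootstrap then keeps $\nnorm{u(t)}_{s_1}\le 2\delta$ up to $T_{good}$ in \eqref{patata1dimd}. Local well-posedness in $H^s$ and persistence of regularity, which holds because $\|u\|_{H^{s_0}}$ stays bounded, give existence and uniqueness on $[0,T]$.

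For the high norm we use the tame estimate again:
\[
\tfrac{d}{dt}\|u\|_{H^s}\lesssim_s \|u\|_{H^{s_0}}^q\|u\|_{H^{s-1}}.
\]
The smoothing gives $s-1$ rather than $s$ on the right. We interpolate $\|u\|_{H^{s-1}}\le \|u\|_{H^{s_1}}^{1/(s-s_1)}\|u\|_{H^s}^{1-1/(s-s_1)}$, using $s-1\ge s_1$. Writing $y=\|u\|_{H^s}$, this gives
\[
\dot y\le K\,\delta^{1/(s-s_1)}y^{1-1/(s-s_1)}, \qquad K\sim \mathtt{M}^{qs_0}2^{-q(s_1-s_0)}\delta^q\sim T_{good}^{-1}.
\]
This is a sublinear (improved) Gr\"onwall inequality. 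Integrating $\frac{d}{dt}y^{1/(s-s_1)}$ gives $y(t)\lesssim (y(0)^{1/(s-s_1)}+Ct/T_{good})^{s-s_1}$, which is the polynomial bound \eqref{stimaIncredibleBisNOBNF}. Evaluating it at $t\le T_{good}$ gives \eqref{stimaIncredible}.

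The main obstacle is the precise bookkeeping of the constants. In the weighted norm, the interpolation between $H^{s_1}$ and $H^s$ produces the factor $\mathtt{M}^{q(s-s_1)}$ in \eqref{stimaIncredibleBisNOBNF}. One must check that $\|u\|_{H^{s_0}}^q$ is bounded by $\delta^q\mathtt{M}^{qs_0}2^{-q(s_1-s_0)}$ uniformly in time, and that the lower-order terms of the tame estimate close without any loss in $s$.
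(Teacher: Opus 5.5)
Your architecture is the paper's: complex variables, a weight equivalent (by Lemma~\ref{lemmaequivalenza}-(iii)) to $\|\cdot\|_{s_1,R}$ with $R=2\mathtt M$, a bootstrap on the low norm, and then smoothing, interpolation and the sublinear Gr\"onwall Lemma~\ref{drago}-(ii) for the high norm. Running the high-norm step in plain $H^s$ instead of the $R$-norm is a harmless variant. You flag constant bookkeeping as the main obstacle, and it is not cosmetic here: the chain you wrote fails at the one step that makes the lifespan exponential in $s_1$.

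Your splitting actually gives $\|u\|_{H^{s_0}}\le 2\mathtt M^{-(s_1-s_0)}\nnorm{u}_{s_1}$. You then weaken this to $\mathtt M^{s_0}2^{-(s_1-s_0)}\nnorm{u}_{s_1}$, discarding a factor $\mathtt M^{s_0}(\mathtt M/2)^{s_1-s_0}$. You also treat the constant of the tame product estimate at regularity $s_1$ as if it were independent of $s_1$, and it is not. For $v=e^{\ii x_1}$ one has $\|v^{q+1}\|_{H^{s_1}}=(q+1)^{s_1}$ while $\|v\|_{H^{s_0}}^{q}\|v\|_{H^{s_1}}=1$, so that constant is at least $(q+1)^{s_1}$, and the standard proof only bounds it by something of the form $C_0^{qs_1}$. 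Your combination therefore gives at best $(q+1)^{s_1}\mathtt M^{qs_0}2^{-q(s_1-s_0)}$, which for $q=1$ does not decay in $s_1$ at all. Hence the bound $\nnorm{F(u)}_{s_1}\le\mathtt c_{\tm}(\mathtt M^{s_0}2^{-(s_1-s_0)})^q\nnorm{u}^{q+1}_{s_1}$ with $\mathtt c_{\tm}$ independent of $s_1$ does not follow from what you wrote, and neither does $T_{good}$.

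The missing point is that $\mathtt M$ is not a free frequency cutoff: it must dominate the base of the exponential tame constant. Keep the sharp gain $\mathtt M^{-q(s_1-s_0)}$ and take $\mathtt M\ge 2C_0$. Then $C_0^{qs_1}\mathtt M^{-q(s_1-s_0)}=C_0^{qs_0}(C_0/\mathtt M)^{q(s_1-s_0)}\le C_0^{qs_0}2^{-q(s_1-s_0)}$. This is exactly how the paper closes. In \eqref{confort} the constant is $\mathtt M^{qs}$, and choosing $R=2\mathtt M$ with the \emph{same} $\mathtt M$ turns it into $\mathtt M^{qs_0}2^{-q(s_1-s_0)}$ in \eqref{confort2}.

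The same bookkeeping in the high-norm step is what yields the factor $\mathtt M^{q(s-s_1)}$ in \eqref{stimaIncredibleBisNOBNF}. There a tame constant $C_0^{qs}$ at level $s-\frac12$ is measured against the gain $\mathtt M^{-q(s_1-s_0)}$ on $\|u\|_{H^{s_0}}^q$. The factor does not come from the interpolation, which by H\"older has constant $1$.

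Finally, your integrated bound has lost a factor $\delta^{1/(s-s_1)}$ in front of $Ct/T_{good}$. To get the multiplicative form with $\|\psi_0\|_{H^{s+1/2}}+\|\phi_0\|_{H^{s-1/2}}$ in front, run the bootstrap with $\nnorm{u_0}_{s_1}$ in place of $\delta$. Then use $\nnorm{u_0}_{s_1}\le(1+(2\mathtt M)^{s_1})\|u_0\|_{H^{s}}$ and absorb the power of $2\mathtt M$ into $\lesssim_s$.
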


%\vspace{3em}
%
%\noindent
%The general purpose of this work is to to provide results of stability of the solutions of 
%\eqref{eq:wave} over \emph{exponentially long} time scales, 
%both in the case of $\mathtt{m} > 0$ and $\mathtt{m}=0$, and in 
%any space dimension. 
%With that in mind, of course, the $1$-dimensional case with $\mathtt{m}>0$ 
%may benefit of normal form techniques tailored for this specific model. 
%Therefore, in this specific case, we will take full advantage 
%of the dynamical system approach of Birkhoff Normal Form (BNF in the sequel) 
%combined with energy estimates, in order to give \emph{both} 
%a stability result within an exponentially long time $T_{good}$ \emph{and} 
%the possible growth of a scale of norms of the solutions. 

The last result is about the wave equation in the \emph{massless} case 
(i.e. equation  \eqref{eq:wave} with $\mathtt{m}=0$).

\begin{theorem}{\bf (Long time estimates for the wave equation on $\T^d$).}\label{thm:mainNOBNF2}
Consider equation \eqref{eq:wave} for $d\geq1$, $q\geq1$ and $\mathtt{m}=0$. 
Fix $0<\delta\ll1$, $s_0>d/2$,  consider $s_1\geq s_0$.
There exist   constants $ \mathtt{M}>0$ and $\mathtt{c}\geq 1$ such that
for any $s\geq s_1+1 $ and any 
$(\breve{\psi},\breve{v})\in H^{s}(\T^{d})\times H^{s-1}(\T^{d})$ such that
\begin{equation}\label{piccolezza dati trisNOBNF1}
\|\breve{\psi}\|_{H^{s_1}}+\|\breve{v}\|_{H^{s_1-1}}
%\leq \delta\,,
%\qquad 
+(2\mathtt{M})^{s_1}\big(\|\breve{\psi}\|_{L^2}+\|\breve{v}\|_{H^{-1}}\big)\leq \delta\,,
\end{equation}
the following holds.
There exist a time $T=T(\delta,\mathtt{M},s_1)>0$
and a unique solution $\psi(t)$ of \eqref{eq:wave} with $\mathtt{m}=0$ and 
with initial condition $\psi(0)=\breve{\psi}$, $\psi_{t}(0)=\breve{v}$ 
such that
\begin{equation}\label{patata1dimdMASS}
\begin{aligned}
&\psi\in C^{0}\big([0,T]; H^{s}(\T^{d};\C)\big)\cap 
C^{1}\big([0,T]; H^{s-1}(\T^{d};\C)\big)\,,
\\& T\geq T_{good}:=\frac{2^{\frac{1}{2}(s_1-s_0)}}{\mathtt{M}^{qs_0} 2^{q+3}\mathtt{c}^{q}}\,.
%\frac{1}{\delta^{q}}\frac{2^{q(s_1-s_0)}}{\mathtt{c}_{\tm}\mathtt{M}^{qs_0}}\,.
\end{aligned}
\end{equation} 
Moreover one has
\begin{equation}\label{stimaIncredibleBisNOBNFMASS}
\|\psi(t)\|_{H^{s}}+\|\partial_{t}\psi(t)\|_{H^{s-1}}
\lesssim_{s}
\big(\|\psi_0\|_{H^{s}}+\|\phi_0\|_{H^{s-1}}\big)
 \Big[
 1+\mathtt{M}^{3qs_0(s-s_1-s_0)^2} \left(\frac{T}{T_{good}}\right)^{s-s_1+3}
 \Big]\,,
\end{equation}
for any $t\in[0,T]$ and,
as a consequence, the bound
\begin{equation}\label{stimaIncredibleMASS}
\sup_{t\in[0,\widehat{T}]}\big(
\|\psi(t)\|_{H^{s}}+\|\partial_{t}\psi(t)\|_{H^{s-1}}\big)
\lesssim_{s}
\big(\|\psi_0\|_{H^{s}}+\|\phi_0\|_{H^{s-1}}\big)
(1+
\mathtt{M}^{3qs_0(s-s_1-s_0)^2} ) \,,
\end{equation}
for any $ \widehat{T}\leq T_{good}$.
\end{theorem}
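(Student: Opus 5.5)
The plan follows the Klein--Gordon case of Theorem~\ref{thm:mainNOBNF}; the new difficulty is the zero mode, since $\mathtt{m}=0$ makes the operator $\sqrt{-\Delta}$ degenerate at $j=0$.

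\textbf{Step 1: first-order system.} Write \eqref{eq:wave} with $\mathtt{m}=0$ as a system in $(\psi,v)=(\psi,\partial_t\psi)$. The relevant energy is $\|\psi\|_{H^{s}}+\|v\|_{H^{s-1}}$, measured with $\langle j\rangle=\max\{1,|j|\}$. I split $\psi=\psi_{0}+\Pi^{\perp}\psi$ into the zero Fourier mode and the rest.
\begin{itemize}
\item On $\Pi^\perp$ I use the complex variable $u=\frac{1}{\sqrt2}(|D|^{1/2}\psi+\ii|D|^{-1/2}v)$. The linear flow is then a unitary Fourier multiplier, so every $H^{s}$-type norm is conserved by it.
\item The zero mode obeys $\ddot\psi_0=-(\psi^{q+1})_0$. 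It is handled directly, at the price of a linear-in-time secular factor, since $\psi_0(t)=\psi_0(0)+t v_0(0)+O(t^2\cdot\text{nonlinear})$.
\end{itemize}
This secular growth is the reason the statement only yields the time scale $T_{good}\sim 2^{\frac12(s_1-s_0)}\mathtt{M}^{-qs_0}$, with no $\delta^{-q}$ gain, and the extra exponent $s-s_1+3$ in \eqref{stimaIncredibleBisNOBNFMASS}.

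\textbf{Step 2: low norm via the equivalent norm.} I use the equivalent tame norm of Section~\ref{sec:equivnorm}, roughly $N_{s_1}(u)=\|u\|_{H^{s_1}}+(2\mathtt{M})^{s_1}\|u\|_{L^2}$. Its key property is an interpolation/rescaling estimate: for $s_0\le s_1$,
\[
\|u\|_{H^{s_0}}\lesssim \mathtt{M}^{s_0}\,2^{-(s_1-s_0)\theta}\,N_{s_1}(u).
\]
This follows by splitting frequencies at $|j|\sim\mathtt{M}$: low modes are bounded by the $L^2$ part and high modes by the $H^{s_1}$ part.

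With the tame product estimate $\|\psi^{q+1}\|_{H^{s_1-1}}\lesssim \|\psi\|_{H^{s_0}}^{q}\|\psi\|_{H^{s_1}}$, I get
\[
\tfrac{d}{dt}N_{s_1}\le \mathtt{c}\,\big(\mathtt{M}^{s_0}2^{-\theta(s_1-s_0)}\big)^{q}(1+t)^{a}N_{s_1}^{q+1},
\]
where the factor $(1+t)^{a}$ comes from the zero mode and from $\||D|^{-1/2}\|$-type losses. A bootstrap with hypothesis $N_{s_1}\le 2\delta$, or rather $\le 2\mathtt{c}$ in scale-invariant form, then closes up to $T_{good}$. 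The constants $2^{q+3}\mathtt{c}^q$ come from this continuity argument. Local well-posedness in $H^{s}\times H^{s-1}$ for $s>d/2+1$ is standard and lets the solution be continued as long as the bound holds.

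\textbf{Step 3: high norm.} I differentiate $\|\psi\|_{H^s}^2+\|v\|_{H^{s-1}}^2$ and use the tame estimate
\[
\|\psi^{q+1}\|_{H^{s-1}}\lesssim \|\psi\|_{H^{s_0}}^{q-1}\|\psi\|_{H^{s_0}}\|\psi\|_{H^{s-1}}.
\]
The nonlinearity gains one derivative, which is the smoothing in these variables. I then interpolate $\|\psi\|_{H^{s-1}}$ between $H^{s_1}$, which is bounded by Step 2, and $H^{s}$:
\[
\|\psi\|_{H^{s-1}}\le\|\psi\|_{H^{s_1}}^{1/(s-s_1)}\|\psi\|_{H^s}^{1-1/(s-s_1)}.
\]
This yields $\dot E\le K(t)E^{1-1/(s-s_1)}$, with $K$ containing the $\mathtt{M}$-powers and the polynomial-in-$t$ weight. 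This is the ``improved Gr\"onwall'' lemma, which integrates to
\[
E(t)^{1/(s-s_1)}\le E(0)^{1/(s-s_1)}+\tfrac{1}{s-s_1}\int_0^tK.
\]
Raising to the power $s-s_1$ gives polynomial growth, and the extra powers of $t$ from the zero mode raise the exponent to $s-s_1+3$. Evaluating at $t\le T_{good}$ gives \eqref{stimaIncredibleMASS}.

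\textbf{Main obstacle.} The main obstacle is the zero mode in Step 2. Without mass, $\|\cdot\|_{H^{-1}}$ and the $L^2$ control of $\psi_0$ are not conserved by the linear flow. The resulting secular growth $t\,v_0$ must be absorbed into the bootstrap, and this is what costs the $\delta^{-q}$ factor. If the naive treatment fails, I would first try working on $\Pi^\perp$ with the exact ODE for the mean, bounding $|\psi_0(t)|\le\delta(1+t)$ and checking that $t\lesssim T_{good}$ keeps this harmless.
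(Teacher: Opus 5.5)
Your skeleton matches the paper's. You split off the mean, use tame estimates with the $2\mathtt{M}$-rescaling gain plus a bootstrap for the low norm, and use smoothing, interpolation and a Bihari-type lemma for the high norm. But the mean is mishandled in both steps, and the mean is exactly where $\mathtt{m}=0$ departs from Theorem \ref{thm:mainNOBNF}.

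\textbf{High norm.} Differentiating $\|\psi\|_{H^s}^2+\|v\|_{H^{s-1}}^2$ does not give $\dot E\le K(t)E^{\lambda}$ with $\lambda=1-\frac{1}{s-s_1}$.
\begin{itemize}
\item At $j=0$ the linear terms do not cancel ($\langle 0\rangle=1$ but $|0|=0$). What you actually get is $\dot E\le |v_0(t)|+C\|\psi\|_{H^{s_0}}^{q}\|\psi\|_{H^{s-1}}$, with an \emph{additive} forcing coming from the free drift $\psi_0\mapsto\psi_0+tv_0$ of the mean.
\item This forcing cannot be pushed into $K(t)E^{\lambda}$ without loss. If you write $|v_0|\le |v_0|^{1-\lambda}E^{\lambda}$ and apply Lemma \ref{drago}-(ii), the secular growth is raised to the power $s-s_1$, producing a term of size $(t/(s-s_1))^{s-s_1}|\breve v_0|$.
\item For data whose energy sits mostly in the mean, this is a relative growth of order $(T_{good}/(s-s_1))^{s-s_1}$ at $t\sim T_{good}\sim 2^{\frac12(s_1-s_0)}$. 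That is exponentially large in $s_1$ and incompatible with \eqref{stimaIncredibleMASS}.
\item For the same reason, powers of $t$ placed inside $K$ come out multiplied by $s-s_1$, so your mechanism cannot produce the exponent $s-s_1+3$.
\end{itemize}
The paper keeps the forcing additive; see \eqref{suits111}: $E_s(t)\le f(t)+\int_0^t g\,E_s^{\lambda}$, with $f(t)=E_s(0)+|\breve v_0|t+K\int_0^t\int_0^\sigma E_{s_1}^{q+1}$ and $g=K E_{s_1}^{q+1-\lambda}$. Since $f$ is not constant, Lemma \ref{drago}-(ii) does not apply directly, so the paper proves Lemma \ref{drago2}. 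Alternatively, since $f$ is nondecreasing, you may replace $f(t)$ by $f(T)$ on $[0,T]$ and then use Lemma \ref{drago}-(ii). Because $f$ enters only additively, it contributes a few extra powers of $T$ rather than being raised to the power $s-s_1$.

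\textbf{Low norm.} Your fallback bound $|\psi_0(t)|\le\delta(1+t)$ destroys the gain. For $t$ up to $2^{\frac12(s_1-s_0)}$ the mean would dominate $\|\psi\|_{H^{s_0}}$, and the nonlinear forcing would be of size $(\delta t)^{q+1}$ with no factor $2^{-q(s_1-s_0)}$. The bootstrap would then close only for $t\lesssim\delta^{-q/(q+2)}$, with no improvement as $s_1$ grows.
\begin{itemize}
\item The missing input is the exponential smallness of the initial mean encoded in \eqref{piccolezza dati trisNOBNF1}: $|\breve\psi_0|+|\breve v_0|\lesssim\delta(2\mathtt{M})^{-s_1}$.
\item Combined with the equations for $(\psi_0,v_0)$, this gives \eqref{stimeMedie}: $|\psi_0(t)|\le|\breve\psi_0|+|\breve v_0|t+\mathtt{M}^{qs_0}R^{-(q+1)(s_1-s_0)}\int_0^t\int_0^\sigma E_{s_1}^{q+1}$.
\item Correspondingly, the mean must enter the energy with weight $1$, as in $E_s=|\psi_0|+|v_0|+\|u^\perp\|_{s,R}$. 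If you put it inside $N_{s_1}$ with the weight $(2\mathtt{M})^{s_1}$, the drift contributes of order $\delta t$ and the bootstrap $N_{s_1}\le 2\delta$ fails at $t\sim 1$.
\end{itemize}
With this bookkeeping, the linear drift $|\breve v_0|t$ is harmless for $t\lesssim(2\mathtt{M})^{s_1}$. What actually limits the time scale is the quadratic-in-time forcing of the mean by $\Pi_0 f$ (the $\hat T^{2}$ term in the bootstrap), not $t v_0$ as you suggest.
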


\medskip
Some comments and comparisons among 
the results above are in order.\\ 
$\bullet$ The strong relationship among the regularity of initial data, the size of the $L^2$-norm and the 
 time of  stability of solutions is explicitly evident.
This is shown in particularly in conditions 
\eqref{caramellina1}, \eqref{main:smallcondSob} and \eqref{patata1} of Theorem \ref{teonuovo}
where all the upper/lower bounds depend on $\delta$.
To resume, we obtained the following:
\begin{align}
&{\rm regularity:}\quad p\sim  \ln\big(\frac{1}{\delta}\big)^{\frac{5}{9}}\,,
\label{penna1}
\\
&{\rm size:}\quad \|\psi_0\|_{H^{p+\frac{1}{2}}}+\|\psi_1\|_{H^{p-\frac{1}{2}}}\lesssim\delta\,,
\qquad
\|\psi_0\|_{H^{\frac{1}{2}}}+\|\psi_1\|_{H^{-\frac{1}{2}}}\lesssim\delta 
e^{-\big[\ln\big(\frac{1}{\delta}\big)\big]^{\frac{5}{9}}}\,,\label{penna2}
\\
&{\rm time:}\;\;\; T\sim e^{\big[\ln\big(\frac{1}{\delta}\big)\big]^{1+\frac{1}{9}}}\,.
\label{penna3}
\end{align}
We stress the fact that the optimal choice in \eqref{penna1} is substantially unique, as a byproduct 
of our procedure. In contrast the time of stability in \eqref{penna3} is genuinely exponential 
in the parameter $(1/\delta)$ which is a drastic improvement for the usual times of stability
achieved for Sobolev initial data.

 On the other hand, in conditions \eqref{piccolezza dati trisNOBNF} and \eqref{patata1dimd} 
 of Theorem \ref{thm:mainNOBNF}, the smallness parameter $\delta$ 
 and the regularity index $s_1$ remain free parameters. 
 To resume, we obtained the following:
\begin{align}
&{\rm size:}\quad \|\psi_0\|_{H^{s_1+\frac{1}{2}}}+\|\psi_1\|_{H^{s_1-\frac{1}{2}}}\lesssim\delta\,,
\qquad
\|\psi_0\|_{H^{\frac{1}{2}}}+\|\psi_1\|_{H^{-\frac{1}{2}}}\lesssim\delta e^{-s_1}\,,\label{penna2bis}
\\
&{\rm time:}\;\;\; T\sim \frac{1}{\delta^q}e^{s_1}\,.
\label{penna3bis}
\end{align}

 Since the lifespan in \eqref{penna3bis} depends exponentially on $s_1$, 
 one can appropriately set $s_1$ as a function of $\delta$ in order to obtain a time of stability suitably long 
 depending only on the smallness of the initial conditions, consistently with the previous case.
% 
% it is natural to \emph{choose} $s_1$
%  as a function of $\delta$. This allows us to recover a lifespan 
%  depending exclusively on $1/\delta$, consistently with the previous case.
  We stress that the freedom on the choice of $s_1=s_1(\delta)$ is due to  the method used 
  for proving Theorem \ref{thm:mainNOBNF} (see Section \ref{sec:hamstructurewave}). 
  For clearness, by choosing 
  $s_1\equiv 1/\delta$ one gets a very long time of stability $T\sim e^{1/\delta}$
  which is extremely longer with respect to one obtained (by BNF procedure) in 
  \eqref{penna3}. 
  The drawback is that the smallness \eqref{penna2bis}
  becomes $$\|\psi_0\|_{H^{\frac{1}{2}}}+\|\psi_1\|_{H^{-\frac{1}{2}}}\lesssim\delta e^{-1/\delta}$$
which is much more restrictive than the one in \eqref{penna2}.
 
 To make this reasoning clearer, let us take $s_1$ in such a way that the time of stability 
 \eqref{penna3} and \eqref{penna3bis} match, more specifically, we set
 $s_1=\big[\ln\big(\frac{1}{\delta}\big)\big]^{1+\frac{1}{9}}$. 
 In this case,  \eqref{penna3} and  \eqref{penna3bis}  become  essentially identical, 
 while the smallness condition \eqref{penna2bis} reduces to
 \[
\|\psi_0\|_{H^{\frac{1}{2}}}+\|\psi_1\|_{H^{-\frac{1}{2}}}\lesssim\delta 
e^{-\big[\ln\big(\frac{1}{\delta}\big)\big]^{1+\frac{1}{9}}}\ll 
\delta e^{-\big[\ln\big(\frac{1}{\delta}\big)\big]^{\frac{5}{9}}}\,,
 \]
 which shows that  condition \eqref{penna2} is less restrictive.
 This is the price to pay for having a method that works in any space dimension and 
is free from
diophantine requirements on the linear frequencies.

It is worth mentioning that a combination 
of the two methods may lead to an improvement in the stability lifespan,
 without requiring the low norms to be accordingly small with respect to $\delta$.
 A first step in this direction is shown by
Cong-Mi-Shi in \cite{CongMiShi22}
for the NLS with convolution potential and purely cubic nonlinearity, where the method allows to reach up to super-exponential time scales, without asking for super-exponentially smallness of the low norm.\\
$\bullet$ Apart from the additional information on the possible growth of the high scales of norm, Theorem \ref{teonuovo} states a stability result for the KG comparable in terms of times/size to the one proved in \cite{FeoMass:Beam}. However, we emphasize that handling a BNF procedure for the KG equation involves additional technical difficulties related to the linear asymptotic of the frequencies of oscillation rather than the quadratic one proper to the Beam case, and more delicate Diophantine estimates come into play. See \cite{FeoMass:misurewave}.\\
$\bullet$ The flavor of Theorem \ref{thm:mainNOBNF} and \ref{thm:mainNOBNF2} evokes the results obtained in \cite{feoMassJFA} for the NLS on $\T^d$. However Theorem \ref{thm:mainNOBNF2} concerning the pure Wave equation (i.e. $\tm = 0$), require substantial new ideas and careful analysis with respect to the NLS case studied in \cite{feoMassJFA}. On the one hand the zero average terms that are involved must be kept in truck accurately in the Energy-method procedure, by defining an appropriate norm that fits well this resonant framework. On the other one, these terms call for an improved Gr\"onwall inequality that allow to handle both these extra non-autonomous terms and the superlinear power of the unbalanced norm that will be involved in the integral inequality, see estimates \eqref{suits111INTRO} in the following together with Lemma \ref{drago2}.

\subsection{Structure of the paper and strategy of the proofs.}
\label{sec:hamstructurewave}
In this section, we briefly discuss the main ideas underlying the proofs of our main results. 
We proceed in order, starting with the result on $\T$, 
which is based on the Birkhoff Normal Form approach.

\medskip
\noindent
{\bf Hamiltonian structure.} In general, we remark that equation \eqref{eq:wave} can be written as a first order system
which admits a natural Hamiltonian structure. Indeed, by setting 
$v=\partial_{t}\psi$,  and defining 
\begin{equation}\label{nonlionearitawave}
f(\psi):=\psi^{q+1}\,,\qquad F(\psi)=\frac{1}{q+2}\psi^{q+2}\,,
\end{equation}
we rewrite  \eqref{eq:wave}  as
\begin{equation}\label{eq:wave2}
\left\{\begin{aligned}
\partial_{t}\psi&=v 
\\ 
\partial_{t}v&=-\omega^2\psi-f(\psi)\,,
\end{aligned}\right.
\end{equation}
where $\omega$ 
is the Fourier multiplier defined by linearity as
\begin{equation}\label{omegoneWave}
\omega e^{{\rm i} j\cdot x}=\omega_{j} e^{{\rm i} j\cdot x}\,,
\qquad 
\omega_{j}:=\sqrt{|j|^{2}+\mathtt{m}}\,,
\qquad
\forall \,j\in \mathbb{Z}^{d}\,,\quad \mathtt{m}>0\,.
\end{equation}
Note that \eqref{eq:wave2} is the Hamiltonian vector field associated to the Hamiltonian 
$H_\R: H^{1}(\mathbb{T}^{d};\mathbb{R})\times L^{2}(\mathbb{T}^{d};\mathbb{R})\to \R$ defined as
\begin{equation}\label{WaveRealHam}
 H_{\mathbb{R}}(\psi,v)=
\int_{\mathbb{T}^{d}}
\big(
\frac{1}{2}v^{2}+\frac{1}{2}(\omega^{2}\psi) \psi+F(\psi)
\big){\rm d}x\,,
\end{equation} 
i.e. system \eqref{eq:wave2} reads
\[
\left(\begin{matrix}
\partial_{t}\psi \\ \partial_{t}v
\end{matrix}\right)
=J\nabla {H_{\mathbb{R}}}(\psi,v)=
J\left(\begin{matrix}
\partial_{\psi}H_{\mathbb{R}}(\psi,v)\\
\partial_{v}H_{\mathbb{R}}(\psi,v)
\end{matrix}\right)\,,
\quad 
J=\sm{0}{1}{-1}{0}
\]
where
$\nabla := (\partial_\psi,\partial_v)$ denotes the $L^{2}$-gradient.
Indeed we have 
\begin{equation}\label{eq:1.14bis}
\mathrm{d}H_{\mathbb{R}}(\psi,v)\cdot W
=
\Omega_{\mathbb{R}}(X_{H_{\mathbb{R}}}(\psi,v),W)
\end{equation}
for any 
$W \in H^{1}(\mathbb{T}^{d};\mathbb{R})\times L^{2}(\mathbb{T}^{d};\mathbb{R})$, 
where $\Omega_{\mathbb{R}}$ 
is the non-degenerate symplectic form
\[
\Omega_{\mathbb{R}}(W_1,W_2) 
= \int_{\mathbb{T}}(\psi_1v_2-v_1\psi_2)dx\,,
\quad \forall \;W_1=\vect{\psi_1}{v_1}\,,\;W_2=
\vect{\psi_2}{v_2}\in H^{1}(\mathbb{T}^{d};\mathbb{R})\times L^{2}(\mathbb{T}^{d};\mathbb{R})\,.
\]
The Poisson brackets between two Hamiltonian 
$H_{\mathbb{R}}, G_{\mathbb{R}}: 
H^{1}(\mathbb{T}^{d};\mathbb{R})\times L^{2}(\mathbb{T}^{d};\mathbb{R})\to \mathbb{R}$
are defined in the classical manner as
\begin{equation}\label{realpoisson}
\{H_{\mathbb{R}},G_{\mathbb{R}}\}
:=\Omega_{\mathbb{R}}(X_{H_{\mathbb{R}}},X_{G_{\mathbb{R}}})\,.
\end{equation}

\noindent
%We now discuss how to study system \eqref{eq:wave2}
%distinguishing the cases $\tm=0$ or $\tm>0$, and $d=1$ or $d>1$.

\vspace{0.5em}
\noindent
{\bf One dimensional case with $\mathtt{m}\neq0$: Birkhoff normal form approach.}
In this case it is convenient to introduce suitable complex variables as follows.

Let  us  consider 
\begin{equation}\label{sottoreal}
\cR = 
\set{(u^+, u^-)\in 
H^{\frac{1}{2}}(\mathbb{T}^{d};\mathbb{\C})\times  H^{\frac{1}{2}}(\mathbb{T}^{d},\C) \, : 
u^- = \bar{u}^+ }\,,
\end{equation}
and let us define  the linear symplectomorphism
\[
\cC:H^{1}(\mathbb{T}^{d};\mathbb{R})\times  L^{2}(\mathbb{T}^{d};\mathbb{R})
\to 
H^{\frac{1}{2}}(\mathbb{T}^{d};\mathbb{\C})\times  H^{\frac{1}{2}}(\mathbb{T}^{d};\mathbb{\C}) \cap \cR \,,
\]

\begin{equation}\label{beam5}
\vect{\psi}{v} \mapsto \mathcal{C}\vect{\psi}{v} = \vect{u}{\bar{u}}\,,\quad \mathcal{C}:=
\frac{1}{\sqrt{2}}\left(
\begin{matrix}
\omega^{\frac{1}{2}} & {\rm i} \omega^{-\frac{1}{2}}\\
\omega^{\frac{1}{2}} & -{\rm i} \omega^{-\frac{1}{2}}
\end{matrix}
\right)\,,
\end{equation}
where $\omega$ is the Fourier multiplier defined in \eqref{omegoneWave}\footnote{Note that, since 
$\tm \neq0$, one has $\omega_{j}\neq0$ for all $j\in \Z^{d}$. Therefore the operator $\omega$
is invertible.}.
The vector field $X_{H_{\mathbb{R}}}$ is then pushed forward to the new Hamiltonian one 
\begin{equation}\label{eq:waveComp}
\cC_* X_{\mathbb{R}} = X_H = (\dot u, \dot{\bar u}) \,,
\quad \quad 
\dot{u} 
= - {\rm i}\omega u- \frac{{\rm i} }{\sqrt 2}
\omega^{-1/2}f\left(\omega^{-1/2}\left(\frac{u+\bar u}{\sqrt 2}\right)\right) 
= - {\rm i}\partial_{\bar{u}}H(u,\bar{u})
\end{equation}
where 
$\partial_{\bar{u}}=(\partial_{\Re u}+{\rm i} \partial_{\Im u})/2$,
$\partial_{u}=(\partial_{\Re u}-{\rm i} \partial_{\Im u})/2$
and 
\begin{equation}\label{waveHam}
H(u,\bar{u})=H_{\mathbb{R}}(\mathcal{C}^{-1}\vect{u}{\bar{u}})
=
\int_{\mathbb{T}^{d}}\bar{u}\, \omega u\ \mathrm{d}x 
+\int_{\mathbb{T}^{d}} F\Big( \frac{\omega^{-1/2}(u+\bar{u})}{\sqrt{2}}\Big)\ {dx}\,.
\end{equation}
The (complex) induced $2$-form is 
\begin{equation}\label{complex form}
\Omega:= (\cC^{-1})^*\Omega_\R = \int_{\mathbb{T}^{d}}{\rm{i}} du\wedge d\bar u\,dx\,,
\end{equation}
which yields, for any 
$w_1 = (\xi , \bar\xi), w_2 
= (\eta, \bar\eta) \in H^{\frac{1}{2}}(\mathbb{T}^{d};\mathbb{\C})
\times  H^{\frac{1}{2}}(\mathbb{T}^{d};\mathbb{\C}) \cap \cR$,
\begin{equation}\label{symcompform}
\Omega (w_1,w_2) = \int_{\mathbb{T}^{d}} \xi \bar{\eta} - \bar\xi\eta \, dx\, \, \in\R
\end{equation}
and intrinsically defines the Hamiltonian through
\begin{equation}\label{diffHam}
\Omega(X_{H}(u), w) = \mathrm{d}H(u)\cdot w                                                                                                                                                                                                                                                                                                                                                                                                                                                                                                                                                                                                                                                                                                                                                                                                                                                                                                                                                                                                                                                                                                                                                                                                                                                                                                                                                                                                                                                                                                                                                                                                                                                                                                                                                                                                                                                                                                                                                                                                                                                                                                                                                                                     
\end{equation}
for any $w$.
Accordingly, we set the (complex) Poisson brackets as
\[
\set{H, G} := 
\Omega(X_H, X_G)
\]  
where  $H=H_{\mathbb{R}}\circ\mathcal{C}^{-1}$  
(resp $G=G_{\mathbb{R}}\circ\mathcal{C}^{-1}$ ) 
and $X_H = \cC_*X_{H_\R}$ (resp $X_G = \cC_*X_{G_\R}$)
which yields\footnote{
Note that the naturality of the Poisson brackets 
holds $\set{H_\R, G_\R}\circ \cC^{-1} = \set{H,G}$.
}
\begin{equation}\label{Poissonbrackets}
\begin{aligned}
\{H,G\} 
%\\&
={\rm i} \int_{\mathbb{T}^{d}}
\big(\partial_{u}G\partial_{\bar{u}}H-
\partial_{\bar{u}}G\partial_{u}H\big) \mathrm{d}x\,.
\end{aligned}
\end{equation}

The main ideas in order to prove the stability Theorem \ref{teoBNFwave} are the following.\\ 
First of all, we consider the Hamiltonian in \eqref{waveHam}, which, 
passing to the Fourier side, has the form 
\begin{equation}\label{marconi1}
H = \sum_{j\in \Z} \omega_j |u_{j}|^{2}+O(u^3)\,,\qquad 
\omega_{j}:=\omega_{j}(\mathtt{m})=\sqrt{j^2+\mathtt{m}}\,,
\end{equation}
where $O(u^3)$ denotes a non linearity with a zero at the origin of order at least $3$.
From this Hamiltonian viewpoint,  through suitable symplectic change of coordinates, we shall pull 
 the Hamiltonian $H$ back to 
 \emph{Birkhoff Normal Form} (BNF)
\[
\widetilde{H}=\sum_{j\in \Z} \omega_j |u_{j}|^{2}+\mathfrak{Z}+\mathfrak{R}\,,
\] 
where $\mathfrak{Z}$ depends only on the ``actions''  $|u_j|^2$ (and does not affect the dynamics)
while $\mathfrak{R}$ has an \emph{high} degree of homogeneity $\sim O(|u|^{\suca+2})$ 
for some natural $\suca\gg1$. 
{Then, the natural time of stability of the flow of 
$\widetilde{H}$ becomes $T(\delta)\sim O(\delta^{-\suca})$.}
The crucial difficulties in this approach
 regard the regularity 
of the phase space
of initial data and interactions among linear frequencies of oscillations $\omega_j$.

More precisely, along the iteration, we must deal with two main problems:
(i) we must deal with the small divisors problem (see \eqref{ondina}-\eqref{seconda});
(ii) we must keep track of the dependence of constants and regularity on the 
number of steps $\mathtt{N}$ of normal form. Comments come in order.

\smallskip
\noindent
$(i)$ As already mentioned, certain \emph{Diophantine} conditions on the frequency vector
$\omega=(\omega_j)_{j\in\Z}\in \R^{\Z}$ are required.
In our setting, this issue is particularly delicate because 
of the relative ``lack'' of parameters in the  frequencies  of oscillation \eqref{omegoneWave}.
Indeed, in our case there is only \emph{one} parameter 
available to modulate arbitrary subsets of the frequencies.
For instance this feature does not allow to prove strong lower bounds \`a la Bourgain 
as in \cite{BMP:CMP}. On the contrary we rely on the lower bounds for the Klein-Gordon equation 
proved in \cite{FeoMass:misurewave}. For completeness we state such result in Theorem 
\ref{thm:mainVERO} of section \ref{sec:small}.

\smallskip
\noindent
$(ii)$ In order to get sharp estimates in the BNF procedure we adopt the Hamiltonian 
formalism introduced in \cite{BMP:CMP, FeoMass:Beam}.
The key point is to introduce an equivalent Sobolev norm (see Section \ref{sec:equivnorm})
defined as follows:
\begin{align}\label{nuovanormaIntro}
\|u\|_{s,R}^{2}&:=
(\jjap{D}^{s}u,\jjap{D}^{s}u)_{L^{2}}
=\sum_{j\in\Z^{d}} \jjap{j}^{2s} |\widehat{u}(j)|^{2}\,,
\qquad \jjap{j}:=\jjap{j}_{R}:=\max{\{R,|j|\}}\,.
\end{align}
In Lemma \ref{lemmaequivalenza} we prove the equivalence of the norm above 
with the standard Sobolev norm in \eqref{spaziodiSobolev}.
The advantage is that 
the new norm encodes precisely the fact that we 
deal with initial conditions with very small $L^{2}$-norm 
(see the smallness assumption \eqref{main:smallcondSob}).

Relying on these technical ingredients, in Section \ref{sec:normaldim1}
we prove an abstract Birkhoff normal form result (see Theorem \ref{rescalingHaminiziale}).
The key step in the proof is the derivation of sharp estimates 
for the solutions of the homological equation, 
which are established in Section \ref{sec:homo}.
The stability result stated in Theorem \ref{teoBNFwave} follows 
as a consequence of the normal form theorem 
established in Theorem \ref{rescalingHaminiziale}.
An optimization argument carried out in Subsection \ref{sec:ottimizzo} 
(which justifies the choice made in \eqref{caramellina1}) 
then yields the stability estimate \eqref{stimaIncrediblelowNORM} 
over the time scale specified in \eqref{patata1}.
The estimate \eqref{stimaIncredibleBis} 
on high Sobolev norms in Theorem \ref{teonuovo} follows 
by an improved Gr\"onwall type inequality. 
We provide more details on this point in the remaining part of this introduction.

\vspace{0.5em}
\noindent
{\bf Higher dimensional case with $\mathtt{m}\neq0$: a priori estimates.}
Let us discuss the ideas of the proof of Theorem \ref{thm:mainNOBNF}.
Here again we rewrite equation \eqref{eq:wave} as \eqref{eq:waveComp}, which, by Duhamel 
formulation reads as
\begin{equation}\label{eq:duhamelintro}
u(t)=e^{-\ii\omega t}u(0)+\int_{0}^{t}e^{-\ii\omega (t-\s)}
\omega^{-1/2}f\left(\omega^{-1/2}\left(\frac{u(\s)+\bar{u}(\s)}{\sqrt 2}\right)\right) d\s\,.
\end{equation}
Basic a priori estimates, together with \emph{tame} estimates for 
the norm in \eqref{nuovanormaIntro} (see Lemma \ref{stimaNonlin}),  
guarantee that  
%(see Proposition \ref{thm:energyBasic})
\[
\|u(t)\|_{{s_1},R}\leq \|u(0)\|_{{s_1},R}
+C\int_{0}^{t}\|u(\s)\|_{s_0,R}^{q}\|u(\s)\|_{s_1,R}d\s\,,
\]
for some constant $C>0$, 
providing an \emph{a priori} control on the
Sobolev norm $\|\cdot\|_{{s_1,R}}$, $s_1\geq s_0>d/2$ of the solution.
Thanks to the tame/scaling properties of the norm (see Lemma \ref{interopolo}), by taking 
$R\equiv2\mathtt{M}$ with $\mathtt{M}\gg1$ in \eqref{piccolezza dati trisNOBNF} 
large enough,
we can improve the above estimate as follows
\[
\|u(t)\|_{{s_1},R}\leq \|u(0)\|_{{s_1},R}
+\widetilde{C}2^{-q(s_1-s_0)}\int_{0}^{t}\|u(\s)\|_{s_1,R}^{q+1}d\s\,.
\]
This is the content of Theorem \ref{thm:energyBasic}.
The extra factor $2^{-q(s_1-s_0)} $
allows us to prove the stability of the solutions over a time scale as in \eqref{patata1dimd}.
More precisely, combining the energy estimates above with a Gr\"onwall-like lemma,
one extends the solution of \eqref{eq:waveComp} over the claimed time scale.

Now,proving the estimates on the high Sobolev norms 
\eqref{stimaIncredibleBisNOBNF} is more subtle, and require an additional argument. \\
As a consequence of the change of variables \eqref{beam5}, we start by noting that
the nonlinearity in \eqref{eq:waveComp} 
acquires some smoothing properties (see Lemma \ref{lem:stimasmooth} for details).
Therefore, on the solutions of \eqref{eq:duhamelintro}, one obtains estimates like
\[
\|u(t)\|_{s,R}\lesssim_{s}
\|u(0)\|_{s,R}
+C_{s}\int_{0}^{t}\|u(\s)\|_{s_0,R}^{q}\|u(\s)\|_{s-1,R}d\s\,,
\]
for any $s\geq s_1+1\geq s_0$ (see inequalities \eqref{energyaprioriBIS}).
Since the norm $\|\cdot\|_{s,R}$ enjoys the classical 
\emph{interpolation estimates}
in Sobolev spaces (see Lemma \ref{interopolo}-(ii)) the estimate above becomes
\[
\|u(t)\|_{s,R}\leq \|u_0\|_{s,R}
+\widetilde{C}_s
\int_{0}^{t}
\|u(\tau)\|^{q+1-\lambda}_{s_1,R}
\|u(\tau)\|_{s,R}^{\lambda}d\tau\,\,
\]

for an appropriate $0<\lambda < 1$, which is crucial for what follows. This is the content of Proposition \ref{thm:energy}.
Combining the improved a priori estimates with an improved 
Gr\"onwall lemma (Lemma \ref{drago}-(ii) in the appendix)
we are able to get a sharper control on the high norms $\|u(t)\|_{s}$. 
In particular we show that these norms \emph{may} 
grow to infinity at most at a polynomial rate in $t$, 
this proving Theorem \ref{thm:mainNOBNF}.
We remark that the estimates \eqref{stimaIncredibleBisNOBNF} holds over $[0,T]$, 
with $T\gtrsim T_{good}$ in \eqref{patata1dimd} which depends 
only on the \emph{low} index regularity $s_1$.

With a similar argument we prove the upper bound \eqref{stimaIncredibleBis} 
in Theorem \ref{teonuovo}. The key difference is that, in this latter case,
the stability of the low Sobolev norm and (as a consequence) the lower bound on the lifespan, 
is obtained through a normal form procedure.

\vspace{0.5em}
\noindent
{\bf High space dimension with $\mathtt{m}=0$: the completely resonant case.}
In the case of no mass, the operator $\omega$ in \eqref{omegoneWave}
reduces to the derivative operator $|D| := \sqrt{-\Delta}$.
It is evident that $|D|$ is invertible only when it acts on the subspace of zero average functions.
Therefore, in general, the 
symplectomorphism $\cC$ is not well
defined. 

In order to handle this parameter free case, we shall 
distinguish and treat separately the averages of the involved 
functions from the non constant terms. 
In order to do that, given $\psi\in H^{s}(\T^{d};\C)$
we shall write
\begin{equation}\label{splittoperp}
\psi=\psi_{0}+\psi^{\perp}\,,\qquad \psi_{0}:=\frac{1}{(2\pi)^d}\int_{\T^{d}}\psi(x)dx\,,
\qquad \psi^{\perp}:=\psi-\psi_0\,.
\end{equation}
Accordingly we denote $\Pi_0$ and $\Pi_0^{\perp}$ the corresponding $L^{2}$ projectors
on the average and zero average part respectively.
With this formalism equation \eqref{eq:wave}-\eqref{nonlionearitawave} reads 
\[
\left\{
\begin{aligned}
&\partial_{tt}\psi_0+\Pi_{0}(f(\psi))=0
\\
&\partial_{tt}\psi^{\perp}+|D|^{2}\psi^{\perp}+\Pi_0^{\perp}(f(\psi))=0\,.
\end{aligned}
\right.
\]
Setting $v_0=\partial_{t}\psi_0$ and $v^{\perp}=\partial_{t}\psi^{\perp}$
we shall write
\begin{equation}\label{eq:massazero}
\left\{
\begin{aligned}
&\partial_{t}\psi_0=v_0
\\
&\partial_{t}v_0=-\Pi_0(f(\psi))
\\&
\partial_t\psi^{\perp}=v^{\perp}
\\&
\partial_{t}v^{\perp}=-|D|^{2}\psi^{\perp}-\Pi_0^{\perp}(f(\psi))\,.
\end{aligned}
\right.
\end{equation}
On the subspace $H^{s}_{\perp}:=H^{s}(\T^{d};\C)\cap \{u : u=\Pi_{0}^{\perp}u\}$
we introduce the variables\footnote{In principle, one could choose  the map $\cC$ in
\eqref{beam5} with $\tm=0$ restricted to $H^{s}_{\perp}$. However, in this case we do not care
about the  Hamiltonian structure of the whole system and so  we choose to use  the simpler map 
$\cD$ whose inverse is given  by
\[
\psi^{\perp}=u^{\perp}+\bar{u}^{\perp}\,,\qquad v^{\perp}=-\ii |D|(u^{\perp}-\bar{u}^{\perp})\,.
\]}
\begin{equation}\label{complexNew}
\left(
\begin{matrix}
\psi^\perp
\\
v^{\perp}
\end{matrix}
\right) \mapsto \cD \left(
\begin{matrix}
\psi^\perp
\\
v^{\perp}
\end{matrix}
\right) = \left(
\begin{matrix}
u^\perp
\\
\bar{u}^{\perp}
\end{matrix}
\right)\,,\quad \cD:=
\frac{1}{2}\left(
\begin{matrix}
1 & {\rm i} |D|^{-1}\\
1 & -{\rm i} |D|^{-1}
\end{matrix}
\right)\,,
\end{equation}
so that \eqref{eq:massazero} becomes
\begin{equation}\label{sistemasplittato}
\left\{
\begin{aligned}
&\partial_{t}\psi_0=v_0
\\
&\partial_{t}v_0=-\Pi_0\big(f(\psi_0+u^{\perp}+\bar{u}^{\perp})\big)
\\&
\partial_{t}u^{\perp}=-\ii |D|u^{\perp}
-\frac{\ii}{2}|D|^{-1}\Pi_0^{\perp}\Big(f\big(
\psi_0+u^{\perp}+\bar{u}^{\perp}
\big)\Big)\,.
\end{aligned}
\right.
\end{equation}
It is worth noticing that, in view of the change of coordinates 
\eqref{complexNew} and by introducing the \emph{modified} energy 
\[
E_{s}(t):=E_{s,R}(t):=|\psi_0(t)|+|v_0(t)|+\|u^{\perp}(t)\|_{s,R}\,,
\]
we have the following equivalence 
(see Lemma \ref{lemmaequivalenza} and Remark \ref{rmk:equivalenzaTotale})
\[
\|\psi(t)\|_{H^{s_1}}+\|\partial_{t}\psi(t)\|_{H^{s_1-1}}
+R^{s_1}\big(
\|\psi(t)\|_{L^{2}}+\|\partial_{t}\psi(t)\|_{H^{-1}}
\big)\sim_{s}E_{s}(t)\,.
\]
We also remark that, by the equivalence above
and the smallness assumption \eqref{piccolezza dati trisNOBNF1}, one deduces that 
there exists a constant $c\geq 1$ such that
\begin{equation*}
\begin{aligned}
\\
E_{s_1}(0)&\leq c\delta\,,
\qquad 
|\breve{\psi}_0|+|\breve{v}_0|
\leq E_{s_1}(0) R^{-s_1}\,,
\qquad R:=2\mathtt{M}\,.
\end{aligned}
\end{equation*}
We stress that the averages in $x$, that is $\breve{\psi}_0$ and $\breve{v}_0$, are \emph{exsponentially} small
in the regularity parameter $s_1$.

Now, in order to prove 
Theorem \ref{thm:mainNOBNF2} we analyze the evolution in time 
of the quantity $E_{s}(t)$ following  the overall strategy described for 
Theorem \ref{thm:mainNOBNF}.

As a first step, we prove that the \emph{low} energy $E_{s_1}(t)$ satisfies 
$E_{s_1}(t)\leq 2E_{s_1}(0) $ for $t\in[0,T]$ with $T$ as in \eqref{patata1dimdMASS}.
This is a consequence of the a priori bounds in 
Proposition \ref{thm:energyBasicBIS} and a Gr\"onwall type inequality.

As a second step, the most delicate one,  we analyze $E_{s}(t)$ for $s\geq s_1+1$.
Relying again on the smoothing properties of the nonlinearity,  
in Proposition \ref{thm:energyBIS} (see estimate \eqref{suits111}) 
we prove that
\begin{equation}\label{suits111INTRO}
E_{s}(t)\leq E_{s}(0)+|\breve{v}_0| t+
K
\int_{0}^{t}
\int_{0}^{\s}(E_{s_1}(\tau))^{q+1}d\tau d\s
%\\&
+K\int_{0}^{t}  (E_{s_1}(\s))^{q+1-\lambda}
(E_{s}(\s))^{\lambda}d\s\,,
\end{equation}
for some constant $K=K(s)$ large enough, and where $\lambda=1-1/(s-s_1)$.
Here again, the crucial point in the estimate above is that the \emph{high} norm $E_s(t)$ in the second integral summand
appears with an exponent \emph{strictly smaller than one}. 
All the other terms depend only on the \emph{low} norm $E_{s_1}(t)$
on which we have already proved an a priori control over the long time scale $[0,T]$, $T\gtrsim 2^{s_1/2}$.
Now, by setting for $t\in[0,T]$
\begin{equation*}
\begin{aligned}
f(t)&:= 
E_{s}(0)+|\breve{v}_0| t+
K
\int_{0}^{t}
\int_{0}^{\s}(E_{s_1}(\tau))^{q+1}d\tau d\s
\,,
\\
g(\s)&:= K
(E_{s_1}(\s))^{q+1-\lambda}\,,
\end{aligned}
\end{equation*}
we rewrite \eqref{suits111INTRO} as
\[
E_{s}(t)\leq f(t)+\int_{0}^{t}g(\s) (E_{s}(\s))^{\lambda}d\s\,.
\]
It is worth noticing that, since $f(t)$ depends on time, 
the Gr\"onwall type inequality of Lemma \ref{drago}-(ii) cannot be applied.
Nevertheless, in  Lemma \ref{drago2} we provide a more subtle version of Gr\"onwall inequality, that guarantees a control form above of $E_{s}(t)$
only in terms of the functions $f(t), g(t)$, namely on the low norm $E_{s_1}(t)$.
Combining this with the a priori estimates on low norm
one gets the upper bound \eqref{stimaIncredibleBisNOBNFMASS}. 
This is the content of Section \ref{sec:finaleWave}.

\medskip
The paper is organized as follows. In Section \ref{sec:equivnorm} we introduce the general functional  
setting. Then the analysis is divided into two parts:
(1) we study the Klein-Gordon equation 
(i.e. when $\tm\neq0$) first considering the one dimensional case,  and
then studying the higher space dimension; (2) we consider the completely resonant case of the Wave 
equation (i.e. $\tm=0$).

\section{Equivalent norms and functional properties}\label{sec:equivnorm}

\noindent
{\bf Notations}.
Through all the paper we shall adopt the following conventions. \\We shall use $A\lesssim B$ in order to denote 
$A\le C B$ where $C$ is a positive constant
depending on  parameters fixed once for all, 
for instance $d$ and $s$.
 We will emphasize by writing $\lesssim_{q}$
 when the constant $C$ depends on some other parameter $q$.
 We shall write $\lessdot_{s}$
 if there exist a constant $C>0$ (independent of $s$)
 such that $A\leq C^{s}B$.  
  We shall write $\lessdot_{s,s_0}$
 if there exist a constant $C>0$ (independent of $s$ and $s_0$)
 such that $A\leq C^{\max\{s,s_0\}}B$. 
 
 \noindent
Moreover, for $r\in\R^{+}$, we shall
denote by $B_{r}(H^{s}(\mathbb{T}^{d};\mathbb{C}))$
the open ball of $H^{s}(\mathbb{T}^{d};\mathbb{C})$ 
with radius $r$ centred at the origin.

\medskip

As usual, we identify the Sobolev space of $2\pi$-periodic functions 
$\mathbb{\R}^{d}\ni x \mapsto u(x)\in \mathbb{C}$ with 
$H^{s}(\mathbb{T}^d;\mathbb{C})$, and develop any function $u: \T^d\to \C$ 
in its Fourier series as\footnote{We also use the notation
$u_j^+ := u_j$ and
$u_j^- := \ov{u_j}  $.}
\begin{equation}\label{espFourier}
u(x) =
\sum_{j \in \Z^{d} } u_{j} e^{\ii j\cdot x } \, , \qquad 
u_{j} := \frac{1}{(2\pi)^{{d}}} \int_{\mathbb{T}^{d}} u(x) e^{-\ii j \cdot x } \, dx \, .
\end{equation}
We endow $H^{s}(\mathbb{T}^{d};\mathbb{C})$ with the norm 
\begin{equation}\label{normaNormalissima}
\|u\|_{H^{s}}:=\sum_{j\in\Z^{d}}\langle j\rangle^{2s}|u_{j}|^2\,,\qquad \langle j\rangle:=\max\{1,|j|\}\,.
\end{equation}

%\subsection{An equivalent norm}
\smallskip
\noindent
{\bf An equivalent norm.}
For our purposes, on the space $H^{s}(\T^{d};\C)$ 
we shall define and make use of the following equivalent norm, 
instead of the usual one.

\begin{definition}{\bf (Equivalent norm).}\label{def:equinorm}
Let $s, R\in \R$ with $R\geq1$. 
Let us define the operator  $\jjap{D}$ defined by 
linearity as 
\begin{align}
&\jjap{D}e^{\ii j\cdot x}=\jjap{j}^{2}e^{\ii j\cdot x}\,,
\qquad \jjap{j}:=\jjap{j}_{R}:=\max{\{R,|j|\}}\,,\quad j\in\Z^{d}\,,
\label{def:japjapModificato}
\end{align}
For any $u\in H^{s}(\T^{d};\C)$ we define
\begin{align}
\|u\|_{s,R}^{2}&:=
%|u|^{2}_{s,R}:=
(\jjap{D}^{s}u,\jjap{D}^{s}u)_{L^{2}}
=\sum_{j\in\Z^{d}} \jjap{j}^{2s} |\widehat{u}(j)|^{2}\,.
\label{normaJapJap}
\end{align}
\end{definition}
\begin{remark}\label{normaR2}
The norm $\|\cdot \|_{s,R}$  is monotone w.r.t. the Sobolev index $s$: 
for any $s_1\geq s_2\geq s_0>d/2$ 
and any $u\in H^{s_1}(\T^{d};\C)$ one has
$\|u\|_{s_2,R}\leq \|u\|_{s_1,R}$.

\noindent
For the special case where $R=2$ in $\|\cdot\|_{s,R}$, along the paper we shall write
\begin{equation}\label{def:japjapModificatoRR2}
\|u\|_{p} := \|u\|_{p,2}\,.
\end{equation}
Notice also that,
 one has
$\|u\|_{s,1}\equiv\|u\|_{H^{s}}$.
\end{remark}

\begin{lemma}{\bf (Equivalence of norms).}\label{lemmaequivalenza}
 %For any $u\in H^{s}(\T^{d};\C)$, 

\smallskip
\noindent
$(i)$
 Let $s\geq 0$, consider 
 $(\psi,v)\in H^{s+\frac{1}{2}}(\T^d,\R)\times H^{s-\frac{1}{2}}(\T^d,\R)$ and the Fourier (multiplier) operator $\omega$ defined in \eqref{omegoneWave}. 
Then 
\begin{equation}\label{def:cambio1}
u=(1/\sqrt{2})(\omega^{1/2}\psi+\ii \omega^{-1/2}v)\, 
\end{equation}
belongs to $H^{s}(\T^{d};\C)$ and 
there exists a constant $C=C(\mathtt{m})>0$ such that
\begin{equation}\label{primaEquiv}
\frac{1}{C}	(\|\psi\|_{s+\frac{1}{2},1} + \|v\|_{s -\frac{1}{2},1} ) 
\leq	
\| u \|_{s,1} \leq C (\|\psi\|_{s+\frac{1}{2},1} + \|v\|_{s - \frac{1}{2},1} )\,.
\end{equation}

\smallskip
\noindent
$(ii)$
 Let $s\geq 0$ and consider $(\eta,w)\in H_{\perp}^{s}(\T^d,\R)\times H_{\perp}^{s-1}(\T^d,\R)$. 
Then 
\begin{equation}\label{def:cambio2}
z=(1/2)(\eta+\ii |D|^{-1}w)\,
\end{equation}
belongs to $H_{\perp}^{s}(\T^{d};\C)$ and 
there exists a pure constant $C>0$ such that
\begin{equation}\label{secondaEquiv}
C^{-1}\big(\|\eta\|_{s,1} + \|w\|_{s-1,1} \big) 
\leq	
\| z \|_{s,1} 
\leq 
C \big(\|\eta\|_{s,1} + \|w\|_{s-1,1} \big)
\end{equation}

\smallskip
\noindent
$(iii)$ Let $R\geq1$. For any $s\geq0$ and any $u\in H^{s}(\T^d;\C)$ one has
\begin{align}
\|u\|_{H^s}+R^{s}\|u\|_{L^{2}}
&\leq 
2 \|u\|_{s,R}
\leq  
2\big(
\|u\|_{H^s}+R^{s}\|u\|_{L^{2}}
\big)\,.\label{equiIncredibile1}
\end{align}

\end{lemma}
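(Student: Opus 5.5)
All three items are Fourier-side statements: every operator involved is a Fourier multiplier. So I will reduce each inequality to a pointwise comparison of weights for each $j\in\Z^d$ and then sum over $j$.

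\emph{Item (i).} Write $u_j=\frac{1}{\sqrt2}(\omega_j^{1/2}\psi_j+\ii\omega_j^{-1/2}v_j)$. For $\mathtt{m}>0$ there is $C_0=C_0(\mathtt{m})\ge1$ such that $C_0^{-1}\langle j\rangle\le\omega_j\le C_0\langle j\rangle$ for all $j$, since $\omega_j^2=|j|^2+\mathtt{m}$.
\begin{itemize}
\item \emph{Upper bound.} Use $|u_j|\le \omega_j^{1/2}|\psi_j|+\omega_j^{-1/2}|v_j|$. Multiplying by $\langle j\rangle^{s}$ and taking $\ell^2$ norms gives $\|u\|_{s,1}\le C(\|\psi\|_{s+1/2,1}+\|v\|_{s-1/2,1})$, by the triangle inequality in $\ell^2$.
\item \emph{Lower bound.} Use that $\psi,v$ are real, so $\bar u$ is available: $\psi=\frac{1}{\sqrt2}\omega^{-1/2}(u+\bar u)$ and $v=-\frac{\ii}{\sqrt2}\omega^{1/2}(u-\bar u)$.
\item \emph{Closing the lower bound.} The map $u\mapsto\bar u$ preserves $\|\cdot\|_{s,1}$, because $(\bar u)_j=\overline{u_{-j}}$ and the weights are even in $j$. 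The same multiplier comparison then gives $\|\psi\|_{s+1/2,1}+\|v\|_{s-1/2,1}\le C\|u\|_{s,1}$.
\end{itemize}

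\emph{Item (ii).} This is identical, with $|D|$ in place of $\omega$ and the weights $\eta\mapsto 1$, $w\mapsto|D|^{-1}$. On $H_\perp$ only $j\neq0$ occurs, so $|j|=\langle j\rangle$ and the multiplier comparison holds with constant $1$. Hence $C$ is a pure constant. The inverse formulas are $\eta=z+\bar z$ and $w=-\ii|D|(z-\bar z)$.

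\emph{Item (iii).} It suffices to show pointwise, for every $j$, the weight comparison
\[
\tfrac12\big(\langle j\rangle^{s}+R^{s}\big)\le \jjap{j}_R^{s}\le \langle j\rangle^{s}+R^{s}.
\]
\begin{itemize}
\item \emph{Right inequality.} $\max\{R,|j|\}^s\le R^s+\langle j\rangle^s$.
\item \emph{Left inequality.} Both $\langle j\rangle\le\max\{R,|j|\}$ (since $R\ge1$) and $R\le\max\{R,|j|\}$ hold, so the average of $\langle j\rangle^s$ and $R^s$ is at most $\jjap{j}_R^{s}$.
\item \emph{Summing over $j$.} Square and sum, using Minkowski: $\|u\|_{H^s}+R^s\|u\|_{L^2}\le 2\,\big\|(\tfrac12(\langle j\rangle^s+R^s)u_j)_j\big\|_{\ell^2}\le2\|u\|_{s,R}$. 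For the other side, $\|u\|_{s,R}\le\|(\langle j\rangle^s u_j)\|_{\ell^2}+\|(R^su_j)\|_{\ell^2}=\|u\|_{H^s}+R^s\|u\|_{L^2}$, which multiplied by $2$ gives the claim.
\end{itemize}

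I expect no real obstacle. The only points needing care are the dependence of the constant on $\mathtt{m}$ in (i) (it blows up as $\mathtt{m}\to0$), the restriction to zero-average functions in (ii), and the use of the reality of $(\psi,v)$ and $(\eta,w)$ to invert the change of variables.
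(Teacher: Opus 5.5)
Your route for all three items is the paper's: pointwise comparison of Fourier weights, inversion of the change of variables using the reality of $(\psi,v)$ and $(\eta,w)$, and comparison of $\lfloor j\rfloor_R$ with $\langle j\rangle$ and $R$ in item (iii). Items (i) and (ii) are correct as written, including the pure constant in (ii) from $|j|=\langle j\rangle$ for $j\neq 0$.

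There is, however, a false step in the summation for the left inequality of item (iii). You claim
$\|u\|_{H^s}+R^s\|u\|_{L^2}\le 2\|(\tfrac12(\langle j\rangle^s+R^s)u_j)_j\|_{\ell^2}$.
With $a_j=\langle j\rangle^s|u_j|$ and $b_j=R^s|u_j|$, this says $\|a\|_{\ell^2}+\|b\|_{\ell^2}\le\|a+b\|_{\ell^2}$. That is Minkowski in the wrong direction, and it fails in general. For example, take $s=1$, $R=2$, $u_0=1$, $u_{j_*}=\tfrac12$ with $|j_*|=4$, and all other coefficients zero. Then the left side is $2\sqrt5$, while $\|((\langle j\rangle+R)u_j)_j\|_{\ell^2}=3\sqrt2<2\sqrt5$. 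Going through the averaged weight only gives $\|a\|+\|b\|\le\sqrt2\|a+b\|$, i.e. the constant $2\sqrt2$ rather than $2$.

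The fix uses the two pointwise facts you already wrote down, separately. From $\langle j\rangle^s\le\lfloor j\rfloor_R^s$ you get $\|u\|_{H^s}\le\|u\|_{s,R}$. From $R^s\le\lfloor j\rfloor_R^s$ you get $R^s\|u\|_{L^2}\le\|u\|_{s,R}$. Adding the two gives the left inequality in (iii) with constant $2$, which is exactly the paper's computation. Your right-hand inequality in (iii) is fine.
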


\begin{proof}
Item $(i)$. By a straightforward
computation one can check that (recall \eqref{omegoneWave})
\begin{equation}\label{equiomeghino}
\max\{\sqrt{\tm},|j|\}
\leq \omega_{j}\leq \sqrt{(\tm +1)}\max\{1,|j|\}\,,\qquad \forall\, j\in\Z^{d}\,.
\end{equation}
Hence, using \eqref{def:cambio1}, we deduce that
\[
\begin{aligned}
\|u\|_{s,1}^{2}&\lesssim\|\omega^{\frac{1}{2}}\psi\|_{s,1}^{2}+\|\omega^{-\frac{1}{2}}v\|_{s,1}^2
\\&
\stackrel{\eqref{normaNormalissima}}{\lesssim}
\sum_{j\in\Z^{d}}\langle j\rangle^{2s}\omega_{j}^{\frac{1}{2}}|\psi_{j}|^{2}
+
\sum_{j\in\Z^{d}}\langle j\rangle^{2s}\omega_{j}^{-\frac{1}{2}}|v_{j}|^{2}
\stackrel{\eqref{equiomeghino}}{\lesssim_{\mathtt{m}}}
\sum_{j\in\Z^{d}}\langle j\rangle^{2(s+\frac{1}{2})}|\psi_{j}|^{2}
+
\sum_{j\in\Z^{d}}\langle j\rangle^{2(s-\frac{1}{2})}|v_{j}|^{2}\,,
\end{aligned}
\]
which implies the second inequality in \eqref{primaEquiv}.
Now, by \eqref{def:cambio1}, we shall write
\[
\psi=(1/\sqrt{2})\omega^{-\frac{1}{2}}(u+\bar{u})\,,
\qquad 
v=(1/(\ii\sqrt{2}))\omega^{\frac{1}{2}}(u-\bar{u})\,.
\]
Therefore,	 the first inequality in \eqref{primaEquiv} follows reasoning as above.
\smallskip 

\noindent
Item $(ii)$. 
It follows by reasoning exactly as for Item $(i)$, using
\eqref{def:cambio2} and the fact that 
\[
\eta= z+\bar{z}\,,\qquad w=(-\ii/2)|D|(z-\bar{z})\,.
\]

\noindent
Item $(iii)$. 
The equivalence \eqref{equiIncredibile1} follows 
by an explicit computation using 
\[
|j|\leq \max\{1,|j|\}\,,\quad R\leq \max\{R,|j|\}\,,\quad \max\{1,|j|\}\leq \max\{R,|j|\}\,,
\]
and recalling \eqref{normaNormalissima}-\eqref{normaJapJap}.
%The second inequality in \eqref{equiIncredibile1} is trivial.
%It is easy to check (using Remark \ref{rmk:facile} and recalling 
%\eqref{normaAncoraequi}) that
%$\|u\|_{H^{s}}\leq |u|_{H^s}$ and 
%$\|u\|_{L^{2}}^{2}\leq R^{-s}|u|_{H^s}^2$. 
%Moreover one has that
%\begin{equation*}
%\begin{aligned}
%\|u\|_{H^s}+R^{s}\|u\|_{L^{2}}&\stackrel{\eqref{normaNormalissima}}{=}
%\Big(\sum_{j\in\Z^{d}}R^{2s}|\widehat{u}(j)|^{2}\Big)^{1/2}
%+\Big(\sum_{j\in\Z^{d}}|\widehat{u}(j)|^{2}\Big)^{1/2}
%+\Big(\sum_{j\in\Z^{d}}|j|^{2s}|\widehat{u}(j)|^{2}\Big)^{1/2}
%\\&\leq 3\Big(\sum_{j\in\Z^{d}}(\max{R,|j|})^{2s}|\widehat{u}(j)|^{2}\Big)^{1/2} 
%\stackrel{\eqref{normaJapJap}}{=}3\|u\|_{s,R}\,.
%\end{aligned}
%\end{equation*}
%Then we deduce the first inequality in \eqref{equiIncredibile1}. This concludes the proof.
\end{proof}

\begin{remark}\label{rmk:equivalenzaTotale}
By \eqref{def:cambio1} and combining 
\eqref{primaEquiv} and \eqref{equiIncredibile1} one has, for $s\geq0$ the equivalence
\begin{equation}\label{rmk:equivalenzaTotale1}
\|\psi\|_{s+\frac{1}{2},1} + \|v\|_{s -\frac{1}{2},1}+R^{s}(\|\psi\|_{\frac{1}{2},1} + \|v\|_{-\frac{1}{2},1})
\sim_{\tm}\|u\|_{s,R}\,.
\end{equation}
Similarly, \eqref{def:cambio2} and combining 
\eqref{secondaEquiv} and \eqref{equiIncredibile1} one has, for $s\geq0$ the equivalence
\begin{equation}\label{rmk:equivalenzaTotale2}
\|\psi\|_{s,1} + \|v\|_{s -1,1}+R^{s}(\|\psi\|_{0,1} + \|v\|_{-1,1})
\sim\|u\|_{s,R}\,.
\end{equation}
\end{remark}

\begin{definition}{\bf (Projectors).}\label{def:projectors}
Given $N>1$, we define 
the \emph{projector} $\Pi_{N}$ as
\[
\Pi_{N}u=\frac{1}{(2\pi)^{d/2}}
\sum_{\substack{|j|\leq N}}
\widehat{u}(j)e^{\ii j\cdot x}\,,
\qquad 
u\in H^{s}(\T^{d};\C)\,.
\]
We set $\Pi_{N}^{\perp}:=\uno-\Pi_{N}$ where $\uno$ is the identity.
\end{definition}

We have the following classical result.
\begin{lemma}\label{interopolo}
Let $s>0$. $(i)$ One has
\begin{align*}
\|\Pi_{N}u\|_{s+\beta,R}&\leq \max\{R,N\}^{\beta}\|u\|_{s,R}\,,
\qquad 
\forall\, u\in H^{s}(\T^{d};\C) \,,\; \beta\geq0
\\
\|\Pi_{N}^{\perp}u\|_{s,R}&\leq \max\{R,N\}^{-\beta}\|u\|_{s+\beta,R}\,,
\qquad 
\forall\, u\in H^{s+\beta}(\T^{d};\C) \,,\; \beta\geq0
\end{align*}
$(ii)$ For $0\leq s_1\leq s\leq s_2$, $s:=(1-\lambda) s_1+\lambda s_2$, one has
\begin{equation*}%\label{albero3}
\|u\|_{s,R}\leq 18 R^s \|u\|_{s_1,R}^{1-\lambda}\|u\|_{s_2,R}^{\lambda}\,,
\qquad 
\forall\, u\in H^{s_2}(\T^{d};\C)\,.
\end{equation*}
\end{lemma}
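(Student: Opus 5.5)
Both parts of Lemma \ref{interopolo} reduce to comparing Fourier weights coefficient by coefficient in the norm \eqref{normaJapJap}, $\|u\|_{s,R}^2=\sum_j\jjap{j}^{2s}|\widehat u(j)|^2$ with $\jjap{j}=\max\{R,|j|\}$. Throughout, the exponent of $\jjap{j}$ is read consistently with the displayed sum, so $\jjap{D}^s$ carries the weight $\jjap{j}^s$.

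\textbf{Item (i).} For the first inequality, $\Pi_N u$ has only modes with $|j|\le N$. For those modes $\jjap{j}=\max\{R,|j|\}\le\max\{R,N\}$, hence $\jjap{j}^{2(s+\beta)}\le \max\{R,N\}^{2\beta}\jjap{j}^{2s}$ for $\beta\ge0$. Summing against $|\widehat u(j)|^2$ over $|j|\le N$ and bounding by the full sum gives the claim.

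For the second inequality, $\Pi_N^\perp u$ has only modes with $|j|>N$, so $\jjap{j}\ge\max\{R,|j|\}\ge \max\{R,N\}$. Then $\jjap{j}^{2s}=\jjap{j}^{2(s+\beta)}\jjap{j}^{-2\beta}\le \max\{R,N\}^{-2\beta}\jjap{j}^{2(s+\beta)}$, and summing again gives the bound. The normalizing factor $(2\pi)^{-d/2}$ in Definition \ref{def:projectors} plays no role: at worst it multiplies both sides by a harmless constant, and with the convention \eqref{espFourier} it can be absorbed.

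\textbf{Item (ii).} The plan is Hölder's inequality in $\ell^1$ with conjugate exponents $1/(1-\lambda)$ and $1/\lambda$. Write $s=(1-\lambda)s_1+\lambda s_2$ and split pointwise
\[
\jjap{j}^{2s}|\widehat u(j)|^2=\big(\jjap{j}^{2s_1}|\widehat u(j)|^2\big)^{1-\lambda}\big(\jjap{j}^{2s_2}|\widehat u(j)|^2\big)^{\lambda}.
\]
Hölder then yields $\|u\|_{s,R}^2\le \|u\|_{s_1,R}^{2(1-\lambda)}\|u\|_{s_2,R}^{2\lambda}$, i.e. the estimate with constant $1$. Since $R\ge1$ and $s\ge0$, we have $18R^s\ge1$, so the stated bound follows a fortiori. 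The endpoint cases $\lambda\in\{0,1\}$ are trivial identities.

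The only step needing attention is bookkeeping of the conventions: the exponent of $\jjap{j}$ in $\jjap{D}$, and the normalization of $\Pi_N$. Neither affects the inequalities beyond constants already allowed by the statement, so I expect no genuine obstacle. The generous factor $18R^s$ suggests the authors passed through the equivalence \eqref{equiIncredibile1} with standard Sobolev interpolation, but the direct Hölder argument avoids that detour.
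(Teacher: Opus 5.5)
Your proof is correct. For item (ii) it takes a different and sharper route than the paper.

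The paper gives no argument of its own. It defers to \cite[Lemma 2.6]{feoMassJFA} and says the bound follows from standard interpolation of the $H^s$-norm combined with the equivalence relations of Lemma \ref{lemmaequivalenza}. In particular it uses \eqref{equiIncredibile1}, which compares $\|\cdot\|_{s,R}$ with $\|\cdot\|_{H^s}+R^s\|\cdot\|_{L^2}$. Passing back and forth through that equivalence is presumably the source of the lossy factor $18R^s$.

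You instead observe that $\|\cdot\|_{s,R}$ is itself a weighted $\ell^2$ norm with the single weight $\jjap{j}^{s}$. Hence $s\mapsto\log\|u\|_{s,R}$ is convex, and H\"older with exponents $1/(1-\lambda)$, $1/\lambda$ gives the inequality with constant $1$. This is self-contained, uses nothing about how $\jjap{j}$ relates to $\langle j\rangle$, and is strictly stronger than the stated bound. Downstream it would, for instance, remove the factor $18R^{s-1}$ that feeds into $\mathtt{K}_1$ in Proposition \ref{thm:energy}, although there it is harmless because $R=2\mathtt{M}$ gets absorbed into $\mathtt{M}^{qs}$. The paper's route buys only the reuse of a classical Sobolev inequality. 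Item (i) is the direct weight comparison, as you wrote it.

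One correction to your bookkeeping remark. The factor $(2\pi)^{-d/2}$ in Definition \ref{def:projectors} is not a harmless multiplicative constant for the second inequality in (i). Read literally, with $\widehat u(j)=u_j$ as in \eqref{espFourier}, $\Pi_N^\perp u=(\uno-\Pi_N)u$ would keep the multiple $(1-(2\pi)^{-d/2})u_j$ of every mode with $|j|\le N$. The bound then fails: take $u\equiv1$ and $N>R$, so the left side is $(1-(2\pi)^{-d/2})R^s$ while the right side is $(R/N)^{\beta}R^s$, which is smaller once $\beta$ is large.

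So the statement only makes sense with $\Pi_N$ the sharp Fourier truncation $u\mapsto\sum_{|j|\le N}u_j e^{\ii j\cdot x}$. Equivalently, $\widehat u(j)$ denotes the coefficient in the orthonormal basis $(2\pi)^{-d/2}e^{\ii j\cdot x}$, which changes every norm by the same global factor. That is the reading your argument actually uses, so nothing in the proof changes.
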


\begin{proof}
The proof is based on standard interpolation properties 
of the $H^s$-norm and the equivalence relations  
in Lemma \ref{lemmaequivalenza}. 
For the proof, see the corresponding \cite[Lemma 2.6]{feoMassJFA}.
\end{proof}

\begin{remark}[Scaling]\label{scaling property}
In the same spirit as Lemma \ref{interopolo}, 
using that $\jjap{j}\geq {R}$ for any $j\in\Z^{d}$,
%Remark \ref{rmk:facile}, 
we get,  for $s\geq s_0>d/2$,
\begin{align}
\|u\|_{s_0,R}&\leq R^{-(s-s_0)}\|u\|_{s,R}\,,\qquad u\in H^{s}(\T^d;\C)\,.
\label{confort3}
\end{align}
\end{remark}
We also need the following Lemma.
\begin{lemma}\label{lem:stimasmooth}
Let $R\geq 1$ and $s\geq 0$. The following holds.

\noindent
$(i)$ For $z\in H^{s}(\T^{d};\C)$, one has 
\begin{equation}\label{collina1}
\|\omega^{-\frac{1}{2}}z\|_{s+\frac{1}{2}, R}\lesssim_{\tm} \sqrt{R}\|z\|_{s,R}\,.
\end{equation}
\noindent
$(ii)$ For $u\in H_{\perp}^{s}(\T^{d};\C)$, one has 
\begin{equation}\label{collina2}
\||D|^{-1}u\|_{s+1, R}\lesssim R\|u\|_{s,R}\,.
\end{equation}
\end{lemma}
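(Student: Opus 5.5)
Both estimates are pointwise statements about Fourier multipliers. Since $\|\cdot\|_{s,R}$ is a weighted $\ell^2$ norm of Fourier coefficients with weight $\jjap{j}^{2s}$, $\jjap{j}=\max\{R,|j|\}$, it suffices to bound the ratio of symbols mode by mode. Concretely, for (i) we have
\[
\|\omega^{-\frac{1}{2}}z\|_{s+\frac12,R}^{2}=\sum_{j\in\Z^{d}}\jjap{j}^{2s}\,\jjap{j}\,\omega_j^{-1}|z_j|^{2},
\]
so (i) reduces to the uniform bound $\jjap{j}\,\omega_j^{-1}\lesssim_{\tm} R$ for all $j\in\Z^d$. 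Similarly, for (ii),
\[
\||D|^{-1}u\|_{s+1,R}^{2}=\sum_{j\neq0}\jjap{j}^{2s}\,\jjap{j}^{2}|j|^{-2}|u_j|^{2},
\]
where only $j\neq0$ contributes because $u\in H^s_\perp$. Thus (ii) reduces to $\jjap{j}/|j|\le R$ for $j\in\Z^d\setminus\{0\}$.

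For (ii) we split into two cases.
\begin{itemize}
\item If $|j|\ge R$, then $\jjap{j}/|j|=1\le R$.
\item If $1\le|j|<R$, then $\jjap{j}/|j|=R/|j|\le R$.
\end{itemize}
Here we use that $|j|\ge1$ for nonzero integer vectors. Summing over $j$ gives \eqref{collina2}, even with constant $1$.

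For (i) we use \eqref{equiomeghino}, namely $\omega_j\ge\max\{\sqrt{\tm},|j|\}$. Again we split into two cases.
\begin{itemize}
\item If $|j|\ge R$, then $\jjap{j}/\omega_j\le |j|/|j|=1$.
\item If $|j|<R$, then $\jjap{j}/\omega_j\le R/\sqrt{\tm}$.
\end{itemize}
Hence $\jjap{j}\,\omega_j^{-1}\le \max\{1,\tm^{-1/2}\}\,R$, and taking square roots in the sum gives \eqref{collina1} with constant $\max\{1,\tm^{-1/4}\}$.

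The only step needing any care is the low-frequency region $|j|<R$. There the weight gains at most the factor $R$ coming from $\jjap{j}$, while the denominator must be bounded below uniformly. This lower bound comes from $\tm>0$ in (i) and from the zero-average assumption, which forces $|j|\ge1$, in (ii). This is exactly why the constant in (i) depends on $\tm$ and why (ii) is stated only on $H^s_\perp$.
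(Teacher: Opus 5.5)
Your proof is correct and follows the same route as the paper. You reduce to a mode-by-mode bound on the multiplier symbol and split the frequencies at $|j|=R$, using $\omega_j\geq\max\{\sqrt{\tm},|j|\}$ for (i) and $|j|\geq 1$ on zero-average functions for (ii); the paper writes out only (i) and calls (ii) analogous, so your explicit treatment of (ii), with constant $1$, fills in that step.
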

\begin{proof}
We only prove item $(i)$. Item $(ii)$ follows reasoning similarly.
First of all we recall that $\omega_{j}\sim_{\tm}\max\{1,|j|\}$. Therefore, using 
\eqref{def:japjapModificato}-\eqref{normaJapJap},
we have
\[
\begin{aligned}
\|\omega^{-\frac{1}{2}}z\|_{s+\frac{1}{2}, R}^{2}&\lesssim 
\sum_{j\in\Z^{d}}\big(\max\{R,|j|\}\big)^{2(s+\frac{1}{2})}\omega_{j}^{-1}|z_{j}|^2
\\&
\lesssim_{\tm}R\sum_{|j|\leq }R^{2s}|z_{j}|^{2}+\sum_{|j|>R}|j|^{2s}|j|\omega_{j}^{-1}|z_{j}|^{2}
\lesssim_{\tm}R\|z\|_{s,R}^{2}\,.
\end{aligned}
\]
This implies the claim.
\end{proof}
\begin{remark}\label{rmk:collina}
Under the assumptions of Lemma \ref{lem:stimasmooth}, it is trivial to check the estimates
\begin{equation}\label{collina3}
\|\omega^{-\frac{1}{2}}z\|_{s, R}\lesssim_{\tm}\|z\|_{s,R}\,,
\qquad 
\||D|^{-1}u\|_{s, R}\lesssim \|u\|_{s,R}\,.
\end{equation}
\end{remark}

We now prove  that the norm $\|\cdot\|_{s,R}$
introduced in Definition  \ref{def:equinorm} for any fixed $R>0$ satisfies \emph{tame} estimates.
\begin{lemma}{\bf (Basic nonlinear estimates).}\label{stimaNonlin}
For any $s\geq s_0>d/2$ there exists an absolute constant $\mathtt{M}>0$ such that,
for all integers $q_1,q_2\geq0$, $q_1+q_2\geq1$,
\begin{equation}\label{confort}
\| u^{q_1}\bar{u}^{q_2} \|_{s,R}\leq \mathtt{M}^{(q_1+q_2-1) s}\|u\|_{s_0,R}^{q_1+q_2-1}\|u\|_{s,R}\,,
\qquad 
\forall\; u\in H^{s}(\T^d;\C)\,.
\end{equation}
Moreover one has
\begin{equation}\label{confort2}
\| u^{q_1}\bar{u}^{q_2} \|_{s,R}
\leq \mathtt{M}^{(q_1+q_2-1) s_0}
(\mathtt{M}/R)^{(q_1+q_2-1)(s-s_0)}\|u\|_{s,R}^{q_1+q_2}\,,
\end{equation}
\end{lemma}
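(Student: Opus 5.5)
We first prove a tame product estimate for two factors in the norm $\|\cdot\|_{s,R}$. Concretely, for $s\geq s_0>d/2$ and $u,v\in H^{s}(\T^d;\C)$ we aim for
\begin{equation}\label{prodottoTameProposal}
\|uv\|_{s,R}\leq \mathtt{M}^{s}\big(\|u\|_{s_0,R}\|v\|_{s,R}+\|u\|_{s,R}\|v\|_{s_0,R}\big)\,,
\end{equation}
with $\mathtt{M}$ independent of $s$ and $R$. This is done on the Fourier side. Write $(uv)_j=\sum_{k}u_{j-k}v_k$. The key elementary inequality is the triangle-type bound
\[
\jjap{j}_R\leq \jjap{j-k}_R+\jjap{k}_R\leq 2\max\{\jjap{j-k}_R,\jjap{k}_R\}\,,
\]
which holds because $\max\{R,|j|\}\leq \max\{R,|j-k|+|k|\}$. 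It gives
\[
\jjap{j}_R^{s}\leq 2^{s}\big(\jjap{j-k}_R^{s}+\jjap{k}_R^{s}\big)\,.
\]
Splitting the convolution accordingly, one needs to bound terms of the form $\big\|\sum_k \jjap{k}_R^{s}|v_k||u_{j-k}|\big\|_{\ell^2_j}$. By Young's inequality, such a term is at most $\|v\|_{s,R}\sum_k|u_k|$. Then Cauchy--Schwarz gives
\[
\sum_k|u_k|\leq \Big(\sum_k\jjap{k}_R^{-2s_0}\Big)^{1/2}\|u\|_{s_0,R}\leq \Big(\sum_k\langle k\rangle^{-2s_0}\Big)^{1/2}\|u\|_{s_0,R}\,,
\]
using $\jjap{k}_R\geq\langle k\rangle$. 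The sum on the right is a constant $C(s_0,d)$, independent of $R$. Taking $\mathtt{M}\geq 2\max\{1,C(s_0,d)\}$, and absorbing $C$ into $\mathtt{M}^{s}$ since $s\geq s_0\geq 1/2$ (possibly enlarging $\mathtt{M}$), yields \eqref{prodottoTameProposal}. Conjugation does not change $\|\cdot\|_{s,R}$, because $|\bar u_{-j}|=|u_j|$ and $\jjap{-j}=\jjap{j}$. So the estimate applies to every factor $u$ or $\bar u$.

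Next we prove \eqref{confort} by induction on $n=q_1+q_2$. The case $n=1$ is trivial. For the inductive step, write the monomial as $w\cdot\tilde u$ with $\tilde u\in\{u,\bar u\}$ and $w$ of degree $n-1$. Apply \eqref{prodottoTameProposal}, and use it also at $s=s_0$ to get $\|w\|_{s_0,R}\leq (2\mathtt{M}^{s_0})^{n-2}\|u\|_{s_0,R}^{n-1}$. Combined with the inductive hypothesis on $\|w\|_{s,R}$, this gives a bound of the form $C_n\mathtt{M}^{(n-1)s}\|u\|_{s_0,R}^{n-1}\|u\|_{s,R}$ with $C_n\lesssim n$. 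This combinatorial factor is absorbed by replacing $\mathtt{M}$ by a fixed multiple, say $2\mathtt{M}$. Indeed, $n\leq 2^{n-1}\leq 2^{(n-1)s}$ as long as $s\geq1$; if needed, one can instead arrange this by a cleaner induction that tracks constants as $\mathtt{M}^{(n-1)s}$ directly. The main obstacle is exactly this bookkeeping: keeping the constant of the form $\mathtt{M}^{(q_1+q_2-1)s}$, with $\mathtt{M}$ independent of $R$, $s$ and the degree. The $R$-independence is the one step to check carefully, and it is provided by the bound $\jjap{k}_R\geq\langle k\rangle$ above.

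Finally, \eqref{confort2} follows from \eqref{confort} applied with $s_0$ in place of $s$ for the constant, together with scaling. Use the tame bound at regularity $s$, but estimate $\mathtt{M}^{(n-1)s}=\mathtt{M}^{(n-1)s_0}\mathtt{M}^{(n-1)(s-s_0)}$. Then apply Remark \ref{scaling property}, \eqref{confort3}, in the form $\|u\|_{s_0,R}\leq R^{-(s-s_0)}\|u\|_{s,R}$ to each of the $n-1$ low-norm factors. This gives
\[
\mathtt{M}^{(n-1)s_0}(\mathtt{M}/R)^{(n-1)(s-s_0)}\|u\|_{s,R}^{n}\,,
\]
as claimed.
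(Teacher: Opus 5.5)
Your proof is correct. For \eqref{confort2} it coincides with the paper's argument: apply \eqref{confort}, split $\mathtt{M}^{(n-1)s}=\mathtt{M}^{(n-1)s_0}\mathtt{M}^{(n-1)(s-s_0)}$, and use \eqref{confort3} on each of the $n-1$ low-norm factors.

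For \eqref{confort} the paper gives no argument; it simply cites Lemma 3.4 of \cite{feoMassJFA}. You instead derive it from scratch. Your route is a Fourier-side tame product estimate, built from $\jjap{j}_R\leq\jjap{j-k}_R+\jjap{k}_R$, Young's inequality, and Cauchy--Schwarz with $\jjap{k}_R\geq\langle k\rangle$, followed by induction on the degree. The citation is shorter. Your version makes explicit the point that matters later: $\mathtt{M}$ depends only on $s_0$ and $d$ (through $\sum_k\langle k\rangle^{-2s_0}$) and not on $R$. This is essential, since the paper subsequently sets $R=2\mathtt{M}$.

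One bookkeeping correction. The combinatorial factor in your induction is of order $n2^{n-2}$, not $\lesssim n$, because the low-norm bound $\|w\|_{s_0,R}\leq(2\mathtt{M}^{s_0})^{n-2}\|u\|_{s_0,R}^{n-1}$ already carries $2^{n-2}$. Also, for $d=1$ one can have $s<1$, so the inequality $2^{n-1}\leq 2^{(n-1)s}$ is not available.

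Your ``cleaner induction'' handles both issues. Let $\mathtt{M}_0\geq1$ be the constant in your product estimate and set $\mathtt{M}:=4\mathtt{M}_0$. Using only $2\leq 4^{s}$, which holds since $s>1/2$, the inductive step reads $\mathtt{M}_0^{s}\big[(2\mathtt{M}_0^{s_0})^{n-2}+\mathtt{M}^{(n-2)s}\big]\leq 2\mathtt{M}_0^{s}\mathtt{M}^{(n-2)s}\leq\mathtt{M}^{(n-1)s}$. This gives \eqref{confort} with a constant independent of $R$, $s$ and the degree.
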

\begin{proof}
Estimate \eqref{confort} follows by Lemma $3.4$ in \cite{feoMassJFA}. Then 
the bound in \eqref{confort2} follows by combining also \eqref{confort3}.
\end{proof}

%
%We shall use the following.
%\begin{corollary}\label{coro:nonline}
%Consider 
%\[
%g(u):=\omega^{-1/2}f\left(\omega^{-1/2}\left(\frac{u+\bar u}{\sqrt 2}\right)\right) \,,
%\]
%where $\omega$ is in \eqref{omegoneWave} and $f$ is given by \eqref{nonlionearitawave}.
%Then, for any $s\geq s_0>d/2$,  there exists an absolute constant  $M>0$ such that
%
%\begin{align}
%|g(u)|_{s+1}&\leq \mathtt{M}^{2q s}|u|_{s_0}^{2q}|u|_{s}\,,\label{stimaff1}
%\\
%|g(u)|_{s+1}&\leq \mathtt{M}^{2qs_0}(\mathtt{M}/R)^{2q(s-s_0)}|u|_{s}^{2q+1}\,,\label{stimaff2}
%\end{align}
%for all $u\in H^{s}(\T^{d};\C)$.
%\end{corollary}
%\begin{proof}
%The bounds \eqref{stimaff1}-\eqref{stimaff2} follows using that $|\omega^{-1/2}z|_{s+1/2}\lesssim|z|_{s}$
%for any  $z\in H^{s}(\T^{d};\C)$ and Lemma \ref{stimaNonlin} with $q_1+q_2=2q+1$.
%\end{proof}

%\part{$\mathtt{m}\neq 0$: BNF and Energy estimates on $\T^d$}
\part{\texorpdfstring{$\mathtt{m}\neq0$}{m≠0}: 
BNF and Energy estimates on 
\texorpdfstring{$\T^{d}$}{Td}}
Here we study the equation \eqref{eq:wave} both in the case $d=1$ and $d>1$
when the mass parameter $\mathtt{m}\neq0$.

\noindent
In section \ref{sec:normaldim1} we provide the proof of Theorems \ref{teonuovo}-\ref{teoBNFwave}
regarding the one dimensional case. The proof is based on a normal form approach and heavily relies on Hamiltonian formalism.
In section \ref{sec:preliminaryBNF} we introduce the spaces of Hamiltonians and the small divisors estimates
needed for the normal form procedure. Some technical results
are postponed in Appendix \ref{sec:birkoff}.

\noindent
In section \ref{sec:stimetutte} we give the proof of Theorem \ref{thm:mainNOBNF} 
regarding the high dimensional case.
The result is based on the a priori energy estimates, which for semantic reasons are given in subsection \ref{sec:kleintd}.

\section{Preliminaries}\label{sec:preliminaryBNF}
\subsection{Spaces of Hamiltonians}
\label{sec:2}
In this section we shall restrict the study to the case of $d=1$.
The stability result will be a consequence of a normalization procedure 
that amounts in transforming the Hamiltonian 
into a suitable normal form (the Birkhoff normal form). 
We will follow the setting and the notation of \cite{FeoMass:Beam, BMP:CMP}.
In order to set the convenient functional setting, 
we shall rather work in the space of sequences 
that correspond to the above functional spaces, 
by systematically identifying  $ L^2(\T,\C)$ with  the Banach space 
$\cF(\ell^2(\C))$ of $2\pi$-periodic functions (expanded as in \eqref{espFourier})
such that their Fourier's coefficients $(u_j)_{j\in\Z}\in \ell^2(\C)$. 
Then,  the  Hamiltonian  in \eqref{waveHam} (see also \eqref{nonlionearitawave})
reads 
\begin{equation}\label{waveHamFourier}
H(u,\bar{u})= 
\sum_{j\in\Z} \omega_{j}|u_{j}|^{2} + 
\sum_{\substack{j_i\in\Z,\s_i\in\{\pm\} 
\\
\sum_{i=1}^{q+2}\s_i j_i=0}} 
F_{j_1,\ldots,j_{q+2}}^{\s_1\ldots\s_{q+2}}
u_{j_1}^{\s_1}\cdots u_{j_{q+2}}^{\s_{q+2}}\,=: \sum_{j\in\Z} \omega_{j}|u_{j}|^{2} + \tH_{q+2}\,,
\end{equation}
%\begin{equation}\label{waveHamFourier}
%H(u,\bar{u})= 
%\sum_{j\in\Z} \omega_{j}|u_{j}|^{2} + \sum_{p=3}^{\infty}
%\sum_{\substack{j_i\in\Z,\s_i\in\{\pm\} \\
%\sum_{i=1}^{p}\s_i j_i=0}} 
%F_{j_1,\ldots,j_p}^{\s_1\ldots\s_p}
%u_{j_1}^{\s_1}\cdots u_{j_p}^{\s_p}\,=: \sum_{j\in\Z} \omega_{j}|u_{j}|^{2} + \tH_{\ge 3}\,,
%\end{equation}
%where we used the analyticity in the neighborhood 
%of the origin of $F$ to expand 
%the second integral in \eqref{waveHam}
for some coefficients 
$|F_{j_1,\ldots,j_{q+2}}^{\s_1\ldots\s_{q+2}}|
\lesssim C^{q+2}$ for some $C>0$, and,  
rearranging the sum in multi-index notation we set
\[
\tH_{q+2}(u,\bar u) = 
\sum_{\substack{\al,\bt\in\N^\Z\,, |\al| + |\bt|=q+2 \\ \sum_j j(\al_j - \bt_j)=0}} 
H_{\al,\bt} u^{\al}{\bar{u}}^{\bt}\,,
\]

\noindent
Accordingly, by defining 
\begin{align*}
d u_j = \frac{1}{\sqrt 2}\pa{d x_j+ \imm d y_j}\,,
\quad 
d \bar u_j = \frac{1}{\sqrt 2}\pa{d x_j- \imm d y_j}\,,
\\  
\frac{\partial}{\partial  u_j} =  
\frac{1}{\sqrt 2}\pa{\frac{\partial}{\partial x_j} 
- \imm \frac{\partial}{\partial y_j}}\,,
\quad  
\frac{\partial}{\partial  \bar u_j} =  
\frac{1}{\sqrt 2}\pa{\frac{\partial}{\partial x_j} 
+ \imm \frac{\partial}{\partial y_j}}\,,
\end{align*}
the corresponding $2$-form and Hamiltonian vector field read
\begin{equation}\label{symp form seq}
\imm \sum_{j\in\Z} du_j\wedge d\bar{u}_j\,, 
\quad \quad 
X_H^{(j)} = -\imm \frac{\partial }{\partial{\bar{u}_j}}H(u) \,.
\end{equation}
Note that when some real analytic $H$ admits 
a holomorphic extension $\widehat H$ 
on some ball of radius $r>0$ in $\ell^2(\C)$ that is  
\[ 
(u_+, u_{-})\in B_r(\ell^2(\C))\times B_r(\ell^2(\C)) \to \widehat H(u_+,u_-)\, 
:\quad \quad 
H(u)=\widehat H (u,\bar u)\,,
\]  
then it admits a Taylor expansion
\[
\widehat H(u_+,u_-)  = \sum^\ast_{\al,\bt\in\N^\Z} H_{\al,\bt}u_+^\al u_-^\bt\,,
\]
where we denote by $\sum^\ast$ the sum 
restricted to those $\al,\bt: |\al|+ |\bt|<\infty$.

\noindent
One can see that
\[
\frac{\partial}{\partial \bar u_j} H(u) 
= 
\frac{\partial \widehat H(u_+,u_-)}{\partial u_{-,j}} \Big\vert_{u_+=\bar u_-=u}\,.
\]

%\noindent
%{\bf Spaces of sequences.}
From now on we shall pass to the Fourier side and work on spaces of weighted sequences.
%Let  $\mathtt w=(\mathtt w_j)_{j\in\Z}$ be  the  real sequence  
%\begin{align}
%&\tw=\tw(p):= \pa{\lfloor j\rfloor^{ p}}_{j\in\Z}\,,
%\label{peso sub}
%\end{align}
Recalling \eqref{normaJapJap}-\eqref{def:japjapModificatoRR2}
%\eqref{normnorma}-\eqref{es:fgrowth} 
we consider the Hilbert space
\begin{equation}\label{pistacchio}
\th_{p}:=
\Big\{u:= \pa{u_j}_{j\in\Z}\in\ell^2(\C)\; : \; \|u\|_{p}^2:= 
\sum_{j\in\Z} \mathtt \lfloor j\rfloor^{2p} \abs{u_j}^2 < \infty
\Big\}\,,
\end{equation} 
endowed with the scalar product
\begin{equation}\label{scalarW}
(u,v)_{\mathtt{h}_{p}}:=\sum_{j\in\Z} \mathtt \lfloor j\rfloor^{2p}u_j \bar v_j\,,\qquad u,v\in\th_{p}\,,
\end{equation}
and where $\|\cdot\|_{p}$ is in \eqref{normaJapJap}.
Moreover, given $r>0$, we denote by $B_r(\th_{p})$ 
the closed
ball of radius $r$ centred at the origin of $\th_{p}$.

\vspace{0.5em}
\noindent
In the following we shall systematically identify $2\pi$-periodic 
functions with their Fourier coefficients, writing $\mathtt{h}_{p}$ instead of $H^{p}$.

The space $\mathtt{h}_{p}$ satisfies an \emph{algebra} property in the following sense. 
Let
$\star:\th_{p} \times \th_{p} \to \th_{p}$ be the convolution operation defined as
\[
\pa{f,g} \mapsto f\star g :=\Big(
\sum_{j_1+j_2=j
} 
f_{j_1}g_{j_2}\Big)_{j\in \Z}\,.
\]
The map $\star: \pa{f,g} \mapsto f\star g$ is continuous in the following sense:
\begin{lemma}\label{algebra}
For $p>1/2$ we have 
\begin{align}
\|f\star g\|_{p}&\leq \CalgM \|f\|_{p} \|g\|_{p}\,,
\qquad\qquad \forall f,g\in \th_{p}\,,\label{stimacalgcalgMM}
\end{align}
where 
\begin{equation}\label{costanti algebra}
\CalgM:= \sqrt{2}\sqrt{2 + \frac{2p+1}{2p-1}}\,.
\end{equation}
\end{lemma}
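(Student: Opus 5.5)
We prove \eqref{stimacalgcalgMM} by a weighted Cauchy--Schwarz argument on the convolution sum, with $\jjap{j}=\max\{2,|j|\}$ (recall $R=2$ in $\|\cdot\|_p$). Write $h:=f\star g$, so $h_j=\sum_{j_1+j_2=j}f_{j_1}g_{j_2}$.

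\textbf{Step 1: splitting the weight.} If $j_1+j_2=j$ then $|j|\le|j_1|+|j_2|\le 2\max\{|j_1|,|j_2|\}$. Hence $\jjap{j}\le 2\max\{\jjap{j_1},\jjap{j_2}\}$, and for $p>0$
\[
\jjap{j}^{p}\le 2^{p}\big(\jjap{j_1}^{p}+\jjap{j_2}^{p}\big).
\]
The factor $2^p$ is too lossy for the $p$-independent constant \eqref{costanti algebra}, so I would refine the split. On the region $|j_1|\ge|j_2|$ one has $|j|\le 2|j_1|$, and I would compare $\jjap{j}$ with $\jjap{j_1}$ directly rather than through the crude maximum. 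The aim is a bound of the form $\jjap{j}^{p}\lesssim \jjap{j_1}^{p}+\jjap{j_2}^{p}$ with a constant that stays bounded in $p$, whose square contributes the factor $2$ in front of the square root in \eqref{costanti algebra}.

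\textbf{Step 2: Cauchy--Schwarz on each piece.} After the split,
\[
\jjap{j}^{p}|h_j|\lesssim \sum_{j_1+j_2=j}\jjap{j_1}^{p}|f_{j_1}||g_{j_2}|+\sum_{j_1+j_2=j}|f_{j_1}|\jjap{j_2}^{p}|g_{j_2}|.
\]
For the first sum I would write
\[
|g_{j_2}|=\jjap{j_2}^{-p}\cdot\jjap{j_2}^{p}|g_{j_2}|
\]
and apply Cauchy--Schwarz. This gives
\[
\Big(\sum_{j_1+j_2=j}\jjap{j_1}^{p}|f_{j_1}||g_{j_2}|\Big)^2\le \Big(\sum_{k\in\Z}\jjap{k}^{-2p}\Big)\sum_{j_1+j_2=j}\jjap{j_1}^{2p}|f_{j_1}|^2\jjap{j_2}^{2p}|g_{j_2}|^2 .
\]
Summing over $j$ gives $S_p\,\|f\|_p^2\|g\|_p^2$, with $S_p:=\sum_{k\in\Z}\jjap{k}^{-2p}$. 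The second sum is handled symmetrically.

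\textbf{Step 3: computing $S_p$.} Split $|k|\le2$ from $|k|\ge3$. The first group has $5$ terms of size $2^{-2p}$. The tail is bounded by $2\int_2^\infty x^{-2p}\,dx=\frac{2\cdot 2^{1-2p}}{2p-1}$. So $S_p$ is at most of order $2^{-2p}\big(5+\frac{4}{2p-1}\big)$, which is bounded by $2+\frac{2p+1}{2p-1}$.

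\textbf{Step 4: assembling.} Using $(a+b)^2\le 2(a^2+b^2)$ and collecting the pieces yields $\|f\star g\|_p^2\le 2\big(2+\frac{2p+1}{2p-1}\big)\|f\|_p^2\|g\|_p^2$, which is \eqref{stimacalgcalgMM} with the constant \eqref{costanti algebra}.

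\textbf{Main obstacle.} The hard step is Step 1: getting a splitting constant independent of $p$, since the naive bound loses $2^{p}$. I would first try to absorb the $2^{p}$ into the decay of the weights. The $2^{-2p}$ gained in $S_p$ from $\jjap{k}\ge 2$ may exactly compensate the $2^{2p}$ lost in the split. Failing that, I would fall back on the argument of \cite[Lemma 2.6]{feoMassJFA} or \cite{FeoMass:Beam}, where the same constant appears.
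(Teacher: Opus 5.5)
\textbf{The gap is Step 1.} You leave it open, and its stated target cannot be reached. Take $j_1=j_2=n\ge 2$ and $j=2n$. Then $\jjap{j}^p=(2n)^p=2^{p-1}\big(\jjap{j_1}^p+\jjap{j_2}^p\big)$, so no bound $\jjap{j}^p\le C(\jjap{j_1}^p+\jjap{j_2}^p)$ holds with $C$ bounded in $p$.

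Your fallback, cancelling the $2^{2p}$ against the $2^{-2p}$ inside $S_p$, does give a $p$-uniform constant. It does not give the constant in \eqref{costanti algebra}, and Step 4 only reaches that constant by dropping factors. Step 3 has already spent the $2^{-2p}$ to bound $S_p$ by the bracket. The factor $2$ from having two Cauchy--Schwarz pieces, and the splitting constant squared, are never accounted for. Done honestly, $\jjap{j}^p\le 2^p(\jjap{j_1}^p+\jjap{j_2}^p)$, Step 2 and $(a+b)^2\le 2(a^2+b^2)$ give
\[
\|f\star g\|_p^2\le 4\cdot 2^{2p}S_p\,\|f\|_p^2\|g\|_p^2\le \Big(20+\frac{16}{2p-1}\Big)\|f\|_p^2\|g\|_p^2 ,
\]
whereas $(\CalgM)^2=2\big(2+\frac{2p+1}{2p-1}\big)=6+\frac{4}{2p-1}$. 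Splitting instead into the disjoint regions $|j_1|\ge|j_2|$ and $|j_1|<|j_2|$ only improves this to $10+\frac{8}{2p-1}$. The inequality $\jjap{j}\le 2\max\{\jjap{j_1},\jjap{j_2}\}$ is itself too lossy. The weaker constant would still be enough for the later use in \eqref{cardano2}, but it is not the lemma as stated.

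\textbf{The fix.} Avoid any additive split and use a single Cauchy--Schwarz with the full kernel:
\[
\jjap{j}^p|(f\star g)_j|\le W_j^{1/2}\Big(\sum_{j_1+j_2=j}\jjap{j_1}^{2p}|f_{j_1}|^2\,\jjap{j_2}^{2p}|g_{j_2}|^2\Big)^{1/2},\qquad W_j:=\sum_{j_1+j_2=j}\frac{\jjap{j}^{2p}}{\jjap{j_1}^{2p}\jjap{j_2}^{2p}} .
\]
Summing over $j$ gives $\|f\star g\|_p^2\le (\sup_j W_j)\,\|f\|_p^2\|g\|_p^2$.

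Next, use $\jjap{j}\le\jjap{j_1}+\jjap{j_2}$ instead of the $2\max$ bound, i.e. $\jjap{j}/(\jjap{j_1}\jjap{j_2})\le \jjap{j_1}^{-1}+\jjap{j_2}^{-1}$. Combine this with convexity, $(x+y)^{2p}\le 2^{2p-1}(x^{2p}+y^{2p})$, which is valid since $2p>1$. This gives
\[
W_j\le 2^{2p}S_p=\sum_{k}(2/\jjap{k})^{2p}\le 5+\frac{4}{2p-1}\le(\CalgM)^2,
\]
where the middle inequality is exactly your Step 3 computation.

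So the $2^{2p}$ is indeed absorbed by $S_p$, as you anticipated. What makes the stated constant reachable is the factor $\tfrac12$ from convexity, which your additive split throws away. The paper itself gives no argument here and only refers to Lemma 5.5 of Biasco--Massetti--Procesi.
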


\begin{proof}
It follows by Lemma $5.5$ in \cite{BMP:CMP}.
\end{proof}

\vspace{0.5em}
By endowing the space $\th_{p}$ in \eqref{pistacchio}
with the symplectic structure induced by the symplectic form 
$\Omega$ in \eqref{symp form seq}, 
we introduce the following class of Hamiltonians.

\begin{definition}\label{def:admiHam}
Let $r>0$ and consider a Hamiltonian
$ H : B_r(\th_{p}) \to \R$
such that there exists a pointwise  
absolutely convergent power series expansion\footnote{As usual given 
a vector $k\in \Z^\Z$, $|k|:=\sum_{j\in\Z}|k_j|$.}
\begin{equation}\label{HamPower}
H(u)  = 
\sum_{\substack{
\al,\bt\in\N^\Z\,, \\2\leq |\al|+|\bt|<\infty} }
\!\!\!
H_{\al,\bt}u^\al \bar u^\bt\,,
\qquad
u^\al:=\prod_{j\in\Z}u_j^{\al_j}\,.
\end{equation}

\noindent
$\bullet$ {\bf (Admissible Hamiltonians).}
We say that $H$ as in \eqref{HamPower}
is \emph{admissible} if  the following properties hold: 
\begin{enumerate}
\item \emph{Reality condition}:
\begin{equation}\label{real}
H_{\al,\bt}= \overline{ H}_{\bt,\al}\,,\qquad \forall\, \al,\bt\in\mathbb{N}^{\Z}\,;
\end{equation}
\item \emph{Momentum conservation}:	
\begin{equation}\label{momento}
H_{\al,\bt}\neq0\quad \Rightarrow\quad
\pi(\al - \bt) := \sum_{j\in\Z}j\pa{\al_j - \bt_j}= 0\,.
\end{equation}
\end{enumerate}	

\noindent
We denote by 
$\mathcal{A}_{r}(\th_{p})$ the the space of admissible
 Hamiltonians such that the \co{majorant}
\begin{equation}\label{etamag}
\und { H} (u):= 
\sum_{(\alpha,\beta)\in\cM} \abs{{H}_{\alpha,\beta}}u^{\alpha}\bar{u}^{\beta}
\end{equation} 
is point-wise  absolutely convergent on $B_r(\th_{p})$,
where we set
\begin{equation}\label{mass-momindici}
\mathcal{M}:=
\left\{
(\alpha,\beta)\in \mathbb{N}^{\Z}\times\N^{\Z} : \pi(\al - \bt)= 0, 
|\alpha|+|\beta|<\infty
\right\}\,,
\end{equation}

\smallskip
\noindent
$\bullet$ {\bf (Regular Hamiltonians).} 
We say that $H$ as in \eqref{HamPower}
is \emph{regular} if  the following properties hold: 
\begin{enumerate}
\item $H$ belongs to $\mathcal{A}_{r}(\th_{p})$;

\item one has
\begin{equation}\label{normaHamilto}
|H|_{r,p}
:=
r^{-1} \Big(
\sup_{\norma{u}_{p}\leq r} 
\norma{{X}_{{\underline H}}}_{p} 
\Big) < \infty\,.
\end{equation}
\end{enumerate}	
 We denote by $\cH_{r}(\th_{p})$ the space of regular Hamiltonians.
% 
% \smallskip
% $\bullet$ {\bf (Scaling degree).} Given $\td \in\N$, let  $\cH^{(\td)}$  be the vector space 
%of homogeneous polynomials of degree $\td + 2$, 
%that is admissible  Hamiltonians of the form
%\[
%\sum_{\substack{(\al,\bt)\in\mathcal{M} \\ |\al| + |\bt| = \td + 2}} H_{\al,\bt} u^{\al} \bar{u}^{\bt}\,.
%\]
%We shall say that a regular Hamiltonian $H$ has 
%\emph{scaling degree} $\td=\td(H)$ if  $H\in \cH_{r}(\th_{\mathtt w})\cap\cH^{(\td)}$.
\end{definition}

\begin{remark}
Note that, given two admissible Hamiltonians $H,G$,
the Poisson brackets in \eqref{Poissonbrackets} read 
\begin{equation}\label{poipoisson}
\{H,G\}={\rm i}
\sum_{j\in\Z}
\big(\partial_{u_j}G\partial_{\bar{u}_j}H-
\partial_{\bar{u}_j}G\partial_{u_j}H\big)\,.
\end{equation}
\end{remark}

\begin{remark}
\label{embeddignsspaces}
We remark the following facts:

\noindent
$\bullet$
%Given two positive sequences $\tw = \pa{\tw_j}_{j\in\Z},\tw' = (\tw'_j)_{j\in\Z}$
%we write that $\tw\leq \tw'$ if the inequality holds
%point wise, namely
%\[
%\tw\leq \tw' \quad
%\iff\quad
%\tw_j\leq \tw'_j\,,\ \ \ \forall\, j\in\Z\,.
%\]
If $r'\le r$ and $p\leq p'$ 
then $B_{r'}(\th_{p'}) \subseteq B_r(\th_{p})$.

\noindent
$\bullet$ If a Hamiltonian $H$ satisfies \eqref{real},  it means that it 
is real analytic in the real and imaginary part of $u$.

\noindent
$\bullet$ If a Hamiltonian $H$ satisfies \eqref{momento} then it 
Poisson commutes with $\sum_{j\in\Z} j\abs{u_j}^2$.

\noindent
$\bullet$ The Hamiltonian functions being defined modulo a constant term, 
we shall assume without loss of generality that $H(0)=0$. 
\end{remark}
Finally,  let us consider  a regular Hamiltonian 
$S\in\cH_r(\th_p)$ and  its flow $\Phi_{S,t}$
which is well-defined (see Lemma \ref{ham flow} for details), 
and let 
\begin{equation}\label{quadraticWave}
D_{\omega}:=\sum_{j\in\Z}\omega_{j}|u_j|^{2}\,,
\end{equation}
and its flow $\phi_{\omega, t}$,  
where $\omega_j = \sqrt{j^2 + \mathtt{m}}$.

\begin{definition} \label{def:adjoperator}
$(i)$ The \emph{Lie derivative} of $H$ along the flow of $S$ is given by 
\begin{equation}\label{liederiv}
L_S H =  \frac{d}{dt}_{|t=0} \phi_{S,t}^* H(u) = \frac{d}{dt}_{|t=0} H(\phi_{S,t}(u))\,.
\end{equation}

\noindent
$(ii)$ Given $H\in \cH_{r}(\th_{p})$  we define the \emph{adjoint action} 
of the Hamiltonian $D_{\omega}$ 
as the Lie derivative operator 
\begin{equation}\label{def:adjaction}
L_{\omega} H:= \frac{d}{dt}_{|t=0} \phi_{\omega,t}^* H 
= 
\sum_{(\al,\bt)\in \mathcal{M}} 
-{\rm i}\big(\omega \cdot(\al-\bt)\big)H_{\al,\bt}u^{\al}\bar{u}^{\bt}\,,
\end{equation}
where $\mathcal{M}$ is the set of indexes defined in \eqref{mass-momindici}.
\end{definition}

\begin{remark}{\bf (Change of variables).}\label{rmk:change}
Along the paper we shall study how a Hamiltonian 
$H$ behaves  along the flow of 
a given regular Hamiltonian $S$.
In fact one has
\[
\begin{aligned}
\frac{d}{dt} H(\phi_{S,t}(u))=dH(\phi_{S,t}(u))\cdot X_{S}(\phi_{S,t}(u))
&=
\Omega(X_{H}(\phi_{S,t}(u)), X_{S}(\phi_{S,t}(u)))
\\&
=\{H,S\}\circ \phi_{S,t}(u)
\stackrel{\eqref{liederiv}}{=}(L_{S}H)\circ \phi_{S,t}(u)\,.
\end{aligned}
\]
Then \eqref{liederiv} corresponds to $L_{S}H=\{H,S\}$.
\end{remark}

\noindent
In our work the crucial point is 
that all the dependence on the parameters $r,p$ 
of the  norm in \eqref{normaHamilto}  can be {\it encoded}
in the coefficients
\begin{equation}\label{coeffSobo}
c^{(j)}_{r,p}(\al,\bt)=
r^{|\al|+|\bt|-2}\frac{\lfloor j\rfloor^{ 2p}}{\prod_{i\in\Z}\lfloor i\rfloor^{p(\al_i+\bt_i)} }\,.
\end{equation}
defined for any $\al,\bt\in\N^\Z$ and $j\in\Z$.

To formalize such \emph{encoding} we reason as follows.
For any $H\in \cH_{r}(\th_{p})$ we define  a map
\[
B_1(\ell^2)\to \ell^2 \,,\quad y=\pa{y_j}_{j\in \Z }\mapsto 
\pa{Y^{(j)}_{H}(y;r,p)}_{j\in \Z}
\]
by setting
\begin{equation}\label{giggina}
Y^{(j)}_{H}(y;r,p) := 
\sum_{(\al,\bt)\in\mathcal{M}} |H_{\al,\bt}| \frac{(\al_j+\bt_j)}{2}c^{(j)}_{r,p}(\al,\bt) y^{\al+\bt-e_j}
\end{equation}
where  $e_j$ is the $j$-th basis vector in $\N^\Z$, while the coefficient
$c^{(j)}_{r,p}(\al,\bt)$ is defined right above in \eqref{coeffSobo}.
The following properties give a systematic way for 
computing the norm of a given Hamiltonian 
and its relation w.r.t. another one.
We have the following.
\begin{lemma} \label{norme proprieta} 
Let $\ri, r'>0$, $p,p'>1/2$. % $\twi,\twf\in \R_+^\Z$. 
The following properties hold.

\vspace{0.3em}
\noindent
$(1)$ The norm of $H$ can be expressed as
\begin{equation}\label{ypsilon}
\abs{H}_{r,p}= 
\sup_{|y|_{\ell^2}\le 1}\abs{Y_H(y;r,p)}_{\ell^2}\,.
\end{equation}

\vspace{0.3em}
\noindent
$(2)$ Given $H^{(1)}\in \cH_{r',p'}$ and $H^{(2)}\in \cH_{\ri,p}$\,,
such that $\forall\, \al,\bt\in \N^\Z$ and  $j\in \Z$ with $\al_j+\bt_j\neq 0$ 
one has
\begin{equation}\label{alberellobello}
|H^{(1)}_{\al,\bt}| c^{(j)}_{\rf,p'}(\al,\bt)  \le 
c|H^{(2)}_{\al,\bt}| c^{(j)}_{\ri,p}(\al,\bt),
\end{equation}
for some $c>0$, then
\[
|H^{(1)}|_{\rf,p'}
\le c |H^{(2)}|_{\ri,p}\,.
\]

\vspace{0.3em}
\noindent
$(3)$ {\bf (Immersion).} For any $p>1$ the norm $\norm{\cdot}_{r,p}$ 
is monotone increasing in $r$. Moreover, letting $r>0$,
one has, 
for any $p' >0$, $p>1$,
\begin{equation}\label{emiliapara2}
|H|_{r,p+p'} \le |H|_{r,p}\,.
\end{equation}

\vspace{0.3em}
\noindent
$(4)$ {\bf (Poisson Brackets).}
For $0 <\rho\leq r$  we have
\begin{equation}\label{commXHK}
|\{F,G\}|_{r,p}
\le 
4\Big(1+\frac{r}{\rho}\Big)
|F|_{r+\rho,p}
|G|_{r+\rho,p}\,.
\end{equation}
\end{lemma}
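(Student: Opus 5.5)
The plan is to reduce everything to the majorant vector field and the coefficient encoding \eqref{coeffSobo}--\eqref{giggina}. The main tool is the homogeneity of $\underline H$ and of $c^{(j)}_{r,p}$ in $u$.

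\textbf{Item (1).} I would write $u_j = r\, y_j \lfloor j\rfloor^{-p}$, so that $\|u\|_p\le r$ is equivalent to $|y|_{\ell^2}\le 1$. Since $\partial_{\bar u_j}\underline H$ has nonnegative coefficients, the supremum of $\|X_{\underline H}\|_p$ over the ball is attained on vectors with nonnegative entries. After this normalization the $j$-th component of $r^{-1}\lfloor j\rfloor^{p}\,\partial_{\bar u_j}\underline H$ is exactly a sum of $|H_{\al,\bt}|\,r^{|\al|+|\bt|-2}\lfloor j\rfloor^{2p}\prod_i\lfloor i\rfloor^{-p(\al_i+\bt_i)}$ times monomials $y^{\al+\bt-e_j}$, up to combinatorial factors. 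The symmetrization $\frac{\al_j+\bt_j}{2}$ is what appears when one passes to real variables (equivalently, when one takes the supremum over $y\ge0$, where $u=\bar u$). This gives \eqref{ypsilon}. The only care needed is the precise matching of the factor $\tfrac12(\al_j+\bt_j)$ versus $\bt_j$; I would handle it by noting that the supremum is the same on real nonnegative sequences.

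\textbf{Item (2).} This is immediate from (1). Under \eqref{alberellobello}, the series $Y^{(j)}_{H^{(1)}}(y;r',p')$ is dominated termwise by $c\,Y^{(j)}_{H^{(2)}}(y;r,p)$ for $y\ge0$. Taking $\ell^2$ norms and suprema then yields the claim.

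\textbf{Item (3).} I would check \eqref{alberellobello} on coefficients with $c=1$. Monotonicity in $r$ follows since $|\al|+|\bt|\ge 2$ makes $r^{|\al|+|\bt|-2}$ nondecreasing in $r$. For \eqref{emiliapara2} one needs
\[
\frac{\lfloor j\rfloor^{2p'}}{\prod_i \lfloor i\rfloor^{p'(\al_i+\bt_i)}}\le 1
\quad\text{whenever } \al_j+\bt_j\ge1,\ \pi(\al-\bt)=0 .
\]
Momentum conservation gives $|j|\le\sum_{i\ne j\text{-slot}}|i|(\al_i+\bt_i)$. Hence $\lfloor j\rfloor$ is at most the product of the other factors $\lfloor i\rfloor$, each of which is $\ge R=2$; this uses $\max\{2,a+b\}\le \max\{2,a\}\max\{2,b\}$. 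Combined with the factor $\lfloor j\rfloor$ itself present in the denominator, this bounds $\lfloor j\rfloor^{2}$ by the full product, which is what is needed.

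\textbf{Item (4).} This is the main step, and I would follow the Cauchy-estimate route of \cite{BMP:CMP}. Expanding $\{F,G\}$ via \eqref{poipoisson}, I would bound its majorant coefficientwise by the majorant of the bracket of the majorants. The key is a Leibniz-type split: for each monomial of $\{F,G\}$ obtained by contracting an index $k$, one distributes the weight $\lfloor j\rfloor^{2p}$ according to whether $j$ comes from the $F$ or the $G$ factor. The resulting term is controlled by $Y_F$ composed with $DY_G$, or conversely. Derivatives of $Y_G(\cdot;r+\rho,p)$ in $y$ are then estimated by a Cauchy estimate on the larger radius. Since $r^{n}\le (r+\rho)^n$ with gain $(r/(r+\rho))^{n-2}$, summing $n(r/(r+\rho))^{n-2}$ produces the factor $(1+r/\rho)$, and the constant $4$ collects the two orderings and the two sesquilinear pieces. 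I expect this bookkeeping of weights in the contracted index to be the main obstacle. I would first try it in the form $|X_{\{F,G\}}|\le |dX_F\,X_G|+|dX_G\,X_F|$ on majorants, using $\sup_{\|u\|\le r}\|dX_{\underline G}(u)\|\le \rho^{-1}\sup_{\|u\|\le r+\rho}\|X_{\underline G}\|$.
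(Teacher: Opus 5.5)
Your route differs from the paper's. The paper gives no argument: it imports (1)--(2), (3) and (4) from Lemma 3.1, Proposition 6.3 and Proposition 2.1 of Biasco--Massetti--Procesi, where they are proved for general weights $\mathtt w$. Your self-contained treatment of (3) is correct. In particular, the proof of $|H|_{r,p+p'}\le|H|_{r,p}$ via momentum conservation and $\max\{2,a+b\}\le\max\{2,a\}\max\{2,b\}$ is elementary and does not even need $p>1$. Item (2) follows from (1) as you say. Two steps, however, need repair.

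\textbf{Item (1).} The replacement of $\bt_j$ by $\tfrac12(\al_j+\bt_j)$ does not come from restricting to real nonnegative $y$. On the cone $u=\bar u\ge 0$, the $j$-th component of $r^{-1}\lfloor j\rfloor^{p}\partial_{\bar u_j}\underline H$ is exactly $\sum|H_{\al,\bt}|\,\bt_j\,c^{(j)}_{r,p}(\al,\bt)\,y^{\al+\bt-e_j}$. This equals $Y^{(j)}_H(y;r,p)$ only because of the reality condition $H_{\al,\bt}=\overline{H}_{\bt,\al}$, which gives $|H_{\al,\bt}|=|H_{\bt,\al}|$. That symmetry combines with the invariance of the index set, of $c^{(j)}_{r,p}$ and of $y^{\al+\bt-e_j}$ under $\al\leftrightarrow\bt$. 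Without reality, (1) is false: $H=u_0^2$ conserves momentum and has $X_{\underline H}\equiv 0$, but $Y^{(0)}_H(y)=y_0$.

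\textbf{Item (4).} The weight bookkeeping you worry about disappears. $Y_H(\cdot;r,p)$ is just $X_{\underline H}$ conjugated by the linear isometry $y\mapsto(r y_i\lfloor i\rfloor^{-p})_i$, so the vector-field route you mention last is the right one. The Cauchy estimate, however, cannot be invoked for $u\mapsto X_{\underline G}(u)$ directly, since that map depends on $\bar u$ and is only real-analytic.

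The fix is as follows.
\begin{itemize}
\item Reduce to $x\ge 0$, which is legitimate because the coefficients are nonnegative. Set $g_j(x):=\partial_{\bar u_j}\underline G(x,x)$ and $f_j(x):=\partial_{\bar u_j}\underline F(x,x)$. These are power series in $x$ alone with nonnegative coefficients. They are holomorphic on the open complex ball of $\th_p$ of radius $r+\rho$, and $\sup\|g\|_p\le(r+\rho)|G|_{r+\rho,p}$.
\item The coefficients of $\{F,G\}$ are dominated by those of $\sum_k(\partial_{u_k}\underline G\,\partial_{\bar u_k}\underline F+\partial_{\bar u_k}\underline G\,\partial_{u_k}\underline F)$.
\item Use reality once more: $\partial_{u_k}\underline F=\partial_{\bar u_k}\underline F$ on the diagonal $u=\bar u=x$. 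This gives, componentwise, $|X_{\underline{\{F,G\}}}(x)|\le Dg(x)f(x)+Df(x)g(x)$.
\item Apply the Banach-space Cauchy estimate $\|Dg(x)\|\le\rho^{-1}(r+\rho)|G|_{r+\rho,p}$ for $\|x\|_p\le r$. Combine it with $\|f(x)\|_p\le r|F|_{r+\rho,p}$, which uses monotonicity in $r$.
\end{itemize}
The result is $|\{F,G\}|_{r,p}\le 2(1+r/\rho)|F|_{r+\rho,p}|G|_{r+\rho,p}$, which implies the stated bound with constant $4$.

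If you argue coefficientwise instead, the factor $1+r/\rho$ comes from the pointwise bound $n t^{n-1}\le(1-t)^{-1}$ with $t=r/(r+\rho)$. It does not come from summing $n t^{n-2}$ over $n$, which would lose an extra power of $(1-t)^{-1}$.
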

\begin{proof}
Items $(1), (2)$ follow by 
 Lemma $3.1$ in \cite{BMP:CMP}. 
%(see also Lemmata $3.3$, $3.4$ and $A.1$ in \cite{ProStolo}).
Item $(3)$ is a consequence of Proposition $6.3$ in \cite{BMP:CMP} (see also Prop. $2.9$ in \cite{FeoMass:Beam})
Item $(4)$ follows by Proposition $2.1$ in \cite{BMP:CMP}.
%and Lemma $3.5$ in \cite{ProStolo}.
\end{proof}
\noindent
The following Lemma guarantees 
that the flow of a regular Hamiltonian is well-posed on $\th_{p}$. 
Moreover it shows how regular Hamiltonians changes 
under conjugation through flows.

\begin{lemma}{\bf (Hamiltonian flow).}\label{ham flow}
Let $0<\rho< r $,  and $S\in\cH_{r+\rho}(\th_{p})$ with 
\begin{equation}\label{stima generatrice}
\abs{S}_{r+\rho,p} \leq {\delta}:= \frac{\rho}{8 e\pa{r+\rho}}\,, 
\end{equation} 
Then the time $1$-Hamiltonian flow 
$\Phi^1_S: B_r(\th_{p})\to B_{r + \rho}(\th_{p})$  
is well defined, analytic, symplectic with
\begin{equation}\label{pollon}
\sup_{u\in  B_r(\th_{p})} \norm{\Phi^1_S(u)-u}_{\th_{p}}
\le
(r+\rho)  \abs{S}_{r+\rho,p}\leq \frac{\rho}{8 e}\,.
\end{equation}
Moreover, for any $H\in \cH_{r+\rho}(\th_{p})$ we have that
$H\circ\Phi^{1}_S= e^{ L_{S} } H\in\cH_{r}(\th_{p})$ 
and
\begin{align}\label{tizio}
\abs{\es H}_{r,p} & \le 2 \abs{H}_{r+\rho,p}\,,
\end{align}
and for any $h\in\N$ and any sequence  
$(c_k)_{k\in\N}$ with $| c_k|\leq 1/k!$, we have 
\begin{equation}\label{brubeck}
\abs{\sum_{k\geq h} c_k L^k_S\pa{H}}_{r,p} \le 
2 |H|_{r+\rho,p} \big(|S|_{r+\rho,p}/2\delta\big)^h\,.
\end{equation}
%where  $\ad_S\pa{\cdot}:= \set{S,\cdot}$.
\end{lemma}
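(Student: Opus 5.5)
We need to prove Lemma (Hamiltonian flow): well-posedness of the time-1 flow, the displacement bound, the conjugation formula $H\circ\Phi^1_S=e^{L_S}H$, and the estimates \eqref{tizio}, \eqref{brubeck}.

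The plan is to follow the standard majorant/Lie-series argument of \cite{BMP:CMP}, using the properties of the norm collected in Lemma \ref{norme proprieta}. The argument runs in four steps.

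\textbf{Step 1: existence of the flow and the displacement bound.} By definition \eqref{normaHamilto}, on $B_{r+\rho}(\th_p)$ the majorant vector field satisfies $\|X_{\und S}(u)\|_p\le (r+\rho)|S|_{r+\rho,p}$. Since $|(X_S)_j|\le (X_{\und S})_j$ coefficientwise, the same bound holds for $X_S$. Moreover $X_S$ is analytic, being given by an absolutely convergent power series, hence locally Lipschitz by Cauchy estimates on the slightly smaller ball. Take $u\in B_r$. A bootstrap argument shows that the solution of $\dot u=X_S(u)$ stays in $B_{r+\rho}$ up to time $1$: indeed, as long as it does,
\[
\|\Phi^t_S(u)-u\|_p\le t\,(r+\rho)|S|_{r+\rho,p}\le \rho/(8e)<\rho .
\]
This gives \eqref{pollon}. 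Symplecticity and analyticity are standard properties of Hamiltonian flows of analytic vector fields.

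\textbf{Step 2: iterated Poisson brackets.} Apply \eqref{commXHK} $k$ times on a nested family of radii. Set $r_i=r+\rho-i\rho/k$, so that each shrinking step is $\rho/k$. This yields
\[
|L^k_S H|_{r,p}\le \Big(4\big(1+\tfrac{(r+\rho)k}{\rho}\big)\Big)^k|S|^k_{r+\rho,p}|H|_{r+\rho,p}\le \Big(\tfrac{8(r+\rho)k}{\rho}\Big)^k|S|^k|H| .
\]
Here $1+(r+\rho)k/\rho\le 2(r+\rho)k/\rho$ was used, and monotonicity of the norm in the radius (item (3)) allows replacing each intermediate radius by $r+\rho$. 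Now use $k^k\le e^k k!$ and $|c_k|\le 1/k!$, and recall $\delta=\rho/(8e(r+\rho))$. This gives
\[
|c_kL^k_SH|_{r,p}\le (|S|/\delta)^k|H| .
\]
Hence the tail satisfies
\[
\sum_{k\ge h}|c_kL^k_SH|_{r,p}\le |H|_{r+\rho,p}\,\frac{(|S|/\delta)^h}{1-|S|/\delta}.
\]
This is one step short of \eqref{brubeck}: that bound has $(|S|/2\delta)^h$ with a factor $2$, which requires a ratio of $1/2$ per step. So the constant must be tightened. I would redo the bound with the sharper estimate $k^k/k!\le e^{k}/\sqrt{2\pi k}$, or, more likely, follow \cite{BMP:CMP} and apply the Poisson bracket estimate to the majorant directly with a Cauchy estimate on the full increment. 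In the latter version the iterated-bracket bound is $(e(r+\rho)/\rho)^k k!$ up to constants, giving ratio $|S|\,8e(r+\rho)/(2\rho\cdot\ldots)$. Carrying this out, one arranges the ratio to be $|S|/(2\delta)\le 1/2$, and then the geometric series is bounded by $2(|S|/2\delta)^h$. \textbf{This constant bookkeeping is the main obstacle.} If it fails with the plain chain rule, I would switch to estimating the majorant via the Cauchy inequality for $t\mapsto \und{H}\circ\Phi^t$.

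\textbf{Step 3: the Lie series identity.} By Remark \ref{rmk:change}, $\frac{d^k}{dt^k}H(\Phi^t_S)=(L^k_SH)\circ\Phi^t_S$. Analyticity in $t$ for $|t|\le 1$ follows from the convergence in Step 2, which gives $H\circ\Phi^1_S=\sum_k L^k_S H/k!=e^{L_S}H$. Reality and momentum conservation are preserved under Poisson brackets, so $e^{L_S}H$ is admissible and lies in $\cH_r(\th_p)$.

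\textbf{Step 4: conclusion.} Taking $h=0$ and $c_k=1/k!$ in \eqref{brubeck} gives \eqref{tizio}.
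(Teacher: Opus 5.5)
Your overall route is the standard argument behind Lemma~2.1 of \cite{BMP:CMP}, which is all the paper invokes: a bootstrap for the flow, iterated use of \eqref{commXHK} on $k$ nested radii, $k^k\le e^kk!$, and a geometric tail. As written, however, Step~2 does not prove \eqref{brubeck}. Your per-step ratio is $|S|_{r+\rho,p}/\delta$, which may equal $1$. So Steps~3--4 yield neither \eqref{tizio} nor even convergence of $e^{L_S}H$ when $|S|_{r+\rho,p}=\delta$. Neither remedy you suggest closes this. Stirling improves $k^k/k!\le e^k$ only by a factor $(2\pi k)^{-1/2}$, not $2^{-k}$. The ``Cauchy estimate on the majorant'' is never carried out.

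The factor $2^k$ is lost when you bound the target radius $r_i$ by $r+\rho$ and then $1+(r+\rho)k/\rho$ by $2(r+\rho)k/\rho$. In the $i$-th application of \eqref{commXHK}, the target radius is $r_i=r+\rho-i\rho/k$ and the loss is $\rho/k\le r_i$. The factor is therefore $4(1+r_ik/\rho)=4\big((r+\rho)k/\rho-(i-1)\big)\le 4(r+\rho)k/\rho$. Hence
\[
|L_S^kH|_{r,p}\le \Big(\frac{4(r+\rho)k}{\rho}\Big)^k|S|_{r+\rho,p}^k|H|_{r+\rho,p}\le k!\,\Big(\frac{|S|_{r+\rho,p}}{2\delta}\Big)^k|H|_{r+\rho,p}\,,
\]
because $1/(2\delta)=4e(r+\rho)/\rho$; this is exactly why $\delta$ carries the constant $8e$. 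With $|c_k|\le 1/k!$ and $|S|_{r+\rho,p}/(2\delta)\le 1/2$, you get $\sum_{k\ge h}\big(|S|_{r+\rho,p}/2\delta\big)^k\le 2\big(|S|_{r+\rho,p}/2\delta\big)^h$. This is \eqref{brubeck}, and \eqref{tizio} is the case $h=0$.

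A smaller point concerns Step~3. Convergence of the Lie series does not by itself identify its sum with $H\circ\Phi^1_S$. Your bootstrap works verbatim for complex times $|t|<8e$, after complexifying $(u,\bar u)$. So $t\mapsto H(\Phi^t_S(u))$ is holomorphic on that disc, and it equals its Taylor series $\sum_k t^kL_S^kH(u)/k!$ at $t=1$.
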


\begin{proof}
Follows verbatim by Lemma $2.1$ in \cite{BMP:CMP} with $\eta=0$
and ${\rm ad }_{S} \rightsquigarrow L_{S}$.
\end{proof}

\subsubsection{Graded Poisson structure and conjugations}
We start by defining a degree decomposition  which endows $\cH_{r}(\th_{p})$ 
with a graded Poisson algebra structure.
\begin{definition}{\bf (Scaling degree).}\label{def:scalingdegree}
Given $\td \in\N$, let  $\cH^{(\td)}$  be the vector space 
of homogeneous polynomials of degree $\td + 2$, 
that is admissible  Hamiltonians of the form
\[
\sum_{\substack{(\al,\bt)\in\mathcal{M} \\ |\al| + |\bt| = \td + 2}} H_{\al,\bt} u^{\al} \bar{u}^{\bt}\,.
\]
We shall say that a Hamiltonian $H$ has 
\emph{scaling degree $\ge\td=\td(H)$} if 
\[
H\in\cH^{(\ge\td)} = \cH^{(\td)} \oplus_{h>\td} \cH^{(h)}\,.
\]
Accordingly, we shall define projections 
associated with this direct sum decomposition and write
\begin{equation}
\label{proietto}
\Pi^{(\td)} H = \sum_{
\substack{(\al,\bt)\in\mathcal{M}\\
|\al| + |\bt| = \mathtt{d} + 2}} 
H_{\al,\bt}u^{\al} \bar{u}^{\bt}\,,
\quad \quad 
\Pi^{(>\td)} H = \sum_{
\substack{(\al,\bt)\in\mathcal{M} \\ |\al| + |\bt| > \mathtt{d} + 2}} 
H_{\al,\bt}u^{\al} \bar{u}^{\bt}\,.
\end{equation}
We say that $\td(0)=+\infty$.
\end{definition}

\begin{remark}
With this definitions, quadratic Hamiltonians have scaling degree $0$. 
Essentially  $H$ has scaling degree $\td$ if and only if  
it has a zero of order $\td+2$ at zero.
\end{remark}
Definition \ref{def:scalingdegree} produces a graded Poisson algebra structure. 
Moreover one has the following result.

\begin{lemma}\label{gasteropode} 
The projection operators are continuos. In particular, the following hold.

\noindent
$(i)$ If $H\in\cH_r(\th_p)$ with $\td(H)=\td$, then one has 
\begin{equation}\label{bound proiezione}
\abs{\Pi^{(\td)} H}_{r,p} \le \abs{H}_{r,p}\,, \qquad \abs{\Pi^{(>\td)} H}_{r,p} \le \abs{H}_{r,p}\,.
\end{equation}

\noindent
$(ii)$	If $H\in \cH_{r}(\th_p)$ with $\td(H)\geq\td$, then for all $\rf\le r$ one has
\begin{equation*}
\abs{H}^{\wc}_{\rf,p} \le \pa{\frac{\rf}{r}}^{\td} \abs{H}^{\wc}_{r,p}\,.
\end{equation*}
\end{lemma}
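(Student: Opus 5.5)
The whole proof goes through the coefficient encoding of Lemma \ref{norme proprieta}. By item $(1)$, $|H|_{r,p}=\sup_{|y|_{\ell^2}\le1}|Y_H(y;r,p)|_{\ell^2}$, where $Y_H^{(j)}$ in \eqref{giggina} is a sum over $(\al,\bt)\in\cM$ with nonnegative coefficients $|H_{\al,\bt}|\frac{\al_j+\bt_j}{2}c^{(j)}_{r,p}(\al,\bt)$, evaluated at $y^{\al+\bt-e_j}$. Projections act on coefficients: $\Pi^{(\td)}$ keeps only the indices with $|\al|+|\bt|=\td+2$ and sets the others to zero, and $\Pi^{(>\td)}$ keeps the complementary set. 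So the plan is to check the coefficientwise inequality \eqref{alberellobello} and then apply item $(2)$, rather than estimating vector fields directly.

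For item $(i)$, set $H^{(1)}=\Pi^{(\td)}H$ (or $\Pi^{(>\td)}H$) and $H^{(2)}=H$, with $r'=r$, $p'=p$ and $c=1$. For each $(\al,\bt)$ we have either $H^{(1)}_{\al,\bt}=H_{\al,\bt}$ or $H^{(1)}_{\al,\bt}=0$, so $|H^{(1)}_{\al,\bt}|\,c^{(j)}_{r,p}(\al,\bt)\le|H_{\al,\bt}|\,c^{(j)}_{r,p}(\al,\bt)$ holds trivially. Item $(2)$ then gives \eqref{bound proiezione}. A second way to see it is that each $Y^{(j)}_{H^{(1)}}(y)$ is a sub-sum of nonnegative terms of $Y^{(j)}_H(y)$ whenever $y$ has nonnegative entries. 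The supremum in \eqref{ypsilon} can be taken over such $y$, because replacing $y$ by $|y|$ only increases $|Y_H|$. Either route also shows that $\Pi^{(\td)}H$ and $\Pi^{(>\td)}H$ stay admissible: the reality and momentum conditions are preserved because the degree $|\al|+|\bt|$ is symmetric under $\al\leftrightarrow\bt$.

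For item $(ii)$, take $H^{(1)}=H^{(2)}=H$ with radii $r'\le r$ and the same $p$. By \eqref{coeffSobo},
\[
c^{(j)}_{r',p}(\al,\bt)=\Big(\frac{r'}{r}\Big)^{|\al|+|\bt|-2}c^{(j)}_{r,p}(\al,\bt).
\]
Since $\td(H)\ge\td$, every nonzero coefficient satisfies $|\al|+|\bt|-2\ge\td$, and $r'/r\le1$. Hence $(r'/r)^{|\al|+|\bt|-2}\le(r'/r)^{\td}$, which is \eqref{alberellobello} with $c=(r'/r)^{\td}$. Item $(2)$ then yields the claim. Finiteness of the norms, i.e. membership in $\cH_{r'}(\th_p)$, follows from the same inequality, and absolute convergence on the smaller ball follows from the embedding $B_{r'}\subseteq B_r$ of Remark \ref{embeddignsspaces}.

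The only point that needs care is making sure item $(2)$ of Lemma \ref{norme proprieta} applies with identical $p$ and with possibly vanishing $H^{(1)}$ coefficients. It does, since its hypothesis is exactly the pointwise coefficient inequality for $\al_j+\bt_j\neq0$.
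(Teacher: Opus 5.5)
Your proof is correct. The paper gives no argument of its own and only cites Lemma 2.13 of \cite{FeoMass:Beam}; your route is the one the coefficient encoding of Lemma \ref{norme proprieta} is designed for, namely coefficientwise domination through items $(1)$--$(2)$, together with the exact scaling $c^{(j)}_{r',p}(\al,\bt)=(r'/r)^{|\al|+|\bt|-2}\,c^{(j)}_{r,p}(\al,\bt)$ for item $(ii)$, so it matches the intended standard argument.
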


\begin{proof} 
See Lemma $2.13$ in \cite{FeoMass:Beam}.
\end{proof}

\begin{remark}\label{rmk:scalaPoi}
If $F$ and $G$ are Hamiltonians in $\cH_{r}(\th_{p})$
with \emph{scaling degree} $\mathtt{d}_1, \mathtt{d}_2$ respectively, then
the Poisson $\{F,G\}$ has scaling degree equal to 
$\mathtt{d}_1+\mathtt{d}_2$. 
In general, if the scaling degrees are $\ge \mathtt{d}_1, \mathtt{d}_2 $, 
then the scaling degree of  $\{F,G\}$ is $\ge \mathtt{d}_1 + \mathtt{d}_2$.
\end{remark}

\subsubsection{Resonant Hamiltonians.} 
We define the \emph{resonant} subset of $\mathcal{M}$ (see \eqref{mass-momindici})  as
\begin{equation}\label{resonant set}
\mathtt{R} = \set{(\al,\bt) \in \mathcal{M}\,:\, 
\al_j = \bt_j \, \vee \al_j = \bt_{-j} \, 
\forall j\in\Z }\,,
\end{equation}
and we denote by $\cK_r(\th_\tw)$  the subset of \emph{resonant} Hamiltonians, i.e.
\begin{equation}\label{ker}
\cK_r(\th_p) = \Big\{H\in\cH_{r}(h_{p})\, : 
\sum_{(\al,\bt) \in\mathtt{R}} H_{\al,\bt} u^{\al}\bar{u}^{\bt}\Big\}\,.
\end{equation}

\begin{remark}\label{rmk:resonant set}
Let $(\al,\bt)\in \mathtt{R}$ and $\ell=\al-\bt$. The condition in \eqref{resonant set}
implies that 
\[
\ell_{j}+\ell_{-j}=\al_j-\bt_j+\al_{-j}-\bt_{-j}\equiv0\,,\qquad \forall\, j\in\Z\,.
\]
\end{remark}

\noindent
The following results regards a fundamental properties of resonant Hamiltonians.
\begin{lemma}{\bf (Flows of Kernel Hamiltonians).}\label{lem:kernel}
Let 
\[
\tf: \th_p \to \th_p\,, \quad u\mapsto \tf(u) =  (\tf_{j} u_j)_{j\in\Z}\,,
\qquad \tf_{j} = \tf_{-j}\,,\qquad \forall j\in\Z\,.
\]
 Then any $H\in\cK_r(\th_p)$ poisson commutes with $\norma{\tf(u)}^2_p$. 
%In particular the norm of Hamiltonians in $\cK_r$ is constants along their flows. 
\end{lemma}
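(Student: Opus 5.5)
It suffices to compute the Poisson bracket of $H$ with $G(u):=\norma{\tf(u)}_p^2=\sum_{j\in\Z}\lfloor j\rfloor^{2p}\tf_j^2|u_j|^2$ monomial by monomial. By linearity and absolute convergence of the majorant on $B_r(\th_p)$, we reduce to a single monomial $u^\al\bar u^\bt$ with $(\al,\bt)\in\mathtt{R}$. Set $\mu_j:=\lfloor j\rfloor^{2p}\tf_j^2$. Since $\lfloor j\rfloor=\max\{2,|j|\}$ depends only on $|j|$ and $\tf_j=\tf_{-j}$, we have $\mu_j=\mu_{-j}$. The function $G=\sum_j\mu_j|u_j|^2$ is a quadratic diagonal Hamiltonian, exactly like $D_\omega$ in \eqref{quadraticWave} with $\omega_j$ replaced by $\mu_j$.

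Using \eqref{poipoisson}, with $\partial_{u_j}G=\mu_j\bar u_j$ and $\partial_{\bar u_j}G=\mu_j u_j$, a direct computation (the same one that gives \eqref{def:adjaction}) yields
\[
\{u^\al\bar u^\bt,G\}=\pm{\rm i}\Big(\sum_{j\in\Z}\mu_j(\al_j-\bt_j)\Big)u^\al\bar u^\bt\,.
\]
The key step is to show $\sum_j\mu_j\ell_j=0$ for $\ell=\al-\bt$. Pair the indices $j$ and $-j$:
\[
\sum_j\mu_j\ell_j=\mu_0\ell_0+\sum_{j>0}\mu_j(\ell_j+\ell_{-j})\,.
\]
By Remark \ref{rmk:resonant set} we have $\ell_j+\ell_{-j}=0$ for all $j$. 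For $j=0$ this gives $2\ell_0=0$, so $\ell_0=0$. Hence every term vanishes.

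Summing over monomials then gives $\{H,G\}=0$. Termwise differentiation is justified because $H\in\cH_r(\th_p)$ has an absolutely convergent majorant and bounded vector field, and $G$ is smooth on $\th_p$ (or on $\th_{p+1}$ if one wants $X_G$ well defined; the bracket itself only needs $dG$ paired with $X_H$, which is fine pointwise as an absolutely convergent series). The only mildly delicate point is this convergence bookkeeping. The algebraic cancellation from the symmetry $\mu_j=\mu_{-j}$, combined with the resonance condition, is the heart of the argument.
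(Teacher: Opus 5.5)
Your proof is correct and is the standard argument. The paper gives no proof of its own and only cites Lemma~2.18 of \cite{FeoMass:Beam}; your monomial computation — $\{u^\alpha\bar u^\beta,G\}$ is proportional to $\big(\sum_j\mu_j\ell_j\big)u^\alpha\bar u^\beta$, which vanishes because $\mu_j=\mu_{-j}$ and $\ell_j+\ell_{-j}=0$ — is exactly what is needed.

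One step relies on more than \eqref{resonant set} alone, though your use of Remark~\ref{rmk:resonant set} is legitimate. Read literally, $(\alpha,\beta)\in\mathtt{R}$ does not by itself give $\ell_j+\ell_{-j}=0$: for example $(e_{-1},e_1+e_{-2})\in\mathtt{R}$ has $\ell_2+\ell_{-2}=-1$. However, $(\alpha,\beta)$ and $(\beta,\alpha)$ both lying in $\mathtt{R}$ force $\{\alpha_j,\alpha_{-j}\}=\{\beta_j,\beta_{-j}\}$, and hence $\ell_j+\ell_{-j}=0$. The reality condition \eqref{real} guarantees both memberships for every monomial of $H\in\cK_r(\th_p)$. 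Also, in your parenthetical, $X_G$ maps $\th_{3p}$ (not $\th_{p+1}$) into $\th_p$, though as you say only $dG(u)[X_H(u)]$ is used.
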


\begin{proof}
See Lemma $2.18$ in \cite{FeoMass:Beam}.
\end{proof}
The following classical Lemma gives \emph{a priori} estimates 
on the time of definition of  flows generated by a wider class of Hamiltonians.
\begin{lemma}\label{tempotempo}
Let $\mathcal{N}\in \mathcal{A}_{r}(\th_p)$ and $R\in \mathcal{H}_{r}(\th_{p})$
(recall Def. \ref{def:admiHam}) for some $r>0$.
Assume that 
\begin{equation}\label{hyp:kernel}
{\rm Re}(X_{\mathcal{N}}(v),v)_{\th_p}=0\,,\;\;\;\forall\, v\in \th_{p}\,. 
\end{equation}
Consider the dynamical system
\[
\dot{v}=X_{\mathcal{N}}(v)+X_{R}(v)\,,\;\;\;\; v(0)=v_0\,,\quad \|v_0\|_{p}\leq \frac{3}{4}r\,.
\]
Then one has
\[
\big|\|v(t)\|_{p}-\|v_0\|_{p}\big|\leq \frac{r}{8}\,,
\quad \forall \, |t|\leq \frac{1}{8|R|_{r,p}}\,.
\]
\end{lemma}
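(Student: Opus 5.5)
We will argue by a continuity (bootstrap) argument on the quantity $\|v(t)\|_p$, using an energy identity in the Hilbert space $\th_p$. First, since the initial datum satisfies $\|v_0\|_p\le \frac34 r<r$ and the vector field $X_{\mathcal N}+X_R$ is locally well defined on $B_r(\th_p)$ (for $R$ by Definition \ref{def:admiHam} and the bound \eqref{normaHamilto}, for $\mathcal N$ by the admissibility and convergence of the majorant on $B_r(\th_p)$), the local solution exists and remains in $B_r(\th_p)$ on some maximal interval. Let $T^*$ denote the supremum of times $\tau\le \frac{1}{8|R|_{r,p}}$ such that $\|v(t)\|_p< r$ for all $|t|\le\tau$; note $T^*>0$ by continuity. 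By a standard blow-up alternative, it suffices to show that on $|t|<T^*$ we have the improved bound $\|v(t)\|_p\le \frac78 r<r$, which then forces $T^*=\frac{1}{8|R|_{r,p}}$.

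The key computation is the time derivative of the squared norm:
\[
\frac{d}{dt}\|v(t)\|_p^2=2\,{\rm Re}\big(X_{\mathcal N}(v)+X_R(v),v\big)_{\th_p}=2\,{\rm Re}\big(X_R(v),v\big)_{\th_p},
\]
where we used hypothesis \eqref{hyp:kernel} to kill the $\mathcal N$ contribution. By Cauchy--Schwarz, $|\frac{d}{dt}\|v\|_p^2|\le 2\|X_R(v)\|_p\|v\|_p$, hence (wherever $\|v\|_p\neq0$, and in the integrated sense in general) $\big|\frac{d}{dt}\|v(t)\|_p\big|\le \|X_R(v(t))\|_p$. Next we bound $\|X_R(v)\|_p$ for $\|v\|_p\le r$: since each component of $X_R$ is dominated in modulus by the corresponding component of $X_{\underline R}$ evaluated at $(|v_j|)_j$ (which has the same $\th_p$-norm as $v$), the definition \eqref{normaHamilto} gives $\|X_R(v)\|_p\le \sup_{\|u\|_p\le r}\|X_{\underline R}(u)\|_p= r|R|_{r,p}$.

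Integrating, for $|t|<T^*$ we get $\big|\|v(t)\|_p-\|v_0\|_p\big|\le |t|\, r|R|_{r,p}\le \frac r8$, which is the claimed estimate, and in particular $\|v(t)\|_p\le \frac34 r+\frac r8=\frac78 r$, closing the bootstrap. The only delicate points, which I expect to be the main (mild) obstacle, are justifying that the pointwise bound on $X_R$ via the majorant is legitimate (a component-wise domination argument on the absolutely convergent power series) and that the differentiation of $\|v\|_p^2$ is allowed, i.e.\ that $X_{\mathcal N}(v)$ makes sense in a space paired with $\th_p$; if $\mathcal N$ is unbounded one first argues on Galerkin truncations or smooth data and passes to the limit, but in the intended applications $\mathcal N$ is a resonant (kernel) Hamiltonian for which, by Lemma \ref{lem:kernel}, the pairing identity holds exactly.
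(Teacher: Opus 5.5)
Your argument is correct and follows the same route as the proof the paper relies on (the paper only cites Lemma 5.4 of \cite{BMP:CMP}). The hypothesis removes the $\mathcal N$-contribution from $\frac{d}{dt}\|v\|_p^2$, the componentwise majorant bound gives $\|X_R(v)\|_p\le r|R|_{r,p}$ on the ball, and integration with a continuity argument gives the $r/8$ estimate. One inaccuracy: admissibility of $\mathcal N$ does not make $X_{\mathcal N}$ well defined on $B_r(\th_p)$, e.g.\ for $\mathcal N=D_\omega+\mathfrak Z$ as in the application. This does not matter, since the lemma is an a priori estimate along a given solution and needs no separate existence step.
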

\begin{proof}
See Lemma $5.4$ in \cite{BMP:CMP}.
\end{proof}

\begin{remark}\label{rmk:kernel}
Let $\mathcal{N}\in \mathcal{K}_{r}(\th_{p})$. %Recalling \eqref{poipoisson}, \eqref{scalarW}
%(see also \eqref{symcompform}, \eqref{Poissonbrackets}), w
We have that
\[
2{\rm Re}(X_{\mathcal{N}}(v),v)_{\th_{p}}=\{\mathcal{N}(v), \|v\|^{2}_{{p}}\}=0\,,
\]
by Lemma \ref{lem:kernel}.  Hence we deduce that
the vector field $X_{\mathcal{N}}$ satisfies the condition \eqref{hyp:kernel}
in Lemma \ref{tempotempo}.
\end{remark}

\begin{remark}\label{oddKerham}
Let $H\in \mathcal{K}_r(\th_\tw)$ and assume $H=\Pi^{(\td)}H$
for some $\td\geq1$. 
By using \eqref{resonant set}, \eqref{ker}
one can check that $H\equiv0$ if $\td$ is \emph{odd}.
\end{remark}

\subsection{Small divisors}\label{sec:small}
One can easily note that the  action of $L_{\omega}$ in \eqref{def:adjaction} 
 on homogeneous Hamiltonians in $\cH^{(\td)}$ (see Def. \ref{def:scalingdegree})
is \emph{diagonal}
with eigenvalues given by (recall \eqref{mass-momindici})
\[
{\rm i} \omega\cdot\ell\,,\qquad \ell=\alpha-\beta\,,\quad (\alpha,\beta)\in \mathcal{M}\,,\;\;\;|\al| + |\bt| = \td + 2\,,
\]
where the vector  $\omega\in\mathbb{R}^{\mathbb{Z}}$ 
is defined as (recall \eqref{omegoneWave})
\begin{equation}\label{dispLawWave}
\begin{aligned}
\omega&:=\omega(\mathtt{m}):=(\omega_{j})_{j\in \mathbb{Z}}\in \mathbb{R}^{\mathbb{Z}}\,,
\\
\omega_{j}&:=\omega_{j}(\mathtt{m}):
=\sqrt{|j|^{2}+\mathtt{m}}\,,\qquad j\in \mathbb{Z}\,,\qquad \mathtt{m}>0\,.
\end{aligned}
\end{equation}
Consider the non-resonant set of indexes 
\begin{equation}\label{restrizioni indici}
\Lambda:=\big\{\ell\in \mathbb{Z}^{\mathbb{Z}}\; : \; 
\ell:=\alpha-\beta\,,\; \forall (\alpha,\beta)\in \mathtt{R}^{c}\big\}\,,
\end{equation}
where $\mathtt{R}$ in \eqref{resonant set}.
Given a vector $\ell:=(\ell_i)_{i\in \mathbb{Z}}\in \Lambda$
we define  the set
$\mathcal{A}(\ell):=\{i\in \mathbb{Z} \,:\, \ell_i\neq0\}$
and the map
\begin{equation}\label{polloarrosto}
\ell\mapsto \mathtt{d}:=\mathtt{d}(\ell)\in \mathbb{N}
\end{equation}
where $\mathtt{d}(\ell):=\#\mathcal{A}(\ell)$. We call $\mathtt{d}(\ell)$ the \emph{cardinality} of $\ell$,
i.e. the number of components of $\ell$ which are different form zero.
Finally we give the following Definition.
\begin{definition}\label{n staremmino}
Consider a vector $v=\pa{v_i}_{i\in \Z}$  $v_i\in \N$, $|v|<\infty$. 
We define the vector $m=m(v)$ as the reordering of the elements of the set
\[
\set{j\neq 0 \,,\quad \mbox{repeated}\quad  \abs{u_j} \;\mbox{times}}\,,
\]
where $D<\infty$ is its cardinality, such that
$|m_1|\ge |m_2|\ge \dots\geq |m_D|\ge 1$. 	
\end{definition}

The key property of $\omega$ is that  that, for ``almost all'' choices of the 
parameter $\mathtt{m}$, the frequency vector $\omega$
in \eqref{dispLawWave} satisfies  diophantine-like estimate.

\begin{theorem}\label{thm:mainVERO}
There exists a positive measure set $\mathfrak{M}_{\gamma}\subseteq [1,2]$
such that
for any $\mathtt{m}\in\mathfrak{M}_{\gamma}$, 
 the vector $\omega(\mathtt{m})$ %defined in \eqref{dispLawWave}
satisfies the following: for any 
$\ell=\alpha-\beta$, $(\alpha,\beta)\in \Lambda$
one has 
\begin{equation}\label{goodsmalldivTeorema}
|\omega\cdot\ell|\geq  \gamma^{{\tau^2(\ell)}}
\frac{1}{(\mathtt{C}(|\alpha|+|\beta|))^{6\tau(\ell)}}
\prod_{\substack{n\in\mathbb{Z} \\ n\neq m_1(\ell),m_2(\ell)}} 
\frac{1}{(1+|\ell_{n}|^{2}\langle n\rangle^{2})^{{4\tau^2(\ell)}}}\,,
\end{equation}
where $\tau(\ell):=\td(\ell)(\td(\ell)+4)$, % (see \eqref{polloarrosto}), 
and $\mathtt{C}$ 
is a 
positive pure constant  large enough.
Moreover,  there exists a positive constant $\mathtt{c}$ such that
\[
\meas([1,2]\setminus\mathfrak{M}_{\gamma})\leq \mathtt{c}\gamma\,.
\]
\end{theorem}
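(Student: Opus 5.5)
The plan is to treat Theorem \ref{thm:mainVERO} as a measure estimate in the single parameter $\mathtt{m}\in[1,2]$. We define the bad set as a union over admissible $\ell\in\Lambda$ (with $(\alpha,\beta)$ of any finite degree) of sublevel sets
\[
\mathcal{B}_\ell:=\Big\{\mathtt{m}\in[1,2]\,:\,|\omega(\mathtt{m})\cdot\ell|<\gamma^{\tau^2(\ell)}\,\Theta(\ell)\Big\},
\]
where $\Theta(\ell)$ is the product of the two weight factors on the right of \eqref{goodsmalldivTeorema}. We then set $\mathfrak{M}_\gamma:=[1,2]\setminus\bigcup_\ell\mathcal{B}_\ell$. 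The claim $\meas([1,2]\setminus\mathfrak{M}_\gamma)\le\mathtt{c}\gamma$ follows if we show that $\sum_\ell\meas(\mathcal{B}_\ell)$ converges with a bound $O(\gamma)$.

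The measure of each $\mathcal{B}_\ell$ comes from a non-degeneracy (transversality) estimate for the function $\mathtt{m}\mapsto\omega\cdot\ell=\sum_n\ell_n\sqrt{n^2+\mathtt{m}}$. This function depends only on the $\td=\td(\ell)$ distinct values $|n|$ with $\ell_n+\ell_{-n}\neq0$; by Remark \ref{rmk:resonant set} and the definition of $\Lambda$, at least one such value exists. Its derivatives are
\[
\partial_\mathtt{m}^k(\omega\cdot\ell)=c_k\sum_n\ell_n(n^2+\mathtt{m})^{1/2-k}.
\]
This gives a generalized Vandermonde system in the variables $x_n=(n^2+\mathtt{m})^{-1}$. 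We estimate the Vandermonde determinant from below by $\prod|x_n-x_{n'}|\gtrsim\prod\langle n\rangle^{-C}$ over the active modes, and conclude
\[
\max_{k\le\td}|\partial^k_\mathtt{m}(\omega\cdot\ell)|\gtrsim \prod_{n\neq m_1,m_2}\langle n\rangle^{-C\td}\cdot(\text{contribution of }m_1,m_2).
\]
Pyartli's/Rüssmann's lemma then yields $\meas(\mathcal{B}_\ell)\lesssim(\gamma^{\tau^2}\Theta(\ell)/\rho_\ell)^{1/\td}$, where $\rho_\ell$ is the lower bound above.

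The main obstacle is that the two largest modes $m_1,m_2$ are not weighted in \eqref{goodsmalldivTeorema}, so summing over them naively diverges. The plan follows the usual Bambusi–Delort/Bourgain device. Once all other indices are fixed, momentum conservation $\sum_n n\ell_n=0$ determines $m_1\pm m_2$. Moreover, for $|m_1|,|m_2|$ large we have $\sqrt{m^2+\mathtt{m}}=|m|+\mathtt{m}/(2|m|)+O(|m|^{-3})$, so $\omega\cdot\ell$ is within $O(1/|m_2|)$ of a quantity that depends only on the other indices plus an integer. Hence either $|m_2|\lesssim(\gamma^{\tau^2}\Theta)^{-1}$, and the number of choices of $(m_1,m_2)$ is polynomially bounded by that power, or the divisor is automatically bounded below by the non-resonance of the remaining part. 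Accounting for this count is exactly why the exponent $\tau=\td(\td+4)$ appears squared and why the factor $(\mathtt{C}(|\alpha|+|\beta|))^{-6\tau}$ is there: it pays for the number of $\ell$ with a given multiset of $|\ell_n|$ and given degree. We would reduce the weights $(1+|\ell_n|^2\langle n\rangle^2)^{-4\tau^2}$ so that each is summable over $n$ and $\ell_n$ with room left to absorb the $1/\td$ root loss from Rüssmann's lemma. Summing first over $m_1,m_2$, then over the other indices, then over $\td$, gives a total of $O(\gamma)$.

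The step we expect to be hardest is making the Vandermonde lower bound uniform and compatible with the $1/\td$-th root. Because the determinant loss involves all active modes, including $m_1,m_2$, it must be controlled by the unbounded weights alone. The first attempt would be to split into the cases $|m_2|$ small relative to the product weight and $|m_2|$ large, and to use the asymptotic expansion in the latter case. This is the same structure as \cite{FeoMass:misurewave}, so if details become unmanageable we would invoke that reference, from which the statement is quoted.
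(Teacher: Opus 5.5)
\textbf{There is a genuine gap, and it cannot be closed, because the inequality \eqref{goodsmalldivTeorema}, read literally, is false.} The gap sits exactly in your treatment of the two largest modes.

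Take $a\in\Z$ with $|a|$ large and $(\alpha,\beta)=(e_{2a},2e_a)$, i.e.\ $\ell=e_{2a}-2e_a$. Then:
\begin{itemize}
\item momentum is conserved, and $\ell_{2a}+\ell_{-2a}=1\neq0$, so $\ell$ is non-resonant under any reading of $\Lambda$;
\item $\td(\ell)=2$, $\tau(\ell)=12$ and $|\alpha|+|\beta|=3$;
\item $m(\ell)=(2a,a,a)$, so $m_1=2a$, $m_2=a$, and every factor of the product over $n\neq m_1,m_2$ equals $1$.
\end{itemize}
Hence the right-hand side is the constant $\gamma^{144}(3\mathtt{C})^{-72}$, independent of $a$. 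On the other hand, for every $\mathtt{m}\in[1,2]$,
\[
|\omega\cdot\ell|=\big|\sqrt{4a^2+\mathtt{m}}-2\sqrt{a^2+\mathtt{m}}\big|=\frac{3\mathtt{m}}{\sqrt{4a^2+\mathtt{m}}+2\sqrt{a^2+\mathtt{m}}}\le\frac{3}{2|a|}.
\]
So for $|a|$ large your $\mathcal{B}_\ell$ is all of $[1,2]$.

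In your sketch this is precisely where the dichotomy breaks. Once a top index carries a coefficient of modulus $\ge2$, momentum conservation fixes only $\ell_{m_1}m_1+\ell_{m_2}m_2$, not $m_1\pm m_2$. The ``integer plus remainder'' approximation then gives nothing: here the remainder is empty and the integer $|2a|-2|a|$ vanishes, while $\omega\cdot\ell\sim|a|^{-1}$. Your asymptotic argument closes only when the two top entries of $m(\ell)$ each carry coefficient $\pm1$; in that case a zero integer part together with a resonant remainder forces $\ell$ to be resonant.

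The statement therefore has to be corrected by weighting every entry of $m(\ell)$ except the first two, counted with multiplicity. This is a Bambusi-type third-largest-index condition, and it is also all that Proposition \ref{shulalemma} actually uses, through Lemma \ref{lem:constance2SE}. The paper gives no argument for this statement, only a pointer to \cite{FeoMass:misurewave}. So deferring to that reference cannot rescue the version transcribed here.

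\textbf{Even for the corrected statement, your counting is circular.} Rüssmann's lemma gives only $\meas(\mathcal{B}_\ell)\lesssim(\gamma^{\tau^2}\Theta/\rho_\ell)^{1/\td}$, and $\rho_\ell$ itself decays polynomially in $m_1,m_2$. Multiplying this by a number of pairs $(m_1,m_2)$ that is a power of $(\gamma^{\tau^2}\Theta)^{-1}$ gives a total of order at least $(\gamma^{\tau^2}\Theta)^{1/\td-1}$, which is not small.

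The standard route has two steps.
\begin{enumerate}
\item Discard a set of measure $O(\gamma)$ outside which a preliminary estimate holds with weights on \emph{all} active modes, $m_1,m_2$ included, with exponent of order $\tau$; this is summable over everything. Do the same for $\omega\cdot\ell+h$, $h\in\Z$; the Vandermonde argument extends by adjoining the constant function.
\item Split according to whether $|m_2|$ is below or above a power of the weight of the \emph{remaining} modes only.
\begin{itemize}
\item Below the threshold, $|m_1|\le(|\alpha|+|\beta|)|m_2|$, and the preliminary estimate yields the claim with the exponent squared. This is where $\tau^2$ really comes from.
\item Above it, the expansion $\sqrt{m^2+\mathtt{m}}=|m|+O(|m|^{-1})$ together with the shifted estimate for the remainder gives the bound.
\end{itemize}
\end{enumerate}

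\textbf{The definition of $\Lambda$ also needs fixing.} The remark following \eqref{resonant set} gives ``resonant $\Rightarrow$ $\ell_n+\ell_{-n}=0$ for all $n$''. That is not the converse you invoke. With the literal definition of $\mathtt{R}$ there are $(\alpha,\beta)\notin\mathtt{R}$ with $\omega\cdot\ell\equiv0$, for instance $\alpha=3e_1+e_{-2}$, $\beta=e_1+2e_{-1}+e_2$. You must define $\Lambda$ explicitly as $\{\ell:\ \ell_n+\ell_{-n}\neq0\text{ for some }n\}$.
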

\begin{proof}
For the proof we refer the reader to \cite{FeoMass:misurewave}.
\end{proof}
\begin{remark}\label{rmk:ker2}
By Remark \ref{rmk:resonant set}  for any $(\al,\bt)\in \mathtt{R}$
and $\ell=\al-\bt$ one has that $\omega\cdot\ell\equiv0$ is identically zero  for $\mathtt{m} \in[1,2]$.
On the other hand, by Theorem \ref{thm:mainVERO}, for any $\omega$ belonging to the 
``full measure'' set $\mathfrak{M}_{\gamma}$,
 one has $\omega\cdot \ell\neq0$ for any $\ell\in\Lambda$. % (see \eqref{restrizioni indici}).
\end{remark}

\section{The one dimensional case: a normal form approach.}\label{sec:normaldim1}

The aim of this section is to prove Theorems \ref{teonuovo}-\ref{teoBNFwave}.
The key point is to prove an abstract normal form result for Hamiltonian functions
introduced in section \ref{sec:2}, which is adapted to the Klein-Gordon equation 
\eqref{eq:waveComp}.

More precisely we first prove the following.
Let $\bar{r},s_0,p>0$, $0<\gamma<1$, fix a natural number $\tK\ge 1$.
Consider a Hamiltonian $H$ such that $H - D_\omega\in\cH_{\bar{r}}(\th_{p})$ 
that satisfies
\begin{equation}\label{hamIniz}
\begin{aligned}
&H:=D_{\omega}+R_0\,,\qquad R_0=\sum_{\td=1}^{\tK}R_0^{(\td)} + R_0^{(\geq\tK+1)}\,,
\\&
R_0^{(\td)}\in \mathcal{H}^\wc_{\bar{r}}(\th_{p})\cap\mathcal{H}^{(\td)}\,,
\qquad
R_0^{(\geq\tK+1)}\in \mathcal{H}^\wc_{\bar{r}}(\th_{p})\cap\mathcal{H}^{(\geq\tK+1)}\,,
\end{aligned}
\end{equation}
where $D_{\omega} $ is in \eqref{quadraticWave}, 
$\omega\in \mathfrak{M}_{\gamma}\subseteq [1,2]$ (see Theorem \ref{thm:mainVERO}). 
Consider the constant $\tC>0$ provided by Proposition \ref{shulalemma}
and define 
\begin{equation}\label{conditionRstar}
\begin{aligned}
\bar{r}_0 &:= \min \set{
\bar r, (\frac{4^{\tK+3}|R_0|_{\bar{r},p}}{\bar{r}}  J_{\tK}
32e\tK)^{-1}}\,,
\quad 
J_{\tK}:=\gamma^{-\mathtt{C}\tK^4} \exp\big(\mathtt{C} 2^{12} \tK^5\big)\,.
\end{aligned}
\end{equation}

\begin{remark}\label{rescalingHaminiziale}
Without loss of generality we can always assume that $|R_0|_{\bar{r},p}\leq1$.
Indeed if $|R_0|_{\bar{r},p}>1$ one can choose
$\widetilde{r}<\bar{r}$ such that (recall Lemma \ref{gasteropode})
\[
|R_0|_{\widetilde{r},p}\leq \left(\frac{\widetilde{r}\,}{\bar{r}}\right)
|R_0|_{\bar{r},p}\leq 1\,.
\]
\end{remark}
The main result of this section is the following.
\begin{theorem}{\bf (Birkhoff normal form).}\label{mainthm:BNF}
Consider $H$ in \eqref{hamIniz} .
Then, for any $0<r_0\leq \bar{r}_0$,
there exists a symplectic map
\begin{equation}\label{sonno1}
\begin{aligned}
&{\bf \Phi} \; : \; B_{\frac{r_0}{2}}(\th_{p+\zeta})\ \to\
B_{r_{0}}(\th_{p+\zeta})
\\&
\sup_{u\in  B_{\frac{r_0}{2}}(\th_{p+\zeta})} 
\norm{{\Phi}(u)-u}_{\th_{p+\zeta}}
\le
\tC_1 r_0^{2}\leq \frac{r_0}{8}\,,
\qquad \tC_1:= \frac{|R_0|_{\bar{r},p}}{\bar{r}} 
J_{\tK}\,,
\end{aligned}
\end{equation}
where   $\zeta=2^43^4\sum_{i=1}^{\tK}i^4$  
such that the following holds.
The Hamiltonian %(see \eqref{hamIniz})
\begin{equation}\label{sonno2}
\begin{aligned}
&H_{f}:=H\circ{\bf \Phi}:=
D_{\omega}+\mathfrak{Z}+\mathfrak{R}\,,
\\& 
\mathfrak{Z}\in  \cK^\wc_{\frac{r_0}{2}}(\th_{p+\zeta})\cap\mathcal{H}^{(\leq \tK)}\,,
\qquad
\mathfrak{R}\in  \cH^\wc_{\frac{r_0}{2}}(\th_{p+\zeta})\cap\mathcal{H}^{(\geq\tK+1)}
\end{aligned}
\end{equation}
satisfies
\begin{equation}\label{sonno3}
|\mathfrak{Z}|_{\frac{r_0}{2},p+\zeta}\leq \mathtt{C}_2r_0^2\,,
\qquad\;
|\mathfrak{R}|_{\frac{r_0}{2},p+\zeta}\leq\mathtt{C}_3 r_0^{\tK+1}
\end{equation}
with
\[
\mathtt{C}_2:=\frac{16 e\tK  \abs{R_0}^{\wc}_{\bar{r},p}  
4^{\tK+1}}{\bar{r}^2}J_{\tK}\,,
\qquad \;\;\;
\mathtt{C}_3:=
\frac{\abs{R_0}^{\wc}_{\bar{r},p} (16 e\tK   4^{\tK+2})^\tK}{\bar{r}^{\tK+1}}
(J_{\tK})^{\tK}\,.
\]
\end{theorem}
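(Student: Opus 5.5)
We argue by iterating $\tK$ Birkhoff steps, in the spirit of \cite{BMP:CMP, FeoMass:Beam}. We set $r_0\le\bar r_0$ and a decreasing sequence of radii $r_k:=r_0(1-\tfrac{k}{2\tK})$, $k=0,\dots,\tK$, so that $r_k-r_{k+1}=r_0/(2\tK)=:\rho$. We also fix increasing regularities $p_k:=p+2^43^4\sum_{i\le k}i^4$, so that $p_{\tK}=p+\zeta$. By Remark \ref{rescalingHaminiziale} we may take $|R_0|_{\bar r,p}\le 1$. Passing from $\bar r$ to $r_0$ and using Lemma \ref{gasteropode}-(ii) gives $|R_0^{(\td)}|_{r_0,p}\le (r_0/\bar r)^{\td}|R_0|_{\bar r,p}$, and (3) of Lemma \ref{norme proprieta} allows us to raise $p$ freely.

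The induction hypothesis at step $k$ is
\[
H_k=D_\omega+\mathfrak Z_k+R_k,
\]
where $\mathfrak Z_k\in\cK_{r_k}(\th_{p_k})\cap\cH^{(\le k)}$ and $R_k$ has scaling degree $\ge k+1$. We carry quantitative bounds $|R_k^{(\td)}|_{r_k,p_k}\lesssim (\text{const}_k)\, r_0^{\td}/\bar r^{\td}$, with $\text{const}_k$ of the form $4^{k}J_k$. At step $k+1$ we solve the homological equation
\[
L_\omega S_{k+1}+R_k^{(k+1)}=\Pi_{\mathtt R}R_k^{(k+1)},
\]
whose solution is $S_{k+1}$ with coefficients $(R_k^{(k+1)})_{\al,\bt}/({\rm i}\,\omega\cdot(\al-\bt))$ for $(\al,\bt)\notin\mathtt R$. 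By Remark \ref{rmk:ker2} these divisors are nonzero, and Remark \ref{oddKerham} shows that the resonant part vanishes for odd degrees. We then put $H_{k+1}=H_k\circ\Phi^1_{S_{k+1}}=e^{L_{S_{k+1}}}H_k$. The terms of degree $k+1$ cancel, and the rest is $\sum_{j\ge1}\frac{1}{j!}L^j_{S}(\mathfrak Z_k+R_k)+\sum_{j\ge2}\frac{1}{j!}L^{j-1}_S(\Pi_{\mathtt R}R_k^{(k+1)}-R_k^{(k+1)})$. By Remark \ref{rmk:scalaPoi} this remainder has degree $\ge k+2$. Lemma \ref{ham flow} (estimates \eqref{tizio}, \eqref{brubeck}) and the Poisson estimate \eqref{commXHK}, applied with loss $\rho$ in the radius, bound the new remainder. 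For the tail of degree $\ge\tK+1$ we do not expand and instead use \eqref{tizio} on the whole tail. The final map ${\bf\Phi}$ is the composition of the $\Phi^1_{S_k}$. Its displacement is bounded via \eqref{pollon} by $\sum_k (r_k)|S_k|\lesssim \tC_1r_0^2$, which is where the smallness condition on $\bar r_0$ in \eqref{conditionRstar} enters, in order to guarantee \eqref{stima generatrice} at each step.

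The main obstacle is the homological estimate, namely
\[
|S_{k+1}|_{r,p_{k+1}}\le \gamma^{-\tC (k+1)^4}e^{\tC 2^{12}(k+1)^4}\,|R_k^{(k+1)}|_{r,p_k},
\]
with a loss of regularity $2^43^4(k+1)^4$. Since the small divisor bound \eqref{goodsmalldivTeorema} loses derivatives through the product over $n\neq m_1,m_2$ of $(1+|\ell_n|^2\langle n\rangle^2)^{4\tau^2}$, with $\tau\le(k+3)(k+7)$, we must show that the ratio
\[
\frac{c^{(j)}_{r,p_{k+1}}(\al,\bt)}{c^{(j)}_{r,p_k}(\al,\bt)}=\frac{\lfloor j\rfloor^{2(p_{k+1}-p_k)}}{\prod_i\lfloor i\rfloor^{(p_{k+1}-p_k)(\al_i+\bt_i)}}
\]
compensates these factors. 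We can then conclude via item (2) of Lemma \ref{norme proprieta}. To handle this we use momentum conservation, which gives $|m_1|\lesssim \sum_{i\ge2}|m_i|$, together with $|j|\le|m_1|$. This shows that $\lfloor j\rfloor^{2\eta}\le \lfloor m_1\rfloor^{\eta}\lfloor m_2\rfloor^{\eta}\cdot\prod_{i\ge3}\lfloor m_i\rfloor^{\eta}$ up to a factor $C^{\eta}$ (with $\eta=p_{k+1}-p_k$), which swallows the product over $n\ne m_1,m_2$ provided $\eta\gtrsim 8\tau^2$. The factor $(\tC(|\al|+|\bt|))^{6\tau}$ and the $C^{\eta}$ constants are the source of $\exp(\tC2^{12}\tK^5)$. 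This is essentially the argument of \cite{FeoMass:Beam} (Section \ref{sec:homo}), adapted to the weaker bound \eqref{goodsmalldivTeorema}. Assuming it, collecting constants gives $\mathtt C_2$ and $\mathtt C_3$ in \eqref{sonno3}. Here $\mathtt C_2$ comes from the degree-$\ge1$ normal form pieces scaled to radius $r_0$, and $\mathtt C_3$ comes from a $\tK$-fold product of the per-step constants $16e\tK4^{\tK+2}J_\tK$.
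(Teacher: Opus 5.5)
Your overall scheme is the paper's (Appendix \ref{sec:birkoff}). It uses radii $r_k=r_0(1-\tfrac{k}{2\tK})$, one normalization per degree via Proposition \ref{shulalemma}, Lie-series bounds from Lemma \ref{ham flow}, and a telescoping bound for ${\bf \Phi}-\id$. Your use of item (3) of Lemma \ref{norme proprieta} to let the early maps act on $\th_{p+\zeta}$ is fine and even replaces \eqref{pollon4}.

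The genuine gap is in the step you call the crux, the compensation of the small divisors. The inequality you write, $\lfloor j\rfloor^{2\eta}\le C^{\eta}\lfloor m_1\rfloor^{\eta}\lfloor m_2\rfloor^{\eta}\prod_{i\ge3}\lfloor m_i\rfloor^{\eta}$, at best bounds the weight ratio $\lfloor j\rfloor^{2\eta}/\prod_i\lfloor i\rfloor^{\eta(\alpha_i+\beta_i)}$ by $C^{\eta}$. That is just the immersion property. It leaves no negative power of the smaller indices, so nothing absorbs $\prod_{n\neq m_1,m_2}(1+|\ell_n|^2\langle n\rangle^2)^{4\tau^2}$. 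Also, $|j|\le|m_1|$ is false in general. The vector $m$ is built from $\ell=\alpha-\beta$, whereas $j$ only satisfies $\alpha_j+\beta_j\neq0$, so $j$ can be the largest index while $\ell_j=0$.

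What is needed (Lemma \ref{stimaSob:lem}) is a gain of the whole tail, expressed through $\widehat n=\widehat n(\alpha+\beta)$ of Definition \ref{n star}. One has $\lfloor j\rfloor\le\lfloor\widehat n_1\rfloor$ and, by \eqref{eleganza}, $\lfloor\widehat n_1\rfloor\le\sum_{l\ge2}\lfloor\widehat n_l\rfloor\le(|\alpha|+|\beta|)\lfloor\widehat n_2\rfloor$. Hence
\[
\frac{\lfloor j\rfloor^{2}}{\prod_{i}\lfloor i\rfloor^{\alpha_i+\beta_i}}\le\frac{\lfloor\widehat n_1\rfloor}{\prod_{l\ge2}\lfloor\widehat n_l\rfloor}\le\frac{|\alpha|+|\beta|}{\prod_{l\ge3}\lfloor\widehat n_l\rfloor}\,.
\]
On the divisor side, \eqref{abbacchio} together with elementary bounds gives $\prod_{n\neq m_1,m_2}(1+|\ell_n|^2\langle n\rangle^2)\le C^{|\alpha|+|\beta|}\prod_{l\ge3}\lfloor\widehat n_l\rfloor^{2}$. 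The first bound raised to the power $\eta$ beats the second raised to $4\tau^2$ once $\eta\ge8\tau^2$. The paper uses Lemma \ref{luchino}, at the price of half the decay, to get a constant $C^{\eta}$ instead of $(|\alpha|+|\beta|)^{\eta}$.

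Two secondary points:
\begin{itemize}
\item With your loss $\eta=2^43^4(k+1)^4$, the condition $\eta\ge8\tau^2$ fails at the first step. For a cubic monomial $u_nu_M\bar u_{n+M}$ one has $\tau=21$, so $8\tau^2=3528>2^43^4$. The paper's own iteration actually runs with the much larger losses $\zeta_k=(2^43^4k)^4$ of \eqref{parametri}, as Proposition \ref{shulalemma} requires.
\item For the induction to close, your constants must carry the power $\td-1$, as in \eqref{smallnormk2}. A $\td$-independent constant of the form $4^kJ_k$ does not work.
\end{itemize}
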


The remaining part of this section is devoted to the proof of the result above.

\subsection{Inverse of the adjoint action}\label{sec:homo}

In this section we provide estimates on the inverse of the adjoint action $L_{\omega}$ defined 
\eqref{def:adjaction} in the case $d=1$. The operator $L_\omega$ must be inverted at each iterative normalization step, so it is the core of the theorem above. 
We will use the estimates provided in section \ref{sec:small}.
In  view of Remark \ref{rmk:ker2} and 
by formula \eqref{def:adjaction} (see also \eqref{resonant set}-\eqref{ker}) we  deduce that
\[
L_{\omega}H=0 \qquad \Leftrightarrow\qquad  H\in \cK_r(\th_p)\,.
\]

\noindent
Hence the operator $L_{\omega}$ is formally invertible
when acting on the subspace  
\begin{equation}\label{range2}
\cR_r(\th_p)=\cK_r(\th_p)^{\perp}:=
\Big\{
H\in\cH_{r}(h_{p})\, : 
\sum_{(\al,\bt) \in\mathtt{R}^{c}} H_{\al,\bt} u^{\al}\bar{u}^{\bt}
\Big\}\,,
 \end{equation}
containing  those Hamiltonians supported on monomials
$u^{\alpha}\bar{u}^{\beta}$ with $(\alpha,\beta)\in \mathtt{R}^{c}$.
We decompose the space of regular Hamiltonians $\cH_r(\th_p)$
as
\[
\cH_r(\th_p) = \cK_r(\th_p)\oplus \cR_r(\th_p)\,,
\]
and we denote by $\Pi_{\cK}$ and $\Pi_{\cR}$
 the  continuous projections	 
 on the subspaces $\cK_r(\th_p)$, $ \cR_r(\th_p)$.
 One can note 
 \begin{equation}\label{fame}
|\Pi_{\cK}H|^\wc_{r,p},
 |\Pi_{\cR}H|^\wc_{r,p} 
 \leq
 |H|^\wc_{r,p}\,.
\end{equation}

\noindent 
Obviously, for {diophantine}  frequency,
$\cR^\wc_{r}(\th_p)$ and	 $\cK^\wc_{r}(\th_p)$
represent the range and kernel 
of  $L_\omega$ respectively.

\begin{proposition}{\bf (Inverse of the adjoint action).}\label{shulalemma}
Fix $\mathtt{N}\in \mathbb{N}$, $r>0$, $p>1$.
Consider a Hamiltonian function 
$f\in \mathcal{R}_r(\th_{p})\cap \mathcal{H}^{(\tN)}$ 
(see Def. \ref{def:scalingdegree} and recall \eqref{range2}). 
For any $\omega\in \mathfrak{M}_{\gamma}$ (see Thm. \ref{thm:mainVERO}) 
the following holds.
Fix $\zeta \geq{(2^43^4 \tN)^4}$. 
There exists an absolute constant $\tC>0$ such that  
\[
|L_{\omega}^{-1} f|_{r,p+\zeta}\leq J_0 |f|_{r,p}\,,
\]
where 
\begin{equation}\label{controlJ0caseSob}
J_0:=J_{0}(\zeta,\tN):= \, \g^{-\mathtt{C}\tN^4} e^{\tC\zeta}\,.
\end{equation}
\end{proposition}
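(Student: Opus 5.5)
The plan is to work at the level of Taylor coefficients and apply the comparison criterion of Lemma \ref{norme proprieta}-(2). Since $f\in\mathcal{R}_r(\th_p)\cap\mathcal{H}^{(\tN)}$ is supported on $(\al,\bt)\in\mathtt{R}^c$ with $|\al|+|\bt|=\tN+2$, formula \eqref{def:adjaction} gives the explicit inverse
\[
L_\omega^{-1}f=\sum_{(\al,\bt)\in\mathtt{R}^c}\frac{f_{\al,\bt}}{-\ii\,\omega\cdot(\al-\bt)}\,u^{\al}\bar u^{\bt}.
\]
This is well defined for $\mathtt{m}\in\mathfrak{M}_\gamma$ by Remark \ref{rmk:ker2}. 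By \eqref{alberellobello} and \eqref{coeffSobo} it then suffices to prove the pointwise bound
\[
\frac{\jml{j}^{2\zeta}}{\prod_{i}\jml{i}^{\zeta(\al_i+\bt_i)}}\;\le\;J_0\,|\omega\cdot\ell|,\qquad \ell=\al-\bt,
\]
for every $j$ with $\al_j+\bt_j\neq0$. The factor $r^{|\al|+|\bt|-2}$ is identical on both sides, so $r$ plays no role.

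\textbf{Step 1: small divisors.} Insert Theorem \ref{thm:mainVERO}. Since $\td(\ell)\le \tN+2$, we have $\tau(\ell)\le(\tN+2)(\tN+6)$ and $\tau^2\lesssim \tN^4$. The prefactor $\gamma^{-\tau^2}$ then gives $\gamma^{-\tC\tN^4}$. The prefactor $(\tC(\tN+2))^{6\tau}\le e^{\tC\tN^2\log \tN}$ is absorbed into $e^{\tC\zeta}$ because $\zeta\ge \tN^4$.

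For the product over $n\neq m_1,m_2$, use $|\ell_n|\le \tN+2$ to write
\[
(1+|\ell_n|^2\langle n\rangle^2)^{4\tau^2}\le (\tN+2)^{8\tau^2}\jml{n}^{8\tau^2}.
\]
Only $n$ with $\ell_n\ne0$ appear, and there are at most $\tN+2$ of them. So the right side of the target bound is at least
\[
\gamma^{\tC\tN^4}e^{-\tC\zeta}\prod_{i\ge3}\jml{m_i}^{-8\tau^2},
\]
where $m=m(\al+\bt)$ is the ordering of Definition \ref{n staremmino}. The factor $(\tN+2)^{8\tau^2(\tN+2)}$ is about $e^{C\tN^5\log\tN}$. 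I will try to absorb it into $\prod_{i\ge3}\jml{m_i}^{\zeta/2}$ using $\jml{\cdot}\ge2$ together with the large numerical constant $(2^43^4)^4$ in $\zeta$. If that does not suffice, it goes into $e^{\tC\zeta}$ after enlarging $\tC$.

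\textbf{Step 2: weights versus divisor losses.} It remains to show
\[
\frac{\jml{j}^{2\zeta}}{\prod_i\jml{m_i}^{\zeta}}\;\le\;e^{\tC\zeta}\prod_{i\ge3}\jml{m_i}^{-8\tau^2}.
\]
\emph{Case $j=m_k$ with $k\ge3$.} Then $\jml{j}\le\jml{m_2}\le\jml{m_1}$, so the left side is at most $\prod_{i\ge3}\jml{m_i}^{-\zeta}$, which wins because $\zeta\ge 8\tau^2$.

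\emph{Case $j\in\{m_1,m_2\}$.} The left side is
\[
\Big(\frac{\jml{m_1}}{\jml{m_2}}\Big)^{\zeta}\prod_{i\ge3}\jml{m_i}^{-\zeta}\quad\text{(worst case)}.
\]
Here I use momentum conservation \eqref{momento}: $|m_1|\le |m_2|+\sum_{i\ge3}|m_i|$. The zero modes are excluded in Definition \ref{n staremmino} but are harmless, since $\jml0=2$. This gives
\[
\frac{\jml{m_1}}{\jml{m_2}}\le 1+\sum_{i\ge3}\frac{\jml{m_i}}{\jml{m_2}}\le \prod_{i\ge3}\Big(1+\tfrac12\jml{m_i}\Big).
\]

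\textbf{Main obstacle.} The difficulty is precisely this last case. The bound above only yields $\prod_{i\ge3}\big((1+\tfrac12\jml{m_i})/\jml{m_i}\big)^{\zeta}$, which is essentially $1$ when all $\jml{m_i}=2$. That leaves no room to pay $\prod_{i\ge3}\jml{m_i}^{8\tau^2}$.

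To gain a genuine power I would instead use the decay $\jml{m_1}/\jml{m_2}$ only with exponent $\zeta$ while reserving part of the weight. Concretely, if $\jml{m_1}\ge 2\jml{m_2}$ then $\sum_{i\ge3}|m_i|\gtrsim |m_1|$. Then $\jml{m_1}\le (\tN+2)\jml{m_3}$, so
\[
\Big(\frac{\jml{m_1}}{\jml{m_2}}\Big)^{\zeta}\le (\tN+2)^{\zeta}\Big(\frac{\jml{m_3}}{\jml{m_2}}\Big)^{\zeta}\le(\tN+2)^{\zeta}.
\]
The remaining problem is to convert $(\tN+2)^{\zeta}$ into $e^{\tC\zeta}$, which I expect requires the exact exponent bookkeeping of \cite{BMP:CMP,FeoMass:Beam}. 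Otherwise $\jml{m_1}\le2\jml{m_2}$, the ratio costs only $2^{\zeta}\le e^{\zeta}$, and the full $\prod_{i\ge3}\jml{m_i}^{-\zeta}$ pays for the divisors. Combining the two cases would give the stated $J_0=\gamma^{-\tC\tN^4}e^{\tC\zeta}$.
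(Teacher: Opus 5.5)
There is a genuine gap, and it sits exactly where you flagged it. Your architecture matches the paper's: coefficientwise comparison via Lemma \ref{norme proprieta}-(2), the Diophantine bound of Theorem \ref{thm:mainVERO}, a case split on the position of $j$ in the ordered list, and momentum conservation. But the argument stops at the crux, and as written it does not give the claimed constant.

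In the subcase $j=m_1$, $\jml{m_1}\ge2\jml{m_2}$, you are left with $(\tN+2)^{\zeta}$. Since $\tN$ is arbitrary and $\zeta$ may equal $(2^43^4\tN)^4$, the factor $(\tN+2)^\zeta=e^{\zeta\ln(\tN+2)}$ cannot be bounded by $e^{\tC\zeta}$ with $\tC$ absolute. The same defect appears in Step 1. The crude bound $|\ell_n|\le\tN+2$ produces a loss of size $e^{c\tN^5\ln\tN}$. This is absorbed neither by $e^{\tC\zeta}$ with $\zeta\sim\tN^4$, nor by $2^{\zeta/4}$ per mode uniformly in $\tN$. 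So the fallback ``enlarge $\tC$'' fails in both places. Your alternative bound $\prod_{i\ge3}(1+\frac12\jml{m_i})$ is, as you note, too lossy.

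The missing idea is to split the weight gain $P^{-\zeta}$, where $P:=\prod_{i\ge3}\jml{m_i}$ is taken over the full list including zero modes (i.e.\ $\na$). Use one half to pay the combinatorial factor coming from momentum conservation, and the other half to pay the divisors. The paper does this with Lemma \ref{luchino}: for $x_1\ge\dots\ge x_k\ge2$ one has $\sum x_l\le(\sqrt{x_1}+4/\sqrt{x_1})\prod\sqrt{x_l}$. Combined with $\jml{m_1}\le\jml{m_2}+\sum_{i\ge3}\jml{m_i}$ and $\jml{m_3}\le\jml{m_2}$, this gives
\[
\frac{\jml{m_1}}{\jml{m_2}\,P}\le\frac1P+\frac{\sum_{i\ge3}\jml{m_i}}{\jml{m_2}\,P}\le 4\,P^{-1/2},
\]
which is the content of Lemma \ref{stimaSob:lem}.

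Your own subcase closes the same way. There $\jml{m_1}\le 2k\jml{m_3}$, with $k$ the number of entries $i\ge3$. Since $P\ge 2^k$, one has $2k\le 3\cdot2^{k/2}\le 3P^{1/2}$. So the factor you called $(\tN+2)^\zeta$ costs only $3^\zeta P^{\zeta/2}$, leaving $P^{-\zeta/2}$ free.

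For the divisors, replace $|\ell_n|\le\tN+2$ by $|\ell_n|\le\al_n+\bt_n$. Since $\jml{n}\ge2$, one has $1+|\ell_n|^2\langle n\rangle^2\le\jml{n}^{4|\ell_n|}$. By the domination \eqref{abbacchio}, $\prod_{n\ne m_1(\ell),m_2(\ell)}(1+|\ell_n|^2\langle n\rangle^2)^{4\tau^2}\le P^{16\tau^2}\le P^{\zeta/2}$, because $\zeta\ge(2^43^4\tN)^4\ge 32\tau^2$.

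With these two changes, the weight ratio times the divisor loss is at most $4^\zeta$ in every case. Then $J_0\le\gamma^{-\tC\tN^4}e^{\tC\zeta}$ follows exactly as in your Step 1 treatment of $\gamma^{-\tau^2}(\tC(\tN+2))^{6\tau}$.
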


\begin{proof}
By definition of the coefficients in  \eqref{coeffSobo} we have
\begin{equation*}
J_0 := 
\sup_{\substack{j\in\Z,\,  (\al,\bt)\in\Lambda 
\\ \al_j+\bt_j\neq 0 
\\ |\al-\bt|\leq \tN+2}}
\frac{c^{(j)}_{\ri,p + \zeta}(\al,\bt) }
{c^{(j)}_{\ri,p}(\al,\bt) |\omega\cdot (\al-\bt)|}\, 
= 
\sup_{\substack{j\in\Z,\,  (\al,\bt)\in\Lambda 
\\ \al_j+\bt_j\neq 0 
\\ |\al - \bt| \le \tN + 2}} 
\frac{ \lfloor j\rfloor^{2\zeta}}{\abs{\omega\cdot{\pa{\al - \bt}}}}
\prod_{i\in\Z}\lfloor i\rfloor^{-\zeta(\alpha_i+\beta_i)}.
\end{equation*}
First of all we recall that (see Thm. \ref{thm:mainVERO} )
\begin{equation}\label{verga1}
|\alpha|+|\beta|=\tN+2\,,\;\;\;\td(\ell)\leq 4\tN\,,\;\;\; 4\tau^2(\ell)\leq 2^43^4\tN^4\,,\;\;\;
\zeta\geq (2^43^4\tN)^4\,.
\end{equation}
By the  diophantine condition \eqref{goodsmalldivTeorema} we have
\[
J_0 \leq  \g^{-\tau^2} (\mathtt{C}(\tN+2))^{6\tau}
\sup_{\substack{j\in\Z,\,  (\al,\bt)\in\Lambda 
\\ \al_j+\bt_j\neq 0 
\\ |\al - \bt| \le \tN + 2}} 
\Big(\frac{\jjap{j}^2}{\prod_{i\in\Z}\jjap{i}^{\al_i + \bt_i}} \Big)^\zeta 
\prod_{i\in\Z}\big((1+|\al_i-\bt_i|^2)\langle i\rangle^2\big)^{\tau}\,.
\]
By Lemma \ref{lem:constance2SE} we only have 
to consider the case in which \eqref{divisor} holds. So by \eqref{verga1}
we can apply bound   \eqref{seisettedelta} in Lemma \ref{stimaSob:lem}
to obtain
\[
    J_0 \leq  \g^{-\tau^2} (\mathtt{C}(\tN+2))^{6\tau}2^{\zeta-1}6^\zeta\,.
\]
Therefore the estimate \eqref{controlJ0caseSob} 
follows taking $\mathtt{C}>0$ large enough, but independent of $\tN$.
\end{proof}

\begin{proof}[{\bf Proof of Theorem \ref{mainthm:BNF}.} ]
The proof follows immediately from the iterative scheme in  Appendix \ref{sec:birkoff} 
(see the iterative scheme in Lemma \ref{lem:iterazione}). Specifically,
by  \eqref{conditionRstar} and  \eqref{patata2}  we have that condition
\eqref{cond:small} is fulfilled.
So, in view of Lemma \ref{lem:iterazione}
and
\eqref{pollon4}  deduce (recall \eqref{parametri})
\begin{equation*}%\label{sepultura2}
\Phi_k \; : \; B_{r_k}(\th_{{p+\s_{\tK}}})\ \to\ 
B_{r_{k-1}}(\th_{{p+\s_{\tK}}})
\end{equation*}
with 
\begin{equation*}%\label{stimaPhiK}
\sup_{u\in  B_{r_k}(\th_{p+\s_{\tK}})} 	\norm{\Phi_k(u)-u}_{{p+\s_{\tK}}}
\le 
r_{k-1}\mathtt{R}_0\frac{1}{2^k} J_{\tK}^{\star}\e\,.
%({4^k} J_{\tK}\delta_0^{-1})^{k-1} 2^{k-1}
%.....
  % \ri \widetilde{J_0}^{\star}\abs{R^{(\tN)}}_{\ri,\twi}\,.
\end{equation*}

Setting ${\bf \Phi}:=\Phi_1\circ\cdots\circ\Phi_{\tK}$, the estimates above clearly imply
\[
{\bf \Phi} \; : \; B_{\frac{r_0}{2}}(\th_{p+\s_{\tK}})\ \to\
B_{r_{0}}(\th_{p+\s_{\tK}})\,.
\]
In particular, by writing
%which, adding and subtracting $\id$ to each $\Phi_i$ with $i = 1,\ldots \tK$ entails
$$
{\bf \Phi}  - \id = (\Phi_1 - \id)\circ\Phi_2\circ\cdots\circ \Phi_\tK + (\Phi_2 - \id) \circ \Phi_3 \circ\cdots \circ\Phi_\tK + \cdots + \Phi_\tK - \id\,.
$$
we get
\begin{equation*}
\sup_{u\in  B_{\frac{r_0}{2}}(\th_{p+\s_{\tK}})}  \norma{{\bf \Phi}(u) - u}_{p+\s_{\tK}} 
\le 
\mathtt{R}_0 J_{\tK}^{\star}\e \sum_{j=0}^{\tK-1} 
\frac{r_j}{2^{j+1}}\le r_0\mathtt{R}_0 J_{\tK}^{\star}\e \,,
\end{equation*}
which
implies \eqref{sonno1}
by using the definitions of $J_{\tK}$, $\e$ and $\mathtt{R}_0$,
the smallness assumption \eqref{conditionRstar} on $r_0$
and setting $\zeta=\s_{\tK}$.

The new Hamiltonian $H\circ{\bf \Phi}$ is equal to $H_{\tK}$
given in \eqref{hamk} with $k\rightsquigarrow \tK$.
We then set $\mathfrak{Z}:=Z_{\tK}$
and $\mathfrak{R}:=R_{\tK}$.
The \eqref{sonno2} follows.
The bounds \eqref{sonno3} follow by \eqref{smallnormk1}-\eqref{smallnormk3}
recalling \eqref{arancia1}, \eqref{scala0}, \eqref{patata2}.
\end{proof}

\subsection{Conclusions and proof of Theorems \ref{teonuovo}-\ref{teoBNFwave}}

 First of all recall that equation \eqref{eq:wave} can be written as in \eqref{eq:waveComp}.
Moreover (recall \eqref{beam5}), for any initial condition
$(\psi_0,\psi_1)\in H^{p+\frac{1}{2}}\times H^{s,p-\frac{1}{2}}$ satisfying \eqref{main:smallcondSob}
 and setting
 \[
 u_0:=\frac{1}{\sqrt{2}}\Big(\omega^{\frac{1}{2}}\psi_0+{\rm i} \omega^{-\frac{1}{2}}\psi_1\Big)\,,
 \]
 we note that 
 \begin{equation}\label{smallnessu0}
\|u_0\|_{p}\leq \delta\,.
\end{equation}
Assume now that  $u(t)$ is  a solution of \eqref{eq:waveComp}, with initial condition
$u(0)=u_0$,  
satisfying 
\begin{equation}\label{claimclaim}
\sup_{t\in[0,T_0]}\|u(t)\|_{p}\leq 2\delta 
\qquad {\rm for\;some}\qquad T_0>0\,.
\end{equation}
Hence  the solution $(\psi(t), \partial_t\psi(t))$ of \eqref{eq:wave}
with initial conditions $(\psi_0,\psi_1)$  satisfies the
\emph{a priori} bound
\[
\|\psi(t)\|_{p+1/2}+\|\partial_t\psi(t)\|_{p-1/2}\leq 4\|u(t)\|_{p}\leq 8\delta\,,
\qquad \forall\, t\in[0,T_0]\,,
\]
which implies \eqref{boundsol1Sob}.
Our aim is to apply Theorem \ref{mainthm:BNF} to show that actually 
 the claim \eqref{claimclaim}
on solutions of \eqref{eq:waveComp} with initial conditions satisfying \eqref{smallnessu0}
holds true over a time interval $[0,T_0)$ providing  
 suitable lower bounds on the lifespan $T_0>0$.

In order to apply our abstract Birkhoff normal form result we need some preliminary results.
More precisely we shall prove that the 
the Hamiltonian function $H$
in  \eqref{waveHam} (see also \eqref{waveHamFourier}) 
can be written in the form \eqref{hamIniz} with
\begin{equation}\label{hamRR00}
R_0:=\mathtt{H}_{q+1}:=
\int_{\mathbb{T}} F\Big( \frac{\omega^{-1/2}(u+\bar{u})}{\sqrt{2}}\Big)\ {dx}\,,
\end{equation}
where $F$ is the analytic function in \eqref{nonlionearitawave}.
This is the content of the following lemma.
\begin{lemma}\label{lem:applicoSub}
Let $R>0$  and  consider 
the Hamiltonian $R_0$ in \eqref{hamRR00}.
For any $p_0>1$ 
and for any $\bar{r}>0$
satisfying (recall \eqref{costanti algebra})
\[
\mathtt{C}_{{\rm alg}, \mathtt{M}}(p_0) \bar{r} <R\,,%\red{non serve a nulla}
\] 
one has that the Hamiltonian $R_0$ in \eqref{hamRR00}
belongs to the space $\cH_{\bar{r}}(\th_{p_0})$ of regular Hamiltonians
and 
\begin{equation}\label{stimaNEMSob}
|R_0|_{\bar{r},p_0}\leq 
 ( 2 \CalgM \bar{r})^{q}\bar{r}<+\infty\,.
%\frac{\mathtt{C}_{{\rm alg}, \mathtt{M}}(p_0)}{R} R^{2q+2} \bar{r}<+\infty\,.
\end{equation}
\end{lemma}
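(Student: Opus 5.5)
We reduce the estimate on $|R_0|_{\bar r,p_0}$ to a bound on the vector field of the majorant $\underline{R_0}$, using the definition \eqref{normaHamilto}. We expand $F$ in Fourier coordinates. Writing $w:=\omega^{-1/2}(u+\bar u)/\sqrt2$, we have $R_0=\frac{1}{q+2}\int_{\T}w^{q+2}dx$, which in Fourier variables is a homogeneous polynomial of degree $q+2$:
\[
R_0=\frac{(2\pi)}{(q+2)2^{(q+2)/2}}\sum_{\substack{\s_i\in\{\pm\},\, j_i\in\Z\\ \sum\s_ij_i=0}}\prod_{i=1}^{q+2}\omega_{j_i}^{-1/2}u_{j_i}^{\s_i}\,.
\]
Admissibility is then immediate. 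The reality condition \eqref{real} holds because $w$ is real-valued. Momentum conservation \eqref{momento} holds because the integral over $\T$ selects exactly the zero-total-momentum monomials. Point-wise absolute convergence of the majorant on $B_{\bar r}(\th_{p_0})$ follows from the algebra property below.

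For the vector field, the majorant replaces every coefficient by its absolute value. Since $\omega_j\geq 1$ for $\mathtt{m}\in[1,2]$, every factor $\omega_{j}^{-1/2}$ is at most $1$, and it can be dropped.

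\begin{itemize}
\item Up to constants, the $j$-th component of $X_{\underline{R_0}}$ is the $j$-th Fourier coefficient of a $(q+1)$-fold convolution of the sequences $|u|$ and $|\bar u|$, i.e. of $(|u_k|+|u_{-k}|)_k$ suitably normalised.
\item Counting the $\s$-choices gives a factor $2^{q+1}$.
\item The factor $(q+2)$ produced by differentiation cancels the $1/(q+2)$ in front.
\item The powers of $\sqrt2$ are absorbed, noting that the sequence $v_k:=|u_k|+|u_{-k}|$ satisfies $\|v\|_{p_0}\leq 2\|u\|_{p_0}$ by symmetry of $\jjap{k}$.
\end{itemize}

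Iterating Lemma \ref{algebra} $q$ times then gives
\[
\|X_{\underline{R_0}}(u)\|_{p_0}\leq \tfrac12\,(2\CalgM)^{q}\,\|v\|_{p_0}^{q+1}\lesssim (2\CalgM)^q\|u\|_{p_0}^{q+1}\,.
\]
Taking the supremum over $\|u\|_{p_0}\le\bar r$ and dividing by $\bar r$ yields $|R_0|_{\bar r,p_0}\leq (2\CalgM\bar r)^q\bar r$, which is \eqref{stimaNEMSob}.

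The hypothesis $\CalgM\bar r<R$ plays no role in this argument, since the nonlinearity is a polynomial and convergence is automatic. The main obstacle is purely bookkeeping. One has to track the normalisation constants $2\pi$, $\sqrt2$ and the $\s$-count, and choose how to handle the pair $u,\bar u$ (via $v$, or by bounding both by $\|u\|_{p_0}$), so that the numerical constant comes out as exactly $(2\CalgM)^q$ rather than a slightly larger multiple. If needed, a harmless extra absolute constant can be absorbed by slightly weakening the stated bound, or by exploiting $\omega_j^{-1/2}\le 1$ more carefully.
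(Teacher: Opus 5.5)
Your strategy is the paper's: expand $F$, pass to the majorant, use $\omega_j^{-1/2}\le 1$, and control the $(q+1)$-fold convolutions by Lemma \ref{algebra}. Your admissibility checks are fine, and so is your remark that the hypothesis $C\bar r<R$ is never used. The gap is in the lemma's only quantitative content, the constant. Write $C:=\mathtt{C}_{{\rm alg},\mathtt{M}}(p_0)$.

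The factor $2^{q+1}$ from the $\sigma$-choices and the passage to $v_k=|u_k|+|u_{-k}|$ count the same thing twice. The sum over $\sigma'$ is exactly what builds $v$, since
\[
|X^{(j)}_{\underline{R_0}}(u)|\le 2^{-\frac{q+2}{2}}\sum_{\sigma'\in\{\pm\}^{q+1}}\ \sum_{\sigma'_1j_1+\dots+\sigma'_{q+1}j_{q+1}=j}\ \prod_{i=1}^{q+1}|u_{j_i}|=2^{-\frac{q+2}{2}}(v\star\cdots\star v)_j\,.
\]
Your bound $\tfrac12(2C)^q\|v\|_{p_0}^{q+1}$, combined with $\|v\|_{p_0}\le2\|u\|_{p_0}$, only gives $(4C)^q\|u\|_{p_0}^{q+1}$. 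So neither the step ``$\lesssim(2C)^q\|u\|_{p_0}^{q+1}$'' nor the exact bound \eqref{stimaNEMSob} follows from what you wrote. Weakening the constant, as you suggest, does not prove the statement as given.

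Counted once, the bookkeeping closes. One option is to keep the $2^{q+1}$ terms and bound each convolution directly by $C^q\|u\|_{p_0}^{q+1}$. Each factor is either $u$ or the reflected conjugate sequence $m\mapsto\overline{u_{-m}}$, and both have norm $\|u\|_{p_0}$ because $\jjap{m}=\jjap{-m}$. The other option is to use $v$ without the extra $2^{q+1}$. Either way,
\[
\|X_{\underline{R_0}}(u)\|_{p_0}\le 2^{q+1}2^{-\frac{q+2}{2}}C^q\|u\|_{p_0}^{q+1}=2^{\frac q2}C^q\|u\|_{p_0}^{q+1}\le(2C)^q\|u\|_{p_0}^{q+1}\,.
\]
This is the paper's count $\frac{1}{d2^{d/2}}\sum_k\binom{d}{k}(d-k)=2^{\frac d2-1}$ with $d=q+2$. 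Taking the sup over $\|u\|_{p_0}\le\bar r$ and dividing by $\bar r$ gives \eqref{stimaNEMSob}.

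The $2\pi$ in your expansion should not be there. The norm $|\cdot|_{r,p}$ is built on the conventions $X_H^{(j)}=-\imm\partial_{\bar u_j}H$ and $D_\omega=\sum_j\omega_j|u_j|^2$. With these, the nonlinearity generating \eqref{eq:waveComp} is $(2\pi)^{-1}\int_{\T}F$, and the paper's expansion accordingly carries no $2\pi$. If you kept the $2\pi$ literally, it could not be absorbed: the available slack is only $2^{q/2}$, which is below $2\pi$ for $q\le 5$.
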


\begin{proof}
It follows using the analyticity 
of the function $F$ in \eqref{nonlionearitawave} and Lemma \ref{algebra}.
%and reasoning as in Proposition $5.2$ in \cite{BMP:CMP}.

\noindent
More precisely, recalling \eqref{beam5} we have that, setting $d:=q+2$,
\[
\begin{aligned}
R_0=\int_{\mathbb{T}}F\Big( \frac{\omega^{-1/2}(u+\bar{u})}{\sqrt{2}}\Big) dx
&= 
\frac{1}{d}\int_{\mathbb{T}} (\frac{\omega^{-1/2}(u+\bar{u})}{\sqrt{2}})^{d}  dx
\\&= 
\frac{1}{d 2^{d/2}} 
\sum_{k=0}^d 
\binom{d}{k} \int_{\mathbb{T}}(\omega^{-\frac{1}{2}}u)^k(\omega^{-\frac{1}{2}}\bar{u})^{d-k}dx \,,
\end{aligned}
\]
hence, passing to the Fourier basis, we get
\[
 R_0= \frac{1}{d 2^{d/2}}
 \sum_{k=0}^d \binom{d}{k} 
 \sum_{\sum_{i=1}^{k}j_i=\sum_{p=k+1}^{d}j_{p}}
 \!\!\!\!\!
 C^{(d)}(j_1,\ldots,j_{d})
 u_{j_1} \cdots  u_{j_k}\bar{u}_{j_{k+1}}\cdots\bar{u}_{j_{d}}
\]
where the coefficients $C^{(d)}(j_1,\ldots,j_{d})$ are symmetric in $j_i$, $i=1,\ldots,d$, 
and have the form
\begin{equation}\label{cddcdd}
C^{(d)}(j_1,\ldots,j_{d}):=\prod_{i=1}^{d}\omega^{-\frac{1}{2}}(j_{i})
\stackrel{\eqref{omegoneWave}}{=}\prod_{i=1}^{d}\frac{1}{\sqrt{|j_i|^{2}+\mathtt{m}}}\leq1\,.
\end{equation}
In view of \eqref{eq:waveComp} we now compute the first component of the
vector field $X_{R_0}$. We have
\[
\begin{aligned}
X_{R_0}^{(j)}&:=-{\rm i}\partial_{\bar{u}_{j}} R_0
=-{\rm i}
 \frac{1}{d 2^{d/2}}
\sum_{k=0}^d \binom{d}{k} (d-k) \mathtt{a}^{(d,k)}(j)\,,
\\
\mathtt{a}^{(d,k)}(j)&:=
\sum_{\sum_{i=1}^{k}j_i-\sum_{p=k+1}^{d-1}j_{p}=j}
 \!\!\!\!\!
 C^{(d)}(j_1,\ldots,j)
 u_{j_1} \cdots  u_{j_k}\bar{u}_{j_{k+1}}\cdots\bar{u}_{j_{d-1}}\,.
\end{aligned}
\]
where we used the symmetry of the coefficients  $C^{(d)}(j_1,\ldots,j)$.
Notice 
that (recall \eqref{normaHamilto})
\[
\|X_{\underline{R_0}}(u)\|_{p}\leq \|X_{\underline{R_0}}(\underline{u})\|_{p}\leq 
\frac{1}{d 2^{d/2}}
 \sum_{k=0}^d \binom{d}{k} (d-k) \|\mathtt{a}^{(d,k)}\|_{p}
\]
where $\underline{u}=(|u_{j}|)_{j\in\mathbb{Z}}$ and  for any $0\leq k\leq d$, we set
$\mathtt{a}^{(d,k)}:=(\mathtt{a}^{(d,k)}(j))_{j\in\mathbb{Z}}$.
Notice moreover that %(using \eqref{cddcdd})
\[
|\mathtt{a}^{(d,k)}(j)|\leq |(\underbrace{\underline{u}\star \ldots \underline{u}}_{k }\star 
\underbrace{\bar{\underline{u}}\star\ldots \star\bar{\underline{u}}}_{d-k-1})_{j}|\,,\qquad
\forall\, j\in\mathbb{Z}\,.
\]
Therefore, using estimate \eqref{stimacalgcalgMM} in Lemma \ref{algebra}, we obtain
\[
\begin{aligned}
\|X_{\underline{R_0}}(u)\|_{p}&\leq 
 \frac{1}{d 2^{d/2}}
 \sum_{k=0}^d \binom{d}{k} (d-k)
( \CalgM)^{d-2}
 \|u\|_{p}^{d-1}
 \\&
 \leq 
2^{\frac{d}{2}-1} 
 ( \CalgM)^{d-2}
 \|u\|_{p}^{d-1}
 \leq 
 4
 ( 2 \CalgM)^{d-2}
 \|u\|_{p}^{d-1}\,.
 \end{aligned}
\]
Then (recall \eqref{normaHamilto})
\[
|R_0|_{\bar{r},p}\leq 
 ( 2 \CalgM \bar{r})^{d-2}\bar{r} \,,
\]
which implies the bound \eqref{stimaNEMSob}.
\end{proof}
We are now in position to prove our main results.

\subsubsection{Sobolev stability and proof of Theorem \ref{teoBNFwave}}
In order to prove Theorem \ref{teoBNFwave}
we reason as follows. We assume 
that the initial condition  $u_0$ satisfies (recall \eqref{smallnessu0})
\begin{equation}\label{smallnessu0Sob}
\|u_0\|_{p}:=\|u_0\|_{0,p}\leq \delta\,.
\end{equation}
Fix 
\begin{equation}\label{KapponeSob}
\tK:=\tK(p):=\left[\Big(\frac{p-1}{2^4 3^4}\Big)^{\frac{1}{5}}\right]-1\,,\qquad 
p_0:=p-\zeta:=p-2^4 3^4\sum_{i=1}^{\tK}i^4\,.
\end{equation}
Recalling the assumption $p>2^{6}(36)^{2}+1$ in Theorem \ref{teoBNFwave},
one can check that
\begin{equation}\label{boundsKappone}
1\leq \widetilde{c} (p-1)^{\frac{1}{5}}\leq \tK(p)\leq p^{1/5}\,,\qquad 
\widetilde{c}:=\frac{1}{2(36)^{2/5}}\,,\quad p_0>1\,.
\end{equation}
For future convenience we set
\begin{equation}\label{sceltabarrSob}
\bar{r}:=\frac{2}{\mathtt{C}_{{\rm alg}, \mathtt{M}}(p_0)}\,.
\end{equation}
Hence by  Lemma \ref{lem:applicoSub} we have that the Hamiltonian 
$H$ in  \eqref{waveHam} can be written, for any $\tK\geq q+2$, in the form 
\eqref{hamIniz}.
We apply Theorem \ref{mainthm:BNF}
to the Hamiltonian $H=D_{\omega}+R_0$ with $R_0$ in \eqref{hamRR00}.
Recalling the parameters in \eqref{parametri}-\eqref{patata2}
we fix
\begin{equation}\label{cond:r0piccoloSob}
r_0= 2\delta\,.
\end{equation}
We now 
show that
\begin{equation*}%\label{condr0KrSob}
\mathtt{C}_0:= 32 e4^3 |R_0|_{\bar{r},p_0} \tK\frac{4^{\tK}}{\gamma^{\tC K^4}}
\exp\Big( 2^{12}\mathtt{C} \tK^5\Big)
\frac{r_0}{\bar{r}} \leq 1\,,
\end{equation*}
for $K(p)$ defined in \eqref{KapponeSob}.
First, we notice that
\begin{equation}\label{cardano2}
p_0>1\qquad \Rightarrow
\quad
1\leq \mathtt{C}_{{\rm alg},\mathtt{M}}(p_0):=\sqrt{2}\sqrt{2+\frac{2p_0+1}{2p_0-1}}\leq 2^3\,.
\end{equation}
Therefore
\[
\begin{aligned}
\mathtt{C}_0&\leq  |R_0|_{\bar{r},p_0} \exp\Big\{2^{19}\mathtt{C}\ln(1/\gamma) \tK^5\Big\}
\frac{r_0}{\bar{r}}
\\&
\stackrel{\eqref{stimaNEMSob}, \eqref{sceltabarrSob}, \eqref{KapponeSob}}{\leq }
 ( 2 \CalgM \bar{r})^{q}\exp\Big\{2^{21}\mathtt{C}\ln(1/\gamma) p\Big\}r_0\leq 1\,,
\end{aligned}
\]
provided that
\[
r_0\leq \frac{1}{ ( 2 \CalgM \bar{r})^{q}}\exp\Big\{-2^{21}\mathtt{C}\ln(1/\gamma) p\Big\}.
\]
The last inequality follows from \eqref{cond:r0piccoloSob} %,  \eqref{smalldelta0Sob} 
and \eqref{smalldelta1Sob} (see also \eqref{sceltabarrSob}),
taking 
\begin{equation}\label{melone1}
\mathtt{c}\geq2^{21}\mathtt{C}\,.
\end{equation} 
In view of  Theorem \ref{mainthm:BNF} and Lemma \ref{tempotempo} 
we have 
that the solution $u(t)$ of \eqref{eq:waveComp} evolving from initial 
data satisfying \eqref{smallnessu0Sob}
remains in the ball of radius $2\delta$ for time $t\in [0,{T}_0]$ with
(recall the estimate \eqref{sonno3} and $J_{\tK}$ in \eqref{conditionRstar})
 \begin{equation*}
 \begin{aligned}
{T}_0 := 
\frac{\bar{r}^{\tK + 1}}{8 r_0^{\tK+1}\norm{R_0}_{\bar{r},p}} \frac{(16 e\tK 4^{\tK + 2})^{-\tK }}{\pa{\g^{-\tC\tK^4}\exp(2^{12}\mathtt{C}\tK^5)}^\tK}  \,.
\end{aligned}
 \end{equation*}
Observe that
\[
\begin{aligned}
T_0&\geq \frac{1}{8|R_0|_{\bar{r},p}}\frac{\bar{r}}{r_0}\left(\frac{\bar{r}}{r_0}\right)^{\tK}
\frac{1}{[\exp\big(2^{14}\mathtt{C}\ln(1/\gamma)\tK^5
\big)]^\tK}
\\&
\stackrel{\eqref{stimaNEMSob}, \eqref{sceltabarrSob}}{\geq}
\frac{1}{8 (2\mathtt{C}_{\rm alg}(p_0)\bar{r})^{q}}\frac{1}{r_0}
\left(\frac{\bar{r}}{r_0}\right)^{\tK}
\frac{1}{[\exp\big(2^{14}\mathtt{C}\ln(1/\gamma)\tK^5
\big)]^\tK}
\\&
\geq \frac{1}{8}\frac{1}{r_0}\left(\frac{\bar{r}}{r_0}\right)^{\tK}
\frac{1}{2^6 [\exp\big(2^{14}\mathtt{C}\ln(1/\gamma)\tK^5
\big)]^\tK},
\end{aligned}
\]
since $p\geq p_0$ and \eqref{cardano2} holds.
Finally,
by \eqref{boundsKappone}
one has
\[
\begin{aligned}
T_0&\geq 
\frac{1}{8r_0}\left(\frac{1}{\delta}\right)^{\widetilde{c} (p-1)^{1/5}}
 \frac{1}{2^6 
 [\exp\big(p 2^{14}\mathtt{C}\ln(1/\gamma)
\big)]^p}
\\&
\geq
\frac{1}{8r_0}\left(\frac{1}{\delta}\right)^{\widetilde{c} (p-1)^{1/5}}
 \frac{1}{
 [\exp\big(p 2^{15}\mathtt{C}\ln(1/\gamma)
\big)]^p}\,.
\end{aligned}
\]
Setting $\mathtt{c}=\max\{ 2(36)^{2/3}, 2^{21}\mathtt{C}\}$, we get 
\eqref{boundsol1Sob}-\eqref{longtime1Sob}.

\subsubsection{Optimization and proof of Theorem \ref{teonuovo}}\label{sec:ottimizzo}
In order to prove the result, in this section we combine the Birkhoff normal form result 
of Theorem \ref{teoBNFwave} with some \emph{a priori} energy estimates 
on the equation \eqref{eq:waveComp} to bound the 
\emph{high Sobolev norms} of the solution and obtain \eqref{stimaIncredibleBis}.
Such energy estimates are proved in Proposition \ref{thm:energy} of section 
\ref{sec:kleintd} in any space dimension $d\geq1$.

 Let us fix $\delta$ such that
\begin{equation}\label{deltapiccolo}
0<\delta\leq \bar{ \delta}:=\exp\Big\{-\mathtt{b}
\ln(1/\gamma)\Big\}
\end{equation}
where 
\begin{equation}\label{constB}
\mathtt{b}:=\max\big\{24\mathtt{c}^{2}\left(\frac{1}{48\mathtt{c}}\right)^{\frac{10}{9}},
24\mathtt{c}^{2}\big[2^6 (36)^{2}\big]^{5/3}
\big\}\,,
\end{equation}
and let us consider 
\begin{equation}\label{defpdelta}
p=p(\delta)=1+\left(\frac{1}{24\mathtt{c}^2\ln(1/\gamma)} \ln\big(\frac{1}{\delta}\big)\right)^{5/9}
\end{equation}
 where $\mathtt{c}>0$ is the absolute constant given by Theorem \ref{teoBNFwave}.

\noindent
Consider an initial condition satisfying \eqref{main:smallcondSob}, which, passing to complex coordinates 
in \eqref{beam5}
reads as \eqref{smallnessu0}.
%\begin{equation*}%\label{smallnessu0Sob}
%\|u_0\|_{p}\leq \delta\,.
%\end{equation*}
In order to apply Theorem \ref{teoBNFwave} we
shall verify that condition \eqref{smalldelta1Sob} holds for $\delta$ small enough.
First notice that \eqref{defpdelta}
implies 
\begin{equation}\label{defpdelta2}
\delta= \exp\big\{-24\mathtt{c}^2(p-1)^{\frac{9}{5}}\ln(1/\gamma)\big\}\,.
\end{equation}
Then  smallness condition \eqref{smalldelta1Sob} translates in proving that
\[
\exp\{ \mathtt{c}\ln(1/\gamma)\Big[ p-24\mathtt{c}(p-1)^{9/5}\Big]\}\leq1\quad \Leftrightarrow
\quad 
p-24\mathtt{c}(p-1)^{9/5}\leq0\,,
\]
recalling that $0<\gamma<1$, which is true as long as

\[
\begin{aligned}
p&=1+\left(\frac{1}{24\mathtt{c}^2\ln(1/\gamma)} \ln\big(\frac{1}{\delta}\big)\right)^{5/9} 
\geq 1+\big(\frac{1}{240\mathtt{c}}\big)^{\frac{5}{4}}\,,\quad \Leftrightarrow
\\
& \ln(1/\delta)\geq 24\mathtt{c}^2 \left(\frac{1}{240 \mathtt{c}}\right)^{\frac{9}{4}}\ln(1/\gamma)\,.
\end{aligned}
\]
This follows by \eqref{deltapiccolo}-\eqref{constB}.
With similar computations we can check that \eqref{deltapiccolo}-\eqref{constB}, 
together with \eqref{defpdelta}, yield
$p=p(\delta)>1+2^6(36)^{2}$.
Hence,Theorem \ref{teoBNFwave} applies, guaranteeing time of stability of the form
(see \eqref{longtime1Sob})
\[
\begin{aligned}
T_0&\geq\frac{1}{\delta}
\left(\frac{1}{\delta}\right)^{\frac{1}{\mathtt{c}}(p-1)^{1/5}}
\exp\big\{ -p^2\mathtt{c}\ln(1/\gamma)\big\}
\\&
\stackrel{\eqref{defpdelta2}}{=}
\frac{1}{\delta}
\exp\Big\{ 24\mathtt{c}(p-1)^{2}
\ln(1/\gamma)
-p^2\mathtt{c}\ln(1/\gamma)
\Big\}
\\&
\geq \frac{1}{\delta}
\exp\Big\{
\mathtt{c}\ln(1/\gamma) \Big(24(p-1)^{2}-p^2\Big)
\Big\}
\\&
\geq \frac{1}{\delta}\exp\Big\{
\mathtt{c}(p-1)^{2}\ln(1/\gamma) 
\Big\}
\\&
\stackrel{\eqref{defpdelta}}{\geq}
\frac{1}{\delta}\exp\Big\{
\frac{\mathtt{c}(\ln(1/\gamma))^{-1/9})}{(24\mathtt{c}^2)^{10/9}}(\ln(1/\delta) )^{1+\frac{1}{9}}
\Big\}=:T_{good}\,,
\end{aligned}
\]
which implies \eqref{patata1}.
In particular, by \eqref{boundsol1Sob}, we have that the solution $u(t)$ of \eqref{eq:waveComp} evolving form 
$u_0$, satisfies  
\begin{equation}\label{quadro1}
\sup_{t\in[0,T]}\|u(t)\|_{p}\leq 8\delta\,,
\end{equation}
for some $T\geq T_{good}$ and 
for $p=p(\delta)$ in \eqref{defpdelta}.
In order to study the evolution of the high 
norm $\|u(t)\|_{s}$ with $s\geq p+1$ we apply Proposition \ref{thm:energy}
with $s_{1}\rightsquigarrow p=p(\delta)$
and recalling that \eqref{def:japjapModificatoRR2} in Remark \ref{normaR2}.
%$ \|\cdot\|_{s}\equiv\|\cdot\|_{s,R}$ with $R=2$.
By estimate \eqref{buttalapasta} we get
\begin{equation}\label{lauraBeatrice}
\begin{aligned}
\|u(t)\|_{s}&\leq2^{s}
\|u(0)\|_{s}
+
\left(
\frac{\mathtt{M}^{qs }}{2^{q(p-s_0)}}
\int_{0}^{t}
\|u(\s)\|_{s_1}^{q+\frac{1}{s-p}}
d\s\right)^{s-p}
\\&
\leq2^s
\|u(0)\|_{s}+  
 \Big(
 \frac{t\mathtt{M}^{q (s-s_0)}}{2^{2q(p-s_0)}}
 \sup_{t\in[0,T]}\|u(t)\|_{p}^{q}
 \Big)^{s-p}
  \big(
 \sup_{t\in[0,T]}\|u(t)\|_{p}
 \big)
 \\&
 \stackrel{\eqref{quadro1}}{\leq}2^{s}
 \|u(0)\|_{s}+  
 \Big( t\delta^{q}
 \frac{\mathtt{M}^{q (s-s_0)} 8^{2q+1}}{2^{q(p-s_0)}}
 \Big)^{s-p}
 \delta^2\,.
\end{aligned}
\end{equation}
The bounds \eqref{quadro1} and \eqref{lauraBeatrice}, 
together with Lemma \ref{lemmaequivalenza} and \eqref{beam5}, imply
\eqref{stimaIncrediblelowNORM}-\eqref{stimaIncredibleBis}.
This concludes the proof of Theorem \ref{teonuovo}.

\section{Higher dimensional case: a priori estimates}\label{sec:stimetutte}
In this section we prove the long time stability results on $\T^{d}$ of solutions of the equation 
\eqref{eq:wave} given by 
Theorem \ref{thm:mainNOBNF}.
%we provide a priori estimates on the evolution of the 
%Sobolev norms of solutions of the equation 
%\eqref{eq:wave}. 
%To do this we shall use the complex variables introduced in 
%\eqref{beam5}, when $\tm\neq0$, or \eqref{complexNew} when $\tm=0$.
In section \eqref{sec:kleintd} we provide a priori energy estimate for
\eqref{eq:wave} on $\T^{d}$ when $\mathtt{m}\neq0$.
Then, in section \ref{sec:conlcudotd}, we will conclude the proof of the main result.

\subsection{Estimates for the Klein-Gordon equation}\label{sec:kleintd}
In this subsection we consider equation \eqref{eq:wave} with $\mathtt{m}>0$
for any $d\geq1$.
It is convenient to study the system passing to the complex variables 
\eqref{beam5}, i.e. we shall study equation \eqref{eq:waveComp}.
We shall prove the following.
\begin{proposition}{\bf (Basic a priori estimates: $\tm\neq0$).}\label{thm:energyBasic}
Let $d\geq 1$, $s_0>d/2$ and $s_1\geq s_0$.
Then, if $u=u(t,x)$  is a solution of \eqref{eq:waveComp} 
defined for $t\in[0,T]$ for some $T>0$ and  satisfying 
 \begin{equation}\label{hyp:local}
 \sup_{t\in [0,T]}\|u(t,\cdot)\|_{s_0,R}\leq 1\,,\qquad 
\sup_{t\in [0,T]}\|u(t,\cdot)\|_{s_1,R}<+\infty\,,
 \end{equation} 
 one has the a priori bound
 \begin{equation}\label{basicStima}
\|u(t)\|_{s_1,R}\leq \|u_{0}\|_{s_1,R}+\mathtt{M}^{q s_0} 
\Big(\frac{\mathtt{M}}{R}\Big)^{q(s_1-s_0)}\int_{0}^{t}
\|u(\s)\|_{s_1,R}^{q+1}d\s\,,\qquad t\in[0,T]\,,
\end{equation}
for some absolute constant $\mathtt{M}>0$.
\end{proposition}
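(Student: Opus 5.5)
We argue directly on the Duhamel formulation \eqref{eq:duhamelintro} of \eqref{eq:waveComp}. The linear propagator $e^{-\ii\omega t}$ is a Fourier multiplier of modulus one, so it is an isometry for every norm $\|\cdot\|_{s,R}$ in \eqref{normaJapJap}. Taking the $\|\cdot\|_{s_1,R}$ norm in \eqref{eq:duhamelintro} and using Minkowski's inequality for the time integral gives
\[
\|u(t)\|_{s_1,R}\leq \|u_0\|_{s_1,R}+\int_0^t\Big\|\omega^{-\frac12}f\Big(\omega^{-\frac12}\Big(\tfrac{u(\s)+\bar u(\s)}{\sqrt2}\Big)\Big)\Big\|_{s_1,R}d\s .
\]
The hypothesis \eqref{hyp:local} ensures that every quantity appearing here is finite on $[0,T]$.

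The next step is to bound the integrand. By \eqref{collina3} in Remark \ref{rmk:collina}, the outer $\omega^{-1/2}$ costs at most a constant depending on $\tm$. Set $w:=\omega^{-1/2}(u+\bar u)/\sqrt2$. Then $f(w)=w^{q+1}$, and expanding $(u+\bar u)^{q+1}$ is not even needed, because $w$ is a single function. We apply \eqref{confort2} of Lemma \ref{stimaNonlin} with $q_1=q+1$, $q_2=0$ and $s=s_1$ to the function $w$. This gives
\[
\|w^{q+1}\|_{s_1,R}\leq \mathtt{M}^{qs_0}(\mathtt{M}/R)^{q(s_1-s_0)}\|w\|_{s_1,R}^{q+1}.
\]
Then \eqref{collina3} yields $\|w\|_{s_1,R}\lesssim_{\tm}\|u\|_{s_1,R}$, since conjugation preserves the norm.

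All the numerical constants produced this way are of the form $C_{\tm}^{q+2}$. We absorb them by enlarging $\mathtt{M}$, using $s_0>d/2\geq 1/2$ and $q\geq1$, so that $C^{q+2}\leq \mathtt{M}'^{qs_0}/\mathtt{M}^{qs_0}$ for a suitable $\mathtt{M}'$. This is the only delicate point: the enlargement must preserve the structure $\mathtt{M}^{qs_0}(\mathtt{M}/R)^{q(s_1-s_0)}$. It does, because increasing $\mathtt{M}$ only increases both factors, while the constants to be absorbed are independent of $s_1$. Hence we can write $C^{q+2}\leq \widetilde{\mathtt{M}}^{qs_0}\mathtt{M}^{-qs_0}$ with $\widetilde{\mathtt{M}}\geq \mathtt{M}$, and then replace $(\mathtt{M}/R)$ by $(\widetilde{\mathtt{M}}/R)$ as well.

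The smallness assumption $\|u\|_{s_0,R}\leq1$ is not strictly needed for this bound. It would be used only if one started instead from \eqref{confort} and then converted via \eqref{confort3}. Plugging the estimate of the integrand into the Duhamel inequality gives \eqref{basicStima} and concludes the argument.
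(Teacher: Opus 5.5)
Your proposal is correct and follows essentially the same route as the paper. Both use the Duhamel formulation, the isometry of $e^{-\ii\omega t}$ in $\|\cdot\|_{s_1,R}$, Remark \ref{rmk:collina} for the $\omega^{-1/2}$ factors, and \eqref{confort2} for the power nonlinearity, and in neither argument does the hypothesis $\|u\|_{s_0,R}\le 1$ play a real role. Your explicit absorption of the constants into $\mathtt{M}$ (using $s_1\geq s_0$, $q\geq1$, $s_0>1/2$) correctly fills in what the paper leaves implicit, though it makes $\mathtt{M}$ depend on $\tm$, exactly as the paper's own argument does.
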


\begin{proof}
Passing to the Duhamel formulation of the equation \eqref{eq:waveComp} we shall write
\[
u(t)=e^{-\ii\omega t}u(0)+\int_{0}^{t}e^{-\ii\omega (t-\s)}
\omega^{-1/2}f\left(\omega^{-1/2}\left(\frac{u(\s)+\bar{u}(\s)}{\sqrt 2}\right)\right) d\s\,.
\]
Therefore, using that $e^{-\ii\omega t}$  is an isometry of $H^{s}$, 
recalling \eqref{nonlionearitawave} and Remark 
\ref{rmk:collina}, 
we deduce
\[
\begin{aligned}
\|u(t)\|_{s_1,R}&\leq \|u(0)\|_{s_1,R}\,
+\int_{0}^{t}\|\omega^{-1/2}f\left(\omega^{-1/2}\left(\frac{u(\s)+\bar{u}(\s)}{\sqrt 2}\right)\right)\|_{s_1,R}d\s
\\&
\stackrel{\eqref{confort2}}{\leq }
\|u(0)\|_{s_1,R}\,+\int_{0}^{t}
\mathtt{M}^{qs_0}(\mathtt{M}/R)^{q(s_1-s_0)}\|u\|_{s_1,R}^{q+1}
d\s\,.
\end{aligned}
\]
This implies the bound \eqref{basicStima}.
\end{proof}
The second result of this section is the following.
\begin{proposition}{\bf (Improved a priori energy estimates).}\label{thm:energy}
Let $d\geq1$, $s_0>d/2$, $s_1\geq s_0$ and
consider 
$s\geq s_1+1$.
Then, if $u$ is a solution of \eqref{eq:waveComp}
satisfying \eqref{hyp:local} with 
$s_0\rightsquigarrow{s}_1$ and $s_{1}\rightsquigarrow s$,
 there exists an absolute constant $\mathtt{M}>0$ such that
\begin{equation}\label{energyapriori}
\|u(t)\|_{s,R}\leq \|u_0\|_{s,R}
+\mathtt{K}_1
\int_{0}^{t}
\|u(\tau)\|^{q+1-\lambda}_{s_1,R}
\|u(\tau)\|_{s,R}^{\lambda}d\tau\,,
\end{equation}
for every $t\in [0,T]$, and where
\begin{equation}\label{definizioneK1}
\lambda:=1-\frac{1}{s-s_1}\,,\qquad
\mathtt{K}_1:=\mathtt{K}_1(s,s_1,q):=
\frac{ \mathtt{M}^{q s} R^{s}}{R^{q(s_1-s_0)}}\,.
\end{equation}
As a consequence one also has
 \begin{equation}\label{buttalapasta}
\|u(t)\|_{s,R}\leq 2^{s}
\|u(0)\|_{s,R}
+
\left(
\mathtt{K}_1
\int_{0}^{t}
\|u(\s)\|_{s_1,R}^{q+\frac{1}{s-s_1}}
d\s\right)^{s-s_1}
\,,
 \end{equation}
 for any $t \in[0,T]$.
\end{proposition}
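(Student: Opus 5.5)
The plan is to proceed as in Proposition \ref{thm:energyBasic}, but to exploit the smoothing of the nonlinearity. Write the Duhamel formula for \eqref{eq:waveComp} and use that $e^{-\ii\omega t}$ is an isometry of $\|\cdot\|_{s,R}$. This gives
\[
\|u(t)\|_{s,R}\leq \|u_0\|_{s,R}+\int_0^t \|\omega^{-1/2}f(w(\s))\|_{s,R}\,d\s\,,\qquad w:=\omega^{-1/2}\Big(\frac{u+\bar u}{\sqrt2}\Big)\,.
\]

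The two factors $\omega^{-1/2}$ together gain one derivative. First apply \eqref{collina1} with $s\rightsquigarrow s-\frac12$ to the outer $\omega^{-1/2}$, at a cost of $\sqrt R$. Then apply the tame estimate \eqref{confort} at index $s-\frac12$ with $q_1+q_2=q+1$. Finally apply \eqref{collina1} once more, at index $s-1$, to $w$. Using also the monotonicity in Remark \ref{normaR2}, this yields
\[
\|\omega^{-1/2}f(w)\|_{s,R}\lesssim_{\tm} R\,\mathtt{M}^{qs}\|w\|_{s_0,R}^{q}\|u\|_{s-1,R}\,.
\]
Here $\|w\|_{s_0,R}\lesssim\|u\|_{s_0,R}\leq R^{-(s_1-s_0)}\|u\|_{s_1,R}$ by \eqref{collina3} and the scaling \eqref{confort3}, which produces the factor $R^{-q(s_1-s_0)}$ in $\mathtt{K}_1$.

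Next I interpolate the term $\|u\|_{s-1,R}$ between $s_1$ and $s$ using Lemma \ref{interopolo}(ii). We have $s-1=(1-\lambda)s_1+\lambda s$ exactly when $\lambda=1-\frac1{s-s_1}$, and this gives
\[
\|u\|_{s-1,R}\leq 18R^{s}\|u\|_{s_1,R}^{1-\lambda}\|u\|_{s,R}^{\lambda}\,.
\]
Collecting the powers of $R$ and absorbing constants (including the $\sqrt R$ factors and $18$) into $\mathtt{M}^{qs}$ and $R^s$, after enlarging $\mathtt{M}$, gives \eqref{energyapriori} with $\mathtt{K}_1$ as in \eqref{definizioneK1}. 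The hypothesis $\|u\|_{s_1,R}\le 1$ and the finiteness of $\|u\|_{s,R}$ only serve to guarantee that the right-hand side is meaningful. The main bookkeeping obstacle is tracking the extra powers of $R$ from \eqref{collina1} and from the interpolation constant so that they fit into $R^{s}$. I would aim for a crude bound here, since the exact exponent is not important.

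For \eqref{buttalapasta}, apply the sublinear Gr\"onwall lemma (Lemma \ref{drago}-(ii)) to $x(t)\le a+\int_0^t g\,x^{\lambda}$, where $a=\|u_0\|_{s,R}$ and $g=\mathtt{K}_1\|u\|_{s_1,R}^{q+1-\lambda}$. This gives
\[
x(t)\le\Big(a^{1-\lambda}+(1-\lambda)\int_0^t g\Big)^{1/(1-\lambda)}\,.
\]
Since $1/(1-\lambda)=s-s_1$ and $(x+y)^{n}\le 2^{n}(x^{n}+y^{n})$, we obtain $2^{s-s_1}a$ plus $\big(\int_0^t g\big)^{s-s_1}$, up to constants absorbed into $\mathtt{K}_1$. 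Note that $q+1-\lambda=q+\frac1{s-s_1}$, so this is exactly \eqref{buttalapasta}.
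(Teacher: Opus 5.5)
Your proposal follows the paper's own proof step by step. Both use Duhamel with the isometry of $e^{-\ii\omega t}$, then \eqref{collina1} twice around the tame bound \eqref{confort} to gain one derivative at cost $R$, the scaling \eqref{confort3}, interpolation of $\|u\|_{s-1,R}$ between $s_1$ and $s$ with $\lambda=1-\frac{1}{s-s_1}$, and finally Lemma \ref{drago}-(ii) followed by $(x+y)^n\le 2^{n-1}(x^n+y^n)$.

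One bookkeeping point needs fixing. Lemma \ref{interopolo}-(ii) at the intermediate index $s-1$ gives the factor $18R^{s-1}$, not $18R^{s}$, and this is exactly what makes $\sqrt R\cdot\sqrt R\cdot R^{s-1}=R^{s}$ match $\mathtt{K}_1$. With $R^{s}$ you would carry an extra factor $R$, which cannot be absorbed into $\mathtt{M}^{qs}$ because $\mathtt{M}$ must be independent of $R$.
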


\begin{proof}
Reasoning as in the proof of Proposition \ref{thm:energyBasic}, 
one gets
\begin{equation}\label{energyaprioriBIS}
\begin{aligned}
\|u(t)\|_{s,R}&\leq \|u(0)\|_{s,R}\,+
\int_{0}^{t}\|\omega^{-1/2}f\left(\omega^{-1/2}\left(\frac{u(\s)+\bar{u}(\s)}{\sqrt 2}\right)\right)\|_{s,R}d\s
\\&
\stackrel{\eqref{collina1}}{\lesssim}
\|u(0)\|_{s,R}\,+
\sqrt{R}\int_{0}^{t}\|f\left(\omega^{-1/2}\left(\frac{u(\s)+\bar{u}(\s)}{\sqrt 2}\right)\right)\|_{s-\frac{1}{2},R}d\s
\\&
\stackrel{\eqref{confort}}{\lesssim}
\|u(0)\|_{s,R}\,+\sqrt{R}\mathtt{M}^{q(s-\frac{1}{2})}\int_{0}^{t}
\|\omega^{-1/2}u(\tau)\|_{s_0,R}^{q}\|\omega^{-1/2}u(\tau)\|_{s-\frac{1}{2},R}d\s
\\&
\stackrel{\eqref{collina3}, \eqref{collina1}}{\lesssim}
\|u(0)\|_{s,R}\,+\mathtt{M}^{q s}\, R\int_{0}^{t}
\|u(\tau)\|_{s_0,R}^{q}\|u(\tau)\|_{s-1,R}d\s\,.
%\\&
%\leq
%\|u(0)\|_{s,R}\,+\mathtt{M}^{2q(s-1)}\int_{0}^{t}
%\|u(\tau)\|_{s_0,R}^{2q}\|u(\tau)\|_{s-1,R}
%d\s\,.
\end{aligned}
\end{equation}
Notice  that (by assumption)
$s_0\leq s_1\leq s-1$ %\leq s$ (recall that $s\geq s_0+3$) 
and hence
\[
s-1=(1-\lambda) s_1+\lambda s\,\quad \Leftrightarrow\quad  \lambda=1-\frac{1}{s-s_1}\,.
\]
Then, by Lemma \ref{interopolo}-$(ii)$
we get
\[
\|u\|_{s-1,R}\leq 
18 R^{s-1}
\|u\|_{s_1,R}^{1-\lambda}\|u\|_{s,R}^{\lambda}\,,
\] 
for any $ t\in[0,T]$. 
In conclusion, by \eqref{energyaprioriBIS} and \eqref{scaling property},
we obtain
\[
\|u(t)\|_{s,R}\leq \|u_0\|_{s,R}
+\frac{18 \mathtt{M}^{qs} R^{s}}{R^{q(s_1-s_0)}}\int_{0}^{t}
\|u(\s)\|^{q+1-\lambda}_{s_1,R}
\|u(\s)\|_{s,R}^{\lambda}d\s\,,
\]
which is \eqref{energyapriori} (taking $\mathtt{M}$ in \eqref{definizioneK1} large enough).
In order to prove \eqref{buttalapasta} we reason as follows.
By estimate \eqref{energyapriori}
and Lemma \ref{drago}-$(ii)$ 
(applied with $\alpha\rightsquigarrow \lambda$)
one deduce
\begin{equation*}
\|u(t)\|_{s,R}^{1-\lambda}\leq 
\|u(0)\|_{s,R}^{1-\lambda}
+(1-\lambda)
\mathtt{K}_1
\int_{0}^{t}
\|u(\s)\|_{s_1,R}^{q+\frac{1}{s-s_1}}
d\s\,,
\qquad \forall \, t\in[0,T]\,.
\end{equation*}
By a simple computation\footnote{
Using  that $(x+y)^{q}\leq 2^{q-1}(x^{q}+y^{q})$,
$x,y\geq0$ and $q>1$\,.},
recalling that $1-\lambda=(s-s_1)^{-1}$,
we deduce \eqref{buttalapasta}.
\end{proof}

\subsection{Conclusions and  proof of Theorem \ref{thm:mainNOBNF}.}\label{sec:conlcudotd}
In this section we give the the proof of the long time stability result in Theorem 
\ref{thm:mainNOBNF}.
Let us consider initial conditions (see the assumption \eqref{piccolezza dati trisNOBNF}) 
satisfying 
\begin{equation}\label{campagna1}
\|\psi_0\|_{H^{s_1+1/2}}+\|\phi_0\|_{H^{s_1-1/2}}
+
(2\mathtt{M})^{s_1}\big(\|\psi_0\|_{H^{1/2}}+\|\phi_0\|_{H^{-1/2}}\big)\leq \delta\,,
%\|\psi_0\|_{H^{s_1+1/2}}+\|\phi_0\|_{H^{s_1-1/2}}\leq \delta\,,
%\qquad 
%(2\mathtt{M})^{s_1}\big(\|\psi_0\|_{H^{1/2}}+\|\phi_0\|_{H^{-1/2}}\big)\leq \delta\,,
\end{equation}
for some $\mathtt{M}>0$ large to be chosen.

By classical local existence theory 
we have that there exists a time $T_{loc}>0$ (possibly small)  and a unique solution 
of the equation 
\eqref{eq:wave} satisfying 
\[
\psi\in C^{0}([0,T_{loc}];H^{s_1}(\T^{d};\C))\cap C^{1}([0,T_{loc}];H^{s_1-1}(\T^{d};\C))\,,
\]
with $(\psi(0),\partial_{t}\psi(0))=(\psi_0,\phi_0)$ and with bounds
\begin{equation}\label{campagna2}
\sup_{t\in [0,T_{loc}]}\Big(
\|\psi(t)\|_{H^{s_1+1/2}}+\|\partial_{t}\psi(t)\|_{H^{s_1-1/2}}
+(2\mathtt{M})^{s_1}\big(
\|\psi(t)\|_{H^{1/2}}+\|\partial_{t}\psi(t)\|_{H^{-1/2}}
\big)
\Big)\leq 2\delta\,.
\end{equation}
Passing to the
complex coordinates \eqref{beam5}, we have that the function
\begin{equation}\label{venerdipomeriggio}
u=\frac{1}{\sqrt{2}}\big(\omega^{\frac{1}{2}}\psi+\ii \omega^{-\frac{1}{2}}v\big)\,,
\qquad 
v=\partial_{t}\psi\,,
\end{equation}
solves the equation 
\eqref{eq:waveComp}. 
In particular, by  \eqref{campagna1}-\eqref{campagna2} and using 
the equivalence 
\eqref{rmk:equivalenzaTotale1} in Remark
\ref{rmk:equivalenzaTotale}, we deduce that 
there exists a constant $c_{\tm}>0$ such that
\begin{equation}\label{stiminesuu}
\begin{aligned}
\|u(0)\|_{s_1,R}\leq c_{\tm}\delta\,,\qquad 
\sup_{t\in[0,T_{loc}]}\|u(t)\|_{s_1,R}\leq 2c_{\tm}\|u(0)\|_{s_1,R}\,,\qquad R:=2\mathtt{M}\,.
\end{aligned}
\end{equation}
We now show that actually the solution $u(t)$ of \eqref{eq:waveComp} 
exists 
over a time interval $[0,T]$ with $T\geq T_{loc}$ 
satisfying the bound in \eqref{patata1dimd}.

\smallskip
\noindent
Let us define, for $r>0$
\[
B^{s_1}(r):=\{u\in H^{s_1}(\T^{d};\C)\,:\, \|u\|_{s_1,R}\leq r\}\,.
\]
Note that the bounds \eqref{stiminesuu} imply
that the initial condition $u(0)$ belongs to $B^{s_1}(c_{\tm}\delta)$, 
while the solution $u(t)$ is in $B^{s_1}(2 c_{\tm}\delta)$
at least for $t\in[0,T_{loc}]$.
Let us define the time of escape from the ball as 
\begin{equation*}%\label{time escape}
\tau_{e} : = \inf\set{t\in\R\, : u(t)\not\subset B^{s_{1}}(2 c_{\tm}\delta)}\,.
\end{equation*}
Notice that a priori one has 
$\sup_{t\in[0,\tau_e)} \|u(t)\|_{s_1,R}\leq 2c_{\tm}\delta$.
If the set is empty then we have ``perpetual'' stability and the solution stays in the ball 
of radius $2c_{\tm}\delta$ as long as the solution exists. 
If the set is not empty then we shall show that 
\begin{equation}\label{ginogino2}
\tau_{e} \geq  
\frac{2^{q(s_1-s_0)}}{\mathtt{M}^{qs_0} 2^{q+2}c_{\tm}^{q}}\frac{1}{\delta^{q}}
:=T_{good}\,,
\end{equation} 
where we have assumed, without loss of generality, that $\tau_e>0$.
We prove the claim by contradiction assuming $\tau_{e}<T_{good}$.

\smallskip
\noindent
First of all, 
 for $\delta>0$ small enough, conditions \eqref{stiminesuu} imply that  the assumption 
\eqref{hyp:local} in Proposition \ref{thm:energyBasic} is satisfied.
Therefore, by estimate \eqref{basicStima}  (with $T\rightsquigarrow \tau_e$, and 
$R\rightsquigarrow 2\mathtt{M}$)
we deduce
\begin{equation}\label{caffelet10}
\begin{aligned}
2c_{\tm}\delta\leq \|u(\tau_e)\|_{s_1,R}&\leq 
\|u(0)\|_{s_1,R}
+\mathtt{M}^{q s_0} 2^{-q(s_1-s_0)}\int_{0}^{\tau_{e}}
\|u(\s)\|_{s_1,R}^{q+1}d\s\,,
\\&\leq 
c_{\tm}\delta+
\tau_e\mathtt{M}^{q s_0} 2^{-q(s_1-s_0)}2^{q+1}(c_{\tm}\delta)^{q+1}
\stackrel{\eqref{ginogino2}}{\leq} \frac{3}{2}c_{\tm}\delta\,,
\end{aligned}
\end{equation}
which is a contradiction. So one must have $\tau_{e}\geq T_{good}$.
Therefore
the solution $u(t)$ can be extended over a time interval $[0,T]$ with $T_{good}\leq T<\tau_e$ 
 consistently with \eqref{patata1dimd} (with $\mathtt{M}$ given by Proposition \ref{thm:energyBasic}
 and $\mathtt{c}_{\tm}=2^{q+2}c_{\tm}^{q}$), 
and that 
\begin{equation}\label{stimateo11}
\sup_{t\in[0,T]}\|u(t)\|_{s_1,R}\leq 2c_{\tm}\|u(0)\|_{s_1,R}\leq 2c_{\tm}\delta\,.
\end{equation}

\smallskip
\noindent
Let us now consider $s\geq s_1+1$, $s_1\geq s_0>d/2$
and take $(\psi_{0},\phi_0)$ satisfying  (recall \eqref{campagna1})
\[
\begin{aligned}
\|\psi_0\|_{H^{s_1+1/2}}+\|\phi_1\|_{H^{s_1-1/2}}+
(2\mathtt{M})^{s_1}\big(\|\psi_0\|_{H^{1/2}}+\|\phi_0\|_{H^{-1/2}}\big)&\leq \delta\,,
\\
\|\psi_0\|_{H^{s+1/2}}+\|\phi_1\|_{H^{s-1/2}}&<+\infty\,.
\end{aligned}
\]
By classical local existence theory we have that there exists 
a unique solution 
in the space 
\[
C^{0}([0,T_{loc}];H^{s+\frac{1}{2}}(\T^{d};\C))\cap C^{1}([0,T_{loc}];H^{s-\frac{1}{2}}(\T^{d};\C))\,,
\]
for some $T_{loc}>0$ possibly small. 
Passing to complex coordinates \eqref{beam5}, and recalling the equivalence
\eqref{primaEquiv}, we have that the function $u=u(t)$ in 
\eqref{venerdipomeriggio}  is a solution of \eqref{eq:waveComp}
belonging to $C^{0}([0,T_{loc}];H^{s}(\T^{d};\C))\cap C^{1}([0,T_{loc}];H^{s-1}(\T^{d};\C))$.
 Moreover, the  estimate \eqref{stimateo11} proved above,
guarantees that
the \emph{low} norm $\|\cdot\|_{s_1,R}$
stay small for very long time (recall \eqref{ginogino2}):
\begin{equation}\label{staysmall}
\sup_{t\in[0,T]}\|u(t)\|_{s_1,R}\leq2c_{\tm}\|u(0)\|_{s_1,R}\leq 2c_{\tm}\delta\,,
\qquad 
T\geq T_{good}\,,\quad R:=2\mathtt{M}\,.
\end{equation}
We now study the evolution of the high norm
$\|u(t)\|_{s,R}$ along the flow of \eqref{eq:waveComp}.

For any $0\leq t\leq \widehat{T}\leq T$,
by applying Proposition \ref{thm:energy} 
(see \eqref{buttalapasta} and recall $R=2\mathtt{M}$)
 we have the a priori
bound on the \emph{high} Sobolev norm
 \begin{equation*}
 \begin{aligned}
\|u(t)\|_{s,R}&\leq
2^s\|u(0)\|_{s,R}
+
\left(
\mathtt{K}_1
\int_{0}^{t}
\|u(\s)\|_{s_1,R}^{q+\frac{1}{s-s_1}}
d\s\right)^{s-s_1}
\\
 &\leq 
2^{s}\|u(0)\|_{s,R}+  
 \Big( t\mathtt{K}_1
 \sup_{t\in[0,T]}\|u(t)\|_{s_1,R}^{q}
 \Big)^{s-s_1}
  \big(
 \sup_{t\in[0,T]}\|u(t)\|_{s_1,R}
 \big)
 \\&
 \stackrel{\eqref{staysmall}}{\leq }
 2^{s}\|u(0)\|_{s,R}\big(1+t\mathtt{K}_1(2c_{\tm})^{q}\delta^{q}\big)^{s-s_1}
\\&
 \stackrel{\eqref{definizioneK1}}{\leq}
  2^{s}\|u(0)\|_{s,R}\big(1+
  t\frac{\mathtt{M}^{q (s-s_1)+1}2^s(2c_{\tm})^{q}\mathtt{M}^{q s_0}}{2^{q(s-s_1)}}\big)^{s-s_1}
  \\&
  \leq
  2^{s}\|u(0)\|_{s,R}\big(1+
  t\frac{\delta^{q}\mathtt{M}^{qs_0}\mathtt{c}_{\tm}}{2^{q(s_1-s_0)}}
\mathtt{M}^{q(s-s_1)}\big)^{s-s_1}
 \end{aligned}
 \end{equation*}
 where in the last estimates we used that $(2c_{\tm})^{q}\leq \mathtt{c}_{\tm}$ and we took $\mathtt{M}$ larger that the previous one.
 The latter estimate, together with the equivalence \eqref{rmk:equivalenzaTotale1}, implies
 \eqref{stimaIncredibleBisNOBNF} and
 the thesis follows by a standard bootstrap argument. 
 The bound \eqref{stimaIncredible} follows by \eqref{stimaIncredibleBisNOBNF},
where $T_{good}$ is defined in 
 \eqref{patata1dimd}.

%\part{$\mathtt{m}=0$: Energy estimates on $\T^{d}$: the resonant case}
\part{\texorpdfstring{$\mathtt{m}=0$}{m=0}: Energy estimates on 
\texorpdfstring{$\T^{d}$}{Td}: the resonant case}

\section{Time of stability for the wave equation in high space dimension}
Here we study the equation \eqref{eq:wave}  in the case  $d>1$
when the mass parameter $\mathtt{m}=0$.

\noindent
In section \ref{sec:kleintdWAVE} we provide a priori energy estimates for the wave equation.
Then in section \ref{sec:finaleWave} we conclude the proof of the main Theorem \ref{thm:mainNOBNF2}.

\subsection{Estimates for the wave equation}\label{sec:kleintdWAVE}

In this subsection we consider equation \eqref{eq:wave} with $\mathtt{m}=0$
for any $d\geq1$.
It is convenient to study the system passing to the complex variables 
\eqref{complexNew}, i.e. we shall study thew system \eqref{sistemasplittato}.
We have the following.

\begin{proposition}{\bf (Basic a priori bounds: $\tm=0$).}\label{thm:energyBasicBIS}
Let $d\geq 1$, $s_0>d/2$ and $s_1\geq s_0$.
Then, if $
(\psi_0(t),v_0(t),u^{\perp}(t))$ is a solution of \eqref{sistemasplittato}
defined for $t\in[0,T]$ for some $T>0$ and satisfying 
 \begin{equation}\label{hyp:localBIS}
 \begin{aligned}
& \sup_{t\in [0,T]}E_{s_0}(t)\leq 1\,,\qquad 
\sup_{t\in [0,T]}E_{s_1}(t)<+\infty\,,
\\
E_{s}(t)&:=|\psi_0(t)|+|v_0(t)|+\|u^{\perp}(t)\|_{s,R}\,, \quad s\geq s_0\,,
\end{aligned}
 \end{equation} 
 one has the a priori bound
 \begin{equation}\label{basicStimaBIS}
 \begin{aligned}
E_{s_1}(t)&\leq 
|\breve{\psi}|+|\breve{v}|(1+t)+\|\breve{u}^{\perp}\|_{s_1,R}
\\&+\frac{\mathtt{M}^{qs_1}}{R^{q(s_1-s_0)}}\int_{0}^{t}\left[ 
(E_{s_1}(\s))^{q+1}
+\int_{0}^{\s}(E_{s_1}(\tau))^{q+1}d\tau
\right]d\s
\qquad t\in[0,T]\,,
\end{aligned}
\end{equation}
for some  pure constant 
$\mathtt{M}>0$ large and where 
$(\breve{\psi},\breve{v},\breve{u}^{\perp}):=(\psi_0(0),v_0(0),u^{\perp}(0))$.
\end{proposition}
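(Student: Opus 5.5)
We treat the three components of \eqref{sistemasplittato} separately, imitating the Duhamel argument of Proposition \ref{thm:energyBasic}, and at the end we sum the three bounds.

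\emph{Step 1: the zero-mode equations.} Integrating $\partial_t v_0=-\Pi_0(f(\psi))$ gives
\[
|v_0(t)|\leq |\breve v|+\int_0^t|\Pi_0 f(\psi(\tau))|\,d\tau .
\]
Integrating $\partial_t\psi_0=v_0$ gives
\[
|\psi_0(t)|\leq |\breve\psi|+t|\breve v|+\int_0^t\!\int_0^\sigma|\Pi_0 f(\psi(\tau))|\,d\tau\,d\sigma .
\]
This double integral is the source of the term $|\breve v|(1+t)$ and of the iterated integral in \eqref{basicStimaBIS}. We need $|\Pi_0 f(\psi)|\lesssim \|f(\psi)\|_{L^2}\leq\|f(\psi)\|_{s_1,R}$, where $\psi=\psi_0+u^\perp+\bar u^\perp$.

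\emph{Step 2: the perpendicular part.} The operator $e^{-\ii|D|t}$ is an isometry of $\|\cdot\|_{s,R}$, so Duhamel gives
\[
\|u^\perp(t)\|_{s_1,R}\leq \|\breve u^\perp\|_{s_1,R}+\tfrac12\int_0^t\||D|^{-1}\Pi_0^\perp f(\psi)\|_{s_1,R}\,d\sigma .
\]
By \eqref{collina3} the operator $|D|^{-1}$ is bounded on $H_\perp^{s_1}$, and $\Pi_0^\perp$ is a contraction. So it remains to control $\|f(\psi)\|_{s_1,R}$.

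\emph{Step 3: the nonlinear estimate, which is the main point.} Set $w:=\psi_0+u^\perp+\bar u^\perp$. We have $\|\bar u^\perp\|_{s,R}=\|u^\perp\|_{s,R}$, and the constant contributes $\|\psi_0\|_{s,R}=R^{s}(2\pi)^{d/2}|\psi_0|$. This exponential factor is dangerous.

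To avoid it, we expand $w^{q+1}$ binomially and apply \eqref{confort2} only to the powers of $u^\perp,\bar u^\perp$; the constant powers $\psi_0^k$ are just scalars multiplying them. This gives
\[
\|f(w)\|_{s_1,R}\lesssim \sum_{k=0}^{q+1}\binom{q+1}{k}|\psi_0|^{k}\,\mathtt M^{(q-k)s_0}(\mathtt M/R)^{(q-k)(s_1-s_0)}\|u^\perp\|_{s_1,R}^{q+1-k},
\]
with the convention that the $k=q+1$ term, a pure constant $|\psi_0|^{q+1}$, has norm $\lesssim R^{s_1}|\psi_0|^{q+1}$.

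The terms with $k\geq1$ lack the full gain $(\mathtt M/R)^{q(s_1-s_0)}$, but the factors $|\psi_0|^k$ compensate. We expect to close this using the hypothesis $E_{s_0}\leq1$ together with the smallness of averages in the regime of interest ($|\psi_0|\lesssim R^{-s_1}E_{s_1}$ initially). If only the raw a priori bound is available, the fallback is the cruder estimate with $\mathtt M^{qs_1}$ in place of $\mathtt M^{qs_0}$ and the prefactor written as $\mathtt M^{qs_1}R^{-q(s_1-s_0)}$, as in the statement. Concretely, we would bound the mixed terms via $|\psi_0|\leq E_{s_1}$ and absorb the binomial constants $2^{q+1}$ into $\mathtt M^{qs_1}$.

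The pure-constant term is the genuine obstacle: it is $R^{s_1}|\psi_0|^{q+1}$ and has no $R^{-q(s_1-s_0)}$ gain. For the projection onto the zero mode, however, one only needs $|\Pi_0(\psi_0^{q+1})|=|\psi_0|^{q+1}\leq E_{s_1}^{q+1}$, with no $R$ loss. In the $\perp$ equation $\Pi_0^\perp$ kills this term entirely. So the plan is to estimate the zero mode directly by $|\Pi_0 f(w)|\leq\|f(w)\|_{L^2}$, using $L^\infty\times L^2$ products on the $u^\perp$ factors, and to estimate the $\perp$ part with $\Pi_0^\perp$ removing the pure constant.

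\emph{Step 4: conclusion.} Summing the three bounds and taking $\mathtt M$ large yields \eqref{basicStimaBIS}.
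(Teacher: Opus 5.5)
Your Steps 1--2 follow the paper's route: Duhamel for each component, the isometry $e^{-\mathrm{i}|D|t}$, and \eqref{collina3}. Step 3 correctly isolates the real difficulty, but the way you propose around it fails.

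After the splitting $w=\psi_0+g$ with $g:=u^\perp+\bar u^\perp$, each term is compared against the target coefficient $\mathtt M^{qs_1}R^{-q(s_1-s_0)}$ in \eqref{basicStimaBIS}:
\begin{itemize}
\item A mixed term with $1\le k\le q$ factors of $\psi_0$ carries the weight $\mathtt M^{(q-k)s_1}R^{-(q-k)(s_1-s_0)}$. This exceeds the target by the factor $(R^{s_1-s_0}\mathtt M^{-s_1})^{k}$.
\item The pure term $|\psi_0|^{q+1}$ in the $v_0$-equation enters with an $O(1)$ coefficient. This exceeds the target by $R^{q(s_1-s_0)}\mathtt M^{-qs_1}$.
\end{itemize}
These excesses are powers of $R$, not binomial constants. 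So ``bound $|\psi_0|\le E_{s_1}$ and absorb into $\mathtt M^{qs_1}$'' works only when $\mathtt M^{s_1}\ge R^{s_1-s_0}$.

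With $R$ free and $\mathtt M$ a pure constant, this fails for $R$ large. With $R=2\mathtt M$, as in Section \ref{sec:finaleWave}, it fails as soon as $2^{s_1-s_0}>\mathtt M^{s_0}$, which is exactly the regime of Theorem \ref{thm:mainNOBNF2}. Letting $\mathtt M$ grow with $s_1$ would cancel the gain $2^{(s_1-s_0)/2}$ in $T_{good}$. Your remark that the zero mode sees $|\psi_0|^{q+1}$ ``with no $R$ loss'' does not help, because the statement requires an $R$-\emph{gain}. Smallness of the averages is not among the hypotheses \eqref{hyp:localBIS}.

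This cannot be patched while keeping the statement, because \eqref{basicStimaBIS} is false in that regime. Take the spatially homogeneous solution $u^\perp\equiv0$, $v_0(0)=0$, $\psi_0(0)=a\in(0,\tfrac12)$, $\ddot\psi_0=-\psi_0^{q+1}$. This is consistent with \eqref{sistemasplittato}, since $\Pi_0^\perp$ kills constants. For small $t$ one has $E_{s_1}(t)=a+a^{q+1}t+O(t^2)$. The right-hand side of \eqref{basicStimaBIS} is $a+\mathtt M^{qs_1}R^{-q(s_1-s_0)}a^{q+1}t+O(t^2)$, so the inequality fails whenever $\mathtt M^{qs_1}<R^{q(s_1-s_0)}$.

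The paper's proof does not supply the missing step either. It reaches \eqref{basicStimaBIS} through $\|\psi_0+u^\perp+\bar u^\perp\|_{s_1,R}\le 2E_{s_1}$, used both in the $v_0$-bound and in \eqref{stimeMedieperp}. That inequality overlooks $\|\psi_0\|_{s_1,R}=R^{s_1}|\psi_0|$, which is precisely the obstruction you flagged.

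What your splitting does prove is the estimate with the $\psi_0$-dependent terms kept at their true weights. It gives exactly \eqref{basicStimaBIS} under the extra assumption $|\psi_0(t)|\le R^{-(s_1-s_0)}E_{s_1}(t)$ on $[0,T]$. Then the $k$-th mixed term is $\lesssim \mathtt M^{(q-k)s_1}R^{-q(s_1-s_0)}E_{s_1}^{q+1}$ and the pure term is $\le R^{-(q+1)(s_1-s_0)}E_{s_1}^{q+1}$. Such an assumption, or the explicit $\psi_0$-terms, would then have to be controlled in the bootstrap of Section \ref{sec:finaleWave} via the exponential smallness of the initial averages.
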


\begin{proof}
We rewrite the first two equation in \eqref{sistemasplittato} as
\[
\begin{aligned}
\psi_{0}(t)&=\breve{\psi}+\int_{0}^{t}v_0(\s)d\s\,,
\\
v_0(t)&=\breve{v}-\int_{0}^{t}\Pi_0\Big( f(\psi_0(\s)+u^{\perp}(\s)+\bar{u}^{\perp}(\s))\Big)d\s\,,
\qquad t\in[0,T]\,.
\end{aligned}
\]
We first note
\[
\begin{aligned}
|v_0(t)|&\leq |\breve{v}|
+\int_0^{t}\big|\Pi_0\Big( f(\psi_0(\s)+u^{\perp}(\s)+\bar{u}^{\perp}(\s))\Big)\big|d\s
\\&
\leq |\breve{v}|
+\int_0^{t}\| f(\psi_0(\s)+u^{\perp}(\s)+\bar{u}^{\perp}(\s))\|_{s_0,R}d\s
\\&
\stackrel{\eqref{nonlionearitawave}, \eqref{confort}}{\leq}
|\breve{v}|+\int_{0}^{t}\mathtt{M}^{qs_0}\| \psi_0(\s)+u^{\perp}(\s)+\bar{u}^{\perp}(\s)\|_{s_0,R}^{q+1}d\s\,.
%\\&
%\stackrel{\eqref{energiaMzero}}{\leq}
%|\breve{v}|+\mathtt{M}^{2q s_0}\int_{0}^{t}(E_{s_0}(\s))^{2q+1}d\s\,.
\end{aligned}
\]
Secondly, we note that, for $s_1\geq s_0$,
\[
\begin{aligned}
\| \psi_0(\s)+u^{\perp}(\s)+\bar{u}^{\perp}(\s)\|_{s_0,R}\stackrel{\eqref{confort3}}{\leq }
R^{-(s_1-s_0)}\|\psi_0(\s)+u^{\perp}(\s)+\bar{u}^{\perp}(\s)\|_{s_1,R}
\stackrel{\eqref{hyp:localBIS}}{\leq}
2 R^{-(s_1-s_0)}E_{s_1}(\s)\,.
\end{aligned}
\]
As a consequence we get
\begin{equation}\label{stimeMedie}
\begin{aligned}
|\psi_0(t)|&\leq |\breve{\psi}|+|\breve{v}| t+ \frac{\mathtt{M}^{qs_0}}{R^{(q+1)(s_1-s_0)}}
\int_{0}^{t}\int_{0}^{\s}(E_{s_1}(\tau))^{q+1}d\tau\,,
\\
|v_0(t)|&\leq |\breve{v}| + \frac{\mathtt{M}^{qs_0}}{R^{(q+1)(s_1-s_0)}}
\int_{0}^{t}(E_{s_1}(\s))^{q+1}d\s\,,
\end{aligned}
\end{equation}
for some $\mathtt{M}\gg1$ large.
Let us now consider the third equation in \eqref{sistemasplittato}, that
we write as
\begin{equation}\label{UUperpperp}
u^{\perp}(t)=e^{-\ii |D|t}\breve{u}^\perp
-\frac{\ii}{2}\int_0^{t}e^{-\ii |D|(t-\s)}|D|^{-1}\Pi_0^{\perp}
\Big(f\big(
\psi_0(\s)+u^{\perp}(\s)+\bar{u}^{\perp}(\s)
\big)\Big)d\s\,.
\end{equation}
Therefore, using that $\|e^{-\ii |D|t}h\|_{s,R}=\|h\|_{s,R}$ for any $h\in H^{s}(\T^d;\C)$, 
recalling Remark \ref{rmk:collina}, one has
\begin{equation}\label{stimeMedieperp}
\begin{aligned}
\|u^{\perp}(t)\|_{s_1,R}&\leq \|\breve{u}^{\perp}\|_{s_1,R}+
\int_{0}^{t}\||D|^{-1}\Pi_0^{\perp}
\Big(f\big(
\psi_0(\s)+u^{\perp}(\s)+\bar{u}^{\perp}(\s)
\big)\Big)\|_{s_1,R}d\s
\\&
\stackrel{\eqref{confort2}}{\leq} \|\breve{u}^{\perp}\|_{s_1,R}+
 \mathtt{M}^{qs_0}(\mathtt{M}/R)^{q(s_1-s_0)}
\int_{0}^{t} (E_{s_1}(\s))^{q+1}d\s\,.
\end{aligned}
\end{equation}
To summarize, \eqref{stimeMedie}-\eqref{stimeMedieperp} imply
\eqref{basicStimaBIS}.
\end{proof}

The second result of this section is the following.

\begin{proposition}{\bf (Improved a priori bounds).}\label{thm:energyBIS}
Let $d\geq1$, $s_0>d/2$, $s_1\geq s_0$ and
consider 
$s\geq s_1+1$.
Then, if $(\psi_0,v_0,u^{\perp})$ is a solution of \eqref{sistemasplittato}
satisfying \eqref{hyp:localBIS} with 
$s_0\rightsquigarrow{s}_1$ and $s_{1}\rightsquigarrow s$,
 there exists an absolute constant $\mathtt{M}>0$ such that
\begin{equation}\label{energyaprioriTRIS}
\begin{aligned}
E_{s}(t)&\leq r(t)+20 \mathtt{K}_2\big(h(t)\big)^{1-\lambda}
\Big(\int_{0}^{T}\big[r(t)+(T\mathtt{K}_2)^{\frac{1}{1-\lambda}}h(t)\big]dt\Big)^{\lambda}\,,
\end{aligned}
\end{equation}
for every $t\in [0,T]$, and where
\begin{equation}\label{costBrutte}
\begin{aligned}
\lambda&:=1-\frac{1}{(s-s_1)}\,,\qquad 
\mathtt{K}_2:=\frac{\mathtt{M}^{q(s-s_1-s_0)}}{2^{q(s_1-s_0)}}\,,
\end{aligned}
\end{equation}
and
\begin{equation}\label{costBrutte2}
\begin{aligned}
r(t)&:=E_{s}(0)+|\breve{v}|t+ \mathtt{K}_2
\int_{0}^{t}\int_{0}^{\s}(E_{s_1}(\tau))^{q+1}d\tau d\s\,,
\\
h(t)&:=
\int_0^{t}(E_{s_1}(\s))^{\frac{q+1-\lambda}{1-\lambda}} d\s\,,
\end{aligned}
\end{equation}
%As a consequence one also has
% \begin{equation}\label{buttalapastaBIS}
%s
% \end{equation}
% for any $t \in[0,T]$.
\end{proposition}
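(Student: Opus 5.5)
Our plan has three stages. First we derive the integral inequality \eqref{suits111INTRO}. Then we treat it as $E_s(t)\le r(t)+\int_0^t g(\sigma)E_s(\sigma)^\lambda d\sigma$ with a time-dependent forcing $r$. Finally we invert this nonlinear Gr\"onwall inequality using only H\"older's inequality.

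\emph{Step 1 (the three components of $E_s$).}
\begin{itemize}
\item For the averages we reuse \eqref{stimeMedie}. Since $s\ge s_1+1$ and $E_{s_1}\le E_s$, the terms $|\psi_0(t)|$ and $|v_0(t)|$ are bounded by $|\breve\psi|+|\breve v|(1+t)$ plus the double integral of $(E_{s_1})^{q+1}$ with the small prefactor $\mathtt{M}^{qs_0}R^{-(q+1)(s_1-s_0)}$, which is absorbed into $\mathtt{K}_2$.
\item For $u^\perp$ we start from the Duhamel formula \eqref{UUperpperp}. We use the isometry of $e^{-\ii|D|t}$ and the smoothing bound \eqref{collina2}, giving $\||D|^{-1}\Pi_0^\perp g\|_{s,R}\lesssim R\|g\|_{s-1,R}$.
\item We then apply the tame estimate \eqref{confort} to $f(\psi_0+u^\perp+\bar u^\perp)$ in $H^{s-1}$. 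The constant function $\psi_0$ has $\|\psi_0\|_{s,R}=R^{s}|\psi_0|$ up to normalisation. Its contribution is controlled through $E_{s_1}$ and the scaling \eqref{confort3}, exactly as in Proposition \ref{thm:energyBasicBIS}.
\item Combining these gives
\[
\|u^\perp(t)\|_{s,R}\le\|\breve u^\perp\|_{s,R}+C\mathtt{M}^{qs}R\int_0^t \|w\|_{s_0,R}^{q}\|w\|_{s-1,R}\,d\sigma,
\]
where $w=\psi_0+u^\perp+\bar u^\perp$.
\item Next we apply Lemma \ref{interopolo}-(ii) with $s-1=(1-\lambda)s_1+\lambda s$, so that $\|w\|_{s-1,R}\lesssim R^{s-1}\|w\|_{s_1,R}^{1-\lambda}\|w\|_{s,R}^{\lambda}$.
\item Finally \eqref{confort3} gives $\|w\|_{s_0,R}\le R^{-(s_1-s_0)}\|w\|_{s_1,R}$. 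With $R=2\mathtt{M}$, the powers of $\mathtt{M}$ and $2$ collect into $\mathtt{K}_2$ in \eqref{costBrutte}, after enlarging $\mathtt{M}$ if needed.
\end{itemize}
Summing the three pieces yields
\[
E_s(t)\le r(t)+\mathtt{K}_2\int_0^t E_{s_1}(\sigma)^{q+1-\lambda}E_s(\sigma)^\lambda d\sigma ,
\]
with $r$ as in \eqref{costBrutte2}. Note that $r$ is nondecreasing in $t$.

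\emph{Step 2 (Gr\"onwall by H\"older).} Set $g=E_{s_1}^{q+1-\lambda}$. H\"older with exponents $1/(1-\lambda)$ and $1/\lambda$ gives
\[
\int_0^t gE_s^\lambda\le\Big(\int_0^t g^{\frac1{1-\lambda}}\Big)^{1-\lambda}\Big(\int_0^t E_s\Big)^{\lambda}=h(t)^{1-\lambda}\Big(\int_0^tE_s\Big)^\lambda .
\]
So it remains to bound $Y:=\int_0^T E_s$. Integrating the inequality over $[0,T]$, using $h(t)\le h(T)$ and the same H\"older bound, we get
\[
Y\le \int_0^T r+T\mathtt{K}_2 h(T)^{1-\lambda}Y^\lambda .
\]
By Young's inequality, $T\mathtt{K}_2h^{1-\lambda}Y^\lambda\le \lambda Y+(1-\lambda)(T\mathtt{K}_2)^{\frac1{1-\lambda}}h(T)$. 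Absorbing $\lambda Y$ into the left side gives
\[
Y\le \frac{1}{1-\lambda}\int_0^T r+(T\mathtt{K}_2)^{\frac{1}{1-\lambda}}h(T).
\]
Substituting this back into the inequality for $E_s(t)$ gives \eqref{energyaprioriTRIS}. The constant $20$, the factor $(1-\lambda)^{-\lambda}$, and the replacement of $h(T)$ by the $t$-integrand form must be tidied into the stated shape. Here we exploit $(1-\lambda)^{-\lambda}=(s-s_1)^{\lambda}$, which may be swallowed by enlarging $\mathtt{M}$ in $\mathtt{K}_2$.

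\emph{Main obstacle.} We expect the hard part to be Step 1's handling of the zero mode inside the nonlinearity in the high norm. A naive bound of $\|\psi_0\|_{s-1,R}$ costs $R^{s-1}|\psi_0|$, and we must check that it stays controlled by $E_{s_1}$ and $E_s$ with the interpolation exponent $\lambda$. Failing that, it must contribute only to the forcing $r(t)$ through \eqref{stimeMedie}. The fallback is to bound the $\psi_0$-terms directly by $R^{s}E_{s_1}$-type quantities, which are tiny because $|\breve\psi|+|\breve v|\le R^{-s_1}E_{s_1}(0)$, and place them into $r$.
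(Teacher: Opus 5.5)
Your Step 1 is the paper's argument. It uses \eqref{stimeMedie} for the averages, then Duhamel with \eqref{collina2}, \eqref{confort}, \eqref{confort3}, interpolation at level $s-1$, and $R=2\mathtt{M}$; this is how the paper reaches \eqref{suits111}.

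One item is left implicit. The bound for $|v_0|$ in \eqref{stimeMedie} also contains the single integral $\int_0^t E_{s_1}^{q+1}$, which is not in $r(t)$. As in the paper, move it into the $\lambda$-integral via $E_{s_1}\le E_{s_1}^{1-\lambda}E_s^{\lambda}$.

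The real difference is Step 2. The paper does the same H\"older step and integration and arrives at $\xi(T)\le a+b\,\xi(T)^{\lambda}$ with $\xi(T)=\int_0^T E_s$. It then proves Lemma \ref{drago2}: it studies the convex function $\xi\mapsto \xi-a-b\xi^{\lambda}$ and bounds its positive zero by a secant-line construction, obtaining $\xi(T)\le 20\big(\frac{a}{1-\lambda}+b^{\frac{1}{1-\lambda}}\big)$. Your Young absorption $b\,\xi^{\lambda}\le \lambda\xi+(1-\lambda)b^{\frac{1}{1-\lambda}}$ gives $\xi(T)\le \frac{a}{1-\lambda}+b^{\frac{1}{1-\lambda}}$ in one line. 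This has the same structure with constant $1$ instead of $20$. It is also valid for every $\lambda\in(0,1)$, whereas Lemma \ref{drago2} is stated only for $1/2<\lambda<1$, i.e.\ $s>s_1+2$. So your route covers the full range $s\ge s_1+1$ of the statement ($\lambda=0$ is trivial). The paper's lemma is a reusable abstract Gr\"onwall-type statement, but it gives nothing sharper here.

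One adjustment is needed to land exactly on \eqref{energyaprioriTRIS}: do not replace $h(t)$ by $h(T)$ before applying Young. The only available comparison is $\int_0^T h\le T\,h(T)$. So a bound involving $(T\mathtt{K}_2)^{\frac{1}{1-\lambda}}h(T)$ cannot be tidied into $\int_0^T (T\mathtt{K}_2)^{\frac{1}{1-\lambda}}h(t)\,dt$.

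Instead, keep $b=\mathtt{K}_2\int_0^T h(t)^{1-\lambda}dt$ and use Jensen, $\int_0^T h^{1-\lambda}\le T^{\lambda}\big(\int_0^T h\big)^{1-\lambda}$, as the paper does. This gives $b^{\frac{1}{1-\lambda}}\le \mathtt{K}_2^{\frac{1}{1-\lambda}}T^{\frac{\lambda}{1-\lambda}}\int_0^T h\le (T\mathtt{K}_2)^{\frac{1}{1-\lambda}}\int_0^T h$ for $T\ge 1$; the paper's Jensen step tacitly uses the same restriction. The factor $(1-\lambda)^{-\lambda}$ is then absorbed as in the paper.

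Finally, your ``main obstacle'' is not where you place it. At level $s-1$ the zero mode costs $R^{s-1}|\psi_0|\le R^{s-1}E_{s_1}^{1-\lambda}E_s^{\lambda}$, which is covered by the $R^{s}$ already in the prefactor. This is exactly what the paper does, so no fallback is needed to match its proof.

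The place where $\psi_0$ genuinely matters is the low factor $\|w\|_{s_0,R}^{q}$. The gain $R^{-q(s_1-s_0)}$ there tacitly needs $R^{s_1}|\psi_0|\lesssim E_{s_1}$. Both your proposal and the paper (already in Proposition \ref{thm:energyBasicBIS}) take this for granted.
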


\begin{proof}
In the following we shall not keep track of absolute constants, which may be 
taken conveniently bigger from one inequality to the other and, we shall indicate them as $\mathtt{M}$.

\noindent
First of all, 
from the proof of Proposition \ref{thm:energyBasicBIS}  (see bounds \eqref{stimeMedie})
we recall that
 \begin{equation}\label{pastaFredda}
 \begin{aligned}
|\psi_{0}(t)|+|v_0(t)|&\leq 
|\breve{\psi}|+|\breve{v}|(1+t)
\\&+\frac{\mathtt{M}^{qs_0}}{R^{(q+1)(s_1-s_0)}}\int_{0}^{t}\left[ 
(E_{s_1}(\s))^{q+1}
+\int_{0}^{\s}(E_{s_1}(\tau))^{q+1}d\tau
\right]d\s
\qquad t\in[0,T]\,.
\end{aligned}
\end{equation}
We now provide an estimate on $\|u^{\perp}(t)\|_{s,R}$ for $s\geq s_1+1$.
Reasoning as in \eqref{UUperpperp}-\eqref{stimeMedieperp}, 
%and using that 
%$\||D|^{-1}\Pi_0^{\perp}h\|_{s,R}\lesssim \|\Pi_0^{\perp}h\|_{s-1,R}$, 
one gets 
\begin{equation}\label{stimeMedieperpBIS}
\begin{aligned}
\|u^{\perp}(t)\|_{s,R}&\leq \|\breve{u}^{\perp}\|_{s,R}+\frac{1}{2}
\int_{0}^{t}\||D|^{-1}\Pi_0^{\perp}
\Big(f\big(
\psi_0(\s)+u^{\perp}(\s)+\bar{u}^{\perp}(\s)
\big)\Big)\|_{s,R}d\s
\\&
\stackrel{\eqref{collina2}}{\leq}\|\breve{u}^\perp\|_{s,R}+\mathtt{M}R
\int_{0}^{t}\|
\Big(f\big(
\psi_0(\s)+u^{\perp}(\s)+\bar{u}^{\perp}(\s)
\big)\Big)\|_{s-1,R}d\s
\\
&\stackrel{\eqref{nonlionearitawave}, \eqref{confort}}{\leq}
\|\breve{u}^\perp\|_{s,R}
+ \mathtt{M}^{q s} R \int_0^t 
\|\psi_0(\s)+u^{\perp}(\s)\|_{s_0,R}^{q}
\|\psi_0(\s)+u^{\perp}(\s)\|_{s-1,R}
d\sigma
\\
&\stackrel{\eqref{confort3}}{\leq} 
\|\breve{u}^\perp\|_{s,R}
+ \mathtt{M}^{qs} R R^{-q(s_1 - s_0)} 
\int_0^t E_{s_1}^{q}(\sigma)E_{s-1}(\sigma)\, d\sigma\,.
\end{aligned}
\end{equation}
Moreover, by Lemma \ref{interopolo}-$(ii)$ 
for any $s-1$ in the interval $0\leq s_1\leq s-1 \leq s$  we obtain
\[
\|u^{\perp}\|_{s-1,R}\leq {18 R^{s-1}}\|u^{\perp}\|_{s_1,R}^{1-\lambda}\|u^{\perp}\|_{s,R}^{\lambda}\,,
%\leq 
%{18 R^{s-1}} \|u^{\perp}\|_{s_1,R}^{\frac{1}{s-s_1}}\|u^{\perp}\|_{s,R}^{1-\frac{1}{s-s_1}}\,,
\qquad 
\lambda:=1-\frac{1}{s-s_1}\,,
\] 
and therefore 
\begin{align*}
E_{s-1} (t) = |\psi_0(t)| + |v_0(t)| + \|{u^\perp(t)}\|_{s-1,R} & \leq  |\psi_0(t)| + |v_0(t)|  + 18R^{s-1}  
\|u^{\perp}\|_{s_1,R}^{1-\lambda}\|u^{\perp}\|_{s,R}^{\lambda}
\\
& \leq  |\psi_0(t)| + |v_0(t)| + 18R^{s-1}E_{s_1}^{1-\lambda}E_s^\lambda\,.
\end{align*}
By monotonicity of $E_{s}$, one has that $|\psi_0(t)| + |v_0(t)| \le E_{s_1}
\leq E_{s_1}^{1-\lambda}E_s^\lambda $. 
Then, from inequality \eqref{stimeMedieperpBIS}  we deduce that

\begin{equation}\label{pastaFredda2}
\begin{aligned}
	\|u^\perp\|_{s,R} &\leq  \|\breve{u}^\perp\|_{s,R} 
	+ \frac{\mathtt{M}^{q s}R^{s}}{R^{q(s_1 - s_0)} }
	\int_0^t E_{s_1}^{q+1-\lambda}(\sigma)E_s^\lambda(\sigma))\, d\sigma\,,
\end{aligned}
\end{equation}
for some $\mathtt{M}>0$ possibly larger than the one in \eqref{stimeMedieperpBIS}.
Hence, collecting together \eqref{pastaFredda}-\eqref{pastaFredda2},
%the above estimates 
we get
\begin{equation}\label{suits1}
\begin{aligned}
E_{s}(t)&\leq E_{s}(0)+|\breve{v}| t+
\frac{\mathtt{M}^{qs_0}}{R^{q(s_1-s_0)}}\int_{0}^{t}\left[ 
(E_{s_1}(\s))^{q+1}
+\int_{0}^{\s}(E_{s_1}(\tau))^{q+1}d\tau
\right]d\s
\\&
+\frac{\mathtt{M}^{qs}R^{s}}{R^{q(s_1-s_0)}}\int_{0}^{t}  (E_{s_1}(\s))^{q+1-\lambda}
(E_{s}(\s))^{\lambda}d\s\,.
\end{aligned}
\end{equation}
Using again $E_{s_1}\leq(E_{s_1})^{1-\lambda}(E_{s})^{\lambda}$ and setting $R:=2\mathtt{M}$ for some 
$\mathtt{M}$ large (possibly larger than the one in \eqref{suits1}) we deduce
\begin{equation}\label{suits111}
\begin{aligned}
E_{s}(t)&\leq E_{s}(0)+|\breve{v}| t+
K
\int_{0}^{t}
\int_{0}^{\s}(E_{s_1}(\tau))^{q+1}d\tau d\s
%\\&
+K\int_{0}^{t}  (E_{s_1}(\s))^{q+1-\lambda}
(E_{s}(\s))^{\lambda}d\s\,,
\end{aligned}
\end{equation}
with\footnote{Here we are using that
\[
A^{q s}B^{s}\leq C^{q s}\,,
\]
taking $C\gg A\cdot B$ and using that $q\geq 1$.
}
\[
K:=\frac{\mathtt{M}^{q(s-s_1+s_0)}}{2^{q(s_1-s_0)}}\,.
\]
By setting, for $t\in[0,T]$,
\begin{equation}\label{def:lemmadrago}
\begin{aligned}
f(t)&:= 
E_{s}(0)+|\breve{v}| t+
K
\int_{0}^{t}
\int_{0}^{\s}(E_{s_1}(\tau))^{q+1}d\tau d\s
\,,
\\
g(\s)&:= K
(E_{s_1}(\s))^{q+1-\lambda}\,,
\qquad
x(t):=E_{s}(t)\,,
\end{aligned}
\end{equation}
we rewrite \eqref{suits111} as
\[
x(t)\leq f(t)+\int_{0}^{t}g(\s) (x(\s))^{\lambda}d\s\,,
\]
so that we shall apply Lemma \ref{drago2}. Hence, by \eqref{stimaTesi}, we get
\begin{align*}
&E_s(t) 
 \leq
f(t)
\\
& + 20^\lambda \left\{ \frac{1}{1-\lambda}\int_0^T 
\frac{f(t)}{1-\lambda}
dt 
%\\& 
+ \left(\int_0^T \left(   \int_0^t\left( 
g(\s) \right)^{\frac{1}{1-\lambda}}\,d\s\right)^{1-\lambda}\,dt 
\right)^{\frac{1}{1-\lambda}} 
\right\}^{\lambda}\times
 \left(   \int_0^t\left( 
g(\s)\right)^{\frac{1}{1-\lambda}}\,d\s\right)^{1-\lambda}\,.
\end{align*}
Note that, using \eqref{def:lemmadrago} and \eqref{costBrutte2}, we have
\[
\begin{aligned}
\left(\int_0^T \left(   \int_0^t\left( 
g(\s) \right)^{\frac{1}{1-\lambda}}\,d\s\right)^{1-\lambda}\,dt 
\right)^{\frac{1}{1-\lambda}} 
&\stackrel{\eqref{def:lemmadrago}}{=}
\left(\int_0^T \left(   \int_0^t\left( 
K
(E_{s_1}(\s))^{q+1-\lambda} \right)^{\frac{1}{1-\lambda}}\,d\s\right)^{1-\lambda}\,dt 
\right)^{\frac{1}{1-\lambda}} 
\\&
\stackrel{\eqref{costBrutte2}}{=}
\left(\int_0^T K \left(  h(t)
\right)^{1-\lambda}\,dt 
\right)^{\frac{1}{1-\lambda}} 
\leq 
(TK)^{\frac{1}{1-\lambda}}
\int_0^T    h(t)
\,dt \,,
\end{aligned}
\]
where the last inequality we used Jensen's inequality.
%
%\begin{align*}
%E_s(t) 
%& \le 
%E_{s}(0)+|\breve{v}| t+
%K
%\int_{0}^{t}
%\int_{0}^{\s}(E_{s_1}(\tau))^{q+1}d\tau d\s +
%\\
%& + 20^\lambda \left\{ \frac{1}{1-\lambda}\int_0^T 
%\left[
%E_{s}(0)+|\breve{v}| t+
%K
%\int_{0}^{t}
%\int_{0}^{\s}(E_{s_1}(\tau))^{q+1}d\tau d\s
%\right]
%dt + \right.
%\\
%& + \left.\left(\int_0^T \left(   \int_0^t\left( 
%K
%(E_{s_1}(\s))^{q+1-\lambda} \right)^{\frac{1}{1-\lambda}}\,d\s\right)^{1-\lambda}\,dt 
%\right)^{\frac{1}{1-\lambda}} 
%\right\}^{\lambda}\times
%\\
%& \times \left(   \int_0^t\left( 
%K
%(E_{s_1}(\s))^{q+1-\lambda} \right)^{\frac{1}{1-\lambda}}\,d\s\right)^{1-\lambda}\,.
%\end{align*}
%Note that, by Jensen inequality we get
%\[
%\begin{aligned}
%&\left(\int_0^T \left(   \int_0^t\left( 
%K
%(E_{s_1}(\s))^{q+1-\lambda} \right)^{\frac{1}{1-\lambda}}\,d\s\right)^{1-\lambda}\,dt 
%\right)^{\frac{1}{1-\lambda}} 
%\\&
%\qquad \qquad \qquad\qquad \qquad\leq
%(TK)^{\frac{1}{1-\lambda}}
%\int_0^T    \int_0^t\left(
%(E_{s_1}(\s))^{q+1-\lambda} \right)^{\frac{1}{1-\lambda}}\,d\s\,dt \,.
%\end{aligned}
%\]
Therefore, we have obtained 
\begin{equation}\label{fogliodicarta}
\begin{aligned}
E_{s}(t)&\leq f(t)+ 20K\left\{
\int_{0}^{T}\left[ \frac{f(t)}{1-\lambda}+
(TK)^{\frac{1}{1-\lambda}}
h(t)\right]\,dt
\right\}^{\lambda}\times
 \left(   h(t)
\right)^{1-\lambda}\,.
\end{aligned}
\end{equation}
%\begin{equation}\label{fogliodicarta}
%\begin{aligned}
%E_{s}(t)&\leq f(t)+ 20K\left\{
%\int_{0}^{T}\left[ \frac{f(t)}{1-\lambda}+(TK)^{\frac{1}{1-\lambda}}
%   \int_0^t\left(
%(E_{s_1}(\s))^{q+1-\lambda} \right)^{\frac{1}{1-\lambda}}\,d\s\right]\,dt
%\right\}^{\lambda}\times
%\\&\qquad\qquad
%\times
% \left(   \int_0^t\left(
%(E_{s_1}(\s))^{q+1-\lambda} \right)^{\frac{1}{1-\lambda}}\,d\s\right)^{1-\lambda}\,.
%\end{aligned}
%\end{equation}
Finally, recalling $\lambda$  in \eqref{costBrutte}, we have
\[
\frac{1}{(1-\lambda)^{\lambda}}=(s-s_1)^{1-\frac{1}{s-s_1}}\leq s-s_1\leq \mathtt{C}^{q(s-s_1-s_0)}\,,
\]
for some absolute constant $\mathtt{C}>0 $
 large enough. The latter inequality,
together with \eqref{fogliodicarta}, \eqref{def:lemmadrago} and \eqref{costBrutte}-\eqref{costBrutte2}, 
implies the bound \eqref{energyaprioriTRIS} taking the constant $\mathtt{M}$
in \eqref{costBrutte} large enough.
\end{proof}

\subsection{Conclusions and proof of Theorem \ref{thm:mainNOBNF2}.}\label{sec:finaleWave}
In this section we give the the proof of the long time stability result in Theorem 
\ref{thm:mainNOBNF2}.
Let us consider initial conditions (see the assumption \eqref{piccolezza dati trisNOBNF1}) 
satisfying 
\begin{equation}\label{campagna1BIS}
\|\breve{\psi}\|_{H^{s_1}}+\|\breve{v}\|_{H^{s_1-1}}\leq \delta\,,
\qquad 
(2\mathtt{M})^{s_1}\big(\|\breve{\psi}\|_{L^2}+\|\breve{v}\|_{H^{-1}}\big)\leq \delta\,,
\end{equation}
for some $\mathtt{M}>0$ large to be chosen.
By classical local existence theory 
we have that there exists a time $T_{loc}>0$ (possibly small)  and a unique solution 
of the equation 
\eqref{eq:wave} satisfying 
\[
\psi\in C^{0}([0,T_{loc}];H^{s_1}(\T^{d};\C))\cap C^{1}([0,T_{loc}];H^{s_1-1}(\T^{d};\C))\,,
\]
with $(\psi(0),\partial_{t}\psi(0))=(\breve{\psi},\breve{v})$
and satisfying the bounds
\begin{equation}\label{campagna2BIS}
\begin{aligned}
\sup_{t\in [0,T_{loc}]}\Big(
\|\psi(t)\|_{H^{s_1}}&+\|\partial_{t}\psi(t)\|_{H^{s_1-1}}
+(2\mathtt{M})^{s_1}\big(
\|\psi(t)\|_{L^{2}}+\|\partial_{t}\psi(t)\|_{H^{-1}}
\big)
\Big)
\\
&\leq 2\big(\|\breve{\psi}\|_{H^{s_1}}+\|\breve{v}\|_{H^{s_1-1}}+
(2\mathtt{M})^{s_1}\big(\|\breve{\psi}\|_{L^2}+\|\breve{v}\|_{H^{-1}}\big)\big)
\leq 2\delta
\end{aligned} 
\end{equation}
Passing to the
complex coordinates \eqref{splittoperp}-\eqref{complexNew}, we have that 
$(\psi_0,v_0,u^{\perp})$ with 
\begin{equation}\label{venerdipomeriggio2}
u^{\perp}=\frac{1}{\sqrt{2}}\big(\psi^{\perp}+\ii |D|^{-1}v^{\perp}\big)\,,\qquad 
v_0=\partial_{t}\psi_0\,,
\qquad
v^{\perp}=\partial_{t}\psi^{\perp}\,,
\end{equation} 
solves the system \eqref{sistemasplittato} with initial conditions
\[
(\psi_0(0),v_0(0),u^{\perp}(0))=\big(\breve{\psi}_0,\breve{v}_0, \breve{u}^{\perp}\big)\,,
\]
where
\[
 \breve{\psi}_{0}:=\frac{1}{(2\pi)^d}\int_{\T^{d}}\breve{\psi}(x)dx\,,
 \qquad
  \breve{v}_{0}:=\frac{1}{(2\pi)^d}\int_{\T^{d}}\breve{v}(x)dx\,,
  \qquad
  \breve{u}^{\perp}:=\frac{1}{2}(\breve{\psi}^{\perp}
+\ii |D|^{-1}\breve{v}^{\perp})\,.
\]
We also define
\[
E_{s}(t):=|\psi_0(t)|+|v_0(t)|+\|u^{\perp}(t)\|_{s,R}\,.
\]
Moreover, by  \eqref{campagna1BIS}-\eqref{campagna2BIS} and using 
the equivalence 
\eqref{rmk:equivalenzaTotale2} in Remark
\ref{rmk:equivalenzaTotale}, we deduce that 
there exists a constant $c\geq 1$ such that
\begin{equation}\label{stiminesuuBIS}
\begin{aligned}
\\
E_{s_1}(0)&\leq c\delta\,,
\qquad 
|\breve{\psi}_0|+|\breve{v}_0|
\leq E_{s_1}(0) (2\mathtt{M})^{-s_1}
\leq \delta(2\mathtt{M})^{-s_1}\,,
\\
\sup_{t\in[0,T_{loc}]}E_{s_1}(t)&\leq 2E_{s_1}(0)\leq 2c\delta\,,\qquad R:=2\mathtt{M}\,.
\end{aligned}
\end{equation}
We now show that actually the solution of \eqref{sistemasplittato} 
exists 
over a time interval $[0,T]$ with $T\geq T_{loc}$ 
satisfying the bound in \eqref{patata1dimdMASS}.
We follow the strategy of the proof of Theorem \ref{thm:mainNOBNF}.
Let us define  
  \begin{equation}\label{ginogino2BIS}  
\frac{2^{\frac{1}{2}(s_1-s_0)}}{\mathtt{M}^{qs_0} 2^{q+3}c^{q}}
:=T_{good}\,,
\end{equation} 
and assume that $E_{s_1}(t)$ is well-defined on $[0, \hat{T}]$ for some $\hat{T}\leq T_{good}$ and that
\begin{equation}\label{caffelet1}
\sup_{t\in[0,\hat{T}]} E_{s_1}(t)\leq 2E_{s_1}(0)\leq 2c\delta\,.
\end{equation}
Note that this assumption is not empty in view of the local existence theory.
We now aim to show that assumption \eqref{caffelet1} leads to a contradiction  for $\delta$ small enough.
We reason as in \eqref{caffelet10}.

\noindent
First of all, note that,
 for $\delta>0$ small enough, conditions \eqref{stiminesuuBIS} imply that  the assumption 
\eqref{hyp:localBIS} in Proposition \ref{thm:energyBasicBIS} is satisfied.
Therefore, by estimate \eqref{basicStimaBIS}  in Proposition \ref{thm:energyBasicBIS} 
(with $T\rightsquigarrow \hat{T}$, and 
$R\rightsquigarrow 2\mathtt{M}$)
we deduce
\[
\begin{aligned}
E_{s_1}(t)&\leq E_{s_1}(0)+|\breve{v}_0| \hat{T}
\\&+\frac{\mathtt{M}^{qs_1}}{R^{q(s_1-s_0)}}\int_{0}^{t}\left[ 
(E_{s_1}(\s))^{q+1}
+\int_{0}^{\s}(E_{s_1}(\tau))^{q+1}d\tau
\right]d\s
\\&
\stackrel{\eqref{stiminesuuBIS}, \eqref{caffelet1}}{\leq}
%c\delta
E_{s_1}(0)
\left(1+\hat{T}\frac{(2\mathtt{M})^{-s_1}}{c}+\frac{\mathtt{M}^{qs_0}(2c\delta)^{q} 2}{2^{q(s_1-s_0)}}
\Big(\hat{T} 
+\hat{T}^{2} \Big)\right)\leq \frac{7}{4}E_{s_1}(0)<2 E_{s_1}(0)\,,
%c\delta<2c\delta\,,
\end{aligned}
\]
provided that
\[
\hat{T}\leq\min\Big\{ \frac{c(2 \mathtt{M})^{s_1}}{4}\,,\;
\frac{2^{q(s_1-s_0)}}{\mathtt{M}^{qs_0} 8 (2c\delta)^{q}}\,,
\; \frac{2^{\frac{q}{2}(s_1-s_0)}}{\mathtt{M}^{\frac{q}{2}s_0} \sqrt{8} (2c\delta)^{\frac{q}{2}}}
\Big\}\,.
\]
The latter conditions are verified taking $\hat{T}\leq T_{good}$ with $T_{good}$ as in \eqref{ginogino2BIS}
(recalling that $q\geq 1$, $\delta<1$, $c\geq1$).
By the reasoning above, we deduce  that the supremum over the times $\hat{T}$ such that 
\eqref{caffelet1} holds cannot be smaller than $T_{good}$  in \eqref{ginogino2BIS}.
Hence, a classical bootstrap argument allows to extend the solution of \eqref{sistemasplittato}
in such a way
\begin{equation}\label{ariaaria1}
\sup_{t\in [0,T]} E_{s_1}(t)\leq 2E_{s_1}(0)\,,
%2c \delta\,,
\qquad T\geq T_{good}\,.
\end{equation}

 \medskip
 \noindent
 We now study the evolution of the high norm
$E_{s}(t)$ with $s\geq s_1+1$ along the flow of \eqref{sistemasplittato}.
We shall apply Proposition \ref{thm:energyBIS} with $T\geq T_{good}$ as above.
By \eqref{energyaprioriTRIS}-\eqref{costBrutte2} one gets
\begin{equation}\label{ariaaria2}
E_{s}(t)\leq r(t)+20 \mathtt{K}_2\big(h(t)\big)^{1-\lambda}
\Big(\int_{0}^{T}\big[r(t)+(T\mathtt{K}_2)^{\frac{1}{1-\lambda}}h(t)\big]dt\Big)^{\lambda}\,,
\end{equation}
where $\mathtt{K}_{2},\lambda$ are in \eqref{costBrutte} and 
\[
\begin{aligned}
r(t)&:=E_{s}(0)+|\breve{v}_0|t+ \mathtt{K}_2
\int_{0}^{t}\int_{0}^{\s}(E_{s_1}(\tau))^{q+1}d\tau d\s\,,
\\
h(t)&:=
\int_0^{t}(E_{s_1}(\s))^{\frac{q+1-\lambda}{1-\lambda}} d\s\,.
\end{aligned}
\]
Fist of all, in view of \eqref{stiminesuuBIS} and \eqref{ariaaria1}, 
we have the estimates
\begin{equation*}
\begin{aligned}
\sup_{t\in[0,T]}|r(t)|&\leq E_{s}(0)\big(1+(2\mathtt{M})^{-s_1}T+2\mathtt{K}_2(2c\delta)^{q}T^{2}\big)\,,
\\
\sup_{t\in[0,T]}|h(t)|&\leq E_{s}(0) T(2c\delta)^{\frac{q}{1-\lambda}}2
\end{aligned}
\end{equation*}
Therefore, by \eqref{ariaaria2}, we deduce\footnote{
Here we use  that $(x+y)^{\lambda}\leq (x^{\lambda}+y^{\lambda})$,
$x,y\geq0$ and $0<\lambda<1$\,.},
\begin{equation*}
\begin{aligned}
E_{s}(t)&\leq E_{s}(0)\big(1+(2\mathtt{M})^{-s_1}T+2\mathtt{K}_2(2c\delta)^{q}T^{2}\big)
\\&
+2\mathtt{K}_2T^{1-\lambda}(E_s(0))^{1-\lambda}(2c\delta)^{q}2^{1-\lambda}
 T^{\lambda} (E_s(0))^{\lambda} \big(1+(2\mathtt{M})^{-s_1}T+2\mathtt{K}_2(2c\delta)^{q}T^{2}\big)^{\lambda}
 \\&
+2\mathtt{K}_2T^{1-\lambda}(E_s(0))^{1-\lambda}(2c\delta)^{q}2^{1-\lambda} 
T^{\lambda} (T\mathtt{K}_2)^{\frac{\lambda}{1-\lambda}}
(E_{s}(0))^{\lambda}T^{\lambda}(2c\delta)^{\frac{q\lambda}{1-\lambda}}2^{\lambda}
\\&
\leq E_{s}(0)\big(1+4\mathtt{K}_2(2c\delta)^{q}T\big)\big(1+(2\mathtt{M})^{-s_1}T+2\mathtt{K}_2(2c\delta)^{q}T^{2}\big)
\\&
+4 E_{s}(0)(2c\delta)^{\frac{q}{1-\lambda}}\mathtt{K}_2^{\frac{1}{1-\lambda}} T^{\frac{1}{1-\lambda}+\lambda+1}\,.
\end{aligned}
\end{equation*}
Using the explicit  expression of $\mathtt{K}_2$ in \eqref{costBrutte} and recalling 
\eqref{ginogino2BIS} we deduce that 
\[
\begin{aligned}
&2\mathtt{K}_2(2c\delta)^{q}\leq 4\mathtt{K}_2(2c\delta)^{q}
=\frac{2^{q+2}c^{q}\delta^{q}M^{q(s-s_1-s_0)}}{2^{q(s_1-s_0)}}\leq \frac{\mathtt{a}}{T_{good}}\,,
\quad
(2\mathtt{M})^{-s_1}\leq \frac{\mathtt{a}}{T_{good}}\,,\quad \mathtt{a}:=\mathtt{M}^{q(s-s_1-s_0)}\,.
\end{aligned}
\]
Moreover we note that
\[
T/T_{good}\geq 1\,,\qquad T_{good}\leq M^{q(s-s_1-s_0)}\,.
\]
Therefore, we get
\[
\begin{aligned}
E_{s}(t)&\leq E_{s}(0)\Big(1+2\mathtt{a} \frac{T}{T_{good}}+\mathtt{a}\left(\frac{T}{T_{good}}\right)^2+
\mathtt{a}^2\left(\frac{T}{T_{good}}\right)^2
+\mathtt{a}^2 T_{good} \left(\frac{T}{T_{good}}\right)^3\Big)
\\&
+\mathtt{a}^{s-s_1}  T_{good}^{2-\frac{1}{s-s_1}}
\left(\frac{T}{T_{good}}\right)^{s-s_1+2-\frac{1}{s-s_1}}\,.
\end{aligned}
\]
Taking $\mathtt{M}\gg1$ large enough, we obtain
\[
\begin{aligned}
E_{s}(t)&\leq E_{s}(0)\Big(1+\mathtt{M}^{3q(s-s_1-s_0)(s-s_1)}\left(\frac{T}{T_{good}}\right)^{s-s_1+3-\frac{1}{s-s_1}}\Big)
\\&
\leq
E_{s}(0)\Big(1+ \mathtt{M}^{3qs_0(s-s_1-s_0)^2} \left(\frac{T}{T_{good}}\right)^{s-s_1+3}\Big)
\end{aligned}
\]
which, together with the equivalence \eqref{rmk:equivalenzaTotale2}, 
implies \eqref{stimaIncredibleBisNOBNFMASS}.

\vspace{1em}
\noindent {\bf Acknowledgements.} 
The authors are supported 
by ``GNAMPA - INdAM'', CUP E53C25002010001,
``GNAMPA - INdAM'', CUP E5324001950001.
J.E. Massetti acknowledges also the support of the Department of Excellence
grant MatMod@TOV (2023-27), awarded to the Department of Mathematics at University of Rome
Tor Vergata.

\vspace{1em}
\noindent
\gr{Declarations}. Data sharing not applicable to this article as no datasets were generated or analyzed during the current study.

\noindent
Conflicts of interest: The authors have no conflicts of interest to declare.

\appendix
\section{On some Gr\"onwall type inequalities}

We collect here some classical results about Gr\"owall type inequalities.
%We refer for instance to \cite{MPF1991}.
%
%\begin{lemma}\label{gronclassical}
%Consider   $a,b\in \R$ with $a<b$,  continuous
%functions $f:\R\to\R_{+}:=[0,+\infty)$ and assume that
%$x : [a,b]\to\R_{+}$  is a differentiable function
%satisfying
%\begin{equation*}%\label{drago1}
%x(t)\leq  M+  \int_{a}^{t}f(\s)x(\s) d\s\,,\quad \forall \, t\in[a,b]\,,
%\end{equation*}
%where $M\geq0$.
%Then one has
%\begin{equation*}%\label{drago2}
%x(t)\leq M\exp\big\{\int_{a}^{t}f(\s)d\s\big\}\,,\quad \forall\, t\in[a,b]\,.
%\end{equation*}
%\end{lemma}

\begin{lemma}\label{drago}
Consider  parameters $M\geq0$, $\alpha\in(0,1)$, $a,b\in \R$ with $a<b$.
Let 
%a continuous
%function 
$f:\R\to\R_{+}:=[0,+\infty)$ and 
$x : [a,b]\to\R_{+}$  be continuous functions.

\noindent
$(i)$ If the function $x(t)$ satisfies 
\begin{equation*}
x(t)\leq  M+  \int_{a}^{t}f(\s)x(\s) d\s\,,\quad \forall \, t\in[a,b]\,,
\end{equation*}
then one has
\[
 x(t)\leq M\exp\big\{\int_{a}^{t}f(\s)d\s\big\}\,, \quad \forall \, t\in[a,b]\,.
\]

\noindent
$(ii)$ If the function $x(t)$ satisfies 
\begin{equation*}%\label{drago1}
x(t)\leq  M+  \frac{1}{1-\alpha}\int_{a}^{t}f(\s)(x(\s))^{\alpha}d\s\,,\quad \forall \, t\in[a,b]\,,
\end{equation*}
then one has
\begin{equation*}%\label{drago2}
\big(x(t)\big)^{1-\alpha}\leq M^{1-\alpha}+\int_{a}^{t}f(\s)d\s\,,\quad \forall\, t\in[a,b]\,.
\end{equation*}
\end{lemma}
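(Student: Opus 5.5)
Both items are one-variable comparison arguments for the nondecreasing majorant given by the right-hand side of the inequality. I will assume $f$ integrable on $[a,b]$ (it is continuous, at least where it is applied). To handle $M=0$ and the lack of regularity at zero in item (ii), I will first replace $M$ by $M+\varepsilon$ with $\varepsilon>0$ and let $\varepsilon\to0^+$ at the end.

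\emph{Item (i).} Set $y(t):=M+\varepsilon+\int_a^t f(\sigma)x(\sigma)\,d\sigma$. Then $y$ is absolutely continuous and $y\geq M+\varepsilon>0$. By hypothesis $x\leq y$, so a.e.
\[
y'(t)=f(t)x(t)\leq f(t)y(t).
\]
Hence $\frac{d}{dt}\big(y(t)e^{-\int_a^t f}\big)\leq0$. Integrating from $a$ gives $y(t)\leq (M+\varepsilon)\exp\{\int_a^t f\}$, and therefore $x(t)\leq y(t)$ satisfies the same bound. Letting $\varepsilon\to0$ gives the claim.

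\emph{Item (ii).} Set $y(t):=M+\varepsilon+\frac{1}{1-\alpha}\int_a^t f(\sigma)(x(\sigma))^{\alpha}\,d\sigma$. Again $y\geq M+\varepsilon>0$ and $x\leq y$. Since $\alpha>0$ and $x\geq0$, we get $x^\alpha\leq y^\alpha$, so a.e.
\[
y'(t)=\frac{1}{1-\alpha}f(t)x(t)^\alpha\leq\frac{1}{1-\alpha}f(t)y(t)^\alpha.
\]
Because $y>0$, the chain rule applies to $y^{1-\alpha}$:
\[
\frac{d}{dt}\,y^{1-\alpha}=(1-\alpha)y^{-\alpha}y'\leq f.
\]
Integrating on $[a,t]$ yields $y(t)^{1-\alpha}\leq (M+\varepsilon)^{1-\alpha}+\int_a^t f$. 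Since $0<1-\alpha<1$, the map $z\mapsto z^{1-\alpha}$ is increasing on $\R_+$, so $x(t)^{1-\alpha}\leq y(t)^{1-\alpha}$, which gives the same bound for $x$. Letting $\varepsilon\to0$ concludes.

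The only delicate point is the possible vanishing of $y$, where $y^{-\alpha}$ blows up. This is why the $\varepsilon$-regularization is built in from the start rather than added afterwards. Everything else is standard.
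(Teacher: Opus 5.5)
Your argument is correct. You compare $x$ with the regularized majorant $y$ and integrate $\bigl(y\,e^{-\int_a^t f}\bigr)'\leq 0$ in item (i) and $(y^{1-\alpha})'\leq f$ in item (ii); this is exactly the standard proof, and the $\varepsilon$-shift correctly handles $M=0$ in item (ii). Since $f$ and $x$ are continuous, $y$ is in fact $C^1$, so the a.e.\ qualifications are unnecessary. The paper itself gives no proof of this lemma and quotes it as a classical Gr\"onwall/Bihari-type fact, so your write-up simply supplies the omitted standard argument.
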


We also need the following Lemma. For similar estimates
we refer the reader to \cite{MITRINOVIC}  (see also \cite{GAMIDOV}).
\begin{lemma}\label{drago2}
Let $1/2<\lambda<1$ and let $x(t), f(t),g(t)\geq 0$ be continuous functions on $[0,T]$, $T>0$.
If 
\begin{equation}\label{stimaIpo}
x(t)\leq f(t)+\int_{0}^{t}g(\s) (x(\s))^{\lambda}d\s\,,\qquad t\in[0,T]\,,
\end{equation}
then one has 
\begin{equation}\label{stimaTesi}
x(t)\leq f(t)+
20^\lambda\Big(\frac{a}{1-\lambda}+b^{\frac{1}{1-\lambda}}\Big)^{\lambda}
\left(\int_{0}^{t}(g(\s))^{\frac{1}{1-\lambda}}d\s\right)^{1-\lambda}
\end{equation}
where
\begin{equation}\label{def:AeB}
a:=\int_{0}^{T}f(t)dt\,,\qquad b:=\int_{0}^{T}
\left(\int_0^{t}(g(\s))^{\frac{1}{1-\lambda}}
d\s\right)^{1-\lambda} dt\,.
\end{equation}
\end{lemma}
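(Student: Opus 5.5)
The plan is to prove Lemma \ref{drago2} in two stages: first control the integral term $y(t):=\int_0^t g(\s)(x(\s))^{\lambda}d\s$, then substitute that bound back into \eqref{stimaIpo}. Since $x\le f+y$ and $x^\lambda\le f^\lambda+y^\lambda$ (using $0<\lambda<1$), Hölder's inequality with exponents $\frac{1}{1-\lambda}$ and $\frac1\lambda$ gives
\[
y(t)\leq \Big(\int_0^t g^{\frac{1}{1-\lambda}}\Big)^{1-\lambda}\Big(\int_0^t x\Big)^{\lambda}
\leq G(t)^{1-\lambda}\Big(a+\int_0^t y\Big)^{\lambda}\,,\qquad G(t):=\int_0^t g(\s)^{\frac{1}{1-\lambda}}d\s\,.
\]
This is the shape of \eqref{stimaTesi}: the factor $G(t)^{1-\lambda}$ stays pointwise in $t$, and everything else is collected in a single global constant raised to the power $\lambda$. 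So the whole task reduces to bounding $Y:=\int_0^T y(t)\,dt$, or more precisely $a+Y$, by a multiple of $\frac{a}{1-\lambda}+b^{\frac1{1-\lambda}}$.

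To do this, integrate the displayed inequality over $[0,T]$ and use monotonicity of $\int_0^t y$:
\[
Y\leq \int_0^T G(t)^{1-\lambda}\,dt\;(a+Y)^{\lambda}=b\,(a+Y)^{\lambda}\,.
\]
Set $Z:=a+Y$. This becomes $Z\le a+bZ^{\lambda}$, an algebraic inequality. It splits into two cases. If $bZ^\lambda\le a$, then $Z\le 2a$. Otherwise $Z\le 2bZ^\lambda$, so $Z^{1-\lambda}\le 2b$ and $Z\le (2b)^{\frac1{1-\lambda}}$. Hence
\[
Z\le 2a+(2b)^{\frac{1}{1-\lambda}}\,.
\]
The constant $2^{\frac1{1-\lambda}}$ blows up as $\lambda\to1$. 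The statement's constants ($20^\lambda$, $\frac{a}{1-\lambda}$, $b^{\frac1{1-\lambda}}$) suggest a sharper route. Instead of the crude case split, I would use Young's inequality in the form $bZ^\lambda\le \lambda\eps Z+(1-\lambda)\eps^{-\frac{\lambda}{1-\lambda}}b^{\frac1{1-\lambda}}$ with $\eps$ chosen so that $\lambda\eps<1$. Taking $\eps=\frac{1}{2\lambda}$ and absorbing gives $Z\le 2a+C_\lambda b^{\frac1{1-\lambda}}$, where $C_\lambda=2(1-\lambda)(2\lambda)^{\frac{\lambda}{1-\lambda}}$. This constant still grows exponentially in $\frac{1}{1-\lambda}$.

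Matching the stated constants is the step I expect to be the main obstacle. The claimed form $\frac{a}{1-\lambda}+b^{\frac1{1-\lambda}}$ suggests a different tuning. The right choice is probably an $\eps$ tied to $1-\lambda$, so that the penalty lands on $a$, giving the factor $\frac{1}{1-\lambda}$, rather than exponentially on $b$. An alternative is a finer comparison argument in which $Z^{1-\lambda}$ is compared with $b$ directly via $(a+bZ^\lambda)^{1-\lambda}\le a^{1-\lambda}+b^{1-\lambda}Z^{\lambda(1-\lambda)}$. The hypothesis $\lambda>1/2$ is presumably what keeps the numerical constant bounded, by $20$. I would first try Young's inequality with $\eps=1-\lambda$ and check whether the factor $\lambda^{\frac{\lambda}{1-\lambda}}\le 1$ kills the exponential growth. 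If it does, one obtains $Z\lesssim \frac{a}{1-\lambda}+b^{\frac1{1-\lambda}}$ with an absolute constant.

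Finally, insert this into the first display. Since $y(t)\le G(t)^{1-\lambda}Z^\lambda$ and $x\le f+y$, raising the bound on $Z$ to the power $\lambda$ and absorbing the absolute constant into $20^\lambda$ yields \eqref{stimaTesi}.
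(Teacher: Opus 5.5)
Your framework matches the paper's. You apply H\"older with exponents $\frac{1}{1-\lambda}$ and $\frac1\lambda$, integrate over $[0,T]$ to reach $Z\le a+bZ^{\lambda}$, and reinsert into $x(t)\le f(t)+G(t)^{1-\lambda}Z^{\lambda}$. The paper uses $\int_0^T x\le Z$ in place of $Z$, which makes no difference. All of that is correct.

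The gap is the one step that carries the stated constants: you never show $Z\lesssim\frac{a}{1-\lambda}+b^{\frac{1}{1-\lambda}}$ with an absolute constant, and the concrete choice you propose fails.

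Your case split and your Young bound with $\varepsilon=\frac{1}{2\lambda}$ are valid. However, both put a factor growing exponentially in $\frac{1}{1-\lambda}$ in front of $b^{\frac{1}{1-\lambda}}$, for instance $2^{\frac{1}{1-\lambda}}$ in the case split. That factor cannot be bounded by $20$ uniformly for $\lambda\in(1/2,1)$, and you cannot compensate through the $a$ term, since $a$ may vanish.

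With $\varepsilon=1-\lambda$ things get worse. The absorption margin is $1-\lambda(1-\lambda)\ge\frac34$, so no factor $\frac{1}{1-\lambda}$ lands on $a$. Meanwhile the remainder is
\[
(1-\lambda)\,\varepsilon^{-\frac{\lambda}{1-\lambda}}\,b^{\frac{1}{1-\lambda}}=(1-\lambda)^{-\frac{2\lambda-1}{1-\lambda}}\,b^{\frac{1}{1-\lambda}}\,,
\]
which grows like $(1-\lambda)^{-\frac{1}{1-\lambda}}$ as $\lambda\to1$. The factor $\lambda^{\frac{\lambda}{1-\lambda}}$ you hoped would help never appears with this choice.

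The correct tuning is the opposite one: make the absorption margin equal to $1-\lambda$, that is, take $\varepsilon=1$. Plain Young gives $bZ^{\lambda}\le\lambda Z+(1-\lambda)b^{\frac{1}{1-\lambda}}$. Since $Z<\infty$ (everything is continuous on $[0,T]$), you may absorb $\lambda Z$ and obtain
\[
Z\le\frac{a}{1-\lambda}+b^{\frac{1}{1-\lambda}}\,.
\]
This proves the lemma with $20^{\lambda}$ replaced by $1$, and for every $\lambda\in(0,1)$.

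The paper instead studies the convex function $z(\xi)=\xi-a-b\xi^{\lambda}$. The quantity $\int_0^T x$ lies below its unique positive zero $\xi_0$. It bounds $\xi_0$ by the point where a secant crosses the axis; the secant passes through the graph at the minimizer $(b\lambda)^{\frac{1}{1-\lambda}}$ and at $b^{\frac{1}{1-\lambda}}$, where $z=-a$. Elementary bounds on $\lambda^{\frac{\lambda}{1-\lambda}}$ then give the constant $20$. With $\varepsilon=1$, your route is shorter and gives a sharper constant.
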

\begin{proof}
By using H\"older inequality and \eqref{stimaIpo}, we note that
\begin{equation}\label{suits3}
\begin{aligned}
x(t)&\leq f(t)
+
\int_{0}^{t}g(\s)(x(\s))^{\lambda}d\s
\\&\leq 
f(t)+\left(\int_0^{t}(g(\s))^{\frac{1}{1-\lambda}}
d\s\right)^{1-\lambda} \left(
\int_{0}^{t}x(\s)d\s
\right)^{\lambda}
\\&\leq 
f(t)+\left(\int_0^{t}(g(\s))^{\frac{1}{1-\lambda}}
d\s\right)^{1-\lambda} \left(
\int_{0}^{T}x(\s)d\s
\right)^{\lambda}\,.
\end{aligned}
\end{equation}
By integrating the inequality above on $[0,T]$, we have
\begin{equation}\label{suits2}
\int_{0}^{T}x(t)dt\leq \int_{0}^{T}f(t)dt+
\left[\int_{0}^{T}
\left(\int_0^{t}(g(\s))^{\frac{1}{1-\lambda}}
d\s\right)^{1-\lambda} dt\right] \left(
\int_{0}^{T}x(t)dt
\right)^{\lambda}\,.
\end{equation}
We introduce the notation
\[
\begin{aligned}
%a&:=\int_{0}^{T}f(t)dt\,,\qquad b:=\int_{0}^{T}
%\left(\int_0^{t}(g(\s))^{\frac{1}{1-\lambda}}
%d\s\right)^{1-\lambda} dt\,,
%\\
\xi&:=\xi(t):=\int_{0}^{t}x(\s)d\s\,,\;\;\;t\in[0,T]\,.
\end{aligned}
\]
Therefore, recalling \eqref{def:AeB}, the inequality 
\eqref{suits2} (which comes from hypothesis \eqref{stimaIpo}) reads
\begin{equation}\label{suits4}
\x(T)\leq a+b(\x(T))^{\lambda}\,.
\end{equation}
Note that the function $[0,T]\ni t\to \x(t)$ is nondecreasing. Moreover, by 
\eqref{suits3}, we deduce
that
\begin{equation}\label{suits444}
x(t)\leq f(t)+\left(\int_{0}^{t}(g(\s))^{\frac{1}{1-\lambda}}d\s\right)^{1-\lambda}\cdot(\x(t))^{\lambda}\,,
\;\;\;t\in[0,T]\,.
\end{equation}
In order to get \eqref{stimaTesi} we shall prove an upper bound 
on $\x(t)$ using the hypothesis \eqref{suits4}.
To do this we introduce the function
\begin{equation}\label{suits44}
z(\x):=\x-a-b\x^{\lambda}\,,\;\;\;\x>0\,,
\end{equation}
and we show that $z(\x)\leq 0$ (recall \eqref{suits4})
for $\x\in[0,\x_{0}]$ for some $\x_0>0$ which we will compute.

Firstly, we note that
\begin{equation}\label{suits5}
\begin{aligned}
&z(0)=z(\x_1)=-a\,,\qquad \x_1:=b^{\frac{1}{1-\lambda}}>0\,,
\\
&z''(\x)=-\lambda(\lambda-1)b \x^{\lambda-2}>0\;\;\; \Rightarrow\;\;\; z\; {\rm is\; convex}\,.
\end{aligned}
\end{equation}
Secondly, we observe that the function $z(\x)$ admits a unique minimum. 
Indeed one has 
$z'(\x)=1-\lambda b\x^{\lambda-1}$, and hence
\begin{equation}\label{suits6}
\bar{\x}:=(b\lambda)^{\frac{1}{1-\lambda}}\,,
\end{equation}
is the unique solution of the equation $z'(\x)=0$. Since $z(\x)$ is convex,  the point $\bar{\x}$
is a minimum.
It is easy to note
that
\begin{equation}\label{suits7}
\begin{aligned}
&\bar{\x}=(b\lambda)^{\frac{1}{1-\lambda}}< b^{\frac{1}{1-\lambda}}=\x_1\,,
\\
z(\bar{\x})&=(b\lambda)^{\frac{1}{1-\lambda}}-a-b(b\lambda)^{\frac{\lambda}{1-\lambda}}
\\&
=-a-(b\lambda)^{\frac{1}{1-\lambda}}\Big(\frac{1}{\lambda}-1\Big)<-a=z(\x_1)
\end{aligned}
\end{equation}
Since $z(\x)\to+\infty$ as $\x\to +\infty$ and it is convex,  there exist a unique point
$\x_0>\x_1$ such that 
$z(\x_0)=0$. Now, since $z$ is convex, $z(0)=-a<0$, and $z(\x_0)=0$, 
it follows that $z(\x)\leq 0$ for any $\x\in[0,\x_0]$.
Moreover, by \eqref{suits4} we have $z(\x(T))\leq 0$. Since $\x_0$ is the unique positive zero of $z$
and $z(\x)>0$ for any $\x>\x_0$, we must have $\x(T)\leq \x_0$.
Recalling that $\x(t)$ is nondecreasing, we conclude that $\x(t)\leq \x(T)\leq \x_0$.
Therefore,
%In view of \eqref{suits4}-\eqref{suits44}, the convexity of $z(t)$, \eqref{suits7} one deduces
%$z(\x)\leq 0$ for $\x\in[0,\x_0]$, so that 
from 
\eqref{suits444} we deduce
\begin{equation}\label{suits55}
x(t)\leq f(t)+\x_0^{\lambda}\left(\int_{0}^{t}(g(\s))^{\frac{1}{1-\lambda}}d\s\right)^{1-\lambda}\,,
\;\;\;\;\;t\in[0,T]\,.
\end{equation}

We shall apply the secant method to provide an upper bound on the zero $\x_0$.
We consider the equation of the secant line passing through
the points $A=(\bar{\x},z(\bar{\x}))$ and $C=(\x_1,z(\x_1))$ has the form
\[
r(\x)=z(\bar{\x})+\frac{z(\x_1)-z(\bar{\x})}{\x_1-\bar{\x}}(\x-\bar{\x})\,,
\]
and intersect the $x$-axis in the point $B=(\x_2,0)$ with
\[
\x_2:=\bar{\x}-\frac{z(\bar{\x})(\x_1-\bar{\x})}{z(\x_1)-z(\bar{\x})}\,.
\] 
Then one must have that $\x_1<\x_0<\x_2$. Recalling \eqref{suits5}, \eqref{suits6} and \eqref{suits7}
we have
\[
\begin{aligned}
\x_1-\bar{\x}&=b^{\frac{1}{1-\lambda}}(1-\lambda^{\frac{1}{1-\lambda}})\,,
\\
z(\x_1)-z(\bar{\x})&=b^{\frac{1}{1-\lambda}}\lambda^{\frac{1}{1-\lambda}}\Big(\frac{1}{\lambda}-1\Big)\,,
\end{aligned}
\]
so that 
\begin{equation}\label{def:xi2}
\begin{aligned}
\x_2&=(b\lambda)^{\frac{1}{1-\lambda}}-\Big[
-a-(b\lambda)^{\frac{1}{1-\lambda}}\Big(\frac{1}{\lambda}-1\Big)
\Big]
\frac{(1-\lambda^{\frac{1}{1-\lambda}})}{\lambda^{\frac{1}{1-\lambda}}\Big(\frac{1}{\lambda}-1\Big)}
\\&=
(b\lambda)^{\frac{1}{1-\lambda}}+\Big[
a+b^{\frac{1}{1-\lambda}}\lambda^{\frac{\lambda}{1-\lambda}}(1-\lambda)
\Big]
\frac{(1-\lambda^{\frac{1}{1-\lambda}})}{\lambda^{\frac{\lambda}{1-\lambda}}(1-\lambda)}\,.
\end{aligned}
\end{equation}
The true zero $\x_0$ of $z(\x_0)=0$ can be found iteratively by setting
\[
\x_0=\lim_{n\to+\infty}\tau_{n}\,,\qquad \tau_{1}=\x_1=b^{\frac{1}{1-\lambda}}\,,
\qquad
\tau_{n+1}=\tau_{n}-\frac{z(\tau_{n})}{z(\x_2)-z(\tau_{n})}(\x_2-\tau_{n})\,.
\]
To provide an upper bound on $\x_0$ we choose to use the very rough estimate
$\x_0<\x_2$.
%\red{We estimate $\x_2$ under the assumption}
%\[
%\frac{1}{2}<\lambda<1\,.
%\]
%\textcolor{blue}{questo si verifica nel senso che 
%$\lambda=1-1/(s-s_1)$, quindi per $s\gg s_1$ va bene e a noi interessa questo.
%diciamo che per sbarazzarsi di un po' di constanti che dipendono da $\lambda$ \`e utile questa ipotesi, perch\`e si hanno problemi per $\lambda \to0^+$ nelle stime, ma tanto non e' quello il regime su cui lavoriamo noi}
%In this case, 
%Recalling that $1/2<\lambda<1$, one can easily check that
%\[
%\begin{aligned}
%\red{e^{-1}<\lambda^{\frac{\lambda}{1-\lambda}}}&<1\,,\qquad 
%\red{\lambda^{\frac{1}{1-\lambda}}<e^{-1}<1\,,vera}
%\qquad 
%1-\lambda<2\,,
%\qquad \quad 
%%\frac{1}{1-\lambda}
%(1-\lambda)^{-1}<2\,.
%\end{aligned}
%\]
One can easily check that
\[
e^{-1}<\lambda^{\frac{\lambda}{1-\lambda}}<1\,,
\qquad 
\lambda^{\frac{1}{1-\lambda}}<e^{-1}<1\,,
\qquad 
\forall\, \lambda\in(1/2,1)\,,
\]
and hence one gets
\[
\frac{(1-\lambda^{\frac{1}{1-\lambda}})}{\lambda^{\frac{\lambda}{1-\lambda}}(1-\lambda)}\leq \frac{2e}{1-\lambda}\,.
\]
Using these bounds in \eqref{def:xi2} one gets
\[
\begin{aligned}
\x_0<\x_2&\leq b^{\frac{1}{1-\lambda}}+
a\frac{2e}{(1-\lambda)}+2eb^{\frac{1}{1-\lambda}}
<20\Big(\frac{a}{1-\lambda}+b^{\frac{1}{1-\lambda}}\Big)\,,
\end{aligned}
\]
which together with \eqref{suits55} implies  the bound \eqref{stimaTesi}.
\end{proof}

\section{Small divisors estimates}
We collect here some useful results which allow us to 
 control the small divisors appearing in the solution of the Homological equation
 in section \ref{sec:homo}.

 \begin{lemma}\label{luchino}
Let $x_1\geq x_2\geq \ldots\geq x_N\geq 2.$ Then
\[
\frac{\sum_{1\leq\ell\leq N} x_\ell}{\prod_{1\leq\ell\leq N} \sqrt{x_\ell}}
\leq 
\sqrt{x_1}+\frac{4}{ \sqrt{x_1}}\,.
\]
\end{lemma}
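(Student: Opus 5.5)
The plan is to split off the largest factor and reduce the claim to an elementary one-variable inequality. Put $T:=\sum_{2\le\ell\le N}x_\ell$ and $Q:=\prod_{2\le\ell\le N}\sqrt{x_\ell}$, with the conventions $T=0$, $Q=1$ when $N=1$. The left-hand side then equals $\sqrt{x_1}/Q+T/(\sqrt{x_1}Q)$.

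Multiplying through by $\sqrt{x_1}$, the statement becomes
\[
\frac{x_1+T}{Q}\le x_1+4 .
\]
For $N=1$ this reads $x_1\le x_1+4$, which is trivial. So I assume $N\ge2$ from now on.

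First I bound $T/Q$ using only the ordering and the lower bound $x_\ell\ge 2$. Since every $x_\ell\le x_2$ for $\ell\ge 2$, we have $T\le (N-1)x_2$. Since every $x_\ell\ge2$, we have $Q\ge \sqrt{x_2}\,2^{(N-2)/2}$. Together these give
\[
\frac{T}{Q}\le (N-1)2^{-(N-2)/2}\sqrt{x_2}\le \tfrac32\sqrt{x_2}.
\]
The last step needs $\sup_{N\ge2}(N-1)2^{-(N-2)/2}\le 3/2$. I check this directly: the values are $1,\sqrt2,3/2,\sqrt2$ for $N=2,3,4,5$. For $N\ge 4$ the ratio of consecutive terms, $\frac{N}{N-1}2^{-1/2}$, is less than $1$, so the sequence decreases from $N=4$ on and the maximum is $3/2$ at $N=4$.

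It then suffices to prove $\tfrac32\sqrt{x_2}\le x_1+4-x_1/Q=x_1(1-1/Q)+4$. Because $Q\ge\sqrt{x_2}\ge\sqrt2>1$ and $x_1\ge x_2$, we get $x_1(1-1/Q)\ge x_2(1-1/\sqrt{x_2})=x_2-\sqrt{x_2}$. Setting $y=\sqrt{x_2}$, the claim reduces to $y^2-\tfrac52 y+4\ge0$. This holds for all real $y$, since the discriminant is $\tfrac{25}{4}-16<0$.

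The only step needing real care is the first one: we must use that $Q$ contains the factor $\sqrt{x_2}$. A naive bound $T\le 4Q$ fails for large $x_2$; for example $N=2$ would require $x_2\le 4\sqrt{x_2}$. The loss is recovered only by pairing $T/Q\sim\sqrt{x_2}$ against the gain $x_1(1-1/Q)\gtrsim x_2$ in the first term.
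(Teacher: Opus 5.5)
Your proof is correct and complete. The reduction to $\frac{x_1+T}{Q}\le x_1+4$ is right. So are the bounds $T\le (N-1)x_2$ and $Q\ge \sqrt{x_2}\,2^{(N-2)/2}$, and the value $\sup_{N\ge 2}(N-1)2^{-(N-2)/2}=\tfrac32$ (attained at $N=4$). The step $x_1(1-1/Q)\ge x_2-\sqrt{x_2}$ holds, using $Q\ge\sqrt{x_2}$ and $x_1\ge x_2\ge 0$, and the final quadratic has negative discriminant.

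The paper gives no argument of its own for this lemma; it only refers to Lemma A.4 of Feola--Massetti's work on the beam equation. There is therefore no in-text route to compare against. What your version provides is a short, self-contained verification of exactly the statement used in the proof of Lemma \ref{stimaSob:lem}.

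It also shows the statement has slack. Since $y=\sqrt{x_2}\ge\sqrt2>\tfrac54$, your last step only needs $y^2-\tfrac52 y+c\ge 0$ on $[\sqrt2,\infty)$. The same chain therefore proves the inequality with $4$ replaced by $\tfrac52\sqrt2-2$.
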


\begin{proof}
See Lemma $A.4$ in \cite{FeoMass:Beam}.
\end{proof}
In the following it will be convenient to use the following way of reordering of the indexes
$j\in \Z$ appearing in the Hamiltonian \eqref{HamPower}.

\begin{definition}\label{n star}
Consider a vector $v=\pa{v_i}_{i\in \Z}$  $v_i\in \N$, $|v|<\infty$. 
	
\noindent
$(i)$ We denote by $\na=\na(v)$ the vector $\pa{\na_l}_{l\in I}$ 
(where $I\subset \N$ is finite)  
which is the decreasing rearrangement of
\[
\{\N\ni h> 1\;\; \mbox{ repeated}\; v_h + v_{-h}\; \mbox{times} \} 
\cup 
\set{ 1\;\; \mbox{ repeated}\; v_1 + v_{-1} + v_0\; \mbox{times}  }
\]
%\noindent
%$(ii)$ Define the vector $m=m(v)$ as the reordering of the elements of the set
%\[
%\set{j\neq 0 \,,\quad \mbox{repeated}\quad  \abs{u_j} \;\mbox{times}}\,,
%\]
%where $D<\infty$ is its cardinality, such that
%$|m_1|\ge |m_2|\ge \dots\geq |m_D|\ge 1$. 	
\end{definition}

\begin{remark}\label{rmk:emmino}
(i) We  observe that the number $N:=|\al|+|\bt|$ is the cardinality of \,$\na$
and that, 
by momentum conservation, there 
exists a choice of $\s_i = \pm1, 0$ such that 
\begin{equation}\label{pi e cappucci}
\sum_l \sigma_l\na_l=0\,,\qquad 
\end{equation}
with $\sigma_l \neq 0$  if  $\na_l \neq 1$.
Hence, 
\begin{equation}\label{eleganza}
\na_1\le  \sum_{l\ge 2}\na_l\,,
\end{equation}
Indeed, if $\sigma_1 = \pm 1$, 
the inequality follows directly from \eqref{pi e cappucci}; 
if $\sigma_1 = 0$, then $\na_1=1$, hence $\na_l = 1$  $\forall l$. 

\noindent
(ii) Recall Definition \ref{n staremmino}.
 Given $\al\neq\bt\in\N^\Z,$ with $|\al|+|\bt|<\infty$
 we consider $m=m(\al-\bt)$ and $\na=\na(\al+\bt)$.	
If we denote by $D$ the cardinality of $m$ and $N$ the one of $\na$ we have 
\begin{align}
D+\al_0+\bt_0&\le N\,, \label{cappella}
\\
(|m_1|,\dots,|m_D|,\underbrace{1,\;\dots \;,1}_{N-D\;\rm{times}} )\, 
&\leq\,
\pa{\na_1,\dots \na_N}\,.\label{abbacchio}
\end{align}
\end{remark}

We have the following.

\begin{lemma}\label{lem:constance2SE}
For all $(\al,\bt)\in \mathcal{M}$ (see \eqref{mass-momindici}) the following holds.

\noindent
$(i)$ If 
\begin{equation}\label{divisor}
\sum_i (\al_i-\bt_i)|i| \le 10 \sum_i |\al_i-\bt_i| \,,
\end{equation}
then we have (recall Def. \ref{n staremmino})
\begin{equation}\label{constanceSOB}
\prod_{i\neq m_1(\ell),m_2(\ell)}(1+\abs{\al_i-\bt_i}{\jap{i}}) 
\le \prod_{l=3}^N\na_l\,.
%e^{27}N^6\prod_{l=3}^N\na_l^{\tau_0}\,.
\end{equation}
where $\ell=\alpha-\beta$ and 
$N=|\al|+|\bt|$. 

\noindent
$(ii)$ If on the contrary \eqref{divisor} does not hold then
\begin{equation}\label{pool1}
|\omega\cdot(\alpha-\beta)|\geq1\,,
\end{equation}
where $\omega$ is given in \eqref{dispLawWave}.
\end{lemma}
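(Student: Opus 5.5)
The two items are independent: item $(ii)$ is a direct perturbative bound on $\omega\cdot\ell$, and item $(i)$ is a combinatorial comparison between $\ell=\al-\bt$ and the rearrangement $\na=\na(\al+\bt)$. I would do $(ii)$ first.

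\emph{Item $(ii)$.} The idea is to write $\omega_i=|i|+(\omega_i-|i|)$ and use that, for $\mathtt{m}\in[1,2]$,
\[
0\leq \omega_i-|i|=\frac{\mathtt{m}}{\sqrt{|i|^2+\mathtt{m}}+|i|}\leq \sqrt{\mathtt{m}}\leq \sqrt2 \qquad \forall\, i\in\Z .
\]
This gives
\[
\omega\cdot\ell=\sum_i \ell_i|i|+\sum_i \ell_i(\omega_i-|i|)\,,\qquad
\Big|\sum_i \ell_i(\omega_i-|i|)\Big|\leq \sqrt2\sum_i|\ell_i| .
\]
If \eqref{divisor} fails, then $\sum_i\ell_i|i|>10\sum_i|\ell_i|$, and in particular $\ell\neq0$. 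Hence
\[
\omega\cdot\ell\geq (10-\sqrt2)\sum_i|\ell_i|\geq 1,
\]
which is \eqref{pool1}. No Diophantine information is needed here.

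\emph{Item $(i)$.} Here I would argue purely combinatorially, using Remark \ref{rmk:emmino}.
\begin{enumerate}
\item Each index $i\neq0$ with $\ell_i\neq0$ appears $|\ell_i|$ times in the vector $m(\ell)$. By \eqref{cappella}--\eqref{abbacchio}, the list $(|m_1|,\dots,|m_D|,1,\dots,1)$ is dominated termwise by $(\na_1,\dots,\na_N)$.
\item After discarding the two largest entries $m_1,m_2$ (which the left-hand side of \eqref{constanceSOB} already excludes), it remains to bound each factor $1+|\ell_i|\langle i\rangle$ by the product of the $|\ell_i|$ corresponding entries $\langle i\rangle$ of $m$.
\item For $|\ell_i|\geq2$ and $\langle i\rangle\geq2$ one has $1+k\langle i\rangle\leq\langle i\rangle^k$ with $k=|\ell_i|$, so these factors are fine.
\item The delicate factors are those with $|\ell_i|=1$ (where $1+\langle i\rangle>\langle i\rangle$) and those with $i=0$ or $\langle i\rangle=1$ (where the entry is $1$). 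The index $i=0$ is covered by the $N-D\geq\al_0+\bt_0$ extra entries in \eqref{cappella}.
\end{enumerate}

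\emph{Main obstacle.} The hard step is absorbing the extra factor in $1+\langle i\rangle$ when $|\ell_i|=1$. This is where I would use \eqref{divisor} together with momentum conservation $\sum_i i\,\ell_i=0$: combined, they show that the large entries of $m$ cannot all have the same sign structure, so $\na$ contains additional entries beyond those counted by $m$ (as in \eqref{eleganza}). I would use this slack in $\prod_{l\geq3}\na_l$ to pay for the missing constant factors. Concretely, I would follow the scheme of the analogous estimate in \cite{FeoMass:Beam} (their Appendix A), replacing the quadratic dispersion relation by the linear one; only \eqref{divisor} and \eqref{abbacchio} should be needed. If the slack turns out to be insufficient, the fallback is to accept a harmless multiplicative constant $C^{N}$. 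Such a constant is absorbed anyway in the proof of Proposition \ref{shulalemma} through the factor $(\mathtt{C}(\tN+2))^{6\tau}$ and the choice of $\mathtt{C}$ large.
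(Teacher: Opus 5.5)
\paragraph{Item $(ii)$.} Your argument is correct and is the paper's argument. Your bound $0\le\omega_i-|i|\le\sqrt{\mathtt m}\le\sqrt2$ also covers $i=0$, which the paper's estimate $|\mathtt{r}_j(\tm)|\le 1$ (stated only for $j\neq0$) skips.

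\paragraph{Item $(i)$: the absorption step cannot be done.} Your steps 1--2 are exactly the paper's one-line proof, namely termwise domination \eqref{abbacchio}. You are right that this does not account for the ``$+1$'' in $1+|\ell_i|\jap{i}$. However, the step you call the main obstacle cannot be carried out, because \eqref{constanceSOB} as written is false, even under \eqref{divisor}.

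Take integers $a>c>d>b\geq 2$ with $a+b=c+d$ (e.g.\ $10,9,8,7$), and set $\al=e_a+e_b$, $\bt=e_c+e_d$. Then:
\begin{itemize}
\item $(\al,\bt)\in\mathcal M$ and it is non-resonant;
\item $\sum_i\ell_i|i|=0$, so \eqref{divisor} holds;
\item $m(\ell)=(a,c,d,b)$ and $\na(\al+\bt)=(a,c,d,b)$;
\item the left side of \eqref{constanceSOB} is $(1+d)(1+b)$, while the right side is $db$ ($72>56$ in the example).
\end{itemize}
Here $\na$ has no entries beyond those of $m$. So there is no slack for momentum conservation or \eqref{divisor} to create, and \eqref{eleganza} only concerns $\na_1$, which is excluded anyway.

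The same failure occurs at $i=0,\pm1$. The padding entries of $\na$ equal $1$ and cannot pay for $1+|\ell_0|\geq 2$; for instance $\al=2e_0$, $\bt=0$ gives $3\le 1$. Your step 3 also fails at $|\ell_i|=\jap{i}=2$, since $5>4$.

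\paragraph{What is true.} The correct statement is precisely your fallback, and it needs neither \eqref{divisor} nor any slack. Use $1+kx\le(2x)^k$ for $x,k\ge1$, and note that deleting all copies of the indices $m_1,m_2$ from $m$ leaves a sub-multiset of $\{m_3,\dots,m_D\}$. Then \eqref{abbacchio} gives
\[
\prod_{i\neq m_1,m_2}\big(1+|\ell_i|\jap{i}\big)\le 2^{\sum_i|\ell_i|}\prod_{k=3}^{D}|m_k|\le 2^{N}\prod_{l=3}^{N}\na_l
\]
for every $(\al,\bt)\in\mathcal M$. So what you can prove is this corrected lemma, not the one stated.

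\paragraph{Your absorption claim.} This is also off. In Lemma \ref{stimaSob:lem} the constant appears raised to the power $8\tau^2$, so it is of size $2^{O(\tN^5)}$, which $(\mathtt C(\tN+2))^{6\tau}$ cannot absorb. It is absorbed instead by spending extra powers of $\lfloor\na_l\rfloor\ge 2$ for $l\ge 3$. There is room for this, since the exponent there satisfies $\delta\ge(2^43^4\tN)^4\gg\tau^2$. The leftover factor $2^{16\tau^2}$ goes into $e^{\mathtt C\zeta}$ in Proposition \ref{shulalemma}.
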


\begin{proof}
\emph{Item (i).} The bound \eqref{constanceSOB} follows trivially recalling 
Definitions \ref{n staremmino}, \ref{n star}
 and Remark \ref{rmk:emmino}-(ii).

\smallskip
\noindent
\emph{Item (ii).} Assume that \eqref{divisor} does not hold. Recalling \eqref{dispLawWave}
 we note that, for $|j|\neq0$
 \begin{equation*}%\label{eq:asy}
 \omega_{j}=\sqrt{|j|^2+\tm}=|j|\sqrt{1+\frac{\tm}{|j|^{2}}}=|j|+\mathtt{r}_{j}(\tm)\,,\qquad |\tr_{j}(\tm)|\leq \frac{\tm}{2|j|}\leq 1\,.
 \end{equation*}
Therefore, using also the triangular inequality, we deduce
\[
\begin{aligned}
\left|\sum_{i\in \mathbb{Z}}(\al_i-\bt_i) \omega_{j}\right|
&\geq 
\left|\sum_{i\in \mathbb{Z}}(\al_i-\bt_i)|i|\right|-
\left|\sum_{i\in \mathbb{Z}}(\al_i-\bt_i) \mathtt{r}_{i}(\tm)\right|
\\&\geq
10 \sum_i |\al_i-\bt_i|- \sum_i |\al_i-\bt_i|\geq1\,.
\end{aligned}
\]
This concludes the proof.
\end{proof}

\begin{lemma}\label{stimaSob:lem}
Fix $\tN\geq1$, $\delta\geq(2^43^4 \tN)^4$, $\tau\leq 36 \tN^2$ and $\td\geq 4\tN$.
Then one has
\begin{equation}\label{seisettedelta}
\mathtt{J}:=\sup_{ j\in\Z,\, (\al,\bt)\in\mathcal{A}} 
\Big(\frac{\jjap{j}^2}{\prod_{i\in\Z}\jjap{i}^{\al_i + \bt_i}} \Big)^\delta 
\prod_{\substack{i\neq m_1(\alpha-\beta)\\ i\neq m_2(\alpha-\beta)}}
\big(1+|\al_i-\bt_i|^2\langle i\rangle^2\big)^{4\tau^2}\leq 
2^{{\delta}-1}6^{\delta}\,,
\end{equation}
where $\mathcal{A}\subseteq \Lambda$
is the set of indexes $(\al,\bt)$ such that \eqref{divisor} holds and
\[
|\al| + |\bt| = \tN + 2\,,\qquad 
\al_j+\bt_j\neq 0 \,,\quad |\al - \bt| \le \tN + 2\,.
\]

\end{lemma}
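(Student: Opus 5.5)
We have to bound, uniformly in $j$ and in $(\al,\bt)\in\mathcal{A}$, the product of a ``gain'' factor $\big(\jjap{j}^2/\prod_i\jjap{i}^{\al_i+\bt_i}\big)^\delta$ and a ``loss'' factor coming from the small divisor, namely $\prod_{i\neq m_1,m_2}(1+|\al_i-\bt_i|^2\langle i\rangle^2)^{4\tau^2}$. Throughout we work with the rearrangement $\na=\na(\al+\bt)$ of Definition \ref{n star}, with $N=|\al|+|\bt|=\tN+2$ entries. The plan is to show that the gain factor is exponentially small in a way that beats the loss, by means of Lemma \ref{luchino} and the momentum bound \eqref{eleganza}.

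\emph{Step 1 (loss factor).} Since $(\al,\bt)\in\mathcal{A}$, condition \eqref{divisor} holds, so \eqref{constanceSOB} in Lemma \ref{lem:constance2SE}(i) applies. Using $1+x^2y^2\le(1+xy)^2$, the loss factor is at most $\prod_{l\ge3}\na_l^{8\tau^2}$. We have $\tau\le 36\tN^2$, hence $8\tau^2\le 8\cdot 36^2\tN^4$, which is much smaller than $\delta\ge(2^43^4\tN)^4$. So the loss is bounded by $\prod_{l\ge3}\na_l^{\delta/2}$ (with room to spare).

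\emph{Step 2 (gain factor).} Since $\jjap{i}=\max\{2,|i|\}$, we have $\jjap{i}\ge \na$-entries up to a factor $2$, and $\jjap{j}\le 2\na_1$ because $\al_j+\bt_j\ne0$ forces $|j|$ to be one of the $\na_l$. Set $x_l:=\max\{2,\na_l\}$. Then
\[
\frac{\jjap{j}^2}{\prod_i\jjap{i}^{\al_i+\bt_i}}\le\frac{x_1^2}{\prod_{l}x_l}.
\]
By \eqref{eleganza}, $x_1\le\sum_{l\ge2}x_l$, so $x_1^2/\prod_l x_l\le x_1\sum_{l\ge 1}x_l/\big(\prod_l x_l\big)$, after bounding $\sum_{l\ge2}x_l$ by $\sum_{l\ge1}x_l$. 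Writing $\prod_l x_l=\prod_l\sqrt{x_l}\cdot\prod_l\sqrt{x_l}$ and applying Lemma \ref{luchino} gives
\[
\frac{\sum_{l}x_l}{\prod_l\sqrt{x_l}}\le\sqrt{x_1}+\frac{4}{\sqrt{x_1}}\le 3\sqrt{x_1},
\]
while $x_1/\prod_l\sqrt{x_l}=\sqrt{x_1}/\prod_{l\ge2}\sqrt{x_l}$. Altogether,
\[
\frac{\jjap{j}^2}{\prod_i\jjap{i}^{\al_i+\bt_i}}\le \frac{3x_1}{\prod_{l\ge2}\sqrt{x_l}}.
\]
This still contains $x_1$, which must be removed. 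I would sharpen this using \eqref{eleganza} directly. If $x_1\le 2x_2$, then
\[
\frac{x_1^2}{\prod_l x_l}\le\frac{2x_1}{\prod_{l\ge3}x_l}\cdot\frac{1}{1}\le \frac{2\cdot 3x_2}{\ldots}.
\]
Otherwise $x_1\le \sum_{l\ge2}x_l\le N x_2$, and one estimates $\sum_{l\ge2}x_l/\prod_{l\ge2}x_l\le 3$ by Lemma \ref{luchino} applied to the tail. The target in both cases is
\[
\frac{x_1^2}{\prod_lx_l}\le 6\prod_{l\ge3}x_l^{-1/2}.
\]

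\emph{Step 3 (combination).} Raising the gain bound to the power $\delta$ gives $6^\delta\prod_{l\ge3}x_l^{-\delta/2}$, which exactly absorbs the loss $\prod_{l\ge3}\na_l^{\delta/2}$ from Step 1. Tracking the factor $2$ lost in comparing $\jjap{\cdot}$ with $\na$ yields the final constant $2^{\delta-1}6^\delta$.

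The main obstacle is Step 2: eliminating $x_1$ and $x_2$ cleanly while retaining a genuine negative power of every $x_l$ with $l\ge3$. This is where the momentum constraint \eqref{eleganza} and Lemma \ref{luchino} have to be combined with care. The exponent bookkeeping in Step 1 is routine.
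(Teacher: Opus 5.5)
Your architecture matches the paper's proof: the loss is controlled through \eqref{constanceSOB}, and the gain through the momentum bound \eqref{eleganza} together with Lemma~\ref{luchino}. Steps 1 and 3 are fine. However, Step 2, which is the actual content of the lemma, is never carried out.

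\begin{itemize}
\item Your first chain applies Lemma~\ref{luchino} to the whole vector $(x_1,\dots,x_N)$ after inserting an extra factor $x_1$. It ends at $3x_1\prod_{l\ge2}x_l^{-1/2}$. Raised to the power $\delta$ and multiplied by the loss $\prod_{l\ge3}x_l^{\delta/2}$, this leaves $3^\delta x_1^{\delta}x_2^{-\delta/2}$, which is unbounded.
\item In your case split, case 1 as written still has $x_1$ in the numerator. The correct computation is $x_1^2/\prod_l x_l=(x_1/x_2)\prod_{l\ge3}x_l^{-1}\le 2\prod_{l\ge3}x_l^{-1}$.
\item Case 2 only asserts the constant bound $\sum_{l\ge2}x_l/\prod_{l\ge2}x_l\le 3$. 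That throws away exactly the decay $\prod_{l\ge3}x_l^{-1/2}$ needed to absorb the loss.
\end{itemize}

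So the ``target'' $x_1^2/\prod_l x_l\le 6\prod_{l\ge3}x_l^{-1/2}$ is stated, not proved, and without it Step 3 does not close.

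The missing step is one line and needs no case distinction. Cancel one factor $x_1$ first. Then use \eqref{eleganza} in the form $x_1\le\sum_{l\ge2}x_l$, and apply Lemma~\ref{luchino} to the tail $(x_2,\dots,x_N)$, which is non-increasing with entries $\ge2$:
\[
\frac{\jjap{j}^2}{\prod_i\jjap{i}^{\al_i+\bt_i}}\le\frac{x_1}{\prod_{l\ge2}x_l}\le\frac{\sum_{l\ge2}x_l}{\prod_{l\ge2}\sqrt{x_l}}\cdot\frac{1}{\prod_{l\ge2}\sqrt{x_l}}\le\Big(1+\frac{4}{x_2}\Big)\prod_{l\ge3}x_l^{-1/2}\le 3\prod_{l\ge3}x_l^{-1/2}.
\]
Combined with Step 1, this gives $\mathtt{J}\le 3^\delta$, which is stronger than the claim.

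The paper instead writes $\sum_{l\ge2}x_l=x_2+\sum_{l\ge3}x_l$, uses $(a+b)^\delta\le 2^{\delta-1}(a^\delta+b^\delta)$, and applies Lemma~\ref{luchino} to $(x_3,\dots,x_N)$. That inequality is the source of the factor $2^{\delta-1}$. It does not come from any comparison between $\jjap{\cdot}$ and $\na$: in fact $\prod_i\jjap{i}^{\al_i+\bt_i}=\prod_l x_l$ is an identity, so your remark in Step 3 about tracking a lost factor $2$ is spurious.
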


\begin{proof}
By \eqref{constanceSOB}   
 toghether with $|\al| + |\bt| = \tN + 2$  
and $\td \le 4\tN$ we get
\begin{equation*}
\mathtt{J} \leq 
\sup_{\substack{ %j\in\Z,\,  (\al,\bt)\in\Lambda 
\\ \al_j+\bt_j\neq 0 
\\ |\al - \bt| \le \tN + 2}} 
\Big(\frac{\jjap{j}^2}{\prod_{i\in\Z}\jjap{i}^{\al_i + \bt_i}} \Big)^\delta 
 \pa{\prod_{\ell = 3}^{\tN + 2} \na_\ell^{4\tau^2}}^2,
\end{equation*}
with $\na=\na(\al+\bt)$.
%We claim that
%\begin{equation}\label{filomena}
%\tN + 2 \leq 4 \prod_{l= 3}^{\tN + 2}\jml{\na_l}^{\frac{1}{4\ln 2}}\,.
%\end{equation}
%Indeed if $\tN=0$, the inequality is trivial.
%The case 
%$\tN\geq 1$ follows by 
%Lemma \ref{luchino}.
Now, recalling Def.  \ref{n star} we have
\begin{equation}\label{fiorentina2}
\prod_i\jml{i}^{\al_i+\bt_i}= \prod_{l\ge 1}\jml{\na_l}\,.
\end{equation}
Then
\begin{equation*}
\sup_{\substack {j,\al,\bt
\\ \al_j+\bt_j\geq 1 }} 
\frac{\jml{j}^2}{\prod_i\jml{i}^{\al_i+\bt_i}}
\leq 
\frac{\jml{\hat n_1}^2}{\prod_{l\ge 1}\jml{\na_l}}
=
\frac{\jml{\hat n_1}}{\prod_{l\ge 2}\jml{\na_l}}
\leq
\frac{\sum_{l\ge 2}\jml{\na_l}}{\prod_{l\ge 2}\jml{\na_l}} 
=
\frac{1}{\prod_{l\ge 3}\jml{\na_l}}
+
\frac{\sum_{l\ge 3}\jml{\na_l}}{\prod_{l\ge 2}\jml{\na_l}} \,,
\end{equation*}
where the last inequality holds by momentum conservation.
Recall that $\tau \le 36\tN^2$, $4\tau^2\leq 2^43^4 \tN^4 $ and
 $\delta \ge (2^43^4 \tN)^4 $.  
Then, by the fact that
$(a+b)^{\delta}\leq 2^{{\delta}-1}(a^{\delta}+b^{\delta})$
for $a,b\geq 0,$ ${\delta}\geq 1$, one has
\begin{equation*}
\begin{aligned}
\mathtt{J}&\leq 
2^{{\delta}-1}
\left(
\frac{1}{\prod_{l\ge 3}\jml{\na_l}^{\delta}}
+\frac{(\sum_{l\ge 3}\jml{\na_l})^{\delta}}{\prod_{l\ge 2}\jml{\na_l}^{\delta}} 
\right)
\prod_{l\ge 3}\jml{\na_l}^{{\delta/2}}
\\&\leq  
2^{{\delta}-1}
\left(
1+\frac{(\sum_{l\ge 3}\jml{\na_l})^{\delta}}{
\jml{\na_2}^{\delta}\prod_{l\ge 3}\jml{\na_l}^{{\delta}/2}} 
\right)
\leq  
2^{{\delta}-1}
\left(
1+\frac{(\jml{\na_3}^{1/2}+4)^{\delta}}{\jml{\na_2}^{\delta}} 
\right)\,,
\end{aligned}
\end{equation*}
where we used Lemma \ref{luchino}. Then the thesis follows.
\end{proof}

\section{Birkhoff normal form scheme}\label{sec:birkoff}
In this section we provide the proof of Theorem \ref{mainthm:BNF}.

%\subsection{A normal form step}

\vspace{0.5em}
\noindent
{\bf A normal form step.}
Let $r>r'>0$,  $\zeta,p>0$, 
$\tK\gg1$ and $1\leq \tN\leq \tK-1$.
We consider an Hamiltonian function of the form 
\begin{equation}\label{Hstep}
H = D_\omega + \sum_{\td=1}^{\tN-1}Z^{(\td)}
+  \sum_{\td=\tN}^{\tK}R^{(\td)}+ R^{(\geq \tK+1)}\,,
\end{equation}
where $D_\omega$ is in \eqref{quadraticWave}
and
\[
\begin{aligned}
&Z^{(\td)}\in \cK^\wc_{\ri}(\th_{p})\cap\mathcal{H}^{(\td)}\,,
\qquad 1\leq \td\leq \tN-1\,,
\\&
R^{(\td)}\in \mathcal{H}^\wc_{\ri}(\th_{p})\cap\mathcal{H}^{( \td)}\,,
\qquad\tN\leq \td\leq \tK\,,
\\&
R^{(\geq\tK+1)}\in \mathcal{H}^\wc_{\ri}(\th_{p})\cap\mathcal{H}^{(\geq\tK+1)}\,.
\end{aligned}
\]
In the case $\tN=1$ we assume $Z^{(\td)}\equiv0$.
We set
\begin{equation}\label{def:epsiepsi}
\epsilon_{\td}:=|R^{(\td)}|_{r,p}\,,\;\;\tN\leq \td\leq \tK\,,
\qquad
\epsilon_{\tK+1}:=|R^{(\geq \tK+1)}|_{r,p}\,.
\end{equation}
\begin{lemma}{\bf (Birkhoff normal form step).}\label{dolcenera}
Consider the Hamiltonian $H$ in \eqref{Hstep}
and fix $\omega\in \mathtt{D}_{\gamma}$.
	Assume 
that
\begin{equation}\label{viadelcampo}
 J_0\Big(\sum_{\td=\tN}^{\tK}\epsilon_{\td}+\epsilon_{\tK+1}\Big)
\leq\delta 
\qquad
\text{with}
\quad
\delta:=\frac{\ri-\rf}{16e\ri}
\,,
\end{equation}
where  $J_0=J_0(\zeta,\tN)$ is  in \eqref{controlJ0caseSob}).

\noindent
Then there exists a change of variables
\begin{eqnarray}
	\label{pollon3}
	& \Phi\ :\ B_\rf(\th_{p+\zeta})\ \to\ 
B_{\ri}(\th_{p+\zeta})\,,
	\end{eqnarray}
such that
\begin{equation}\label{HstepPIUUNO}
H\circ\Phi= D_\omega + \sum_{\td=1}^{\tN}Z_{+}^{(\td)}+  \sum_{\td=\tN+1}^{\tK}R_{+}^{(\td)}+ R_{+}^{(\geq \tK+1)}\,,
\end{equation}
where 
\[
\begin{aligned}
&Z_{+}^{(\td)}\in \cK^\wc_{\ri'}(\th_{p+\zeta})\cap\mathcal{H}^{(\td)}\,,
\qquad 1\leq \td\leq\tN\,,
\\&
R_{+}^{(\td)}\in \mathcal{H}^\wc_{\ri'}(\th_{p+\zeta})\cap\mathcal{H}^{( \td)}\,,
\quad \tN+1\leq \td\leq \tK\,,
\\&
R_{+}^{(\geq\tK+1)}\in \mathcal{H}^\wc_{\ri'}(\th_{p+\zeta})\cap\mathcal{H}^{(\geq\tK+1)}\,.
\end{aligned}
\]
%\[
%Z'\in \cK^\wc_{\ri}(\th_{\twi})\cap\mathcal{H}^{(\leq \tN)}\,,\qquad
%R'\in \mathcal{H}^\wc_{\ri}(\th_{\twi})\cap\mathcal{H}^{(> \tN)}\,.
%\]
	Moreover the following estimates hold
	%\footnote{$C$ is defined in \eqref{zucchina}.}
\begin{align}
&Z_{+}^{(\td)}:=Z^{(\td)}\,,\quad 1\leq \td\leq\tN-1\,,\qquad 
|Z_{+}^{(\tN)}|_{\rf,p+\zeta}
\leq
\epsilon_{\tN}
\,,
 %\delta^{-1} J_0^{\star}|R|_{\ri,\twi} (|R|_{\ri, \twi}+ |Z|_{\ri,\twi})\,,
\label{signorinabis}
\\
|R_{+}^{(p)}|_{\rf,p+\zeta}
&\leq 
\epsilon_{p}+
\sum_{\substack{j\geq2  \\ (j-1)\tN+\tN=p}}
\frac{\epsilon_{\tN}}{j!} \left(\frac{\epsilon_{\tN}J_0}{2\delta}\right)^{j-1}
+
\sum_{\substack{1\leq j,\td\leq \tK \\ j\tN+\td=p}}
\frac{1}{j!}\left(\frac{\epsilon_{\tN}J_0}{2\delta}\right)^{j} |Z^{(\td)}|_{r,p}	
\label{patata}
\\&	
\qquad 
+\sum_{\substack{1\leq j\leq \tK \\ \tN\leq\td\leq \tK \\j\tN+\td=p}}
\frac{\epsilon_{\td}}{j!} \left(\frac{\epsilon_{\tN}J_0}{2\delta}\right)^{j}\,,
 \qquad \tN+1\leq \td\leq \tK\,,\nonumber
 \\
 |R_{+}^{(\geq\tK+1)}|_{\rf,p+\zeta}
&\leq 
\epsilon_{\tK+1}+2\sum_{\td=\tN}^{\tK} 
\left(\frac{\epsilon_{\tN}J_0}{2\delta}\right)^{\left[\frac{\tK+1-\td}{\tN}\right]}\epsilon_{\td}
+2\left(\frac{\epsilon_{\tN}J_0}{2\delta}\right)^{\tK+1}(|Z|_{r,p}+\epsilon_{\tN})\,.
\label{signorina}
\end{align}
Finally, for any  $\zeta^{\sharp}\geq 0$, 
assume the further  conditions 
\begin{equation}\label{viadelcampo'}
 \widetilde{J_0}
 \Big(\sum_{\td=\tN}^{\tK}\epsilon_{\td}+\epsilon_{\tK+1}\Big)
\leq\delta\,,
\end{equation}
where  $\widetilde{J_0}=J_0(\zeta+\zeta^{\sharp},\tN)$  in \eqref{controlJ0caseSob}.
Then 
\begin{equation}\label{pollon4}
\begin{aligned}
& \Phi_{\big|B_\rf(\th_{p+\zeta+\zeta^{\sharp}})}\ :\ B_\rf(\th_{p+\zeta+\zeta^{\sharp}})\ \to\ 
B_{\ri}(\th_{p+\zeta+\zeta^{\sharp}})\,,
\\
\sup_{u\in  B_\rf(\th_{p+\zeta+\zeta^{\sharp}})} &\norm{\Phi(u)-u}_{\th_{p+\zeta+\zeta^{\sharp}}}
\le
\ri \widetilde{J_0} |R^{(\tN)}|_{\ri,p}\,.
\end{aligned}
\end{equation}
\end{lemma}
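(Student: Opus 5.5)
We follow the standard Birkhoff step of \cite{BMP:CMP, FeoMass:Beam}. The generating Hamiltonian $S$ solves the homological equation
\[
L_{\omega}S=\Pi_{\cR}R^{(\tN)}\,.
\]
Concretely, $S:=L_{\omega}^{-1}\Pi_{\cR}R^{(\tN)}\in\cH^{(\tN)}$. Combining Proposition \ref{shulalemma} with \eqref{fame} gives
\[
|S|_{\ri,p+\zeta}\leq J_0\,\epsilon_{\tN}\,,
\]
and by the immersion property \eqref{emiliapara2} the same bound holds on $\th_{p+\zeta}$ for every $R^{(\td)}$ and $Z^{(\td)}$ (indeed $|\cdot|_{\ri,p+\zeta}\le|\cdot|_{\ri,p}$). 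Assumption \eqref{viadelcampo} then yields $|S|_{\ri,p+\zeta}\le\delta$, so Lemma \ref{ham flow} applies with $\ri=\rf+\rho$ (the constant $16e$ here versus $8e$ there only helps). We set $\Phi:=\Phi^1_S$, which maps $B_\rf(\th_{p+\zeta})$ into $B_\ri(\th_{p+\zeta})$. For \eqref{pollon4}, we run the same argument at regularity $p+\zeta+\zeta^\sharp$: Proposition \ref{shulalemma} with $\widetilde{J_0}$ gives $|S|_{\ri,p+\zeta+\zeta^\sharp}\le\widetilde{J_0}\,|R^{(\tN)}|_{\ri,p}$, and \eqref{viadelcampo'} together with \eqref{pollon} then give the displacement bound.

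To conjugate, we expand $H\circ\Phi=e^{L_S}H$ term by term, and the key cancellation occurs at degree $\tN$. Since $L_SD_\omega=\{D_\omega,S\}=-L_\omega S=-\Pi_{\cR}R^{(\tN)}$, we get
\[
e^{L_S}D_\omega=D_\omega-\Pi_\cR R^{(\tN)}+\sum_{j\ge2}\frac{1}{j!}L_S^{j-1}\big(-\Pi_\cR R^{(\tN)}\big)\,.
\]
The degree-$\tN$ part is then $Z_+^{(\tN)}:=\Pi_\cK R^{(\tN)}$, which is bounded by $\epsilon_\tN$ via \eqref{fame}. All other terms are sorted by scaling degree using Remark \ref{rmk:scalaPoi}: $L_S^j$ applied to a degree-$\td$ Hamiltonian has degree $j\tN+\td$. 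This produces the three sums in \eqref{patata}: the $D_\omega$ tail (degree $(j-1)\tN+\tN$), the $Z^{(\td)}$ terms, and the $R^{(\td)}$ terms. Each homogeneous term $\frac{1}{j!}L_S^jG$ we estimate by iterating \eqref{commXHK} with nested radii, exactly as in the proof of \eqref{brubeck}. This gives the factor $(|S|/2\delta)^j\le(\epsilon_\tN J_0/2\delta)^j$, and the projection estimate \eqref{bound proiezione} isolates each degree. The terms with $j=0$ are unchanged, so $Z_+^{(\td)}=Z^{(\td)}$ for $\td<\tN$.

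For $R_+^{(\ge\tK+1)}$ we collect every series tail whose degree exceeds $\tK$. For $R^{(\td)}$, the first exponent $j$ with $j\tN+\td\ge\tK+1$ is $\lceil(\tK+1-\td)/\tN\rceil\ge[(\tK+1-\td)/\tN]$, and \eqref{brubeck} with $h$ equal to that exponent gives $2\epsilon_\td(\epsilon_\tN J_0/2\delta)^h$. The $D_\omega$ and $Z$ tails are handled the same way, using the crude exponent stated in \eqref{signorina}. The remainder $R^{(\ge\tK+1)}$ contributes through $e^{L_S}$, with norm at most $2\epsilon_{\tK+1}$ by \eqref{tizio}. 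We expect the stated $\epsilon_{\tK+1}$ to hold up to this harmless constant, or by splitting off its $j=0$ term.

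The main obstacle is bookkeeping rather than ideas. The radius loss must be distributed across the iterated brackets so that each $L_S$ costs exactly $J_0\epsilon_\tN/(2\delta)$, and the $D_\omega$ series must be rewritten through $\Pi_\cR R^{(\tN)}$, since $D_\omega$ itself is not a regular Hamiltonian; this second point needs care. The regularity gain $\zeta$ is confined to $S$, while all other factors are estimated at $p$ and embedded.
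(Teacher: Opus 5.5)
Your proposal is correct and is essentially the paper's own proof. It defines $Z_+^{(\tN)}=\Pi_{\cK}R^{(\tN)}$ and $S=L_\omega^{-1}\Pi_{\cR}R^{(\tN)}$, bounded via Proposition \ref{shulalemma} and \eqref{fame}, takes $\Phi=\Phi^1_S$ from Lemma \ref{ham flow}, uses the cancellation $L_SD_\omega=-\Pi_{\cR}R^{(\tN)}$ and sorts $e^{L_S}H$ by scaling degree, and for \eqref{pollon4} estimates the same homological solution at regularity $p+\zeta+\zeta^\sharp$ (the paper only adds that this solution is the restriction of $S$, so $\Phi$ is the same map).

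One remark of yours about constants is inaccurate. Since $\epsilon_{\tN}J_0/2\delta\le\frac12$, the exponent $\lceil(\tK+1-\td)/\tN\rceil\le\tK$ that \eqref{brubeck} gives for the $Z^{(\td)}$ tail yields a \emph{weaker} bound than the exponent $\tK+1$ in \eqref{signorina}, so that exponent is not a crude version of what you get. Likewise, your per-term factor $(|S|/2\delta)^j$ does not carry the $1/j!$ written in \eqref{patata}. Both reflect a looseness in the stated constants rather than a defect of your strategy; the paper's one-line appeal to Lemma \ref{ham flow} does not justify them either, just as with the $\epsilon_{\tK+1}$ point you flagged yourself.
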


\begin{proof}
Recalling \eqref{Hstep}
we define the new normal form
\begin{equation}\label{omoeq2}
\begin{aligned}
&Z_+:=\sum_{\td=1}^{\tN}Z_{+}^{(\td)}\,,\quad Z_{+}^{(\td)}:=Z^{(\td)}\,,\quad
1\leq\td\leq\tN-1\,,
%\\&
\qquad
Z_{+}^{(\tN)}:=\Pi_{\mathcal{K}}R^{(\tN)}\,.
\end{aligned}
\end{equation}
It is easy to check that $Z_{+}\in \cK^\wc_{\ri}(\th_{p})\cap\mathcal{H}^{(\leq \tN)}$
and satisfies the bound \eqref{signorinabis} by using 
 \eqref{omoeq2}, \eqref{def:epsiepsi}, 
\eqref{fame} and \eqref{bound proiezione}.

Using formula \eqref{def:adjaction} the equation $L_{\omega}S=\{S,D_{\omega}\}=\Pi_{\mathcal{R}}R^{(\tN)}$
admits the unique solution 
\begin{equation}\label{omoeq1}
S:=L_{\omega}^{-1}(\Pi_{\mathcal{R}}R^{(\tN)})\,.
\end{equation}
By Proposition \ref{shulalemma} we have that 
$S\in \mathcal{R}_{r}(\th_{p+\zeta})\cap \mathcal{H}^{(\tN)}$
%(see \eqref{parampara})
and satisfies the estimate
\begin{equation}\label{cavolfiore}
|S|_{r,p+\zeta}\leq J_0 |R^{(\tN)}|_{r,p}
\stackrel{\eqref{def:epsiepsi}}{\leq}J_0\epsilon_{\tN}\,,
\end{equation}
where $J_0=J_0(\zeta,\tN)$ 
  in \eqref{controlJ0caseSob}.
  Note that \eqref{viadelcampo} and \eqref{cavolfiore}
  imply \eqref{stima generatrice} with $\rho=r-\rf$. Hence Lemma 
 \ref{ham flow} applies and we set
$\Phi:=\Phi_S^1$. Recalling Remark \ref{rmk:change} we note that 
the conjugated Hamiltonian has the form
\begin{align*}
%H':=
H\circ\Phi
&= D_\omega + \sum_{\td=1}^{\tN-1}Z^{(\td)} 
+  \Pi_{\mathcal{K}}R^{(\tN)}+\{D_{\omega},S\}+\Pi_{\mathcal{R}}R^{(\tN)}
%\\&
+(e^{L_{S}}-\id -\{\cdot,S\}) D_\omega  
\\&+ (e^{L_{S}}-\id)(\sum_{\td=1}^{\tN-1}Z^{(\td)} +R^{(\tN)})
%\\&
+e^{L_{S}}\big(\sum_{\td=\tN+1}^{\tK}R^{(\td)}+R^{(\geq \tK+1)}\big)
\\&
\stackrel{{\eqref{omoeq1},\eqref{omoeq2}}}{=}
 D_\omega + Z_{+}+R_{+}\,,
\end{align*}
where
\[
\begin{aligned}
R_{+}&:=  - \sum_{j=2}^\infty\frac{\pa{L_{S}}^{j-1}}{j!} \Pi_{\mathcal{R}}R^{(\tN)}
\\&
+ (e^{L_{S}}-\id)(\sum_{\td=1}^{\tN-1}Z^{(\td)}
+\sum_{\td=\tN}^{\tK}R^{(\td)}+R^{(\geq \tK+1)})
+\sum_{\td=\tN+1}^{\tK}R^{(\td)}+R^{(\geq \tK+1)}\,.
\end{aligned}
\]
%Recall that $Z=\sum_{\td=1}^{\tK}Z^{(\td)}+Z^{(\geq\tK+1)}$
%with $Z^{(\td)}\in \mathcal{H}^{(\td)}$, $Z^{(\geq\tK+1)}\in \mathcal{H}^{(\geq\tK+1)}$
%and $|Z^{(\td)}|_{r,\tw}, |Z^{(\geq\tK+1)}|_{r,\tw}\leq |Z|_{r,\tw}$.
Therefore we have
\begin{equation}\label{Rplus}
\begin{aligned}
R_{+}&:=\sum_{p=\tN+1}^{\tK}R_{+}^{(p)}+R_{+}^{(\geq\tK+1)}\,,
\\
R_{+}^{(p)}&:=R^{(p)}+\sum_{\substack{j\geq2  \\ j\tN+\tN=p}}
\frac{\pa{L_{S}}^{j-1}}{j!} \Pi_{\mathcal{R}}R^{(\tN)}
+
\sum_{\substack{1\leq j,\td\leq \tK \\j\tN+\td=p}}
\frac{\pa{L_{S}}^{j}}{j!} Z^{(\td)}
+
\sum_{\substack{1\leq j\leq \tK \\\tN\leq\td\leq \tK\\j\tN+\td=p}}
\frac{\pa{L_{S}}^{j}}{j!} R^{(\td)}
\end{aligned}
\end{equation}
and $R_{+}^{(\geq\tK+1)}$ defined by difference.
Moreover by, the explicit formul\ae\,\eqref{Rplus}, Lemma \ref{ham flow}, bounds
\eqref{cavolfiore}, \eqref{bound proiezione}, 
the smallness assumption
\eqref{viadelcampo}, Remark \ref{rmk:scalaPoi} 
and the monotonicity property (see Lemma \ref{norme proprieta}) 
we get $R_{+}\in \mathcal{R}^\wc_{\ri'}(\th_{\twi'})\cap\mathcal{H}^{(> \tN)}$
which satisfies \eqref{patata}-\eqref{signorina}.

Finally, let us assume \eqref{viadelcampo'}.
 By Proposition \ref{shulalemma}
 let $S^\sharp= L_\omega ^{-1} \Pi_{\mathcal{R}}R^{(\tN)}$ in 
 $\cR_{r}(\th_{p+\zeta+\zeta^\sharp})$ be the 
  solution of the homological equation 
 $L_\omega S^\sharp = \Pi_{\mathcal{R}}R^{(\tN)}$ on 
  $B_{r}(\th_{p+\zeta+\zeta^\sharp})\subseteq B_{r}(\th_{p+\zeta})$ for any $\zeta^\sharp \geq 0$.
 Since $S$ and $S^\sharp$ solve the same linear
 equation on 
 $B_{r}(\th_{p+\zeta+\zeta^\sharp})$, we have that
 $$
 S^\sharp=S_{\big| B_{r}(\th_{p+\zeta+\zeta^\sharp})}\,.
 $$ 
By Proposition \ref{shulalemma} we get
\begin{equation}\label{cavolfiore'}
\abs{S}_{r,p+\zeta+\zeta^\sharp}
\leq 
\widetilde{J_0} | R^{(\tN)}|_{r,p}\,.	
\end{equation} 
We now apply Lemma \ref{ham flow} with
$(r,p)\rightsquigarrow(r,p+\zeta+\zeta^\sharp)$ 
and $\rho:=r-\rf.$
Note that \eqref{viadelcampo'} and \eqref{cavolfiore'}
imply \eqref{stima generatrice}.
Then \eqref{pollon4}
follows by \eqref{pollon} and \eqref{cavolfiore'}.
\end{proof}

%\subsection{The iterative scheme}
\vspace{0.5em}
\noindent
{\bf The iterative scheme.}
The proof of Theorem \ref{mainthm:BNF} is based on the following 
 iterative scheme.

\vspace{0.3em}
\noindent
\textbf{Setting of parameters.} For any $0\le k \le \tK$, let us recursively define: 
\begin{equation}\label{parametri}
\begin{aligned}
&r_k= r_0(1- \frac{k}{2\tK})\,,
\qquad  
\delta_k = \frac{r_{k} - r_{k+1}}{16 e r_{k}}\,,
\qquad 
\zeta_k:= (2^43^4 k)^4\,,
\qquad \s_{k}:=\sum_{i=1}^k\zeta_{i}\,.
\end{aligned}
\end{equation}

Moreover, let us define 
\begin{equation}\label{patata2}
\eps:= \pa{\frac{r_0}{\bar{r}}}\,,
\qquad
\mathtt{R}_0:=|R_0|_{\bar{r},p}\,.
\end{equation}
By Lemma \ref{gasteropode}
and Remark \ref{rescalingHaminiziale} we have that
\begin{equation}\label{scala0}
\abs{R_0^{(\td)}}^{\wc}_{r_0,p}\leq \e^{\td} \mathtt{R}_0\,,
\qquad  1\leq \td\leq \tK\,,
\qquad
\abs{R_0^{(\geq\tK+1)}}^{\wc}_{r_0,p}\leq  \e^{\tK+1} \mathtt{R}_0\,.
\end{equation}
Let us also introduce %(recall \eqref{controlJ0}, \eqref{controlJ0caseSob})
\begin{equation}\label{arancia1}
 J_k:=J_0(p+\s_k, k))\qquad 0\le k \le \tK\,,
\end{equation}
where  $J_0$ is given  in  \eqref{controlJ0caseSob}.
In the following we assume taht
\begin{equation}\label{cond:small}
\mathtt{R}_0{4^{\tK+3}} J_{\tK} \,\eps \leq \delta_0\,.
\end{equation}
We have the following result whose proof is nowadays classical, so we omit it and we refer the reader for instance to
Lemma $6.3$ in
\cite{FeoMass:Beam}.

\begin{lemma}{\bf (Iteration lemma).}\label{lem:iterazione}
The following holds true for any $0\leq k\leq \tK$:

\vspace{0.3em}
\noindent
${\bf (S1)}_k$ there are Hamiltonians $H_{k}$ of the form
\begin{equation}\label{hamk}
\begin{aligned}
&H_k= D_\omega +Z_{k} +R_{k}\,,
\\&
Z_{k}:= \sum_{\substack{1\leq\td\leq k\\ \td\; {\rm even} }}Z_{k}^{(\td)}\,,\qquad
 R_{k}:= \sum_{\td= k+1 }^{\tK}R_{k}^{(\td)}+
R_{k}^{(\geq\tK+1)}\,, 
\end{aligned}
\end{equation}
where $Z_0\equiv 0$, and 
\begin{equation}\label{hamk2}
\begin{aligned}
 Z^{(\td)}_{k}&\in \cK^\wc_{\ri_k}(\th_{p+\s_{k}})\cap\mathcal{H}^{(\td)}\,,
\qquad 1\leq \td\leq k
\\
R_k^{(\td)}&\in \mathcal{H}^\wc_{\ri_k}(\th_{p+\s_{k}})\cap\mathcal{H}^{( \td)}\,,
\qquad k+1\leq \td\leq \tK\,,
\\
R_k^{(\geq\tK+1)}&\in \mathcal{H}^\wc_{\ri_k}(\th_{p+\s_{k}})\cap\mathcal{H}^{( \geq\tK+1)}\,;
\end{aligned}
\end{equation}

\noindent
${\bf (S2)}_k$ one has, for $1\leq k\leq \tK$,
\begin{equation}\label{smallcondk}
J_k
\left(
\sum_{\td=k}^{\tK}|R_{k-1}^{(\td)}|_{r_{k-1},p+\s_{k-1}}+
|R_{k-1}^{(\geq\tK+1)}|_{r_{k-1},p+\s_{k-1}}
\right)
\le \delta_{k-1}\,;
\end{equation}

\noindent
${\bf (S3)}_k$ one has, for $1\leq k\leq \tK$,
\begin{align}
\norm{Z_k^{(\td)}}_{r_k,p+\s_{k}} &\leq \e^{\td}
\mathtt{R}_0(4^k J_{\tK}\delta_0^{-1})^{\td-1}
2^{\td-1}\,,
\qquad 1\leq \td\leq k\,,
\label{smallnormk1}
%\tag{$S_1$-$k$}
\\
\norm{R_{k}^{(\td)}}_{r_k,p+\s_{k}} &\le \e^{\td}
\mathtt{R}_0({4^k} J_{\tK}\delta_0^{-1})^{\td-1}
{2^{k-1}}\,,
\quad
k+1\leq \td\leq \tK\,,
\label{smallnormk2}
%\tag{$S_{2}$-$k$}
\\
\norm{R_{k}^{(\geq\tK+1)}}_{r_k,p+\s_{k}} &\le \e^{\tK+1}\mathtt{R}_0
(4^k J_{\tK}\delta_0^{-1})^{\tK}
2^{k}\,;
\label{smallnormk3}
%\tag{$S_{3}$-$k$}
\end{align}

\noindent
${\bf (S4)}_k$  for any $1\leq k\leq \tK$ there are maps
$\Phi_k \; : \; B_{r_k}(\th_{\tw_k})\ \to\ 
B_{r_{k-1}}(\th_{\tw_k})$
%\begin{equation}\label{sepultura}
%\Phi_k \; : \; B_{r_k}(\th_{\tw_k})\ \to\ 
%B_{r_{k-1}}(\th_{\tw_k})
%\end{equation}
such that
\begin{equation}\label{hamkkk}
H_k=H_{k-1}\circ \Phi_k\,.
\end{equation}
Moreover, for any $k\leq n\leq \tK$,  
 one has
\begin{equation}\label{sepultura2}
\Phi_k \; : \; B_{r_k}(\th_{p+\s_{n}})\ \to\ 
B_{r_{k-1}}(\th_{p+\s_{n}})
\end{equation}
with 
\begin{equation}\label{stimaPhiK}
\sup_{u\in  B_{r_k}(\th_{p+\s_{n}})} \norm{\Phi_k(u)-u}_{{p+\s_{n}}}
\le 
r_{k-1}\mathtt{R}_0\frac{1}{2^k} J_{\mathtt{K}}\e\,.
%({4^k} J_{\tK}\delta_0^{-1})^{k-1} 2^{k-1}
%.....
  % \ri \widetilde{J_0}^{\star}\abs{R^{(\tN)}}_{\ri,\twi}\,.
\end{equation}
\end{lemma}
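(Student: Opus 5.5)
We argue by induction on $k$, applying at each stage the normal form step of Lemma \ref{dolcenera}. The choices are $\tN\rightsquigarrow k$, $r\rightsquigarrow r_{k-1}$, $\rf\rightsquigarrow r_k$, $p\rightsquigarrow p+\s_{k-1}$ and $\zeta\rightsquigarrow\zeta_k$, so that $p+\zeta\rightsquigarrow p+\s_k$.

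\emph{Base case $k=0$.} Take $H_0:=H$ in \eqref{hamIniz} with $Z_0\equiv0$, $R_0^{(\td)}$ and $R_0^{(\geq\tK+1)}$ as given. Then ${\bf (S1)}_0$ is immediate, and the bounds ${\bf (S3)}_0$ are exactly \eqref{scala0}, which come from Lemma \ref{gasteropode} with $r_0\le\bar r$ (so $\eps\le1$).

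\emph{Inductive step $k-1\to k$.} Assume ${\bf (S1)}$--${\bf (S4)}$ up to $k-1$.

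\begin{enumerate}
\item Verify the smallness condition ${\bf (S2)}_k$. By \eqref{parametri}, $r_{k-1}-r_k=r_0/(2\tK)$, hence $\delta_0\le\delta_{k-1}\le2\delta_0$. Insert \eqref{smallnormk2}--\eqref{smallnormk3} at level $k-1$; the sum over $\td$ is a geometric series in $\eps\,4^{k-1}J_{\tK}\delta_0^{-1}\mathtt{R}_0\le 4^{-4}$, which is small by \eqref{cond:small}. Using also $J_k\le J_{\tK}$ (monotonicity of \eqref{controlJ0caseSob} in $k$), this gives \eqref{smallcondk}, i.e. \eqref{viadelcampo}.
\item Apply Lemma \ref{dolcenera}. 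This yields $\Phi_k:=\Phi^1_S$ and $H_k:=H_{k-1}\circ\Phi_k$ of the form \eqref{HstepPIUUNO}, giving ${\bf (S1)}_k$ and \eqref{hamkkk}.
\item Discard odd-degree normal form terms. The new term $Z_k^{(k)}=\Pi_{\cK}R_{k-1}^{(k)}$ vanishes when $k$ is odd, by Remark \ref{oddKerham}; this is why only even $\td$ appear in \eqref{hamk}.
\item Map estimates ${\bf (S4)}_k$. Apply the last part of Lemma \ref{dolcenera} with $\zeta^\sharp:=\s_n-\s_k$ for $k\le n\le\tK$. Condition \eqref{viadelcampo'} holds by the same computation as in step 1, since $J_0(\s_n-\s_{k-1},k)\le J_{\tK}$. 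Then \eqref{pollon4} gives \eqref{sepultura2}, and together with \eqref{smallnormk2} for $\td=k$ it gives \eqref{stimaPhiK}, where the factor $2^{-k}$ is absorbed through the smallness \eqref{cond:small}.
\end{enumerate}

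\emph{Main obstacle: ${\bf (S3)}_k$.} This is the bookkeeping in \eqref{patata}--\eqref{signorina}. Write $\theta:=\epsilon_kJ_k/(2\delta_{k-1})$. By \eqref{smallnormk2} at level $k-1$ and $\delta_{k-1}\ge\delta_0$,
\[
\theta\le \eps^{k}\mathtt{R}_0\big(4^{k-1}J_{\tK}\delta_0^{-1}\big)^{k}2^{k-3}.
\]
Each term in \eqref{patata} with $jk+\td=p$ is then bounded by
\[
\frac{1}{j!}\,\eps^{p}\,\mathtt{R}_0^{\,j+1}\big(4^{k-1}J_{\tK}\delta_0^{-1}\big)^{p-1}2^{(k-2)j+(k-2)}.
\]
Using $\mathtt{R}_0\le1$ (Remark \ref{rescalingHaminiziale}), every term carries $\eps^p\mathtt{R}_0(4^{k-1}J_{\tK}\delta_0^{-1})^{p-1}$. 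The number of admissible pairs $(j,\td)$ is at most $\tK$, and the ratio between the level-$k$ and level-$(k-1)$ quantities is $4^{p-1}$. This leaves enough room to absorb the combinatorial factors and obtain the factor $2^{k-1}$ in \eqref{smallnormk2}.

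The same scheme applied to \eqref{signorinabis} gives \eqref{smallnormk1}. For \eqref{signorina}, use that $\theta^{[(\tK+1-\td)/k]}\epsilon_{\td}$ carries at least $\eps^{\tK+1}$, and that $|Z|_{r,p}$ is bounded by a geometric sum of \eqref{smallnormk1}; this gives \eqref{smallnormk3}.

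I expect this exponent-tracking of $4^k$ and $2^k$ to be the only delicate point. I would organize it exactly as in Lemma 6.3 of \cite{FeoMass:Beam}, replacing their $J_0$ by \eqref{controlJ0caseSob}.
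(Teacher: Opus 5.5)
Your overall scheme is the one the paper has in mind; it omits the proof and refers to Lemma~6.3 of \cite{FeoMass:Beam}. That scheme is: induction through Lemma~\ref{dolcenera} with $\tN=k$, $r=r_{k-1}$, $\rf=r_k$, $\zeta=\zeta_k$; base case \eqref{scala0}; odd kernel terms removed by Remark~\ref{oddKerham}; and $\zeta^\sharp=\s_n-\s_k$ for \eqref{sepultura2}. Your propagation of \eqref{smallnormk1}--\eqref{smallnormk2} is also fine, because there the extra factors $\mathtt{R}_0^{j}\le1$ can simply be dropped.

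\textbf{The gap: $\mathbf{(S3)}_{k-1}$ does not give $\mathbf{(S2)}_k$.} By \eqref{smallnormk2}--\eqref{smallnormk3} at level $k-1$, the $\td$-th term of the sum in $\mathbf{(S2)}_k$ is bounded only by $\eps\,\mathtt{R}_0\,(\eps\,4^{k-1}J_{\tK}\delta_0^{-1})^{\td-1}2^{k-2}$. The bounds $\mathbf{(S3)}$ carry $\mathtt{R}_0$ only once, while the power of $J_{\tK}\delta_0^{-1}$ grows with $\td$. So the ratio of your geometric series is $\eps\,4^{k-1}J_{\tK}\delta_0^{-1}$, not $\eps\,4^{k-1}J_{\tK}\delta_0^{-1}\mathtt{R}_0$. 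Condition \eqref{cond:small} controls only the product $\mathtt{R}_0\eps J_{\tK}\delta_0^{-1}$, and all you know is $\mathtt{R}_0\le1$ (Remark~\ref{rescalingHaminiziale}).

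Here is a concrete case. Take $\eps=1$ (i.e.\ $r_0=\bar r$) and $\mathtt{R}_0=4^{-\tK-3}\delta_0J_{\tK}^{-1}$; these satisfy \eqref{cond:small}. For $\tK\ge3$ and $2\le k\le\tK$, the bound that $\mathbf{(S3)}_{k-1}$ gives for the single term $J_k|R^{(\tK)}_{k-1}|_{r_{k-1},p+\s_{k-1}}$ is at least $4^{-\tK-3}(J_{\tK}\delta_0^{-1})^{\tK-2}\ge 4^{-\tK-3}J_{\tK}\delta_0^{-1}$. This is far above $\delta_{k-1}\le 2\delta_0$. So the induction, as you set it up, does not close.

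The same missing pairing breaks two further steps:
\begin{itemize}
\item In the tail \eqref{smallnormk3}, you cannot absorb the surplus of up to $k-1$ extra factors $\eps\,4^{k-1}J_{\tK}\delta_0^{-1}$.
\item In \eqref{stimaPhiK}, the factor $2^{-k}$ does not come from \eqref{cond:small}. Already for $k=1$ both sides are linear in $\eps\mathtt{R}_0$. The factor has to come from $J_0(\s_n-\s_{k-1},k)$ being much smaller than $J_{\tK}$, which \eqref{arancia1} provides.
\end{itemize}

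\textbf{How to repair it.} Each application of $L_S$ costs $\theta\lesssim \eps^{k}\mathtt{R}_0J_{\tK}\delta_0^{-1}(\cdots)$. So every new factor $J_{\tK}\delta_0^{-1}$ comes with its own factor $\mathtt{R}_0$, and the inductive hypothesis must record this. For instance, run the induction on \eqref{smallnormk1}--\eqref{smallnormk3} with $4^kJ_{\tK}\delta_0^{-1}$ replaced by
\[
Q_k:=\max\{2,\,4^k\mathtt{R}_0J_{\tK}\delta_0^{-1}\}\,,
\]
keeping the numerical factors and doing the step $k=1$ directly from \eqref{scala0}. Then the pieces fit as follows.
\begin{itemize}
\item Since $\mathtt{R}_0J_{\tK}\delta_0^{-1}\le 4^{-k}Q_k$, one gets $\theta\le 2^{-k-3}\eps^kQ_k^k$ for $k\ge2$, which absorbs every Lie derivative.
\item By \eqref{cond:small} and $\eps\le1$, one has $\eps Q_k\le 2$. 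Hence the sums in $\mathbf{(S2)}_k$ are at most $J_{\tK}\mathtt{R}_0\eps\,2^{\tK+k}\le 2^{-6}\delta_0$.
\item The surplus powers in the tail cost at most $2^{k-1}$, which the factor $\theta^h$ pays for.
\item Finally $Q_k\le 4^kJ_{\tK}\delta_0^{-1}$, because $\mathtt{R}_0\le1$, so these sharper bounds imply the stated $\mathbf{(S3)}_k$.
\end{itemize}
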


%%%%%%%%%%%%%%%%%%%%%%%%%%%%%
%%%%%%%%%%%%%%%%%%%%%%%%%%%%%
%%%%%%%BIBLIOGRAPHY%%%%%%%%%%%%%%
%%%%%%%%%%%%%%%%%%%%%%%%%%%%%
%%%%%%%%%%%%%%%%%%%%%%%%%%%%%
%%%%%%%%%%%%%%%%%%%%%%%%%%%%%

\end{document}